\newtheorem{cor}{Corollary}[section]
\newtheorem{theorem}[cor]{Theorem}
\newtheorem{prop}[cor]{Proposition}
\newtheorem{lemma}[cor]{Lemma}
\theoremstyle{definition}
\newtheorem{defi}[cor]{Definition}
\theoremstyle{remark}
\newtheorem{remark}[cor]{Remark}
\newtheorem{example}[cor]{Example}
\newcommand{\C}{{\mathbb C}}
\newcommand{\R}{{\mathbb R}}
\newcommand{\Q}{{\mathbb Q}}
\newcommand{\Hyp}{\mathbb{H}}
\newcommand{\AdS}{\mathbb{A}\mathrm{d}\mathbb{S}}
\newcommand{\SL}{\mathrm{SL}}
\newcommand{\PSL}{\mathrm{PSL}}
\newcommand{\ph}{\varphi}
\newcommand{\im}{\mathrm{Im}}
\newcommand{\bb}{b}
\newcommand{\tr}{\mbox{\rm tr}\,}
\newcommand{\isom}{\mathrm{Isom}}
\newcommand{\II}{I\hspace{-0.1cm}I}
\newcommand{\rar}{\rightarrow}
\newcommand{\war}{\rightharpoonup}
\newcommand{\SO}{\mathrm{SO}}
\newcommand{\so}{\mathfrak{so}}
\newcommand{\ddt}{\left.\frac{d}{dt}\right|_{t=0}}
\newcommand{\D}{\mathbb{D}}
\newcommand{\RP}{\R \mathrm{P}}
\begin{document}

\setcounter{secnumdepth}{3}
\setcounter{tocdepth}{2}

\title[Area-preserving diffeomorphisms of $\Hyp^2$ and $K$-surfaces in $\AdS^3$]{Area-preserving diffeomorphisms of the hyperbolic plane and  $K$-surfaces in Anti-de Sitter space}

\author[Francesco Bonsante]{Francesco Bonsante}
\address{Francesco Bonsante: Dipartimento di Matematica ``Felice Casorati", Universit\`{a} degli Studi di Pavia, Via Ferrata 5, 27100, Pavia, Italy.} \email{bonfra07@unipv.it} 
\author[Andrea Seppi]{Andrea Seppi}
\address{Andrea Seppi: University of Luxembourg, Mathematics Research Unit, Maison du Nombre, 6 Avenue de la Fonte, Esch-sur-Alzette L-4364 Luxembourg. } \email{andrea.seppi@uni.lu}
%\date{\today}

\thanks{The authors were partially supported by FIRB 2010 project ``Low dimensional geometry and topology'' (RBFR10GHHH003). The first author was partially supported by
PRIN 2012 project ``Moduli strutture algebriche e loro applicazioni''.
The authors are members of the national research group GNSAGA}

\begin{abstract}
%\abstractin{french}
%Nous démontrons que toute courbe faiblement acausale dans le bord de l'espace Anti-de Sitter de dimension (2+1) est le bord asymptotique de deux $K$-surfaces de type espace, une convexe dans le passé et l'autre dans le futur, pour tout $K\in(-\infty,-1)$. La courbe $\Gamma$ est le graphe d'un homéomorphisme quasi-symétrique du cercle si et seulement si les $K$-surfaces ont leurs courbures principales bornées. Dans ce cas, nous prouvons un résultat d'unicité.

%Les preuves reposent sur une correspondance bien connue entre surfaces de type espace dans l'espace Anti-de Sitter et difféomorphismes du plan hyperbolique qui préservent l'aire. En effet, un instrument important est une formule de representation, qui reconstruit la surface à partir du difféomorphisme préservant l'aire.

%En utilisant cette correspondance, nous déduisons que, pour $\theta\in(0,\pi)$ fixé, tout homéomorphisme quasi-symétrique du cercle admet une unique extension qui est un $\theta$-glissement de terrain du plan hyperbolique. Ces extensions sont quasi-conformes.  

%\vspace{0.7cm}

We prove that any weakly acausal curve $\Gamma$ in the boundary of Anti-de Sitter (2+1)-space is the asymptotic boundary of
two spacelike $K$-surfaces, one of which is past-convex and the other future-convex, for every $K\in(-\infty,-1)$.
The curve $\Gamma$ is the graph  of a quasisymmetric
homeomorphism of the circle if and only if the $K$-surfaces have bounded principal curvatures.
Moreover in this case a uniqueness result holds. 

The proofs rely on a well-known correspondence between spacelike surfaces in Anti-de Sitter space
and area-preserving diffeomorphisms of the hyperbolic plane. In fact, an important ingredient is a representation formula, which reconstructs a spacelike surface from the associated area-preserving diffeomorphism. 

Using this correspondence we then deduce
that, for any fixed $\theta\in(0,\pi)$, every quasisymmetric homeomorphism of the circle admits a unique extension which is a $\theta$-landslide of the hyperbolic plane. These extensions are quasiconformal.
\end{abstract}

 %This extension  is unique and quasi-conformal
%if $\phi$ is quasi-symmetric.

%We prove that, for any weakly spacelike curve $\Gamma$ in the boundary of Anti-de Sitter space, there exists a foliation by surfaces of constant curvature $K\in(-\infty,-1)$ of both connected components of the complement of the convex hull of $\Gamma$ in the domain of dependence of $\Gamma$. If $\Gamma$ is the graph of a quasisymmetric homeomorphism of the circle, then for every $K\in(-\infty,-1)$ there exists exactly one $K$-surface in each connected component, and its principal curvatures are bounded. By means of a known construction which associates to a convex surface $S$ in Anti-de Sitter space a diffeomorphism $\Phi$ of the hyperbolic plane, it follows that every quasisymmetric homeomorphism of the circle admits exactly two quasiconformal extensions which are $\theta$-landslides, for $\theta\in(0,\pi/2)$. The proof uses a representation formula for convex surfaces in Anti-de Sitter space, which reconstructs a convex surface $S$ from the associated diffeomorphism $\Phi$ of the hyperbolic plane. 

\maketitle

%\languagename

%\tableofcontents

%\bibstyle{plain}

\section{Introduction}

Since the groundbreaking work of Mess \cite{Mess}, the interest in the study of Anti-de Sitter manifolds has grown, often motivated by the similarities with hyperbolic three-dimensional geometry, and with special emphasis on its relations with Teichm\"uller theory of hyperbolic surfaces. See for instance \cite{ notes, bbzads, barbotzeghib, bsk_multiblack, bonschlfixed, Schlenker-Krasnov,bonseppitamb}. In fact, as outlined in \cite{aiyama,bon_schl,seppimaximal,seppiminimal}, several constructions can be generalized in the context of universal Teichm\"uller space. For instance, given a smooth spacelike convex surface $S$ in Anti-de Sitter space, or a maximal surface, one can define two \emph{projections} from $S$ to the hyperbolic plane $\Hyp^2$, and their composition provides a diffeomorphism $\Phi$ between domains of $\Hyp^2$. In the context of non-smooth surfaces, an example of this phenomenon was already introduced by Mess, who observed that a \emph{pleated surface} provides an \emph{earthquake map} of $\Hyp^2$. If $S$ is a smooth \emph{maximal surface} (namely, a surface of zero mean curvature), then the associated map is a \emph{minimal Lagrangian map}. A generalization of minimal Lagragian maps are the so-called $\theta$-\emph{landslides}, which are one of the main objects of this paper. 
A $\theta$-landslide can be defined as a composition $\Phi=f_2\circ (f_1)^{-1}$, where $f_1$ and $f_2$ are harmonic maps from a fixed Riemann surface, with Hopf differentials satisfying the relation
$$\mathrm{Hopf}(f_1)=e^{2i \theta }\mathrm{Hopf}(f_2)\,.$$
The $\theta$-landslides are precisely the maps associated to $K$-surfaces, i.e. surfaces of constant Gaussian curvature.

In this paper, we will frequently jump from one approach to the other: on the one hand, the study of convex surfaces in Anti-de Sitter space, with special interest in $K$-surfaces; on the other hand, the corresponding diffeomorphisms of (subsets of) $\Hyp^2$, in particular $\theta$-landslides.

\subsection*{A representation formula for convex surfaces}
Let us briefly review the definition of the diffeomorphism $\Phi$ associated to a convex spacelike surface in Anti-de Sitter space, and survey the previous literature. Anti-de Sitter space $\AdS^3$ can be identified to $\isom(\Hyp^2)$, endowed with the Lorentzian metric of constant curvature $-1$ which comes from the Killing form. The group of orientation-preserving, time-preserving isometries of $\AdS^3$ is identified to $\isom(\Hyp^2)\times\isom(\Hyp^2)$, acting by composition on the left and on the right. 
%If a totally geodesic spacelike plane is fixed (which is a copy of $\Hyp^2$), then for every point $\gamma$ on the surface $S$, there exists a unique isometry $\Pi_l^\gamma$ of the form $\{\star\}\times\{\mathrm{id}\}$ which maps $T_\gamma S$ to the fixed spacelike plane. Then the left projection is $\pi_l$ maps $\gamma\in S$ to $\Pi_l^\gamma(\gamma)$. The right projection is defined analogously, and thus $\Phi=\pi_l^{-1}\circ\pi_r$ is a diffeomorphism of (subsets of) $\Hyp^2$.

The essential point of the construction is the fact that the space of timelike geodesics of $\AdS^3$ is naturally identified to $\Hyp^2\times \Hyp^2$, where a point $(x,y)$ parameterizes the geodesic $L_{x,y}=\{\gamma\in\isom(\Hyp^2):\gamma(y)=x\}$, as proved in \cite{barbotbtz1}. This fact has been used in several directions, see for instance \cite{fannythesis, JDK, guerkassel, guersurvey, barbotbtz1, barbotbtz2, bbsads}. When $S$ is spacelike surface, for every point $\gamma\in S$ the orthogonal line is timelike and thus it determines two points $(x,y)$ of $\Hyp^2$. Thus one can define the \emph{left projection} $\pi_l(\gamma)=x$ and the \emph{right projection} $\pi_r(\gamma)=y$, and therefore construct the associated map $\Phi=\pi_r\circ\pi_l^{-1}$. This fact was first observed in \cite{Mess} when $S$ is a pleated surface, thus producing earthquake maps. For smooth convex surfaces, as observed in \cite{Schlenker-Krasnov}, it turns out that $\Phi=\pi_r\circ\pi_l^{-1}$ is a diffeomorphism of (subsets of) $\Hyp^2$ which preserves the area. In \cite{aiyama,bon_schl,seppimaximal} the case in which $S$ is a maximal surface, and correspondingly $\Phi$ is a minimal Lagrangian diffeomorphism, has been extensively studied. A similar construction has also been applied to surfaces with certain singularities in \cite{Schlenker-Krasnov,toulisse}. More recently, progresses have been made on the problem of characterising the area-preserving maps $\Phi$ obtained by means of this construction, satisfying certain equivariance properties, see \cite{barbotkleinian, bonsepequivariant, seppiflux}.

The first problem addressed in this paper is to what extent the surface $S$ can be reconstructed from the datum of the area-preserving diffeomorphism $\Phi$. By construction, the surface $S$ has to be orthogonal to the family of timelike geodesics $\{L_{x,\Phi(x)}\}$. Another basic observation is that, given a surface $S$, for every smooth surface obtained from $S$ by following the normal evolution of $S$, the associated map is still the same map $\Phi$. In fact, the orthogonal geodesics of the parallel surfaces of $S$ are the same as those of $S$. Up to this ambiguity, we are able to provide an explicit construction for the inverse of the left projection $\pi_l$, only in terms of $\Phi$. Actually we will give a 1-parameter family of maps into $\AdS^3$, thus reconstructing all the parallel surfaces which have associated map $\Phi$.

Of course there are conditions on the map $\Phi$. In fact, if $\Phi$ is a map which is associated to a surface $S$, then it is not difficult to prove that there exists a smooth $(1,1)$-tensor $b$ such that, if $g_{\Hyp^2}$ denotes the hyperbolic metric, 
\begin{equation} \label{condition1}
\Phi^*g_{\Hyp^2}=g_{\Hyp^2}(b\cdot,b\cdot)\,.
\end{equation}
and $b$ satisfies the conditions:
\begin{align}
&d^\nabla b=0\,, \label{condition2}\\
&\mathrm{det}\, b=1\,, \label{condition3}\\
&\tr b\in(-2,2) \label{condition4}\,,
\end{align}
where $\nabla$ is the Levi-Civita connection of $\Hyp^2$.  We prove the following converse statement:

\begin{theorem} \label{rep formula intro}
Let $\Phi:\Omega\to\Omega'$ be a diffeomorphism between two open domains of $\Hyp^2$. Suppose that there exists a smooth $(1,1)$-tensor $b$ satisfying   Equations \eqref{condition1},\eqref{condition2},\eqref{condition3},\eqref{condition4}. Consider the map
$\sigma_{\Phi,\bb}:\Omega\to\AdS^3$ defined by the condition that $\sigma_{\Phi,\bb}(x)$ is the unique isometry $\sigma$ satisfying
\begin{align}
&\sigma(\Phi(x))=x\,; \label{definition1}\\
&d\sigma_{\Phi(x)}\circ d\Phi_x=-\bb_x\,.  \label{definition2}
\end{align}
Then $\sigma_{\Phi,b}$ is an embedding of $\Omega\subseteq \Hyp^2$ onto a convex surface in $\AdS^3$. If $\pi_l$ is the left projection of the image of $\sigma_{\Phi,b}$, then $\pi_l\circ\sigma_{\Phi,b}=\mathrm{id}$, and
$\pi_r\circ\sigma_{\Phi, b}=\Phi$.
\end{theorem}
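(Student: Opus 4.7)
The plan is to proceed in four stages: well-definedness of $\sigma_{\Phi,\bb}$, computation of its differential (to show it is a spacelike immersion), identification of the two projections (to conclude it is an embedding with the stated properties), and convexity of the image.

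For well-definedness, conditions \eqref{condition1} and \eqref{condition3} make $-\bb_x \circ (d\Phi_x)^{-1} : T_{\Phi(x)}\Hyp^2 \to T_x\Hyp^2$ an orientation-preserving linear isometry: indeed \eqref{condition1} gives $g_{\Hyp^2}(\bb_x v,\bb_x w) = g_{\Hyp^2}(d\Phi_x v,d\Phi_x w)$, so the two operators differ by an orthogonal factor, and $\det \bb_x = 1 > 0$ together with area-preservation of $\Phi$ (which follows from \eqref{condition1}--\eqref{condition3}) pins down the orientation. Since an orientation-preserving isometry of $\Hyp^2$ is determined by its value at a point and its derivative there (the oriented orthonormal frame bundle being a torsor under $\isom(\Hyp^2) = \PSL(2,\R) = \AdS^3$), conditions \eqref{definition1}--\eqref{definition2} specify $\sigma_{\Phi,\bb}(x)$ uniquely and smoothly in $x$.

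Next I would compute $d(\sigma_{\Phi,\bb})_x$ using the left trivialization $T_\sigma \PSL(2,\R) \cong \psl(2,\R)$ and the identification of $\psl(2,\R)$ with the Killing fields on $\Hyp^2$. Writing $d(\sigma_{\Phi,\bb})_x(v) = \xi_v \cdot \sigma_x$, differentiation of \eqref{definition1} along a curve $x_t$ with $\dot x_0 = v$ gives
\begin{equation*}
\xi_v(x) = (\mathrm{id} + \bb_x)\,v.
\end{equation*}
Conditions \eqref{condition3}--\eqref{condition4} force the eigenvalues of $\bb_x$ to be conjugate on the unit circle (and distinct from $-1$), so $\mathrm{id} + \bb_x$ is invertible and $v \mapsto \xi_v(x)$ is injective. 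Differentiating the pointwise identity \eqref{definition2} and invoking the Codazzi condition $d^\nabla \bb = 0$, one then checks that $\nabla\xi_v|_x = 0$. Hence $\xi_v$ lies in the translational subspace $\mathfrak{p}_x$ of the Cartan decomposition $\psl(2,\R) = \mathfrak{k}_x \oplus \mathfrak{p}_x$ at $x$, a $2$-dimensional spacelike subspace for the Killing form. This shows $\sigma_{\Phi,\bb}$ is a spacelike immersion.

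For the projections, the timelike geodesic $L_{x,\Phi(x)} = \{\gamma\in\isom(\Hyp^2) : \gamma(\Phi(x)) = x\}$ contains $\sigma_x$ by \eqref{definition1}. Its tangent at $\sigma_x$, in left trivialization, is spanned by the rotations fixing $x$, i.e.\ by $\mathfrak{k}_x$, which is Killing-form orthogonal to $\mathfrak{p}_x$. Hence $L_{x,\Phi(x)}$ is the orthogonal timelike geodesic to the image at $\sigma_x$, yielding $\pi_l(\sigma_x) = x$ and $\pi_r(\sigma_x) = \Phi(x)$. The identity $\pi_l\circ\sigma_{\Phi,\bb} = \mathrm{id}$ supplies a continuous inverse, upgrading the spacelike immersion to an embedding. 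Convexity would then follow from a shape-operator computation: the unit normal corresponds in left trivialization to a family $\{J_x\}$ of unit rotational Killing fields, and differentiating this family along the surface (once more using $d^\nabla \bb = 0$) expresses the shape operator as a rational expression in $\bb$ that is forced to be definite by \eqref{condition4}.

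The main obstacle is checking $\nabla\xi_v|_x = 0$, since \eqref{definition2} is a pointwise identity between linear maps of moving tangent spaces: the differentiation must be set up carefully, e.g.\ using parallel frames along $x_t$ and $\Phi(x_t)$ or a Maurer--Cartan style argument. The Codazzi condition $d^\nabla\bb = 0$ enters precisely to eliminate the rotational contribution that would otherwise appear; without it, $\xi_v$ could have a $\mathfrak{k}_x$-component and the image would fail to be spacelike. The shape-operator calculation underlying convexity is technically similar and relies on the same ingredients.
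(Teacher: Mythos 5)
Your proposal is correct and follows essentially the same route as the paper: the paper likewise computes $d\sigma_{\Phi,\bb}$ by differentiating the two defining relations in the $\SO_0(2,1)$ model, obtaining $d\sigma_{\Phi,\bb}(v)=\Lambda(J(E+\bb)v)$ --- which is exactly your statement that $\xi_v$ is the transvection at $x$ with value $(E+\bb)v$, the Codazzi condition entering through the formula $\widetilde\nabla_XY=\bb^{-1}\nabla_X(\bb Y)$ for the Levi-Civita connection of $\Phi^*g_{\Hyp^2}$ --- then deduces the first fundamental form $\tfrac14 g_{\Hyp^2}((E+\bb)\cdot,(E+\bb)\cdot)$, the orthogonality to $L_{x,\Phi(x)}$, and the shape operator $B=-J_I(E+\bb)^{-1}(E-\bb)$, whose determinant $(2-\tr \bb)/(2+\tr \bb)$ is positive precisely under \eqref{condition4}. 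The two computations you defer are exactly those carried out in Propositions \ref{prop first ff} and \ref{prop sec ff}.
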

In fact, given a tensor $b$ satisfying Equations \eqref{condition1},\eqref{condition2} and \eqref{condition3}, one can still define $\sigma_{\Phi,b}$ by means of Equations \eqref{definition1} and \eqref{definition2}, and the image of the differential of $\sigma_{\Phi,b}$ is orthogonal to the family of timelike lines $\{L_{x,\Phi(x)}\}$, but in general $d\sigma_{\Phi,b}$ will not be injective. Actually, given $b$ which satisfies Equations \eqref{condition1},\eqref{condition2} and \eqref{condition3}, for every angle $\rho$, also the $(1,1)$-tensor $R_\rho\circ b$ satisfies Equations \eqref{condition1},\eqref{condition2}, where $R_\rho$ denotes the counterclockwise rotation of angle $\rho$. Changing $b$ by post-composition with $R_\rho$ corresponds to changing the surface $S$ to a parallel surface, moving along the timelike geodesics $\{L_{x,\Phi(x)}\}$. The condition $\tr b\in(-2,2)$ (which in general is only satisfied for some choices of $b$) ensures that $\sigma_{\Phi,b}$ is an embedding. 

We now restrict our attention to $K$-surfaces and $\theta$-landslides. A $\theta$-\emph{landslide} is a diffeomorphism $\Phi$ for which there exists $b$ satisfying the conditions of Equations \eqref{condition1},\eqref{condition2} and \eqref{condition3} and moreover its trace is constant. More precisely,
\begin{equation}
\tr b=2\cos\theta \qquad \mathrm{and}\qquad \tr Jb<0
\end{equation}
for $\theta\in(0,\pi)$. It turns out that $\theta$-landslides are precisely the maps associated to past-convex $K$-surfaces, for $K=-{1}/{\cos^2({\theta}/{2})}$. On the other hand, by means of the map defined in Equations \eqref{definition1} and \eqref{definition2}, one associates a $K$-surface to a $\theta$-landslide. Changing $b$ by $-b$ in the defining Equations \eqref{definition1} and \eqref{definition2} enables to pass from the $K$-surface to its dual surface, which is still a surface of constant curvature $K^*=-K/(K+1)$. Hence a $\theta$-landslide is also the map associated with a future-convex $K^*$-surface.

A special case of $\theta$-landslides are minimal Lagrangian maps, for $\theta=\pi/2$. In this case, we get two $(-2)$-surfaces, dual to one another. It is well-known (see \cite{bon_schl}) that a minimal Lagrangian map is associated to a maximal surface $S_0$ in $\AdS^3$, and that the two $(-2)$-surfaces are obtained as parallel surfaces at distance $\pi/4$ from $S_0$. Since in this case $\tr b=0$, changing $b$ by $Jb$, one has that $Jb$ is self-adjoint for the hyperbolic metric, and the map $\sigma_{\Phi,Jb}$ recovers the maximal surface with associated minimal Lagrangian map $\Phi$.

\subsection*{Foliations by $K$-surfaces of domains of dependence}

We then focus on the case of $K$-surfaces. The first result we prove in this setting concerns the existence of convex $K$-surfaces with prescribed boundary at infinity. The boundary of $\AdS^3$ is identified to $\partial\Hyp^2\times\partial\Hyp^2$, where $\partial \Hyp^2$ is the visual boundary of hyperbolic space, and thus $\partial\AdS^3$ is a torus. It is naturally endowed with a natural conformal Lorentzian structure, for which the null curves have either the first or the second component constant. A weakly acausal curve has the property that in a neighborhood of every point $\xi$, the curve is contained in the complement of the region  connected to $\xi$ by timelike segments. 

The basic example of a weakly acausal curve is the graph of an orientation-preserving homeomorphism of $\partial\Hyp^2$. 
In fact, in this paper we prove an existence theorem for $K$-surfaces with boundary at infinity the graph of any orientation-preserving homeomorphism of $\partial\Hyp^2$:

\begin{theorem}   \label{thm:qui}
Given any orientation-preserving homeomorphism $\phi:\partial\Hyp^2\to\partial\Hyp^2$, the two connected components of the complement of the convex hull of $\Gamma=graph(\phi)$ in the domain of dependence of $\Gamma$ are both foliated by $K$-surfaces $S_K$, as $K\in(-\infty,-1)$, in such a way that if $K_1<K_2$, then $S_{K_2}$ is in the convex side of $S_{K_1}$.
\end{theorem}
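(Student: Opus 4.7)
\emph{Existence.} I would start by constructing, for each fixed $K \in (-\infty,-1)$, a past-convex spacelike $K$-surface $S_K \subset \AdS^3$ with asymptotic boundary equal to $\Gamma$; the future-convex case is symmetric. My strategy is to approximate $\phi$ uniformly by homeomorphisms $\phi_n$ arising from (quasi-)Fuchsian representations, so that $\Gamma_n = \mathrm{graph}(\phi_n)$ converges to $\Gamma$ in $\partial\AdS^3$. For each $\phi_n$ the existence of an equivariant $K$-surface $S_K^n$ is available from the cocompact equivariant theory, typically via the convex hull of $\Gamma_n$ as an upper barrier, null totally geodesic half-planes through points of $\Gamma_n$ as lower barriers, and a standard Monge–Amp\`ere/Perron argument. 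The barriers are uniform in $n$, so $\{S_K^n\}$ is precompact in the local Hausdorff topology on spacelike convex surfaces, and any Hausdorff limit $S_K$ is a convex $C^{1,1}$ surface of constant intrinsic curvature $K$; elliptic regularity for the constant Gaussian curvature equation upgrades it to a smooth embedded surface. Since both the convex-hull and the null-plane barriers have asymptotic boundary $\Gamma$, so does $S_K$.

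\emph{Strict nesting via the maximum principle.} For $K_1 < K_2$ I would prove that $S_{K_2}$ lies strictly in the convex side of $S_{K_1}$ by the geometric maximum principle. At a hypothetical interior tangency point $p$ with a common future-pointing unit normal, the convex-side assumption and second-order tangency force $B_{K_2}\preceq B_{K_1}$ as symmetric operators, while the Gauss equation in $\AdS^3$ expresses $K$ as a monotone function of the elementary symmetric functions of the principal curvatures of a convex spacelike surface; this yields $K_2 \leq K_1$, contradicting $K_2 > K_1$. The strong maximum principle for the fully nonlinear elliptic PDE satisfied by $K$-surfaces excludes interior tangency altogether, so $S_{K_1} \cap S_{K_2} = \emptyset$ with the claimed nesting.

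\emph{Foliation.} The subtle and, in my view, most delicate step is to show that the nested family $\{S_K\}_{K\in(-\infty,-1)}$ actually foliates the past-convex component $\Omega$ of the complement of the convex hull inside $D(\Gamma)$. Strict nesting makes $K \mapsto S_K$ monotone and yields monotone Hausdorff limits $S_{-1^-}$ and $S_{-\infty}$ inside $\overline{\Omega}$. I would identify $S_{-1^-}$ with the past-convex boundary of $\mathrm{conv}(\Gamma)$: the Gauss equation forces the product of the principal curvatures of $S_K$ to collapse as $K\to -1^-$, so the limit is a spacelike pleated surface asymptotic to $\Gamma$ on the past-convex side, which by convexity must coincide with the upper boundary of the convex hull. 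Dually, as $K\to -\infty$ the principal curvatures blow up, forcing $S_K$ to accumulate on the null boundary of $D(\Gamma)$. Combining this with continuity of $K\mapsto S_K$, supplied by the representation formula of Theorem~\ref{rep formula intro} (which reconstructs $S_K$ from a $(1,1)$-tensor $b$ depending continuously on the parameter $\theta$ via $K=-1/\cos^2(\theta/2)$), and with the strict nesting, one obtains the desired foliation. The analogous foliation of the future-convex component is produced by time-reversal symmetry, or equivalently by replacing $b$ by $-b$ in the representation formula.

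\emph{Main obstacle.} I anticipate the principal difficulty to lie in the two limiting regimes $K\to -1^-$ and $K\to -\infty$: without any regularity assumption on $\phi$, both the convex hull $\mathrm{conv}(\Gamma)$ and the null boundary of $D(\Gamma)$ can behave wildly, and proving that the monotone Hausdorff limits of the $S_K$ genuinely reach these two extreme boundaries (rather than stopping at some intermediate convex surface) requires a careful a priori estimate ruling out the escape of the principal curvatures at interior points while $K$ remains bounded away from $-1$ and from $-\infty$.
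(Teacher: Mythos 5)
Your skeleton (approximate $\Gamma$ by curves invariant under cocompact Fuchsian pairs, invoke the equivariant existence theorem of Barbot--B\'eguin--Zeghib, pass to a limit, nest by the maximum principle, and identify the two extreme limits with $\partial_+\mathcal{C}(\Gamma)$ and $\partial_+\mathcal{D}(\Gamma)$) matches the paper's. But there are two genuine gaps. The first and most serious is in the compactness step: you assert that ``the barriers are uniform in $n$'' and that any Hausdorff limit of the $(S_K)_n$ is automatically a convex $C^{1,1}$ surface of constant curvature $K$ asymptotic to $\Gamma$. This is exactly the point the paper identifies as the hard one. A $C^0$ (Lipschitz-graph) limit of convex spacelike surfaces can perfectly well acquire lightlike support planes, i.e.\ run into $\partial\mathcal{D}(\Gamma)$ and degenerate toward a lightlike plane or a union of two lightlike half-planes; your proposed lower barriers (null totally geodesic half-planes through points of $\Gamma_n$) are precisely the walls of $\partial\mathcal{D}(\Gamma_n)$ and so do nothing to prevent this. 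The paper excludes degeneration by an explicit barrier built in Section~\ref{sec barrier}: a $K$-surface, invariant under a hyperbolic one-parameter group, whose boundary is a spacelike half-plane boundary together with a sawtooth, inserted tangent to any putative lightlike support plane and compared with the approximating surfaces via the BBZ foliation and the maximum principle. Only after this is the convergence upgraded to $C^\infty$, and not by elliptic regularity for a Hausdorff limit but by Schlenker's compactness theorem (Theorem~\ref{schl_degeneration}), whose hypotheses (bounded $1$-jets, which again requires knowing the tangent planes do not become lightlike) must themselves be verified.

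The second gap is your appeal to ``continuity of $K\mapsto S_K$, supplied by the representation formula of Theorem~\ref{rep formula intro}.'' The representation formula reconstructs a surface from a given $\theta$-landslide $\Phi$ with its tensor $b$; it does not produce a family of landslides extending $\phi$ depending continuously on $\theta$ --- that existence-and-uniqueness statement is Corollary~\ref{cor intro 2}, which is a \emph{consequence} of the foliation theorem (and of the uniqueness result, available only for quasisymmetric $\phi$), so the argument is circular. For a general homeomorphism $\phi$ there is not even a canonical choice of $S_K$ to speak of continuity for. The paper instead proves there are no gaps in the family by a purely geometric argument: if the monotone limits from above and below at some irrational $K$ bounded a nonempty open set $U$, then since the $(S_q)_n$ foliate $\mathcal{D}_+(\Gamma_n)$, some approximating leaf $(S_{q_n})_n$ would have to enter $U$, forcing $q_n$ to be simultaneously below every rational above $K$ and above every rational below $K$. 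Relatedly, your identification of the $K\to-\infty$ limit via ``principal curvatures blow up'' is too weak to force the surfaces onto $\partial_+\mathcal{D}(\Gamma)$; the paper uses the duality $S\mapsto S^*$, $K^*=-K/(K+1)\to -1$, to reduce this to the $K\to -1$ case, which it handles with a Monge--Amp\`ere measure argument (Lemma~\ref{convergence of solutions}, Theorem~\ref{thm gutierrez}) rather than a pointwise collapse of $\det B$.
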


Analogously to the case of hyperbolic space $\Hyp^3$ (as proved in \cite{MR1293658}), there exists two $K$-surfaces with asymptotic boundary $\Gamma$, one of which is past-convex and the other future-convex.
Theorem \ref{thm:qui},
% and \ref{thm uniqueness and boundedness}
in the case of $\phi$ a quasisymmetric homeomorphism, gives positive answer to the existence part of Question 8.3 in \cite{questionsads}. As expressed in \cite{questionsads}, Question 8.3 also conjectured uniqueness  and boundedness of principal curvatures. These are proved in Theorem \ref{thm uniqueness and boundedness} below.

In general, a weakly acausal curve $\Gamma$ can contain null segments. In particular, if $\Gamma$ contains a \emph{sawtooth}, that is, the union of adjacent ``horizontal'' and ``vertical'' segments in $\partial\Hyp^2\times\partial\Hyp^2$, then the convex hull of the sawtooth is a lightlike totally geodesic triangle, which is contained both in the boundary of the convex hull of $\Gamma$ and in the boundary of the domain of dependence of $\Gamma$. Hence any (future or past) convex surface with boundary $\Gamma$ must necessarily contain such lightlike triangle.

An example is a 1-\emph{step curve}, which is the union of a ``horizontal'' and a ``vertical'' segment in $\partial\Hyp^2\times\partial\Hyp^2$. A 2-\emph{step curve} is the union of four segments, two horizontal and two vertical in an alternate way. It is not possible to have a convex surface in $\AdS^3$ with boundary a 1-step or a 2-step curve.

The proof of Theorem \ref{thm:qui} actually extends to the case of a general weakly acausal curve $\Gamma$, except the two degenerate cases above.

\begin{theorem} \label{thm Ksurfaces ads intro} 
Given any weakly acausal curve $\Gamma$ in $\partial\AdS^3$ which is not a 1-step or a 2-step curve,
for every $K\in(-\infty,-1)$ there exists a past-convex (resp. future-convex) surface $S_K^+$ (resp. $S_K^-$) with $\partial S_K^\pm=\Gamma$, such that:
\begin{itemize}
\item Its lightlike part  is  union of lightlike triangles  associated to sawteeth;
\item Its spacelike part is a smooth $K$-surface.
\end{itemize}

Moreover, the two connected components of the complement of the convex hull of $\Gamma$ in the domain of dependence of $\Gamma$ are both foliated by the spacelike part of surfaces $S_K^\pm$, as $K\in(-\infty,-1)$, in such a way that if $K_1<K_2$, then $S^\pm_{K_2}$ is in the convex side of $S^\pm_{K_1}$.
\end{theorem}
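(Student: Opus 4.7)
The plan is to deduce Theorem \ref{thm Ksurfaces ads intro} from Theorem \ref{thm:qui} by an approximation argument. Any weakly acausal curve $\Gamma\subset\partial\AdS^3=\partial\Hyp^2\times\partial\Hyp^2$ can be written as the Hausdorff limit of graphs $\Gamma_n$ of orientation-preserving homeomorphisms $\phi_n:\partial\Hyp^2\to\partial\Hyp^2$, chosen so that the sequence of domains of dependence $D(\Gamma_n)$ is monotone and exhausts $D(\Gamma)$ from the appropriate side (from inside for the past-convex case, from outside for the future-convex one). For each $n$ and every $K\in(-\infty,-1)$, Theorem \ref{thm:qui} furnishes a past-convex $K$-surface $S_K^n$ with $\partial S_K^n=\Gamma_n$.

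The first task is an a priori confinement: each $S_K^n$ is trapped between the convex hull $C(\Gamma_n)$ and the past boundary of $D(\Gamma_n)$, so the family is uniformly bounded in any fixed compact region. The hypothesis that $\Gamma$ is neither a 1-step nor a 2-step curve guarantees that the interior of $D(\Gamma)\setminus C(\Gamma)$ is non-empty; otherwise the convex hull already fills the domain of dependence and no genuine spacelike piece can be extracted. Passing to a subsequence, $S_K^n$ converges to a closed achronal set $S_K^+$ with $\partial S_K^+=\Gamma$.

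Next comes the structure of the limit. On any compact subset of its spacelike part, uniform interior curvature estimates for $K$-surfaces — or equivalently, via the representation formula of Theorem \ref{rep formula intro} and the corresponding $\theta$-landslide conditions $\det b=1$ and $\tr b=2\cos\theta$, classical $C^{2,\alpha}$-Schauder bounds for an elliptic PDE in $b$ — imply that this part is a smooth $K$-surface. Any remaining piece of $S_K^+$ must be lightlike by past-convexity, and a lightlike piece in a past-convex achronal surface of $\AdS^3$ is necessarily a totally geodesic lightlike triangle; such a triangle can be asymptotic to $\Gamma$ only at a sawtooth. Conversely, each sawtooth of $\Gamma$ forces the presence of the associated lightlike triangle in any past-convex surface bounded by $\Gamma$, so these triangles are exactly the lightlike part of $S_K^+$.

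For the foliation statement, applying the construction for every $K\in(-\infty,-1)$ produces the family $\{S_K^+\}$, and the dual construction (replacing $b$ by $-b$ in Theorem \ref{rep formula intro}, which exchanges $K$ with $K^*=-K/(K+1)$) yields $\{S_K^-\}$. Monotonicity — that $S_{K_2}^+$ lies on the convex side of $S_{K_1}^+$ when $K_1<K_2$ — follows from the geometric maximum principle, since at a would-be interior contact point of two such surfaces with the same asymptotic boundary, comparison of shape operators contradicts $K_1<K_2$. Exhaustion of the corresponding component of $D(\Gamma)\setminus C(\Gamma)$ is then obtained by analyzing the two extremes of the parameter, one of which pushes the surfaces toward $\partial C(\Gamma)$ and the other toward $\partial D(\Gamma)$. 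The hard part will be the control of the spacelike part of $S_K^+$ near the sawteeth: one must ensure that the curvature estimates remain uniform up to the edges of each lightlike triangle, so that the transition between the smooth $K$-surface piece and the lightlike piece is clean and no spurious degeneration appears elsewhere — essentially a boundary regularity issue for a degenerate elliptic problem, and it is what pins down exactly the dichotomy ``smooth $K$-surface part plus lightlike triangles coming from sawteeth''.
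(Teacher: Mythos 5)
The central gap is in your treatment of the lightlike part of the limit. You assert that ``a lightlike piece in a past-convex achronal surface of $\AdS^3$ is necessarily a totally geodesic lightlike triangle'' which ``can be asymptotic to $\Gamma$ only at a sawtooth''. This is false as a structural statement: the future boundary $\partial_+\mathcal D(\Gamma)$ of the domain of dependence is itself a past-convex nowhere timelike surface with boundary $\Gamma$, and its lightlike part consists of pieces of lightlike planes dual to points of $\Gamma$ that are in general not triangles and have nothing to do with sawteeth. The real content of the theorem is precisely to rule out that the limit $S_K^+$ of the approximating surfaces picks up such pieces, i.e.\ that its spacelike part stays off $\partial\mathcal D(\Gamma)$. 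What one can show cheaply is only that a lightlike support plane of the limit at a point $x$ has its dual point on $\Gamma$; to exclude $x$ unless it lies in a sawtooth triangle, the paper constructs explicit barriers (Section \ref{sec barrier}, Corollary \ref{lemma barrier}): $K$-surfaces invariant under a hyperbolic one-parameter group, asymptotic to the boundary of a spacelike half-plane union a sawtooth, obtained by reducing the $\theta$-landslide equation to an ODE via the representation formula. A maximum-principle comparison of the approximating surfaces with these barriers is what confines the limit. Your proposal contains no substitute for this step, and your closing paragraph misidentifies it as a boundary-regularity issue for a degenerate elliptic problem rather than the confinement problem it is. Relatedly, your appeal to ``uniform interior curvature estimates'' or Schauder bounds to get smoothness of the spacelike part presupposes uniform ellipticity, which is exactly what fails where the surfaces approach $\partial\mathcal D(\Gamma)$; the paper instead invokes Schlenker's compactness theorem (Theorem \ref{schl_degeneration}), whose degenerate alternative is then excluded by the same confinement argument.

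Two further points. First, the logical order is inverted relative to the paper: Theorem \ref{thm:qui} is not proved independently and then generalized; both statements come from the same approximation by curves invariant under cocompact Fuchsian pairs (Lemma \ref{lemma approx acausal}), to which Barbot--Zeghib's theorem applies. Starting from Theorem \ref{thm:qui} is not circular if you grant it, but it buys you nothing, since the hard steps recur. Second, the foliation claim needs more than monotonicity plus ``analysis of the two extremes'': you must show there are no gaps between the leaves as $K$ varies (the paper does this using the fact that the approximating invariant surfaces already foliate, plus a diagonal argument over rational $K$), and the identification of the limits $K\to -1$ with $\partial_+\mathcal C(\Gamma)$ and $K\to-\infty$ with $\partial_+\mathcal D(\Gamma)$ is itself nontrivial, resting on Monge--Amp\`ere measure convergence, the characterization of convex envelopes, and duality. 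None of this is addressed in your sketch.
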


In \cite{barbotzeghib}, the existence (and uniqueness) of a foliation by $K$-surfaces was proved in the complement of the convex core of any maximal globally hyperbolic Anti-de Sitter spacetime containing a compact Cauchy surface. Using results of \cite{Mess}, this means that the statement of Theorem \ref{thm Ksurfaces ads intro}  holds for curves $\Gamma$ which are the graph of an orientation-preserving homeomorphism which conjugates two Fuchsian representations of the fundamental group of a closed surface in $\isom(\Hyp^2)$. Moreover, the $K$-surfaces are invariant for the representation in $\isom(\AdS^3)\cong \isom(\Hyp^2)\times \isom(\Hyp^2)$ given by the product of the two Fuchsian representations.

The proof of Theorem \ref{thm:qui}, and more generally Theorem \ref{thm Ksurfaces ads intro}, relies on an approximation from the case of \cite{barbotzeghib}. Some technical tools are needed. First, we need to show that it is possible to approximate any weakly spacelike curve $\Gamma$ by curves invariant by a pair of Fuchsian representations. For this purpose, we adapt a technical lemma proved in \cite{Bonsante:2015vi}.

Second, we use a theorem of Schlenker (\cite{schlenkersurfconv}) which, in this particular case, essentially ensures that a sequence $S_n$ of $K$-surfaces in $\AdS^3$ converges $C^\infty$ to a spacelike surface $S_\infty$ (up to subsequences) unless they converge to a totally geodesic lightlike plane (whose boundary at infinity is a 1-step curve) or to the union of two totally geodesic lightlike half-planes, meeting along a spacelike geodesic (in this case the boundary is a 2-step curve).  

To apply the theorem of Schlenker, and deduce that the limiting surface $S_\infty$ is a $K$-surface with $\partial S_\infty=\Gamma$ (thus proving Theorem \ref{thm:qui}), one has to prove that $S_\infty$ does not intersect the boundary of the domain of dependence of $\Gamma$. More in general, for the proof of Theorem \ref{thm Ksurfaces ads intro}, one must show that the \emph{spacelike part} of $S_\infty$ does not intersect the boundary of the domain of dependence of $\Gamma$. This is generally the most difficult step in this type of problems, and frequently requires the use of \emph{barriers}. Here this issue is indeed overcome by applying the representation formula of Theorem \ref{rep formula intro} in order to construct suitable barriers. 

In fact, it is possible to compute a family of $\theta$-landslides from a half-plane in $\Hyp^2$ to itself, which commutes with the hyperbolic 1-parameter family of isometries of $\Hyp^2$ preserving the half-plane. By using this invariance, the equation which rules the condition of a map $\Phi$ being a $\theta$-landslide is reduced to an ODE. By a qualitative study it is possible to show that there exists a 1-parameter family of smooth, spacelike $K$-surfaces whose boundary coincides with the boundary of a totally geodesic spacelike half-plane in $\AdS^3$. In other words, Theorem \ref{thm Ksurfaces ads intro}  is proved by a \emph{hands-on} approach when the curve $\Gamma$ is the union of two null segments and the boundary of a totally geodesic half-plane, in the boundary at infinity of $\AdS^3$. Such $K$-surfaces are then fruitfully used as \emph{barriers} to conclude the proof of Theorem \ref{thm:qui} - and the proof actually works under the more general hypothesis of Theorem \ref{thm Ksurfaces ads intro}.

In the case $\Gamma$ is the graph of a quasisymmetric homeomorphism, we then prove that the $K$-surfaces with boundary $\Gamma$ are unique. Moreover, it is not difficult to prove that if $S$ is a convex surface in $\AdS^3$ with $\partial S=\Gamma$ and with bounded principal curvatures, then $\Gamma$ is the graph of a quasisymmetric homeomorphism. We give a converse statement for $K$-surfaces, namely, a $K$-surface with boundary $\Gamma=gr(\phi)$, for $\phi$ quasisymmetric, necessarily has bounded principal curvatures.

\begin{theorem} \label{thm uniqueness and boundedness}
Given any quasisymmetric homeomorphism $\phi:\partial\Hyp^2\to\partial\Hyp^2$, for every $K\in(-\infty,-1)$ there exists a unique future-convex $K$-surface $S^+_K$ and a unique past-convex $K$-surface $S^-_K$ in $\AdS^3$ with $\partial S_K^\pm=gr(\phi)$. Moreover, the principal curvatures of $S_K^\pm$ are bounded.
\end{theorem}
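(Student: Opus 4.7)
Existence of $S_K^\pm$ is already contained in Theorem~\ref{thm:qui}, so the task is to establish boundedness of principal curvatures and uniqueness. I would treat these in that order, as the former supplies the geometric control that feeds into the latter.

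\medskip

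\emph{Bounded principal curvatures.} The strategy is to construct explicit $K$-surface barriers near the boundary and use the quasisymmetric hypothesis to trap $S_K^\pm$ between them. The barriers are the ``half-plane plus two null segments'' $K$-surfaces $T_K$ built in the proof of Theorem~\ref{thm Ksurfaces ads intro}: they arise from an ODE reduction of the $\theta$-landslide equation under a one-parameter hyperbolic symmetry, and a direct inspection of the ODE shows that their principal curvatures are uniformly bounded. The quasisymmetry of $\phi$ is precisely the condition translating into uniform cross-ratio control on $\Gamma=gr(\phi)$, which in turn allows, at every point $(\xi,\phi(\xi))\in\Gamma$, a ``future'' and a ``past'' barrier curve of the form $\partial T_K$ to be placed so as to pinch $\Gamma$ near that point, with parameters depending only on the quasisymmetric constant of $\phi$. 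The maximum principle applied to the equation of constant Gaussian curvature then forces $S_K^\pm$ to lie between the two barrier $K$-surfaces, giving a uniform bound on its shape operator along $\Gamma$; the standard interior estimates for the prescribed Gaussian curvature equation propagate this to a global principal-curvature bound.

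\medskip

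\emph{Uniqueness.} Let $S$ be any future-convex $K_0$-surface with $\partial S=gr(\phi)$; by the previous step it has bounded principal curvatures. Let $\{F_K\}_{K\in(-\infty,-1)}$ be the foliation of the future-convex component of the complement of the convex hull provided by Theorem~\ref{thm:qui}, and define $u:S\to(-\infty,-1)$ by letting $u(x)$ be the parameter of the leaf $F_K$ through $x$. The shape-operator bound on $S$ prevents it from approaching either end of the foliation (the pleated boundary of the convex hull on one side and the opposite degeneration on the other), so $u$ stays in a compact subinterval of $(-\infty,-1)$. At an interior extremum of $u$ the surface $S$ is tangent from one side to the corresponding leaf $F_K$, and the strong maximum principle applied to the constant Gaussian curvature equation at this tangency forces $K=K_0$. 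A boundary analysis, again based on the quasisymmetry of $\phi$, ensures that such interior extrema are actually attained, so $u\equiv K_0$ and $S=F_{K_0}$. The past-convex case is symmetric.

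\medskip

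\emph{Main obstacle.} The key difficulty is not the maximum-principle comparisons, which are classical once set up, but the transfer between the analytic object $\phi$ and the geometric barriers in $\AdS^3$: one has to show that the cross-ratio distortion of $\phi$ yields a uniform two-sided barrier of the ``half-plane'' type at every boundary point, and that the resulting boundary bound on the shape operator of $S_K^\pm$ propagates to an interior bound through the Gauss equation. Both the boundedness and uniqueness steps rely fundamentally on this conversion between quasisymmetric data at $\partial\Hyp^2$ and $\AdS^3$ geometry.
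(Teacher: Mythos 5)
Your proposal diverges from the paper's proof in a way that leaves both halves of the argument with genuine gaps.

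For boundedness, the paper does not use the half-plane barriers at all at this stage: it argues by contradiction, taking a sequence $\gamma_n\in S$ where a principal curvature blows up, renormalizing by isometries $\chi_n$ so that $\chi_n(\gamma_n)=\mathcal I_x$ and $\chi_n(P_{\gamma_n}S)=\mathcal R_\pi$, invoking the compactness of quasisymmetric homeomorphisms with uniformly bounded cross-ratio norm (Lemma \ref{Compactness property of quasisymm homeo}) to control the limit of $\chi_n(\Gamma)$, and then using Schlenker's convergence theorem to get $C^\infty$ convergence of $\chi_n(S)$ to a smooth $K$-surface, which contradicts the blow-up. Your barrier scheme has two concrete problems. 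First, the barrier surfaces $T_K$ of Section \ref{sec barrier} do \emph{not} have bounded principal curvatures: their boundary at infinity contains a sawtooth (two null segments), which is not the graph of a quasisymmetric homeomorphism, and indeed they contain a lightlike triangle, so their shape operator degenerates; the claim that inspection of the ODE gives a uniform bound is false. Second, even granting suitable barriers, trapping $S_K^\pm$ \emph{between} two surfaces is a $C^0$ statement and does not by itself bound the shape operator; the passage from a position bound to a second-order bound is exactly the analytic difficulty, and ``standard interior estimates'' for the Lorentzian Monge--Amp\`ere equation are not available off the shelf here (the equation degenerates as the tangent planes approach the light cone). The paper's renormalization argument is designed precisely to avoid this.

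For uniqueness, your plan is the right classical one (compare with the foliation $\{S_K\}$ via the leaf function and apply the maximum principle at an extremum), and it coincides with the paper's up to the decisive point: since $S$ is non-compact, the extremum of the leaf function need not be attained, and your sentence ``a boundary analysis, again based on the quasisymmetry of $\phi$, ensures that such interior extrema are actually attained'' hides the entire content of the proof. The paper resolves this by taking a maximizing sequence $\gamma_n$, applying isometries to bring it back to a fixed point, ruling out degeneration of $\chi_n(\Gamma)$ to the boundary of a lightlike plane via a duality argument, and then applying the maximum principle \emph{in the limiting configuration} (where both the renormalized foliation and the renormalized surface converge); the extremum is attained there, not on the original surface. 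Without this step, or an equivalent ``maximum principle at infinity,'' your argument does not close. Note also that the paper's uniqueness proof does not rely on the boundedness statement, whereas yours does, so the flaw in your first step propagates.
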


To prove uniqueness, the standard arguments for these problems are applications of the \emph{maximum principle}, by using the existence of a foliation $\{S_K\}$ by $K$-surfaces and showing that any other $K$-surface $S_K'$ must coincide with a leaf of the given foliation. However, in this case, due to non-compactness of the surfaces, one would need a form of the maximum principle \emph{at infinity}. This is achieved more easily in this case by applying isometries of $\AdS^3$ so as to bring a maximizing (or minimizing) sequence on $S_K'$ to a compact region of $\AdS^3$. Then one applies two main tools: the first is again the convergence theorem of Schlenker, and the second is a compactness result for quasisymmetric homeomorphisms with uniformly bounded cross-ratio norm. Up to subsequences, both the isometric images of $S_K'$ and the isometric images of the leaves $\{S_K\}$ of the foliation converge to an analogous configuration in $\AdS^3$. But now it is possible to apply the classical maximum principle to conclude the argument.

\subsection*{Extensions of quasisymmetric homeomorphisms by $\theta$-landslides}

By interpreting Theorems \ref{thm Ksurfaces ads intro}  and \ref{thm uniqueness and boundedness} in the language of diffeomorphism of $\Hyp^2$ we can draw a direct consequence. 

\begin{cor} \label{cor intro 2}
Given any quasisymmetric homeomorphism $\phi:\partial\Hyp^2\to\partial\Hyp^2$ and any $\theta\in(0,\pi)$, there exist a unique $\theta$-landslide $\Phi_\theta:\Hyp^2\to\Hyp^2$ which extends $\phi$. Moreover, $\Phi_\theta$ is quasiconformal.
\end{cor}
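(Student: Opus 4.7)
The plan is to deduce this corollary directly from Theorem \ref{thm uniqueness and boundedness} and the representation formula of Theorem \ref{rep formula intro}, via the dictionary between $\theta$-landslides and past-convex $K$-surfaces described in the introduction. Set $K_\theta = -1/\cos^2(\theta/2) \in (-\infty,-1)$. For existence, I would apply Theorem \ref{thm uniqueness and boundedness} to obtain the unique past-convex $K_\theta$-surface $S^-_{K_\theta} \subset \AdS^3$ with $\partial S^-_{K_\theta} = gr(\phi)$ and bounded principal curvatures, and define $\Phi_\theta := \pi_r \circ \pi_l^{-1} : \Hyp^2 \to \Hyp^2$ using the two projections of $S^-_{K_\theta}$. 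By the correspondence recalled in the introduction, $\Phi_\theta$ is automatically a $\theta$-landslide. Its continuous extension to $\phi$ on $\partial \Hyp^2$ follows from the fact that $\partial S^-_{K_\theta} = gr(\phi)$ in $\partial\Hyp^2 \times \partial\Hyp^2$, together with the geometric interpretation of $L_{x,\Phi_\theta(x)}$ as the orthogonal timelike geodesic at $\pi_l^{-1}(x)\in S^-_{K_\theta}$: as $x\to\xi\in\partial\Hyp^2$, the point $\pi_l^{-1}(x)$ tends to $(\xi,\phi(\xi))\in\partial\AdS^3$, forcing $\pi_l$ and $\pi_r$ to extend continuously as the identity and as $\phi$ respectively.

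For uniqueness, suppose $\Phi$ is any $\theta$-landslide of $\Hyp^2$ extending $\phi$. By definition there exists a $(1,1)$-tensor $b$ with $\Phi^*g_{\Hyp^2}=g_{\Hyp^2}(b\cdot,b\cdot)$, $\det b=1$, $d^\nabla b=0$, $\tr b = 2\cos\theta$ and $\tr Jb<0$. Since $|\tr b|<2$, Theorem \ref{rep formula intro} applies and produces a convex spacelike embedding $\sigma_{\Phi,b}:\Hyp^2\to\AdS^3$ whose associated diffeomorphism is $\Phi$. The constancy $\tr b=2\cos\theta$ together with $\tr Jb<0$ characterises the image $S=\sigma_{\Phi,b}(\Hyp^2)$ as a past-convex $K_\theta$-surface. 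Since $\pi_r\circ\sigma_{\Phi,b}=\Phi$ extends to $\phi$ and $\pi_l\circ\sigma_{\Phi,b}=\mathrm{id}$ extends to the identity, the asymptotic boundary of $S$ is $gr(\phi)$. The uniqueness part of Theorem \ref{thm uniqueness and boundedness} then yields $S=S^-_{K_\theta}$, and hence $\Phi=\Phi_\theta$.

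For quasiconformality, the key input is the boundedness of the principal curvatures of $S^-_{K_\theta}$ furnished by Theorem \ref{thm uniqueness and boundedness}. Expressing $b$ in terms of the shape operator of $S^-_{K_\theta}$ and the complex structure of $\Hyp^2$ (a relation that appears in the analysis underlying Theorem \ref{rep formula intro} and pins down $b$ uniquely from the geometry of the surface), bounded principal curvatures translate into a uniform two-sided bound on the operator norm of $b$; combined with $\det b=1$ this gives a uniform bound on the singular values of $b$ bounded away from $0$ and $\infty$. Via $\Phi_\theta^*g_{\Hyp^2}=g_{\Hyp^2}(b\cdot,b\cdot)$, this is precisely a uniform bound on the quasiconformal dilatation of $\Phi_\theta$.

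The most delicate step is the continuous boundary extension in the first paragraph: one must show that, for a past-convex $K$-surface with prescribed asymptotic boundary a graph curve and bounded principal curvatures, the left and right projections extend continuously to the two coordinate functions on $\partial\Hyp^2\times\partial\Hyp^2$. All remaining ingredients are essentially bookkeeping once Theorems \ref{rep formula intro} and \ref{thm uniqueness and boundedness} are in hand.
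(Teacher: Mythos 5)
Your proposal is correct and follows essentially the same route as the paper: existence via the past-convex $K$-surface with $K=-1/\cos^2(\theta/2)$ and boundary $gr(\phi)$ (whose associated map $\pi_r\circ\pi_l^{-1}$ is a $\theta$-landslide extending $\phi$ by Lemma \ref{lemma extension}), uniqueness by reconstructing a past-convex $K$-surface from any given $\theta$-landslide via the representation formula (Corollary \ref{cor reconstruct ksurf}) and invoking Proposition \ref{prop uniqueness quasisym}, and quasiconformality from the bound on principal curvatures via the eigenvalues of $E\pm J_I B$. The only difference is one of presentation: you spell out the boundary-extension and dilatation estimates that the paper compresses into citations of Lemma \ref{lemma extension} and the remark following Lemma \ref{Compactness property of quasisymm homeo}.
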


Again, uniqueness follows from the uniqueness part Theorem \ref{thm uniqueness and boundedness} and from the construction of Theorem \ref{rep formula intro}, while quasiconformality is a consequence of boundedness of principal curvatures.

For $\theta=\pi/2$, we obtain a new proof of the following result of \cite{bon_schl}:

\begin{cor} \label{cor intro 1}
Given any quasisymmetric homeomorphism $\phi:\partial\Hyp^2\to\partial\Hyp^2$ there exists a unique minimal Lagrangian extension $\Phi:\Hyp^2\to\Hyp^2$ of $\phi$. Moreover, $\Phi$ is quasiconformal.
\end{cor}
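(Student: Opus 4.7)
The plan is to deduce Corollary \ref{cor intro 1} as the specialization of Corollary \ref{cor intro 2} to $\theta = \pi/2$, once one has identified $(\pi/2)$-landslides with minimal Lagrangian diffeomorphisms of $\Hyp^2$. With this identification in hand, the existence, uniqueness and quasiconformality in Corollary \ref{cor intro 1} are immediate translations of the corresponding assertions in Corollary \ref{cor intro 2}.

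To carry out the identification I would work at the level of the $(1,1)$-tensor $b$ produced by Equations \eqref{condition1}--\eqref{condition4}. By definition, $\Phi$ is a $(\pi/2)$-landslide iff such a $b$ exists with the additional constraints $\tr b = 2\cos(\pi/2) = 0$ and $\tr(Jb) < 0$, on top of $\det b = 1$ and $d^\nabla b = 0$. The relation $\tr b = 0$ is exactly the condition that $Jb$ be $g_{\Hyp^2}$-symmetric, so the tensor $\widetilde b := -Jb$ is self-adjoint; a direct computation gives $\det \widetilde b = 1$, $\tr \widetilde b = -\tr(Jb) > 0$ (whence $\widetilde b$ is positive definite), and $d^\nabla \widetilde b = -J\, d^\nabla b = 0$ because $J$ is $\nabla$-parallel. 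Using the $g_{\Hyp^2}$-orthogonality of $J$, one gets
\[
g_{\Hyp^2}(\widetilde b X, \widetilde b Y) = g_{\Hyp^2}(JbX, JbY) = g_{\Hyp^2}(bX, bY) = (\Phi^* g_{\Hyp^2})(X, Y).
\]
These are precisely the defining conditions of the Codazzi, self-adjoint, positive-definite, unit-determinant tensor of a minimal Lagrangian diffeomorphism of $\Hyp^2$ (equivalently, $\Phi = f_2 \circ f_1^{-1}$ with $f_1, f_2$ harmonic maps of opposite Hopf differentials, as recalled in the introduction). Since $b \mapsto -Jb$ is a bijection between the corresponding classes of tensors, $(\pi/2)$-landslides and minimal Lagrangian diffeomorphisms coincide as sets of maps of $\Hyp^2$.

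Combining the two steps, Corollary \ref{cor intro 2} applied at $\theta = \pi/2$ produces a unique $(\pi/2)$-landslide extension of $\phi$, which by the identification above is the unique minimal Lagrangian extension of $\phi$, and which is quasiconformal. I expect essentially no obstacle in this argument: the substantive analytic content (existence and uniqueness of $K$-surfaces with quasisymmetric boundary from Theorem \ref{thm uniqueness and boundedness}, boundedness of principal curvatures, and the representation formula of Theorem \ref{rep formula intro} reconstructing the map from the surface) has already been absorbed into the proof of Corollary \ref{cor intro 2}. The only contribution here is the purely algebraic observation that the Codazzi tensor of a minimal Lagrangian map differs from that of the associated $(\pi/2)$-landslide by multiplication by $-J$.
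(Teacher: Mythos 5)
Your proposal is correct and follows essentially the same route as the paper: Corollary \ref{cor intro 1} is obtained by specializing Corollary \ref{cor intro 2} to $\theta=\pi/2$ and identifying $(\pi/2)$-landslides with minimal Lagrangian maps via the substitution $b_0=-Jb$, which is exactly the computation the paper carries out at the end of the subsection on $\theta$-landslides (self-adjointness from $\tr b=0$, positivity from $\tr(Jb)<0$, and preservation of the Codazzi and determinant conditions since $J$ is parallel and orthogonal). No gap.
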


In fact, in \cite{bon_schl} the extension by minimal Lagrangian maps was proved again by means of Anti-de Sitter geometry, by proving existence and uniqueness  (and boundedness of principal curvatures, which implies quasiconformality) of a maximal surface with boundary the graph of $\phi$. Here we instead proved the existence of two $(-2)$-surfaces, which coincide with the parallel surfaces at distance $\pi/4$ from the maximal surface - the minimal Lagrangian map $\Phi$ associated to these three surfaces in the same.

%\begin{theorem} \label{thm existence and uniqueness ads}
%Given any orientation-preserving homemorphism $\phi:S^1\to S^1$, for every $K\in(-\infty,-1)$ there exist a unique concave surface $S_+$ and a unique convex surface $S_-$ of constant curvature $K$ with $\partial_\infty S_+=\partial_\infty S_-=gr(\phi)$.
%\end{theorem}

%Observe that $S_+$ and $S_-$ lie on the complement of the convex hull of $gr(\phi)$ in the domain of dependence of $gr(\phi)$, which we denote by $D(\phi)$. Moreover, $S_+$ and $S_-$ are in the two different connected components of $D(\phi)\setminus\mathcal{CH}(gr(\phi))$. (migliorare notazione?)

%\begin{theorem} \label{thm foliation ads}
%Given any weakly acausal curve $\Gamma$ in $\partial\AdS^3$ which is not a 1-step or a 2-step graph, there exist a foliation of $\mathcal{D}_+(\Gamma)$ and a foliation of $\mathcal{D}_-(\Gamma)$ by surfaces of constant curvature $K$, with $K\in(-\infty,-1)$. The foliation is such that...
%\end{theorem}

%\begin{remark}
%Are they complete?
%\end{remark}

%\begin{theorem} \label{thm bounded II}
%Let $\phi:S^1\to S^1$ be an orientation-preserving homemorphism of the circle. Then $\phi$ is quasisymmetric if and only if every constant curvature convex surface $S$ in $\AdS^3$ with $\partial_\infty S=gr(\phi)$ has bounded principal curvatures.
%\end{theorem}

\subsection*{Organization of the paper}
In Section \ref{prel ads} we give an introduction of Anti-de Sitter space as the group $\isom(\Hyp^2)$, from a Lie-theoretical approach. Section \ref{sec causal} discusses some properties of the causal geometry of Anti-de Sitter space, including the definition of domain of dependence. In Section \ref{sec surfaces} we introduce the left and right projection from a convex surface, and thus the associated diffeomorphism of $\Hyp^2$, and their relation with the differential geometry of smooth surfaces. In Section \ref{sec representation formula} we construct the ``representation'' for the inverse of the left projection of a convex (or maximal) surface, and we prove Theorem \ref{rep formula intro}. Section \ref{sec barrier} studies $\theta$-landslides which commute with a 1-parameter hyperbolic group, constructs the ``barriers'' which are necessary to prove Theorem \ref{thm Ksurfaces ads intro} and their relevant properties. In Section \ref{sec existence} the existence part of Theorem \ref{thm Ksurfaces ads intro} is proved, while Section \ref{sec foliations}
proves that the $K$-surfaces give a foliation of the complement of the convex hull in the domain of dependence. Finally, Section \ref{sec uniqueness boundedness} proves Theorem \ref{thm uniqueness and boundedness} and discusses Corollaries \ref{cor intro 1} and \ref{cor intro 2}.

\subsection*{Acknowledgements}
We are grateful to an anonymous referee for carefully reading the paper and for several useful comments which improved the exposition. We would also like to thank Thierry Barbot for pointing out several relevant references on the topic.

\section{Anti-de Sitter space and isometries of $\Hyp^2$} \label{prel ads}

Let $\Hyp^2$ denote the hyperbolic plane, which is the unique complete, simply connected Riemannian surface without boundary of constant curvature -1. We denote by $\partial\Hyp^2$ its visual boundary, and by $\isom(\Hyp^2)$ the Lie group of orientation-preserving isometries of $\Hyp^2$. Recall that the Killing form $\kappa$ on the Lie algebra of $\mathfrak{isom}(\Hyp^2)$, namely
$$\kappa(v,w)=\mathrm{tr}(\mathrm{ad}(v)\circ\mathrm{ad}(w))\,,$$
 is $\mathrm{Ad}$-invariant. Thus it defines a bi-invariant pseudo-Riemannian metric on $\isom(\Hyp^2)$, still denoted by $\kappa$, which has signature $(2,1)$. 
%As the isometry group of $(\isom(\Hyp^2), \kappa)$ is $6$-dimensional, it turns out that it has constant sectional curvature. 
We will normalize  $\kappa$ to impose that its sectional curvature is $-1$.
As this normalization will be relevant in this paper, we will briefly outline the computation of the sectional curvature of $\kappa$.
\begin{lemma}
The Killing form $\kappa$ has constant sectional curvature $-1/8$.
\end{lemma}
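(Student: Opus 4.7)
The plan is to reduce everything to a Lie-algebraic computation on $\mathfrak{g}=\mathfrak{isom}(\Hyp^2)\cong\mathfrak{sl}(2,\R)$, exploiting the bi-invariance of $\kappa$. The first step recalls the standard formulas for a bi-invariant pseudo-Riemannian metric: for left-invariant vector fields $X,Y,Z$, ad-invariance of $\kappa$ together with the Koszul formula give $\nabla_X Y=\tfrac12[X,Y]$, whence (applying the Jacobi identity) $R(X,Y)Z=-\tfrac14[[X,Y],Z]$, and then
$$\kappa(R(X,Y)Y,X)=\tfrac14\kappa([X,Y],[X,Y])$$
by another application of ad-invariance.

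The second step is to show that $(\isom(\Hyp^2),\kappa)$ is Einstein. Choose a pseudo-orthonormal basis $(e_1,e_2,e_3)$ of $\mathfrak{g}$ with $\kappa(e_i,e_i)=\varepsilon_i\in\{\pm1\}$. Using the identity above,
$$\operatorname{Ric}(X,X)=\sum_i\varepsilon_i\,\kappa(R(e_i,X)X,e_i)=\tfrac14\sum_i\varepsilon_i\,\kappa([e_i,X],[e_i,X]).$$
On the other hand, unwinding $\kappa(X,X)=\operatorname{tr}(\operatorname{ad}(X)^2)$ in the basis $(e_i)$ and using ad-invariance once more yields
$$\sum_i\varepsilon_i\,\kappa([e_i,X],[e_i,X])=-\kappa(X,X),$$
so that $\operatorname{Ric}=-\tfrac14\kappa$ on $\isom(\Hyp^2)$.

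The third step invokes a fact specific to dimension $3$: a pseudo-Riemannian $3$-manifold has identically vanishing Weyl tensor, so its full curvature is determined by the Ricci tensor, and consequently every Einstein $3$-manifold has constant sectional curvature. The proportionality $\operatorname{Ric}=(n-1)K\,g$ with $n=3$ and Einstein constant $-\tfrac14$ forces $K=-\tfrac18$.

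The only obstacle is bookkeeping: signs in the curvature convention, in the pseudo-Riemannian trace defining $\operatorname{Ric}$, and in the ad-invariance relation. As a sanity check I would verify the conclusion by hand on the basis $X_1=H$, $X_2=E+F$, $X_3=E-F$ of $\mathfrak{sl}(2,\R)$, whose brackets $[X_1,X_2]=2X_3$, $[X_1,X_3]=2X_2$, $[X_2,X_3]=-2X_1$ and Killing form $\operatorname{diag}(8,8,-8)$ make the sectional curvature of each coordinate $2$-plane evaluate immediately to $-1/8$, confirming the constant value and its sign.
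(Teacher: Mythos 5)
Your proof is correct, but it takes a genuinely different route from the paper's. Both arguments begin from the same bi-invariant-metric identity $\kappa(R(X,Y)Y,X)=\tfrac14\kappa([X,Y],[X,Y])$ (the paper cites this as Milnor's sectional-curvature formula). From there the paper proceeds entirely by hand: it fixes an orthonormal spacelike pair $v,w$, sets $u=[v,w]$, determines the structure constants $[v,u]=\tfrac12 w$, $[w,u]=-\tfrac12 v$ by imposing $\tr(\mathrm{ad}(v)^2)=\tr(\mathrm{ad}(w)^2)=1$, computes $\kappa(u,u)=-\tfrac12$, and reads off $K_\Pi=-\tfrac18$, repeating the computation for a timelike plane. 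You instead prove the universal fact that the Killing-form metric on a semisimple Lie group satisfies $\operatorname{Ric}=-\tfrac14\kappa$ (your trace identity $\sum_i\varepsilon_i\,\kappa([e_i,X],[e_i,X])=-\kappa(X,X)$ is exactly the unwinding of $\kappa(X,X)=\tr(\mathrm{ad}(X)^2)$ plus ad-invariance, and is correct in any signature provided the $\varepsilon_i$ are kept in the trace, as you do), and then invoke the vanishing of the Weyl tensor in dimension $3$ to upgrade Einstein to constant curvature, so that $(n-1)K=-\tfrac14$ with $n=3$ gives $K=-\tfrac18$. Your route is more conceptual and isolates where each ingredient enters (semisimplicity gives the Einstein constant $-\tfrac14$; the dimension gives constancy and the factor $n-1=2$), at the cost of quoting the three-dimensional Weyl/Schouten decomposition; the paper's route is self-contained and elementary but verifies constancy only by exhibiting the same value on a spacelike and a timelike plane and appealing to homogeneity. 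Your explicit check on the basis $H$, $E+F$, $E-F$ of $\mathfrak{sl}(2,\R)$ is consistent and would serve equally well as a standalone proof in the style of the paper.
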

\begin{proof}
%Let us fix $v,w\in\isom(\Hyp^2)$ such that $\kappa(v,v)=1, \kappa(w,w)=1, \kappa(v,w)=0$.
Let us fix $v,w\in\mathfrak{isom}(\Hyp^2)$.
By results in \cite{milnor_curvature} the sectional curvature of the plane $\Pi$ generated by $v,w$ is given by
\[
    K_{\Pi}=\frac{1}{4}\frac{\kappa([v,w], [v,w])}{\kappa(v,v)\kappa(w,w)-\kappa(v,w)\kappa(v,w)}~.
\]
Now suppose that $\kappa(v,v)=1, \kappa(w,w)=1, \kappa(v,w)=0$. Then $[v,w]\neq 0$ as $v$ and $w$ are linearly independent. Denote $u=[v,w]$.
Notice that by the properties of the Killing form $\kappa(u, v)=\kappa(u, w)=0$, so $u$ is timelike.
As $[v,u]$ is orthogonal to both $v$ and $u$, it turns out that $[v,u]=\lambda w$ and analogously
$[w,u]=\mu v$.
Imposing that $\tr(\mathrm{ad}(v)^2)=\tr(\mathrm{ad}(w)^2)=1$ we deduce that $\lambda=1/2$ and $\mu=-1/2$.
So a simple computation shows that $\kappa(u,u)=\tr(\mathrm{ad}(u)^2)=-1/2$, so that $K_{\Pi}=-1/8$. By an analogous computation, picking $v$ and $w$ with $\kappa(v,v)=1, \kappa(w,w)=-1, \kappa(v,w)=0$, the sectional curvature of a timelike plane is also $-1/8$.
\end{proof}

\begin{defi}
Anti-de Sitter space of dimension 3 is the Lie group $\isom(\Hyp^2)$ endowed with the bi-invariant metric $g_{\AdS^3}=\frac{1}{8}\kappa$, and will be denoted by $\AdS^3$.
\end{defi}

It turns out that $\AdS^3$ has the topology of a solid torus, is orientable and time-orientable, and the induced pseudo-Riemannian metric has constant sectional curvature $-1$. 

We will choose the time orientation of $\AdS^3$ in such a way that the timelike vectors which are tangent to the differentiable curve 
$$t\in[0,\epsilon)\longrightarrow R_t\circ \gamma\,,$$
where $R_t$ is a rotation of positive angle $t$ around  any point $x\in\Hyp^2$
with respect to the orientation of $\Hyp^2$, are future-directed. Moreover, we fix an orientation of $\AdS^3$ so that
 if $v, w$ are linearly independent spacelike elements of $\mathfrak{isom}(\Hyp^2)$, then $\{v,w,[v,w]\}$ is a positive basis of  $\mathfrak{isom}(\Hyp^2)$.

By construction, the group of orientation-preserving, time-preserving isometries of $\AdS^3$ is:
$$\isom(\AdS^3)\cong\isom(\Hyp^2)\times\isom(\Hyp^2)\,,$$
where the left action on $\isom(\Hyp^2)$ is given by:
$$(\alpha,\beta)\cdot \gamma=\alpha\circ\gamma\circ \beta^{-1}\,.$$

The boundary at infinity of Anti-de Sitter space is defined as 
$$\partial\AdS^3\cong \partial\Hyp^2\times \partial\Hyp^2$$
where a sequence $\gamma_n\in\isom(\Hyp^2)$ converges to a pair $(p,q)\in \partial\Hyp^2\times \partial\Hyp^2$ if there exists a point $x\in\Hyp^2$  such that 
\begin{equation} \label{defi convergence boundary}
\gamma_n(x)\to p\qquad \gamma_n^{-1}(x)\to q\,,
\end{equation}
and in this case the condition is true for any point $x$ of $\Hyp^2$. The isometric action of  $\isom(\Hyp^2)\times\isom(\Hyp^2)$ on $\AdS^3$ continuously extends to  the product action on $\partial\Hyp^2\times \partial\Hyp^2$: if $p,q\in\partial \Hyp^2$, then
$$(\alpha,\beta)\cdot(p,q)=(\alpha(p),\beta(q))\,.$$ The boundary at infinity $\partial\AdS^3$ is endowed with a conformal Lorentzian structure, in such a way that $\isom(\AdS^3)$ acts on the boundary by conformal transformations. The null lines of $\partial\AdS^3$ are precisely $\partial\Hyp^2\times\{\star\}$ and $\{\star\} \times\partial\Hyp^2$. 

Since the exponential map  at the identity 
for the Levi-Civita connection of the bi-invariant metric coincides with the Lie group exponential map, 
the geodesics through the identity are precisely the 1-parameter subgroups. 
In particular elliptic subgroups correspond to timelike geodesics through the identity.
Using the action of the isometry group, it follows that 
timelike geodesics (i.e. those of negative squared length) have the form:
$$L_{x,x'}=\{\gamma\in\isom(\Hyp^2):\gamma(x')=x\}\,.$$
They are closed and have length $\pi$. Observe that with this definition
the isometry group acts on timelike geodesics in such a way that 
\begin{equation} \label{transf rule timelike geodesic}
(\alpha,\beta)\cdot L_{x,x'}=L_{\alpha(x), \beta(x')}\,.
\end{equation}

For a similar argument as above, spacelike geodesics (i.e. those on which the metric is positive) are of the form 
$$L_{\ell,\ell'}=\{\gamma\in\isom(\Hyp^2):\gamma(\ell')=\ell\}\,,$$
where $\ell$ and $\ell'$ are oriented geodesics of $\Hyp^2$. The geodesic $L_{\ell,\ell'}$ has infinite length and its endpoints in $\partial\AdS^3$ are $(p_1,q_1)$ and $(p_2,q_2)$, where $p_1$ and $p_2$ are the final and initial endpoints of $\ell$ in $\partial \Hyp^2$, while $q_1$ and $q_2$ are the final and initial endpoints of $\ell'$.

The involutional rotations of angle $\pi$ around a point $x\in\Hyp^2$, which we denote by $\mathcal{I}_x$, are the antipodal points to the identity in the geodesics $L_{x,x}$, and form a totally geodesic plane
$$\mathcal R_\pi=\{\mathcal I_x\,:\,x\in\Hyp^2\}\,.$$ 
See also Figure \ref{fig:torus}.

\begin{figure}[htb]
\centering
\includegraphics[height=7cm]{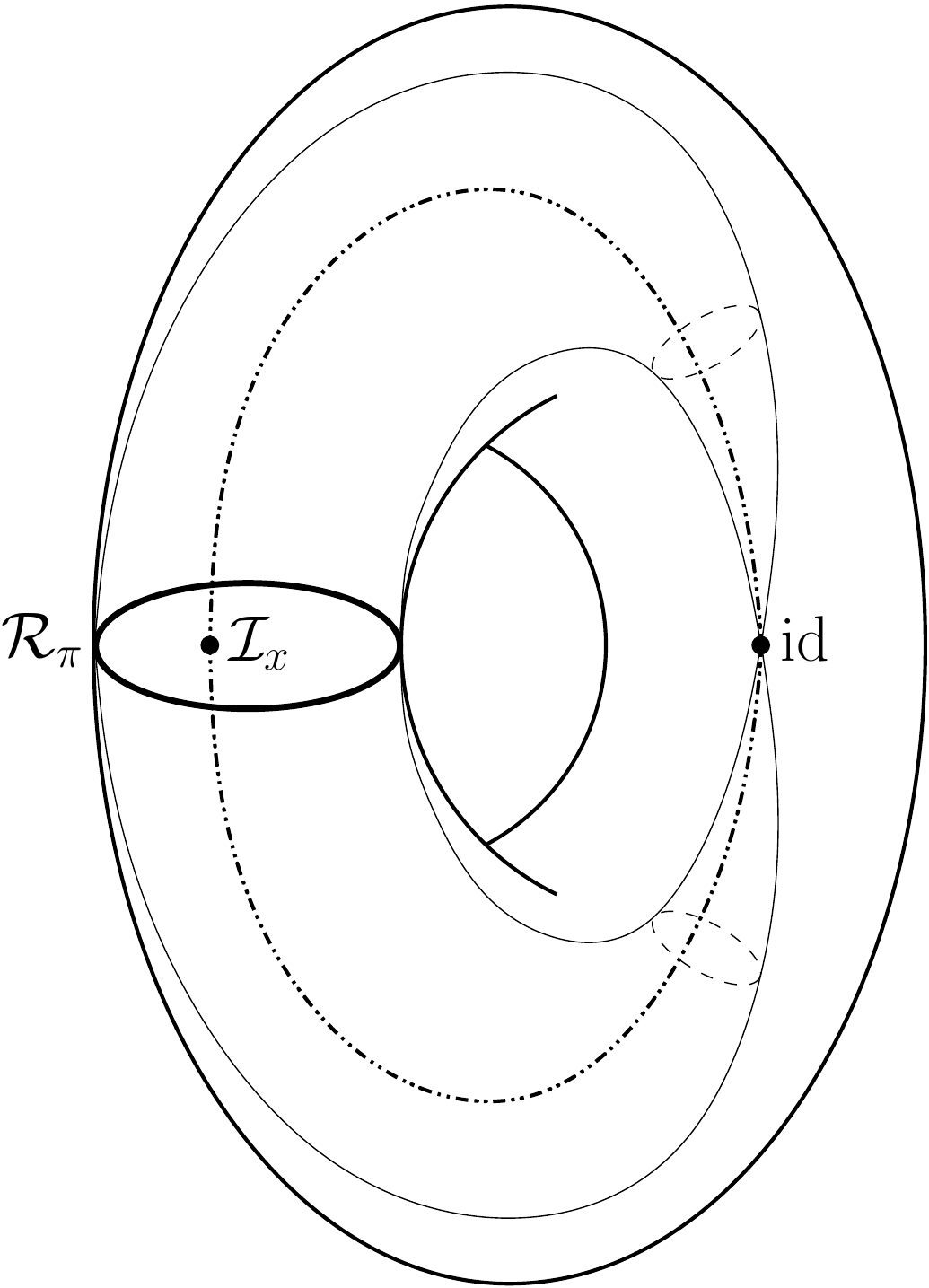}
\caption{A (topological) picture of $\AdS^3$. The totally geodesic plane $\mathcal R_\pi$ is given by the midpoints of timelike geodesics from the identity. Its boundary at infinity is the tangency locus of the lightcone from the identity with the boundary of the solid torus. \label{fig:torus}}
\end{figure}

By using the definition in Equation \eqref{defi convergence boundary}, it is easy to check that its boundary at infinity $\partial \mathcal R_\pi$ is the diagonal in $\partial\Hyp^2\times \partial\Hyp^2$:
$$\partial \mathcal R_\pi=\{(p,p)\,:\,p\in\partial\Hyp^2\}\subset\partial\Hyp^2\times \partial\Hyp^2\,.$$
%(defined by $x_3=0$ in the above coordinates, which can also be seen as the subset of $\PSL(2,\R)$ of traceless matrices). 
More generally, given any point $\gamma$ of $\AdS^3$, the points which are connected to $\gamma$ by a timelike segment of length $\pi/2$ (actually, two timelike segments whose union form a closed timelike geodesic) form a totally geodesic plane, called the \emph{dual} plane $\gamma^*$. From this definition, $\mathcal R_\pi=(\mathrm{id})^*$. If $(\alpha,\beta)$ is an isometry of $\AdS^3$ which sends $\mathrm{id}$ to $\gamma$, then $\gamma^*=(\alpha,\beta)\cdot \mathcal R_\pi$.
In particular, $\gamma^*=(\gamma,1)\cdot \mathcal R_\pi=(1,\gamma^{-1})\cdot \mathcal R_\pi$.
The boundary at infinity of a totally geodesic spacelike plane is thus the graph of the trace on $\partial\Hyp^2$ of an isometry of $\Hyp^2$.

An isometry of $\AdS^3$ fixes a point if and only if it preserves its dual plane. In particular, 
the subgroup which fixes the identity (and therefore preserves $ \mathcal R_\pi=\mathrm{id}^*$) is given by the diagonal:
$$\mathrm{Stab}(\mathrm{id})=\mathrm{Stab}(\mathcal R_\pi)=\{(\gamma,\gamma)\,:\,\gamma\in\isom(\Hyp^2)\}\,.$$ 
We can give an explicit isometric identification of $\Hyp^2$ to the totally geodesic plane $ \mathcal R_\pi$ by means of the following map:
$$x\in\Hyp^2\mapsto\mathcal{I}_x\in  \mathcal R_\pi\,.$$
The identification is a natural isometry, in the sense that the action of $\isom(\Hyp^2)$ on $\Hyp^2$ corresponds to the action of $\mathrm{Stab}(\mathcal R_\pi)$ on $\mathcal R_\pi$, since
$$\gamma\mathcal{I}_x\gamma^{-1}=\mathcal{I}_{\gamma(x)}\,.$$

\subsection{Two useful models} \label{subsec models}
In this subsection we discuss two models of $\AdS^3$, arising from the choice of two different models of $\Hyp^2$, which will be both useful for different reasons in some computations necessary for this paper.

For the first model, let us consider $\Hyp^2$ as a sheet of the two-sheeted hyperboloid in Minkowski space, namely:
$$\Hyp^2=\{x\in \R^{2,1}\,:\,\langle x,x\rangle_{\R^{2,1}}=-1\,,\,x_3>0\}\,,$$
where the Minkowski product of $\R^{2,1}$ is:
$$\langle x,x\rangle_{\R^{2,1}}=x_1^2+x_2^2-x_3^2\,.$$
In this model, the orientation-preserving isometries of $\Hyp^2$ are identified to the connected component of the identity in $\SO(2,1)$, namely:
$$\isom(\Hyp^2)\cong \SO_0(2,1)\,.$$
Hence, the identification of $\Hyp^2$ with $\mathcal R_\pi\subset\isom(\Hyp^2)$ is given by the map which associates to $x\in\Hyp^2$ the linear map of $\R^{2,1}$ sending $x$ to $x$ and acting by multiplication by $-1$ on the orthogonal complement $x^\perp=T_x\Hyp^2$. 
Moreover, the tangent space at $\mathrm{id}$ is identified to the Lie algebra of $\SO(2,1)$:
$$T_{\mathrm{id}}\AdS^3\cong \so(2,1)\,.$$
%The Killing form of $\so(2,1)$ is given by the formula:
%$$B(\mathfrak a,\mathfrak a')=\frac{1}{2}\tr(\mathfrak a \mathfrak a')\,.$$
There is a natural Minkowski cross product, defined by $x\boxtimes y=*(x\wedge y)$, where $*:\Lambda^2(\mathbb R^{2,1})\to\mathbb R^{2,1}$
is the Hodge operator associated to the Minkowski product. Observe that, by means of this cross product, one can write the almost-complex structure of $\Hyp^2$. For $v\in T_x\Hyp^2$,
\begin{equation} \label{almost-complex cross}
J_x(v)=x\boxtimes v\,.
\end{equation}
Like the classical case of Euclidean space, one can define an isomorphism
$$\Lambda:\R^{2,1}\to\so(2,1)\,,$$ which is defined by:
$$\Lambda(x)=x\boxtimes(\cdot)\,.$$
See also Figure \ref{fig:tangent}. This isomorphism has several remarkable properties:
\begin{itemize}[leftmargin=*]
\item It is equivariant for the action of $\SO(2,1)$ on $\R^{2,1}$ by isometries, and the adjoint action on $\so(2,1)$, which is the natural action of $\mathrm{Stab}(\mathrm{id})\cong \isom(\Hyp^2)$ on $T_{\mathrm{id}}\AdS^3$: if $\gamma\in\SO(2,1)$, then
\begin{equation} \label{prop lambda 1}
\Lambda(\gamma\cdot x)(v)=(\gamma\cdot x)\boxtimes v=\gamma(x\boxtimes (\gamma^{-1}v))=\gamma\circ \Lambda(x)\circ\gamma^{-1}(v)\,.
\end{equation}
\item It pulls back the Lie bracket of $\so(2,1)$ to the Minkowski cross product:
\begin{equation} \label{prop lambda 3}
[\Lambda(x),\Lambda(y)]=\Lambda(x\boxtimes y)\,.
\end{equation}
\item It is an isometry between the Killing form of $\so(2,1)$ and the Minkowski metric on $\R^{2,1}$, up to a factor:
\begin{equation} \label{prop lambda 2}
(g_{\AdS^3})_{\mathrm{id}}(\Lambda(x),\Lambda(y))=\frac{1}{4}\langle x,y\rangle_{\R^{2,1}}\,.
\end{equation}
\end{itemize}

Let us check the factor in Equation \eqref{prop lambda 2}. By \eqref{prop lambda 3} for any $x,y\in\R^{2,1}$ we have
\[
\mathrm{ad}(\Lambda(x))(\Lambda(y))=\Lambda(\Lambda(x)y)
\]
that is
\[
    \Lambda^{-1}\circ \mathrm{ad}(\Lambda(x))\circ\Lambda=\Lambda(x)~.
\]
It follows that for any $x,y\in\R^{2,1}$
\[
   (g_{\AdS^3})_{\mathrm{id}}(\Lambda(x),\Lambda(y))=\frac{1}{8}\tr(\Lambda(x)\Lambda(y))~.
\]
Using that
\[
x\boxtimes (y\boxtimes z)=(\langle x, y\rangle_{\R^{2,1}}) z-(\langle x,z\rangle_{\R^{2,1}}) y
\]
we have that $\tr(\Lambda(x)\Lambda(y))=2\langle x,y\rangle_{\R^{2,1}}$, hence obtaining Equation \eqref{prop lambda 2}.
\begin{figure}[htb]
\centering
\includegraphics[height=5cm]{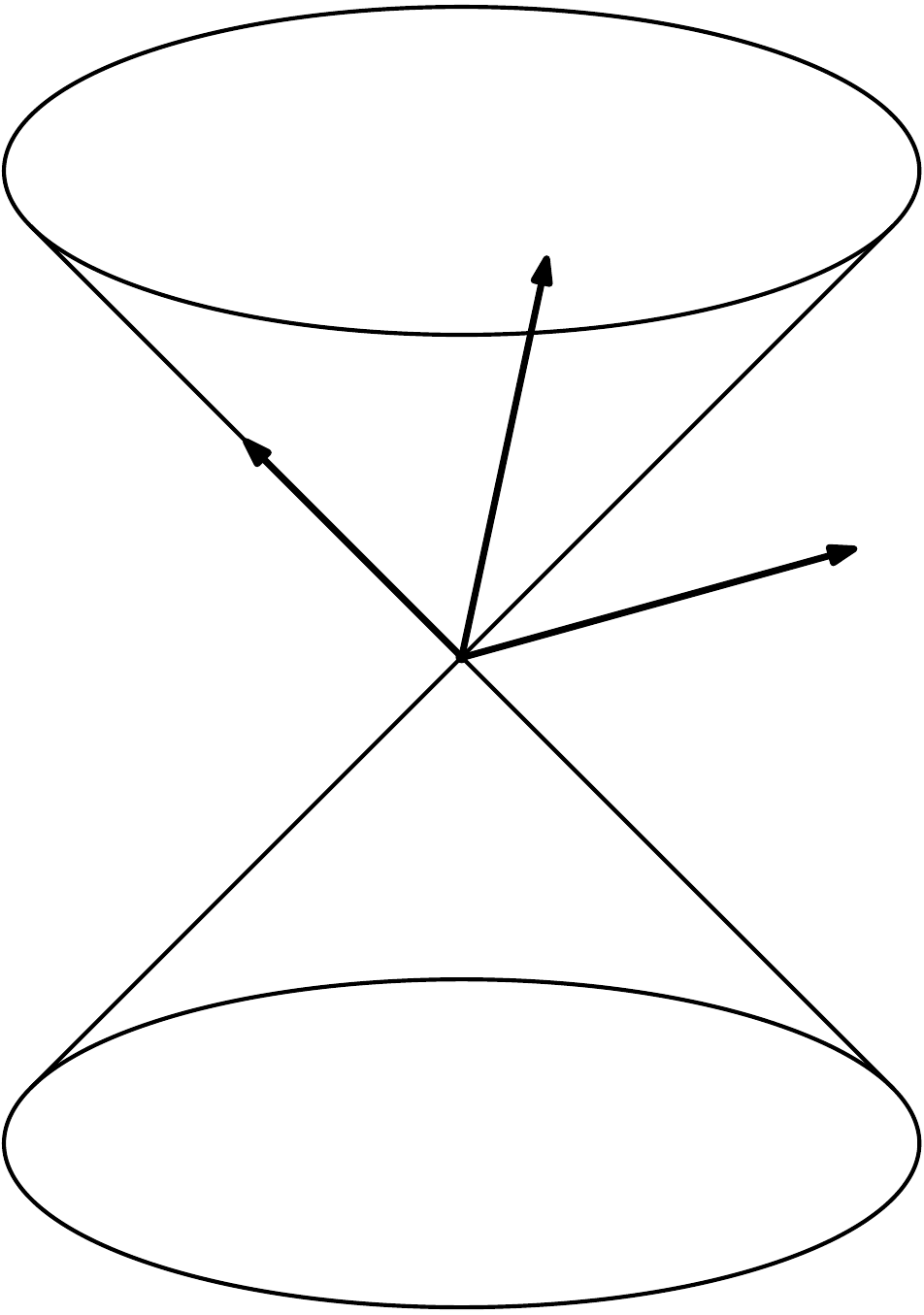}
\caption{In the identification of $T_{\mathrm{id}}\AdS^3$ with $\R^{2,1}$, timelike vectors of Minkowski space (in the interior of the cone) are the generators of elliptic 1-parameter subgroups; lightlike vectors generate parabolic subgroups and spacelike vectors generate hyperbolic subgroups. \label{fig:tangent}}
\end{figure}

The second model we consider comes from the choice of the upper half-plane model of $\Hyp^2$, namely
$$\Hyp^2=\{z\in\C:\im(z)>0\}\,,$$
endowed with the Riemannian metric
${|dz|^2}/{\im(z)^2}$
which makes every biholomorphism of the upper-half plane an isometry.
In this model $\isom(\Hyp^2)$ is naturally identified to $\PSL(2,\R)$ and the visual boundary $\partial\Hyp^2$ is identified with the extended line $\RP^1$. 
%and the Killing form on $\mathfrak{sl}(2,\R)$ takes the form (up to a factor):
%$$\langle A,B\rangle=-\frac{1}{2}\tr(AB)\,.$$
The  Lie algebra $\mathfrak{sl}(2,\R)$ is thus the vector space of traceless 2 by 2 matrices, and in this model the Anti de Sitter metric at the identity is given by:
\begin{equation}\label{eq:minnie}(g_{\AdS^3})_{\mathrm{id}}( m,m')=\frac{1}{2}\tr(mm')\,.\end{equation}

Consider  the quadratic form $q(M)=-\det M$ on the space of $2$-by-$2$ matrices.
Its polarization, say $b$,
%$\langle \cdot, \cdot\rangle_{(2,2)}$
has signature $(2,2)$. We notice that the restriction of
$b$
%$\langle \cdot, \cdot\rangle_{(2,2)}$ 
to 
$\SL(2,\mathbb R)$ is exactly the double cover of $g_{\AdS^3}$. Indeed left and right multiplication by elements in $\SL(2,\R)$ preserve 
$q$, so that the restriction of $b$
%$\langle \cdot, \cdot\rangle_{(2,2)}$
to $\SL(2,\R)$ is a bi-invariant metric. Moreover at the identity
Equation \eqref{eq:minnie} shows that it coincides with $g_{\AdS^3}$.

%By choosing the following coordinates for the space of $2$-by-$2$ matrices,
%\begin{equation} \label{coordinates sl2r}
%(x_1,x_2,x_3,x_4)\in\R^4\longrightarrow M:=\begin{pmatrix} -x_1+x_3 & x_2-x_4 \\   x_2+x_4 & x_1+x_3  \end{pmatrix}\,,
%\end{equation}
%the submanifold $\SL(2,\R)$ is defined by the condition
%$\det(M)=-x_1^2-x_2^2+x_3^2+x_4^2=1$. 
%It can be directly checked that the bi-invariant quadratic form induced on $\SL(2,\R)$ by the Killing form $B$ is precisely
%$q(M)=-\det M$.
Hence this model of $\AdS^3$, namely
$$\PSL(2,\R)=\{A:q(A)=-1\}/\{\pm 1\}$$
endowed with the pseudo-Riemannian metric which descends from $b$,
%$\langle \cdot, \cdot\rangle_{(2,2)}$
is remarkably a \emph{projective model}. 
In fact, $\PSL(2,\R)$ is a subset of $\RP^3$, geodesics for the pseudo-Riemannian metric of $\PSL(2,\R)$ are the intersections of $\PSL(2,\R)$ 
with a projective line, and totally geodesic planes are the intersections with projective planes. 
Thus in an affine chart, geodesics are straight lines and totally geodesic planes are affine planes. 

\begin{figure}[htb]
\centering
\includegraphics[height=6cm]{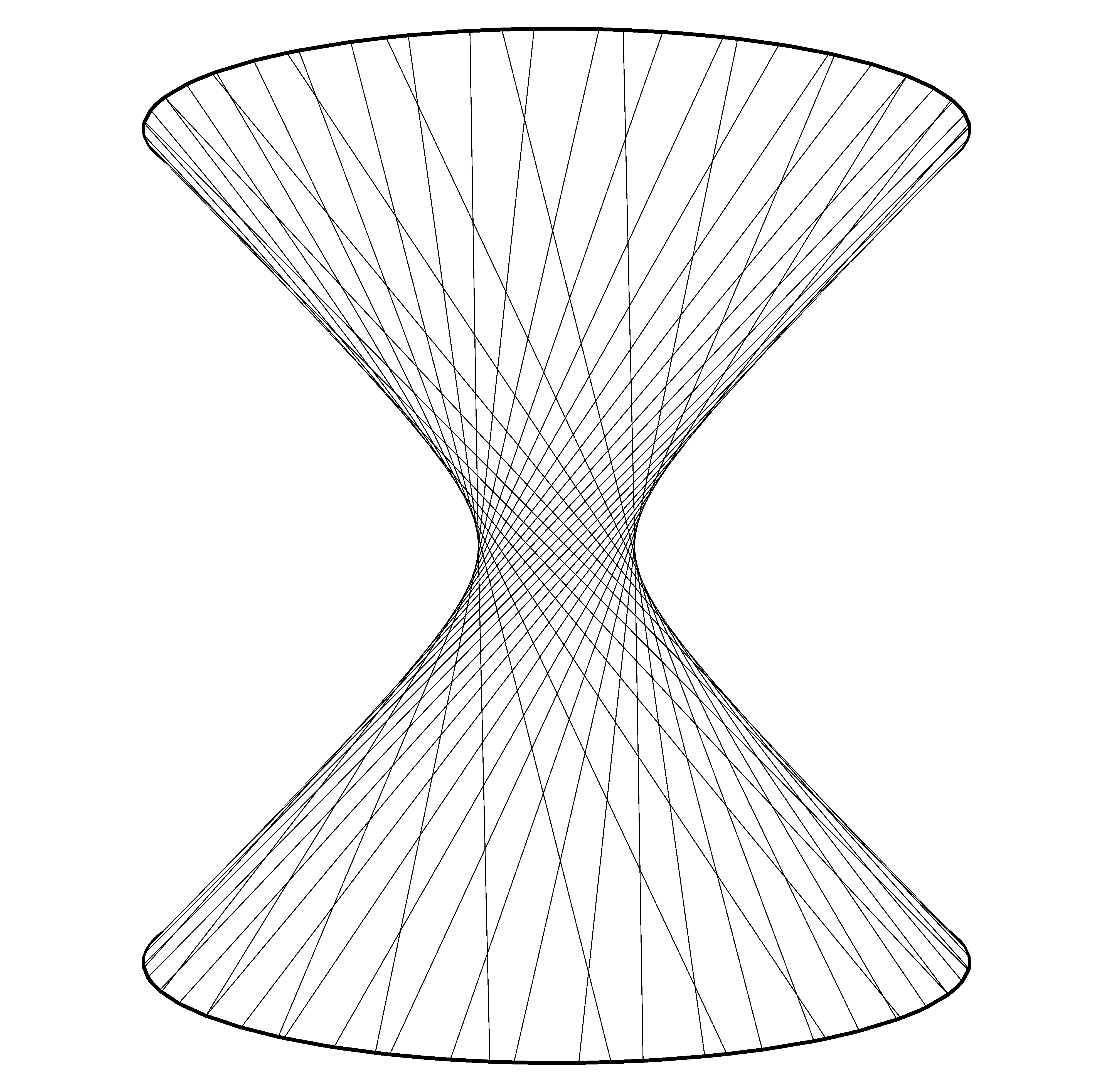}
\caption{In an affine chart, $\AdS^3$ is the interior of a one-sheeted hyperboloid. The null lines of $\partial\AdS^3$ coincide with the rulings of the hyperboloid. 
The intersection with the horizontal plane $z=0$ is a totally geodesic hyperbolic plane, in the Klein model. \label{fig:ruled}}
\end{figure}

\section{Causal geometry of $\AdS^3$} \label{sec causal}

Given a continuous curve $\Gamma$ in $\partial\AdS^3$, we say that a $\Gamma$ is  \emph{weakly acausal} if for every point $p$ of $\Gamma$, 
there exists a neighborhood $U$ of $p$ in $\partial\AdS^3$ such that $U\cap\Gamma$ is contained in the complement of the regions of $U$ which are connected to $p$ by timelike curves. 

There are two important objects we associate to a weakly spacelike curve.

\begin{defi}
Given a weakly acausal curve $\Gamma$, 
%contained in an affine chart for $\AdS^3$
the convex hull $\mathcal{C}(\Gamma)$ is the smallest closed convex subset which contains $\Gamma$. 
\end{defi}
It turns out that $\mathcal{C}(\Gamma)$ is contained in $\AdS^3\cup\partial\AdS^3$ and that $\mathcal{C}(\Gamma)\cap\partial\AdS^3=\Gamma$ (see \cite[Lemma 4.8]{bon_schl}).

\begin{defi}
Given a weakly acausal curve $\Gamma$, 
%contained in an affine chart for $\AdS^3$
the domain of dependence $\mathcal{D}(\Gamma)$ is the union of points $p$ of $\AdS^3$ such that $p^*$ is disjoint from $\Gamma$.
\end{defi}

It turns out that the domain of dependence $\mathcal{D}(\Gamma)$ is always an open subset of $\AdS^3$ containing the interior part of $\mathcal C(\Gamma)$.
Moreover, $\mathcal{D}(\Gamma)$ is contained in an affine chart for $\AdS^3$, and admits no timelike support planes. 
As for the convex hull, $\overline{\mathcal{D}(\Gamma)}\cap\partial\AdS^3=\Gamma$. 
Let us denote by $\mathcal{D}_\pm(\Gamma)$ the connected components of $\mathcal{D}(\Gamma)\setminus \mathcal{C}(\Gamma)$, so that
$$\mathcal{D}(\Gamma)\setminus \mathcal{C}(\Gamma)=\mathcal{D}_+(\Gamma)\sqcup\mathcal{D}_-(\Gamma)\,.$$
We choose the notation in such a way that there exists a  future-directed timelike arc in $\mathcal{D}(\Gamma)$
 going from  $\mathcal{D}_-(\Gamma)$ to $\mathcal{D}_+(\Gamma)$ and intersecting $\mathcal{C}(\Gamma)$.

%The notion of convexity for $\AdS^3$ is well-defined and it coincides with the projective convexity. Let us clarify here our definition of convex surface.
\begin{defi} \label{defi locally convex}
Let $K$ be any convex subset of $\AdS^3$ contained in some affine chart.
A \emph{locally convex spacelike (resp. nowhere timelike)}  surface $S$ is a connected region of $\partial K$ such that the support planes of $K$ at points of $S$ are all spacelike (resp. non timelike).  
%If $S$ is a connected component of $\partial K$ we will say that $S$ is an (entire) convex surface.
%there exists an affine chart where $S$ is the graph of a convex function over a domain $\Omega\subset\R^2$. 
%Moreover, $S$ is \emph{spacelike} (resp. \emph{acausal}) if it only admits spacelike (resp. spacelike or lightlike) support planes. Finally, $S$ is \emph{future-convex} 
%(resp. \emph{past-convex}) if it is acausal and a transverse vector pointing towards the convex side of $S$ is future-directed (resp. part-directed).
\end{defi}

\begin{lemma}\label{lemma:bandabassotti}
Let $S$ be a locally convex nowhere timelike surface contained in an affine chart $U$.
Then  $S$ is contained either in all future half-spaces bounded by support planes at points in $S$ or in 
all past half-spaces. 
\end{lemma}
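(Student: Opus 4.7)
The plan is to define a function $f: S \to \{+, -\}$ recording, at each $p \in S$, which of the two closed half-spaces bounded by a non-timelike support plane at $p$ contains $K$, and then to prove that $f$ is constant by connectedness of $S$.

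First I would check that $f$ is well-defined. Since any non-timelike support plane $P_p$ at $p\in S$ contains no timelike direction, it splits the affine chart $U$ into a well-defined future and past component via the time orientation of $\AdS^3$; by convexity $K$ lies in one of the two closed half-spaces, giving a candidate value $f(p)\in\{+,-\}$. The key observation would be that ``$K$ lies in the closed future half-space of $P_p$'' is equivalent to ``every future-directed timelike vector $v \in T_p U$ points strictly into $K$''. The nontrivial direction uses that the cone of future-directed timelike vectors at $p$ is open, connected, and disjoint from every non-timelike plane through $p$; since its complementary sets ``points into $\mathrm{int}(K)$'' and ``points out of $\overline{K}$'' are both open, it must be contained entirely in one of them. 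This reformulation makes $f(p)$ manifestly independent of the choice of support plane at $p$.

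Local constancy of $f$ then follows by a translation argument inside the affine chart: assuming $f(p) = +$, pick a future-directed timelike $v \in T_p U$, view it as a constant vector field via the affine structure of $U$, and observe that $p + \epsilon v \in \mathrm{int}(K)$ for small $\epsilon > 0$. Since $\mathrm{int}(K)$ is open, and since being future-directed timelike is an open condition on the base point (the Lorentzian metric varying continuously in $U$), for every $q \in S$ sufficiently close to $p$ the same vector is still future-timelike at $q$ and $q + \epsilon v \in \mathrm{int}(K)$, so the characterization above forces $f(q) = +$. Connectedness of $S$ then gives $f \equiv +$ or $f \equiv -$, which together with $S \subset \overline K$ is exactly the dichotomy asserted by the lemma.

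The main subtlety I expect is the possible non-uniqueness of support planes at non-smooth points of $S$, together with the case of lightlike support planes (whose normal is null, so the labelling ``future side'' is geometrically less transparent). Phrasing $f$ in terms of future-directed timelike directions rather than via a chosen support plane sidesteps both issues at once, since future-timelike vectors at $p$ are automatically transverse to every non-timelike plane through $p$, giving an unambiguous notion of pointing into or out of $K$ that only depends on $p$ and not on any auxiliary choice of $P_p$.
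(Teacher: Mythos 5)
Your overall architecture---a pointwise label $f(p)\in\{+,-\}$, well-definedness via the cone of timelike directions at $p$, local constancy by an affine translation argument, and connectedness of $S$---is sound, and it is essentially a fibered version of the paper's proof, which instead runs the open--closed argument directly on the space of pairs $(x,P)$ with $P$ a support plane at $x$ (using, implicitly, that the set of support planes at a fixed point is connected). Your local constancy step is correct and cleanly handled: $q+\epsilon v\in\mathrm{int}(K)$ together with $q\in\overline K$ and convexity of $K$ gives that $v$ enters $\mathrm{int}(K)$ at $q$, and the dichotomy then propagates this to the whole future cone at $q$.

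The soft spot is the well-definedness of $f$. Your connectedness argument only yields a dichotomy: at each $p\in S$ the future timelike cone either points entirely into $\mathrm{int}(K)$ or entirely out of $\overline K$. This does not yet prove the claimed equivalence, because the implication ``$K$ lies in the closed future half-space of one support plane $P_p$ $\Rightarrow$ the future cone points into $K$'' can a priori fail in the second horn: nothing you have written excludes a point $p$ at which \emph{no} timelike direction enters $K$, equivalently at which $K$ lies on the future side of one support plane and on the past side of another (this is what happens at the edge of the wedge cut out by two intersecting spacelike planes, where $f$ would be genuinely ill-defined). Ruling this out is precisely where the hypothesis ``all support planes at points of $S$ are non-timelike'' must be invoked a second time: the cone of outward normals at $p$ is convex, and if it contained both a future-causal and a past-causal vector, the segment joining them would either pass through $0$ (forcing $K$ into a plane, a degenerate case you should dispose of separately) or contain a spacelike vector, i.e.\ produce a timelike support plane at $p$. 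Hence the normal cone at $p$ sits inside a single causal cone, so its polar, the tangent cone of $K$ at $p$, contains a full timelike cone, and the equivalence follows. Adding this one convexity observation closes the gap; it plays exactly the role that connectedness of the fiber of support planes over a point plays in the paper's version of the argument.
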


In the former case we say that $S$ is \emph{future-convex}, in the latter \emph{past-convex}.

\begin{proof}
For any point $x\in S$ and any support plane $P$ at $x$,
$S$ is contained either in the future or in the past half-space bounded by $P$.
It is immediate to verify that  the set of $(x,P)$ such that $S$ is contained in
the future half-space of $P$ is open and closed in the set of pairs $(x,P)$ with $x\in S$ and $P$ support plane at $x$. 
As we are assuming that $S$ is connected, we conclude that this set is either the whole
set or the empty set.
\end{proof}

For instance, the boundary of $\mathcal{C}(\Gamma)$ is composed of two \emph{pleated} nowhere timelike surfaces, 
which we denote by $\partial_\pm\mathcal{C}(\Gamma)$, so that $\partial_+\mathcal{C}(\Gamma)$ is past-convex and 
$\partial_-\mathcal{C}(\Gamma)$ is future-convex. They are distinct unless $\Gamma$ is the boundary of a totally geodesic plane. 
More precisely, the subset of $\partial_\pm\mathcal{C}(\Gamma)$ which admits spacelike support planes is a pleated hyperbolic surface. 
That is, there exists an isometric map $f_\pm:H_\pm\to \partial_\pm\mathcal{C}(\Gamma)$, where $H_\pm$ are complete simply connected 
hyperbolic surfaces with geodesic boundary (also called \emph{straight convex sets} in \cite{bebo}), such that every point $x\in H_\pm$ 
is in the interior of a geodesic arc which is mapped isometrically to a spacelike geodesic of $\AdS^3$.

Given two spacelike planes $Q_1,Q_2$ in $\AdS^3$, we can define their \emph{hyperbolic angle} $\alpha\geq 0$ by:
$$\cosh\alpha=|\langle N_1,N_2\rangle|\,,$$
where $N_1$ and $N_2$ are the unit normal vectors. Using this notion, one can define (similarly to \cite{epsteinmarden} in the case of hyperbolic geometry) a \emph{bending lamination}, which is a measured geodesic lamination on the straight convex set $H_\pm$ in the sense of \cite[Section 3.4]{bebo}. In particular, the \emph{transverse measure} satisfies the usual requirements in the definition of measured geodesic laminations (see also Subsection \ref{subsec approximation} for more details), with the additional property that the measure on a transverse arc $I$ is infinite if and only if $I$ has nonempty intersection with $\partial H_\pm$. The data of a straight convex set $H$ and the measured geodesic lamination on $H$ determines the pleated surface up to global isometries of $\AdS^3$.

It is not difficult to show that $\partial_+\mathcal{C}(\Gamma)$ intersects the boundary of the domain of dependence $\mathcal{D}(\Gamma)$ in $\AdS^3$ if and only if the weakly acausal curve $\Gamma$ contains a \emph{past-directed sawtooth}, that is, if it contains two segments of $\partial\AdS^3$, one which is part of a line of the left ruling and the other in a line of the right ruling, having a common past endpoint. In this case, $\partial_+\mathcal{C}(\Gamma)$ contains a totally geodesic lightlike triangle, bounded by the two above segments in $\partial\AdS^3$ and by a spacelike complete geodesic of $\AdS^3$. The bending lamination of $\partial_+\mathcal{C}(\Gamma)$ has therefore infinite weight on such spacelike geodesic. Of course one can give the analogous definition and characterization for \emph{future-directed sawteeth}, which can possibly be contained in $\partial_-\mathcal{C}(\Gamma)$. See also Figure \ref{fig:sawtooth}.

\begin{figure}[htb]
\centering
\begin{minipage}[c]{.45\textwidth}
\centering
\includegraphics[height=5.5cm]{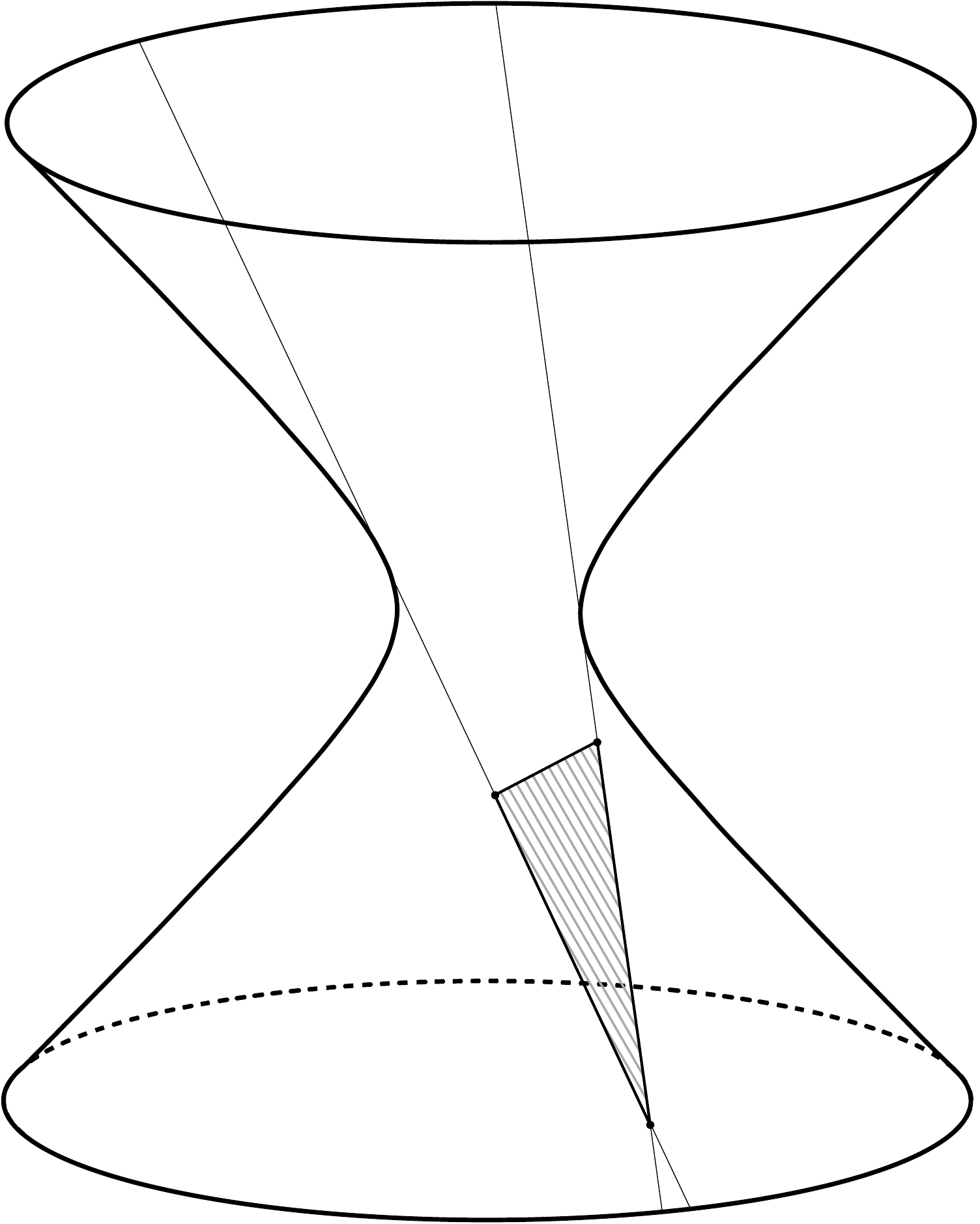}
%\caption{A future-directed sawtooth bounding a lightlike triangle. \label{fig:sawtooth}}
\end{minipage}%
\hspace{1mm}
\begin{minipage}[c]{.45\textwidth}
\centering
\includegraphics[height=5.5cm]{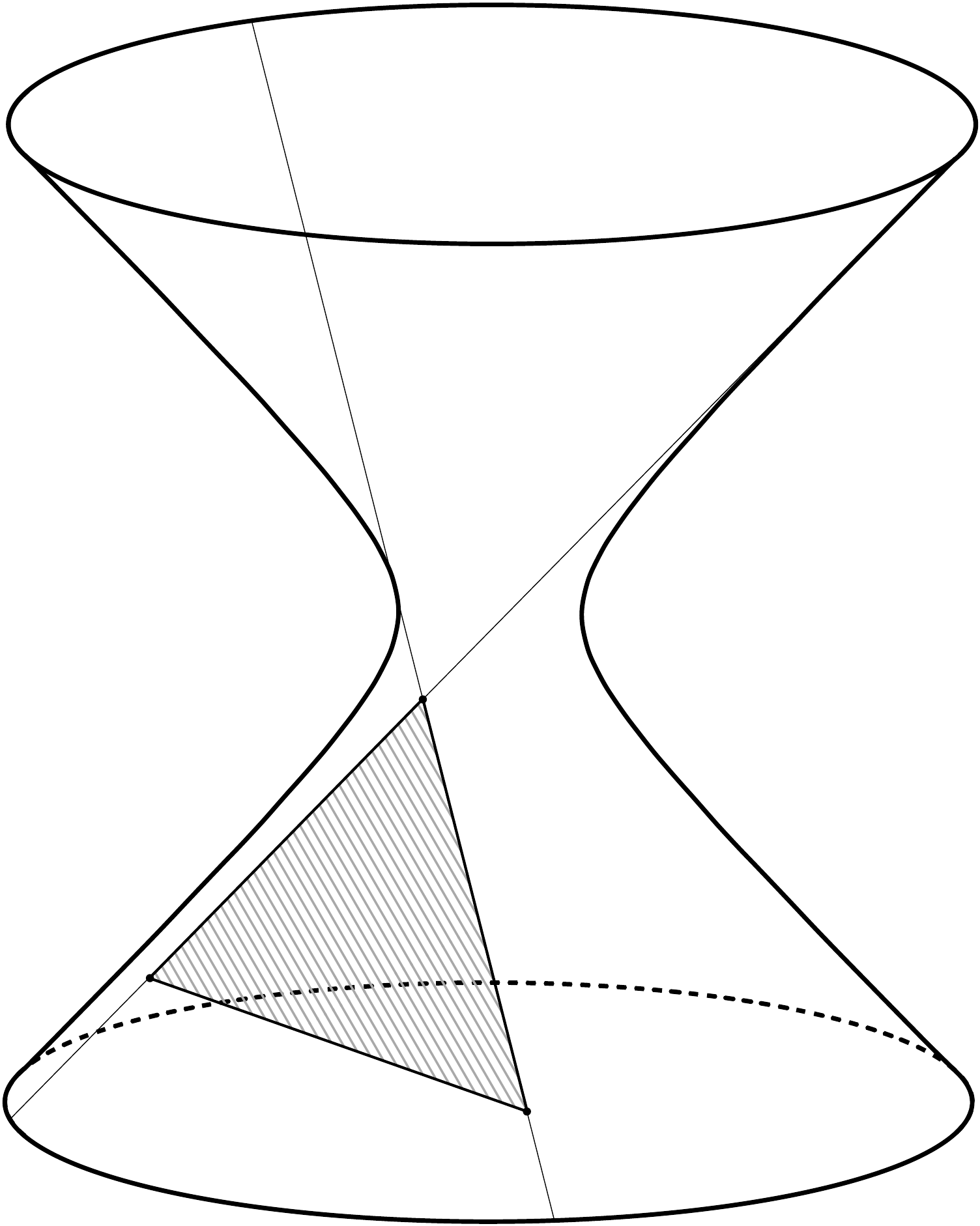}
\end{minipage}
\caption{A past-directed and a future-directed sawtooth bounding a lightlike triangle. \label{fig:sawtooth}}
\end{figure}

In this paper, we are interested in convex nowhere timelike surfaces having as a boundary at infinity a weakly acausal curve $\Gamma\subset\partial\AdS^3$. Given a future-convex (resp. past-convex) surface $S$ with $\partial S=\Gamma$, $S$ must necessarily be contained in $\overline{\mathcal{D}_-(\Gamma)}$ (resp. $\overline{\mathcal{D}_+(\Gamma)}$), and in the closure of the complement of $\mathcal{C}(\Gamma)$. 

If $\partial S=\Gamma$, the lightlike part of a future-convex surface $S$ (i.e. the subsurface of $S$ admitting lightlike support planes) necessarily contains $\partial_-\mathcal C(\Gamma)\cap \partial\mathcal D(\Gamma)$, that is, it contains all future-directed sawteeth. Analogously, the lightlike part of a past-convex surface $S$ contains $\partial_+\mathcal C(\Gamma)\cap \partial\mathcal D(\Gamma)$. 

The convex surface $\partial_-\mathcal C$ has the property that its lightlike part coincides precisely with the union of all future-directed sawteeth. The same of course holds for $\partial_+\mathcal C$. We define $\Gamma_-$ as the boundary of the spacelike part of $\partial_-\mathcal C$.

%$$\Gamma_-=\overline{\mathcal{D}_-(\Gamma)}\cap \partial_-\mathcal{C}(\Gamma)\,.$$
The curve $\Gamma_-$ coincides with $\Gamma$ in the complement of future-directed sawteeth. If  $\Gamma$ contains future-directed sawteeth, then $\Gamma_-$ contains the spacelike lines whose endpoints coincide with the endpoints of each sawtooth. The same description, switching future with past sawteeth, can be given for past-convex nowhere timelike surfaces. In particular we define $\Gamma_+$ as the boundary of the spacelike part of $\Gamma_+$.
%$$\Gamma_+=\overline{\mathcal{D}_+(\Gamma)}\cap \partial_+\mathcal{C}(\Gamma)\,.$$
See also Figure \ref{fig:curvegamma}. The $K$-surfaces which, in Theorem \ref{thm Ksurfaces ads intro}, form a foliation of the future (resp. past) connected component of $\mathcal D(\Gamma)\setminus \mathcal C(\Gamma)$ will have as boundary the curve $\Gamma_+$ (resp. $\Gamma_-$). 

\begin{figure}[htb]
\centering
\includegraphics[height=8cm]{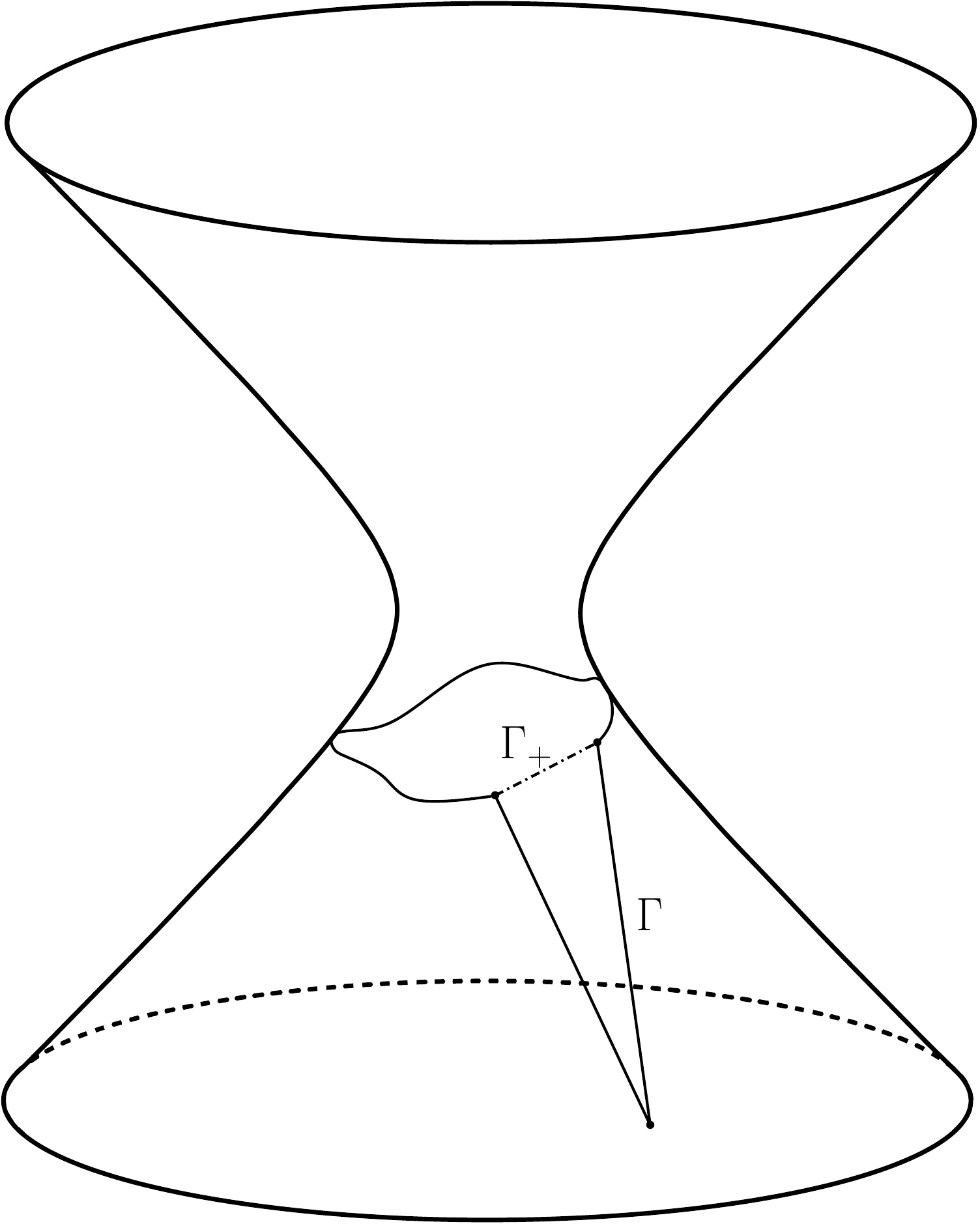}
\caption{If the curve $\Gamma$ contains a past-directed sawtooth, then $\Gamma_+$ coincides with $\Gamma$ in the complement of the sawtooth, whereas $\Gamma_+$ contains the spacelike line which is the third side of the lightlike triangle bounded by the sawtooth. \label{fig:curvegamma}}
\end{figure}

\begin{example} \label{ex crucial}
An example of the above constructions, which will be crucial in Section \ref{sec barrier} and \ref{sec existence}, is the following. Consider the curve $\Gamma'$ which is composed of the boundary of a totally geodesic half-plane and of a past-directed sawtooth. See Figure \ref{fig:example}. Then the upper boundary of the convex hull is composed of the totally geodesic half-plane and of the lightlike triangle bounded by the past sawtooth. On the other hand, the lower boundary of the convex hull $\partial_-\mathcal{C}(\Gamma')$ is pleated along a lamination which foliates the hyperbolic surface $\partial_-\mathcal{C}(\Gamma')$.

It is not difficult to understand the domain of dependence $\mathcal D(\Gamma')$. In fact, the lower boundary of $\mathcal D(\Gamma')$ consists of the surface obtained by two totally geodesic lightlike half-planes which intersect along a spacelike half-geodesic ($L_{\ell,\ell'}$ in Figure \ref{fig:example}, left), glued to the cone over the end-point of the half-geodesic. On the other hand, the component $\mathcal D_+(\Gamma')$, in the affine chart of the right side of Figure \ref{fig:example}, is a region of a vertical cylinder, bounded by the horizontal plane containing $\Gamma'_+$, and by the lightlike plane containing the past-directed sawtooth.
\end{example}

\begin{figure}[htb]
\centering
\begin{minipage}[c]{.45\textwidth}
\centering
\includegraphics[height=7cm]{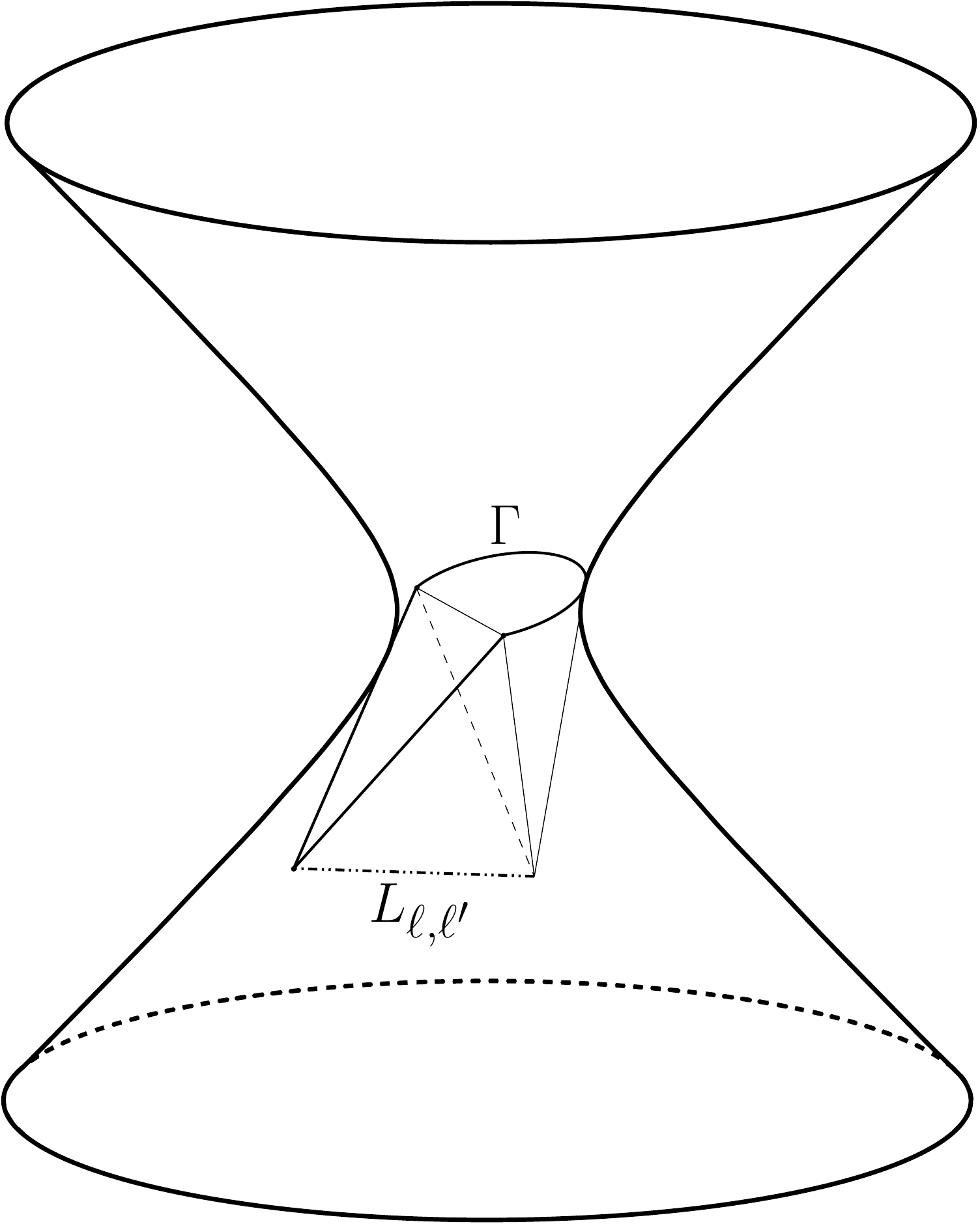}
%\caption{A future-directed sawtooth bounding a lightlike triangle. \label{fig:sawtooth}}
\end{minipage}%
\hspace{3mm}
\begin{minipage}[c]{.45\textwidth}
\centering
\includegraphics[height=7cm]{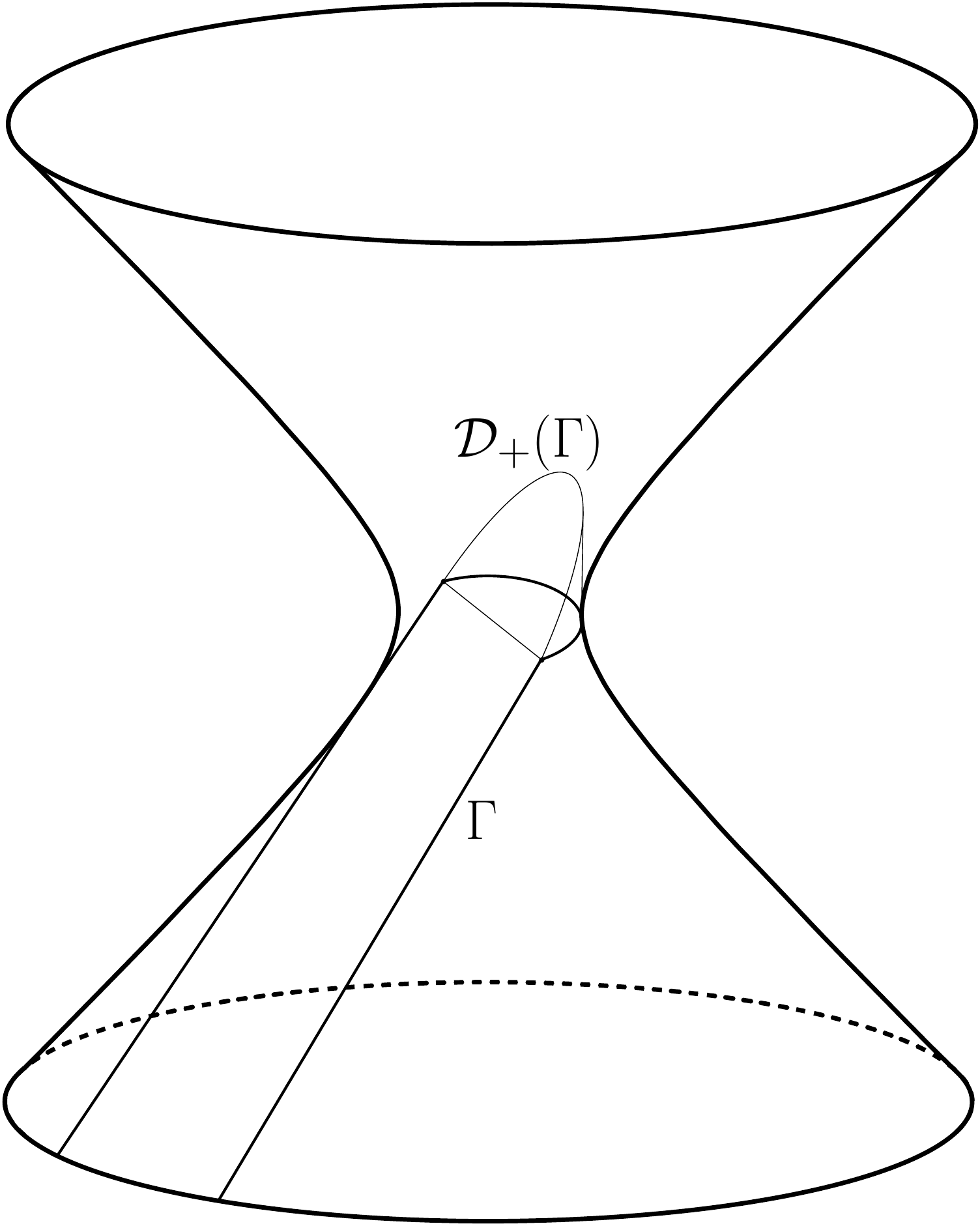}
\end{minipage}
\caption{The two boundaries of the domain of dependence $\mathcal D(\Gamma)$, where $\Gamma$ is the curve of Example \ref{ex crucial}. \label{fig:example}}

\end{figure}

\section{From differentiable surfaces in $\AdS^3$ to local diffeomorphisms of $\Hyp^2$} \label{sec surfaces}

The purpose of this section is to explain the construction which associates to a differentiable embedded spacelike surface in $\AdS^3$ a map between subsets of $\Hyp^2$. Most of the results of this section have been already known and used before, see also the references mentioned in the introduction. 

\subsection{Left and right projections} \label{subsec: projections}
Given a differentiable spacelike surface $ S$ in $\AdS^3$,  one defines two projections 
$\pi_l: S\to \Hyp^2$ and $\pi_r: S\to \Hyp^2$ in the following way.
If the timelike geodesic orthogonal to $S$ at $\gamma$ is $L_{x,x'}$, then 
$$\pi_l(\gamma)=x\qquad\qquad\pi_r(\gamma)=x'\,.$$ 

An equivalent construction of the projections is the following.
%Given a differentiable spacelike surface $ S$ in $\AdS^3$, and given a fixed totally geodesic spacelike plane, one defines two projections $\pi_l: S\to \mathcal R_x$ and $\pi_r: S\to \mathcal R_x$ (where we recall that $\mathcal R_x$ is the totally geodesic plane of $\pi$-rotations) in the following way. 
Given a point $\gamma\in  S$, let us denote by $P_\gamma S$ the totally geodesic plane tangent to $S$ at $\gamma$.
Then, there exists a unique isometry 
$$\Pi^\gamma_l\in\{\mathrm{id}\}\times\isom(\Hyp^2)$$
which maps $P_\gamma  S$ to $\mathcal R_{\pi}$. 
In fact, if we denote $G(\gamma)=(P_\gamma  S)^*$, then the isometry $(\mathrm{id}, G(\gamma))\in\isom(\Hyp^2)\times \isom(\Hyp^2)$ 
maps $G(\gamma)$ to $\mathrm{id}$, and therefore coincides with $\Pi^\gamma_l$. Analogously consider the unique
$$\Pi^\gamma_r\in\isom(\Hyp^2)\times\{\mathrm{id}\}$$
mapping $P_\gamma  S$ to $\mathcal R_\pi$, which has the form $(G(\gamma)^{-1},\mathrm{id})$.
 We then have
$$\Pi^\gamma_l(\gamma)=\gamma G(\gamma)^{-1}=\mathcal I_{\pi_l(\gamma)}\,,\qquad\qquad \Pi^\gamma_r(\gamma)=G(\gamma)^{-1}\gamma =\mathcal I_{\pi_r(\gamma)}\,.$$ 
Indeed, the previous formula is  true if $\gamma=\mathcal I_{x}$, and 
$P_\gamma S=\mathcal R_\pi$, in which case the isometries $\Pi^\gamma_l$ and $\Pi^\gamma_r$ are trivial and the normal geodesic is 
$L_{x,x}$. The general case then follows by observing that, from Equation \eqref{transf rule timelike geodesic}, 
the projection to the point $x$ of $L_{x,x'}$ is invariant by composition with a right isometry, and analogously $x'$ does not change when composing with an isometry on the left.

%Recalling that the totally geodesic plane $\mathcal R_\pi$ is the dual of the identity, namely $\mathcal R_\pi=\mathrm{id}^*$,  the left and right projections defined above have the following form.
 %If $\gamma\in  S$, denoting $G(\gamma)=(P_\gamma  S)^*$, then
%\begin{equation} \label{expression projections}
%\pi_l(\gamma)= \,,\qquad\qquad\pi_r(\gamma)=\,.
%\end{equation}

%An equivalent contruction of the projections is the following:
%\begin{remark} \label{remark equivalent defi proj}
%If the timelike geodesic orthogonal to $T_\gamma  S$ at $\gamma$ is $L_{x,x'}$, then 
%$$\pi_l(\gamma)=x\qquad\qquad\pi_r(\gamma)=x'\,.$$ 
%Indeed, the previous statement is obviously true if $T_\gamma S=\mathcal R_x$, in which case the isometries $\Pi^\gamma_l$ and $\Pi^\gamma_r$ are trivial and the normal geodesic is $L_{\gamma,\gamma}$. The general case then follows by observing that, from Equation \eqref{transf rule timelike geodesic}, the projection to the point $x$ of $L_{x,x'}$ is invariant by composition with a left isometry, and analogously $x'$ does not change when composing with an isometry on the right.
%\end{remark}

\begin{example} \label{ex pleated earthquake}
The most basic example of left and right projections can be seen when $ S$ is a totally geodesic plane. In this case, the $P_\gamma S=S$  and thus $\pi_l$ (resp. $\pi_r$) is the restriction to $ S$ of the unique left (resp. right) isometry of $\AdS^3$ sending $ S$ to $\mathcal R_\pi$ (up to the identification of $\mathcal R_\pi$ with $\Hyp^2$). More generally, if $ S$ is a pleated surface, the left and right projections are well-defined for every point $\gamma\in S$ which admits only one support plane. Moreover, the isometries $\Pi_l^\gamma$ and $\Pi_r^\gamma$ are constant on each stratum of the bending lamination. The induced metric on $ S$ is the hyperbolic metric of a straight convex set $H$, and it can be proved (see \cite{Mess}) that the projections are earthquake maps from $H$ to $\Hyp^2$. 
\end{example}

Using Example \ref{ex pleated earthquake}, one easily proves that if $ S$ is convex, then $\pi_l$ and $\pi_r$ are injective. 

\begin{lemma} \label{proj injective convex}
Let $ S$ be an embedded differentiable convex spacelike surface in $\AdS^3$. Then $\pi_l$ and $\pi_r$ are injective.
\end{lemma}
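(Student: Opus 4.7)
The plan is to establish injectivity of $\pi_l$ (with $\pi_r$ following by a symmetric argument) in two steps: local invertibility of $d\pi_l$ coming from convexity, then global injectivity by approximating $S$ with pleated surfaces for which Example \ref{ex pleated earthquake} directly provides injectivity.

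First I would show that $\pi_l$ is a local diffeomorphism. Fix $\gamma\in S$ and apply the isometry $(\id,G(\gamma))\in\isom(\AdS^3)$ to normalize, so that $\gamma=\mathcal I_{\pi_l(\gamma)}\in\mathcal R_\pi$ and $P_\gamma S=\mathcal R_\pi$; then $G(\gamma)=\id$. I would write $S$ locally as a normal graph over $\mathcal R_\pi\cong\Hyp^2$ and differentiate the defining formula $\gamma'\mapsto \gamma' G(\gamma')^{-1}$ for $\pi_l$ along a curve in $S$. Under the isometric identification $E\colon T_\gamma S\to T_{\pi_l(\gamma)}\Hyp^2$ arising from the tangent plane (Example \ref{ex pleated earthquake} applied to $P_\gamma S$ itself), the computation yields
\[
d\pi_l = E\circ(\id+JB),
\]
where $J$ is the almost-complex structure of $\Hyp^2$ and $B$ is the shape operator of $S$. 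Convexity of $S$ makes $B$ sign-definite, so $\det B>0$; combined with $\tr(JB)=0$ in dimension two (which holds since $B$ is symmetric and $J$ is skew for the induced metric), this gives $\det(\id+JB)=1+\det B>0$, so $d\pi_l$ is invertible.

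For global injectivity I would argue by contradiction. Suppose $\gamma_1\ne\gamma_2$ in $S$ satisfy $\pi_l(\gamma_1)=\pi_l(\gamma_2)=x$. The local-diffeomorphism property produces disjoint open neighborhoods $U_i\ni\gamma_i$ in $S$ on which each $\pi_l|_{U_i}$ is a diffeomorphism onto a common open neighborhood $V$ of $x$ in $\Hyp^2$. I would then approximate $S$ by a sequence $S_n$ of polyhedral convex pleated surfaces, obtained as the boundary of the intersection of the half-spaces bounded by finitely many tangent planes to $S$ at an $\epsilon_n$-dense sample of base points, with $\epsilon_n\to 0$. By Example \ref{ex pleated earthquake}, each $\pi_l|_{S_n}$ is an earthquake map of the associated straight convex subset of $\Hyp^2$ onto its image, hence globally injective. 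For $n$ large, $\pi_l(S_n)$ must cover $V$ via two distinct patches, one near $\gamma_1$ and one near $\gamma_2$, producing two distinct points $\delta_i^n\in S_n$ with $\pi_l(\delta_1^n)=\pi_l(\delta_2^n)$, contradicting injectivity on $S_n$.

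The hard part will be ensuring that the approximation $S_n\to S$ is sufficiently $C^1$-regular near $\gamma_1$ and $\gamma_2$, since pleated surfaces fail to be differentiable along their bending locus. I would handle this either by choosing the sample points to avoid fixed small neighborhoods of $\gamma_1$ and $\gamma_2$, keeping the approximation $C^1$-smooth there, or by mildly smoothing $S_n$ to a nearby $C^1$ convex surface and repeating the argument with $\pi_l$ only slightly perturbed. The argument for $\pi_r$ is completely symmetric, interchanging the roles of the left and right factors of $\isom(\Hyp^2)\times\isom(\Hyp^2)$.
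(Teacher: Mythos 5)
Your local step is fine in substance (it is essentially the content of Proposition \ref{formula KS}: $\det(E+J_I B)=1+\det B\geq 1$ — though note that convexity only gives $\det B\geq 0$, not $\det B>0$, and that the lemma is stated for merely differentiable surfaces, where the shape operator need not exist). The genuine gap is in the global step. From $\pi_l(\gamma_1)=\pi_l(\gamma_2)=x$ you want to extract, for large $n$, two distinct points of the polyhedral approximation $S_n$ with the same left projection. But all that convergence gives you is that $\pi_l^{S_n}(S_n\cap W_1)$ and $\pi_l^{S_n}(S_n\cap W_2)$ are sets that each \emph{approach} $x$; two sequences of sets accumulating at a common point need not intersect for any finite $n$ (compare $\{x+1/n\}$ and $\{x-1/n\}$). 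To close the argument you would need a uniform openness statement — that both images contain a fixed neighborhood of $x$ for all large $n$ — and that is precisely what is delicate here, because $\pi_l^{S_n}$ is an earthquake, discontinuous along the bending locus, whose restriction to a small patch can have gaps in its image. Neither of your proposed fixes resolves this: if the sample points avoid neighborhoods of $\gamma_1,\gamma_2$, then $S_n$ near $\gamma_i$ is a face tangent to $S$ at a point bounded away from $\gamma_i$, so $S_n$ does not converge to $S$ in $C^1$ there; and the smoothing alternative reintroduces the very convergence issue you are trying to avoid.

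The paper sidesteps all of this with a short barrier argument requiring no approximation and no local-diffeomorphism step: given $\gamma\neq\gamma'$ on $S$ (assumed future-convex by Lemma \ref{lemma:bandabassotti}), let $S_1$ be the boundary of the intersection of the two future half-spaces bounded by the tangent planes $P_\gamma S$ and $P_{\gamma'}S$. Then $S_1$ is either a totally geodesic plane or a pleated surface bent along a single geodesic, so its projections are injective by Example \ref{ex pleated earthquake}; and since $S_1$ is tangent to $S$ at both $\gamma$ and $\gamma'$, its projections agree with $\pi_l,\pi_r$ at these two points, whence $\pi_l(\gamma)\neq\pi_l(\gamma')$ and $\pi_r(\gamma)\neq\pi_r(\gamma')$. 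The missing idea in your write-up is that the approximating pleated surface can be collapsed to just these two support planes, which also keeps the proof valid in the merely differentiable setting.
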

\begin{proof}
By Lemma \ref{lemma:bandabassotti} we may assume that $S$ is future convex.
Pick two points $\gamma,\gamma'\in S$. By the hypothesis on  $S$, the totally geodesic planes $P_\gamma S$ and $P_{\gamma'} S$ are spacelike and $S$ is contained in the intersection, say $K_1$, 
of the future half-spaces bounded by $P_\gamma S$ and $P_{\gamma'} S$, in some affine chart. Let $S_1$ be the boundary of $K_1$. It is either a totally geodesic plane or a pleated surface with pleating locus made by a single
geodesic.
Notice that $S_1$ is tangent to $S$ at $\gamma$ and $\gamma'$ so that the corresponding projections $\pi_l^1$ and $\pi_r^1$ coincide with $\pi_l$ and $\pi_r$ at $\gamma$ and $\gamma'$.
As $\pi_l^1$ and $\pi_r^1$ are both injective, we conclude that $\pi_l(\gamma)\neq\pi_l(\gamma')$ and $\pi_r(\gamma)\neq\pi_r(\gamma')$.
%If they coincide, then $\Pi^\gamma_l=\Pi^{\gamma'}_l$, and thus $\pi_l$ maps $\gamma$ and $\gamma'$ to distinct point. The same for $\pi_r$. Otherwise, $T_\gamma S$ and $T_{\gamma'} S$ intersect along a spacelike line $l$. Hence the union of the half-plane of $T_\gamma S$ bounded by $l$ intersecting $ S$ with the analogous half-plane of $T_{\gamma'} S$ is convex and it is a pleated surface $ S_1$. Moreover $\gamma$ and $\gamma'$ lie in the two different strata of $ S_1$. Since the projections for $ S_1$ are earthquake maps, the image of $\gamma$ and $\gamma'$ are distinct. But $ S$ and $ S_1$ have the same tangent planes at $\gamma$ and $\gamma'$, therefore the projections $\pi_l$ and $\pi_r$ of $ S$ coincide with those of $ S_1$ at those two points, and this concludes the claim.
\end{proof}

It turns out that, if $S$ is oriented by means of the ambient orientation of $\AdS^3$ and the choice of the future-directed normal vector field, then $\pi_l$ and $\pi_r$ are orientation-preserving.

When $ S$ is an embedded differentiable convex spacelike surface, one can define a bijective map from $\Omega_l=\pi_l( S)$ to $\Omega_r=\pi_r( S)$, by
$$\Phi:\Omega_l\to\Omega_r\qquad \Phi=\pi_r\circ (\pi_l)^{-1}.$$ 

In \cite{bon_schl} it was proved that if $ S$ is a convex surface with $\partial S$ a curve $\Gamma\subset\partial\AdS^3=\partial\Hyp^2\times \partial\Hyp^2$ which is the graph of an orientation-preserving homeomorphism $\phi:\partial\Hyp^2\to\partial\Hyp^2$, then $\Omega_l=\Omega_r=\Hyp^2$, and $\Phi$ extends to $\phi$ on $\partial\Hyp^2$. This is essentially the content of the next lemma.

\begin{lemma}[{\cite[Lemma 3.18, Remark 3.19]{bon_schl}}]  \label{lemma extension}
Let $ S$ be  a convex spacelike surface in $\AdS^3$ with $\partial S=gr(\phi)$, for $\phi$ an orientation-preserving homeomorphism of $\partial\Hyp^2$. If $\gamma_n\in  S$ converges to $(p,q)\in\partial\AdS^3$, then $\pi_l(\gamma_n)$ converges to $p$ and $\pi_r(\gamma_n)$ converges to $q$.
\end{lemma}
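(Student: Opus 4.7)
The plan is to argue by contradiction: suppose, along a subsequence, $\pi_l(\gamma_n)\to p'$ and $\pi_r(\gamma_n)\to q'$ in $\overline{\Hyp^2}$ with $(p',q')\neq(p,q)$, and derive a contradiction. By the symmetry between the two projections it is enough to prove $\pi_l(\gamma_n)\to p$; the convergence $\pi_r(\gamma_n)\to q$ follows by exchanging the roles of the two factors of $\partial\AdS^3=\partial\Hyp^2\times\partial\Hyp^2$.

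First I would split according to the behaviour of $\pi_r(\gamma_n)$. If $\pi_r(\gamma_n)$ does not accumulate at $q$, so that along a subsequence it stays at positive distance from $q$ in $\overline{\Hyp^2}$, then the identity $\pi_l(\gamma_n)=\gamma_n(\pi_r(\gamma_n))$ together with the North--South dynamics of the isometries $\gamma_n\to(p,q)$---which send any compact subset of $\overline{\Hyp^2}\setminus\{q\}$ uniformly to $p$---immediately forces $\pi_l(\gamma_n)\to p$. So the genuine case to treat is $\pi_r(\gamma_n)\to q$, where the rates of $\gamma_n\to\infty$ and $\pi_r(\gamma_n)\to q$ could in principle combine to produce any limit for $\pi_l(\gamma_n)$, and convexity has to be used.

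The next step is to examine the tangent planes $P_n:=P_{\gamma_n}S$, whose boundaries are graphs of isometries $\psi_n\in\isom(\Hyp^2)$. By convexity of $S$ the curve $\Gamma=\mathrm{graph}(\phi)$ lies on one side of $\mathrm{graph}(\psi_n)$ in $\partial\AdS^3$ for every $n$. Up to a further subsequence, either $\psi_n$ converges in $\isom(\Hyp^2)$ to some $\psi_\infty$, or $\psi_n$ diverges. In the convergent case, $P_n$ converges to a totally geodesic spacelike plane $P_\infty$ with boundary $\mathrm{graph}(\psi_\infty)$; since $\gamma_n\in P_n$ and $\gamma_n\to(p,q)$, the limit point satisfies $(p,q)\in\mathrm{graph}(\psi_\infty)$, i.e.\ $\psi_\infty(p)=q$. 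As the left projection depends only on the tangent plane, continuous dependence gives $\pi_l(\gamma_n)\to\pi_l^{P_\infty}(p,q)=p$, because on a totally geodesic spacelike plane the left projection extends continuously to the boundary as the first-factor projection. This contradicts the choice of the subsequence.

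The hard case is $\psi_n\to\infty$ in $\isom(\Hyp^2)$, so that $\mathrm{graph}(\psi_n)$ Hausdorff-converges, along a further subsequence, to a $1$-step curve $C_\infty$ passing through $(p,q)$. Passing to the limit in the one-sided condition, $\Gamma$ lies in the closure of a single connected component of $\partial\AdS^3\setminus C_\infty$. If $(p,q)$ sits in the interior of a null segment of $C_\infty$, then near $(p,q)$ the curve $\Gamma=\mathrm{graph}(\phi)$ crosses that null segment transversely (because $\phi$ is a homeomorphism of $\partial\Hyp^2$), so $\Gamma$ is locally on both sides of $C_\infty$, contradicting one-sidedness. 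The truly delicate sub-case is when $(p,q)$ coincides with the corner of $C_\infty$---which can only occur if $\phi$ fixes $p=q$---because both spacelike quadrants at the corner may simultaneously lie in one and the same component of $\partial\AdS^3\setminus C_\infty$. I expect this to be the main obstacle; handling it would require a finer argument, for instance a local comparison between $S$ and a totally geodesic spacelike barrier plane tangent to $\Gamma$ at the fixed point $(p,p)$, in order to prevent the tangent planes of $S$ from degenerating at such a point.
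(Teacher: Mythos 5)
The paper does not actually prove this lemma: it is imported verbatim from \cite[Lemma 3.18, Remark 3.19]{bon\_schl}, so there is no in-paper argument to compare yours against; I can only assess your proposal on its own terms.

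Your overall architecture is sound, and several of your steps do work. The reduction to $\pi_l$, the use of the identity $\pi_l(\gamma_n)=\gamma_n(\pi_r(\gamma_n))$ together with the source--sink dynamics of $\gamma_n\to(p,q)$ when $\pi_r(\gamma_n)$ stays away from $q$, and the sub-case where the tangent planes $P_{\gamma_n}S$ converge to a spacelike plane are all correct (in the latter, writing $\mathcal{I}_{\pi_l(\gamma_n)}=\gamma_n G_n^{-1}$ with $G_n=(P_{\gamma_n}S)^*$ converging in $\isom(\Hyp^2)$ makes your ``continuous dependence'' claim rigorous). The sub-case where $\mathrm{graph}(\psi_n)$ degenerates to a $1$-step curve and $(p,q)$ lies in the interior of one of its null segments also goes through, once ``one side'' is made precise by working in an affine chart containing $\mathcal{D}(\Gamma)$, where each $P_{\gamma_n}S$ genuinely separates and $\overline{S}\supset\Gamma$ lies in one closed half-space.

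The genuine gap is the corner case, which you flag but do not close, and it cannot be waved away. First, your side remark that it ``can only occur if $\phi$ fixes $p=q$'' is false: the corner of $C_\infty$ is $(b,a)$ with $b=p$, $a=q$ and $\phi(p)=q$; no fixed-point condition arises. Second, this case is not vacuous: for a general convex surface (no curvature bounds are assumed), the support planes at $\gamma_n$ may perfectly well degenerate to the lightlike plane tangent to $\partial\AdS^3$ at $(p,q)$ itself. Third, it is exactly the case where all the soft arguments fail simultaneously: writing $\mathcal{I}_{\pi_l(\gamma_n)}=\gamma_n\psi_n$, both $\gamma_n\psi_n(x)$ and $\psi_n^{-1}\gamma_n^{-1}(x)$ are of indeterminate type because the attracting point of $\psi_n$ coincides with the repelling point of $\gamma_n$ (and vice versa); and the one-sidedness argument fails because the two spacelike quadrants at the corner of $C_\infty$ lie on the \emph{same} side of the limiting lightlike plane (the hyperboloid is a saddle over its tangent plane, so opposite quadrants are on the same side), so $\mathrm{graph}(\phi)$ passing through $(p,q)$ produces no contradiction. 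Since this is precisely where the actual content of the lemma sits, the proposal as written does not constitute a proof; closing it requires genuinely new geometric input (for instance, trapping $S$ between the two boundary components of $\mathcal{C}(\Gamma)$, whose projections are the earthquake maps extending $\phi$, or a barrier by a totally geodesic spacelike plane whose boundary separates $(p,q)$ from the rest of $\Gamma$), none of which is sketched.
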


In the case of pleated surfaces $ S$, as already observed by Mess \cite{Mess}, the associated map $\Phi$ is an earthquake map. Hence if $\phi:\partial\Hyp^2\to\partial\Hyp^2$ is an orientation-preserving homeomorphism, the boundaries of the convex hull of $gr(\phi)$ are pleated surfaces whose associated maps provide a left and a right earthquake map which extends $\phi$, hence recovering a theorem of Thurston \cite{thurstonearth}.

\begin{remark} \label{remark normal flow}
The map $\Phi$ is constant under the normal evolution of a surface. More precisely, suppose $ S$ is a differentiable surface such that the parallel surface $ S_\epsilon$ is well-defined for a short time $\epsilon$. Then the normal timelike geodesics of $ S_\epsilon$ are the same as those of $ S$. Hence by the above equivalent definition, using the identification of $ S$ and $ S_\epsilon$ given by the normal flow, the maps $\pi_l$ and $\pi_r$ for $ S$ and $ S_\epsilon$ are the same. In particular, the composition $\Phi=\pi_r\circ (\pi_l)^{-1}$ is invariant for the normal evolution.
\end{remark}

\begin{remark} \label{remark dual surface}
For the same reason, the left and right projections are the ``same" for a convex differentiable surface and its dual surface. 
More precisely, if $S$ is a spacelike convex differentiable surface, then one can define the dual surface $S^*$ as the image of 
the map from $S$ to $\AdS^3$ which associates to $\gamma\in S$ the dual point $(P_\gamma S)^*$. 
Recalling that $(P_\gamma S)^*$ is the midpoint of all timelike geodesics orthogonal to $P_\gamma S$, it turns out that $S^*$ is the 
$\pi/2$-parallel surface of $S$ and the identification between $S$ and $S^*$ is precisely the $\pi/2$-normal evolution. 
Therefore the area-preserving diffeomorphisms $\Phi$ associated with $S$ and $S^*$ coincide. 
\end{remark}

\subsection{Relation with the extrinsic geometry of surfaces} \label{subsec surfaces}

A smooth embedded surface $\sigma:\Sigma\rar\AdS^3$ is called spacelike if its tangent plane is a spacelike plane at every point, so that the \emph{first fundamental form} $I(v,w)=\langle d\sigma(v),d\sigma(w)\rangle$ is a Riemannian metric on $\Sigma$. 
Let $N$ be a unit normal vector field to the embedded surface $S=\sigma(\Sigma)$. We denote by $\nabla^{\AdS^3}$ and $\nabla^I$ the ambient connection of $\AdS^3$ and the Levi-Civita connection of the first fundamental form of the surface $\Sigma$, respectively. If $\sigma$ is $C^2$, the \emph{second fundamental form} of $\Sigma$ is defined by the equation
$$\nabla^{\AdS^3}_{d\sigma(v)}d\sigma(\tilde w)=d\sigma\nabla^I_{v}\tilde w+\II(v,w)N$$
if $\tilde w$ is a vector field extending $w$, and $N$ is the future-directed unit normal vector field. The \emph{shape operator} is the $(1,1)$-tensor defined as 
$$B(v)=(d\sigma)^{-1}\nabla^{\AdS^3}_{d\sigma(v)} N\,.$$ It is a self-adjoint operator for $I$, such that
$$\II(v,w)=I( B(v),w)\,.$$
It satisfies the Codazzi equation $d^{\nabla^I}\! B=0$, where (for vector fields $\tilde v$ and $\tilde w$ on $S$ extending $v$ and $w$):
$$d^{\nabla^I}\!B(v,w)=\nabla^I_{\tilde v} (B(\tilde w))-\nabla^I_{\tilde w} (B(\tilde v))-B[\tilde v,\tilde w]\,.$$
Moreover, $B$ satisfies the Gauss equation
\begin{equation} \label{Gauss}
K_I=-1-\det B\,,
\end{equation}
where $K_I$ is the curvature of the first fundamental form.

In \cite{Schlenker-Krasnov}, a formula for the pull-back on $S$ of the hyperbolic metric of $\Hyp^2$ by the left and right projections was given. We report this formula here:

\begin{prop} \label{formula KS}
Let $S$ be a smooth embedded spacelike surface in $\AdS^3$. Then:
\begin{equation} \label{eq pullback left}
\pi_l^*g_{\Hyp^2}(v,w)=I((E+J_I B)v,(E+J_I B)w)\,,
\end{equation}
and  
\begin{equation} \label{eq pullback right}
\pi_r^*g_{\Hyp^2}(v,w)=I((E-J_I B)v,(E-J_I B)w)\,.
\end{equation}
Here $E$ denotes the identity operator, and $J_I$ is the almost-complex structure induced by $I$.
\end{prop}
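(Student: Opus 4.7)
The plan is to compute the differentials $d\pi_l$ and $d\pi_r$ at an arbitrary point $\gamma\in S$ and verify the pullback formulas via the identity $\pi_{l,r}^*g_{\Hyp^2}(v,w)=g_{\Hyp^2}(d\pi_{l,r}(v),d\pi_{l,r}(w))$. By the equivariance of the left and right projections under $\isom(\AdS^3)\cong\isom(\Hyp^2)\times\isom(\Hyp^2)$, together with the tensorial nature of both sides of the claimed identities, it suffices to verify the equalities at a single convenient base point. Applying the isometry $(\mathrm{id},G(\gamma))\in\isom(\AdS^3)$, we may assume that $\gamma=\mathcal I_{x_0}$ for some $x_0\in\Hyp^2$ and that the tangent plane $P_\gamma S$ coincides with $\mathcal R_\pi$. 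With this normalization $G(\gamma)=\mathrm{id}$, $\pi_l(\gamma)=\pi_r(\gamma)=x_0$, and the isometric identification $\mathcal I_x\mapsto x$ of Section~\ref{prel ads} matches $T_\gamma\mathcal R_\pi$ with $T_{x_0}\Hyp^2$.

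Next, recall from Subsection~\ref{subsec: projections} that if $N_\gamma$ denotes the future-pointing unit normal to $S$ at $\gamma$, the orthogonal timelike geodesic admits the two descriptions $t\mapsto \exp(tn_l)\gamma=\gamma\exp(tn_r)$, with $n_l=(R_{\gamma^{-1}})_*N_\gamma$ and $n_r=(L_{\gamma^{-1}})_*N_\gamma$ in $\so(2,1)$. Via the isomorphism $\Lambda$ both become timelike vectors in $\R^{2,1}$, and since the elliptic one-parameter subgroups $\exp(tn_l)$ and $\exp(tn_r)$ fix respectively $\pi_l(\gamma)$ and $\pi_r(\gamma)$ on $\Hyp^2$, one obtains in the hyperboloid model
\[
\pi_l(\gamma)=\frac{\Lambda^{-1}(n_l)}{\|\Lambda^{-1}(n_l)\|},\qquad \pi_r(\gamma)=\frac{\Lambda^{-1}(n_r)}{\|\Lambda^{-1}(n_r)\|}.
\]

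To compute $d\pi_l(v)$ for $v\in T_\gamma S$ I would differentiate $\Lambda^{-1}(n_l)$ along a curve in $S$ tangent to $v$ and project onto $T_{x_0}\Hyp^2\cong x_0^\perp$. Two contributions appear: the variation of $N$ along $S$, which by definition of the shape operator yields the tangential term $d\sigma(B(v))$; and the variation of the right translation $R_{\gamma^{-1}}$, which via the Maurer--Cartan equation and the equivariance property \eqref{prop lambda 1} of $\Lambda$ produces, at the base point $\gamma=\mathcal I_{x_0}$, the vector $v$ itself acted upon by the adjoint of the involution $\gamma$. Under the identification \eqref{almost-complex cross}, this adjoint action restricted to $T_\gamma\mathcal R_\pi$ is precisely the almost-complex structure $J_I$. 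Assembling both contributions one gets
\[
d\pi_l(v)=(E+J_I B)(v)
\]
in the identification $T_\gamma S\cong T_{x_0}\Hyp^2$. The analogous computation for $\pi_r$ is obtained by interchanging $L$ and $R$; the adjoint contribution appears with the opposite sign, giving $d\pi_r(v)=(E-J_I B)(v)$. Taking squared norms in $g_{\Hyp^2}$ then delivers \eqref{eq pullback left} and \eqref{eq pullback right}, and the equivariance argument transports these to every point of $S$.

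The main obstacle is the precise bookkeeping in this differentiation step: one must verify that the normalization $g_{\AdS^3}=\tfrac 18\kappa$ fixed in Section~\ref{prel ads}, together with the factor $1/4$ in \eqref{prop lambda 2}, produces the coefficient $1$ (rather than some spurious constant) in front of $J_I B$, and that the adjoint contribution indeed realizes $J_I$ on $T_\gamma\mathcal R_\pi$ with the correct sign, producing the ``$+$'' in $d\pi_l$ and the ``$-$'' in $d\pi_r$. Once these identifications are in place, the two formulas follow immediately.
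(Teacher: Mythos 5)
The paper does not actually prove Proposition \ref{formula KS}: it is quoted from \cite{Schlenker-Krasnov}. So your attempt to derive it from scratch is doing more than the paper does, and your overall strategy is the right one: reduce by equivariance to a point $\gamma=\mathcal I_{x_0}$ with $P_\gamma S=\mathcal R_\pi$, observe that $\pi_l(\gamma)$ and $\pi_r(\gamma)$ are the fixed points of the elliptic subgroups generated by the right- and left-translated unit normal (so that, via $\Lambda$, they are the normalized timelike vectors $\tfrac12\Lambda^{-1}(n_l)$, $\tfrac12\Lambda^{-1}(n_r)$), and differentiate along $S$. This is exactly the kind of computation the paper carries out in the reverse direction in Propositions \ref{prop first ff} and \ref{prop sec ff}, and it can be pushed through to give \eqref{eq pullback left}--\eqref{eq pullback right}.

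However, as written there is a genuine gap, and one identification you rely on is wrong. You claim that the second contribution is ``$v$ acted upon by the adjoint of the involution $\gamma$'' and that this adjoint ``restricted to $T_\gamma\mathcal R_\pi$ is precisely $J_I$.'' By the equivariance \eqref{prop lambda 1}, $\mathrm{Ad}(\mathcal I_{x_0})\Lambda(w)=\Lambda(\mathcal I_{x_0}w)=-\Lambda(w)$ for $w\in x_0^\perp=T_{x_0}\Hyp^2$: the adjoint of the involution is $-\mathrm{id}$ there, not $J_I$. The almost-complex structure actually enters elsewhere: the isometric identification $x\mapsto\mathcal I_x$ has differential $w\mapsto 2\Lambda(J_{x_0}w)$ after right translation, so the shape-operator term $\nabla^{\AdS^3}_vN=d\sigma(Bv)$ contributes $J_IB\bar v$ (not $B\bar v$), while the identity term comes from the connection correction $\pm\tfrac12[v,N]$ of the bi-invariant metric, where $\mathrm{ad}(N)=2\Lambda(x_0)\boxtimes(\cdot)=2J_{x_0}$ applied to $2\Lambda(J\bar v)$ yields $\mp 2\Lambda(\bar v)$. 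Assembled as you describe, the two contributions would give $Bv\pm v$ rather than $(E\pm J_IB)v$, so the ``bookkeeping'' you defer is not a routine check of constants but the actual content of the proof; it needs to be carried out, e.g.\ using the formula $\nabla^{\AdS^3}_XY=\nabla^{r}_XY-\tfrac12[X,Y]=\nabla^l_XY+\tfrac12[X,Y]$ already invoked in the proof of Proposition \ref{prop sec ff}, which also explains cleanly why $\pi_l$ and $\pi_r$ pick up opposite signs.
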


Observe that, since $B$ is self-adjoint for $I$, $\mathrm{tr}(J_I B)=0$ and thus $\det(E\pm J_I B)=1+\det B$. Hence if $\det B\neq -1$, then $\pi_l$ and $\pi_r$ are local diffeomorphisms. 
If moreover $S$ is convex ($\det B\geq 0$), then by  Lemma \ref{proj injective convex}, $\pi_l, \pi_r$ are diffeomorphisms onto their images. 
Since $\det(E+J_I B)=\det(E-J_I B)$, the composition $\Phi=\pi_r\circ(\pi_l)^{-1}$ is area-preserving. In conclusion, we have:

\begin{cor}
If $S$ is a smooth convex embedded spacelike surface in $\AdS^3$, then $\pi_l$ and $\pi_r$ are diffeomorphisms on their images. Therefore, $\Phi=\pi_r\circ(\pi_l)^{-1}:\Omega_l\to\Omega_r$, where $\Omega_l=\pi_l(S)$ and $\Omega_r=\pi_r(S)$, is an area-preserving diffeomorphism.
\end{cor}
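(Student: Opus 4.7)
The plan is to read off both assertions directly from Proposition \ref{formula KS} combined with Lemma \ref{proj injective convex}, with a short computation to handle the area-preserving property.

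First I would establish that $\pi_l$ and $\pi_r$ are local diffeomorphisms. Since $S$ is convex, its shape operator $B$ satisfies $\det B \geq 0$. Because $B$ is self-adjoint with respect to $I$, the operator $J_I B$ has vanishing trace, so
\[
\det(E \pm J_I B) = 1 + \det B \geq 1 > 0.
\]
From Equations \eqref{eq pullback left} and \eqref{eq pullback right}, the pullbacks $\pi_l^* g_{\Hyp^2}$ and $\pi_r^* g_{\Hyp^2}$ are thus nondegenerate (in fact Riemannian) bilinear forms on $T_\gamma S$ at every point, which forces $d\pi_l$ and $d\pi_r$ to be invertible everywhere. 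Hence both projections are local diffeomorphisms onto their images.

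Next I would invoke Lemma \ref{proj injective convex}, which states that for a convex embedded spacelike surface the maps $\pi_l$ and $\pi_r$ are injective. Combining injectivity with the local diffeomorphism property just established, $\pi_l: S \to \Omega_l$ and $\pi_r: S \to \Omega_r$ are global diffeomorphisms onto the open sets $\Omega_l = \pi_l(S)$ and $\Omega_r = \pi_r(S)$. Consequently $\Phi = \pi_r \circ (\pi_l)^{-1} : \Omega_l \to \Omega_r$ is a well-defined diffeomorphism.

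Finally, for the area-preserving property, let $dA_I$ denote the area form of $I$. By Proposition \ref{formula KS}, the area forms of the pulled-back metrics are
\[
dA_{\pi_l^* g_{\Hyp^2}} = \det(E + J_I B)\, dA_I, \qquad dA_{\pi_r^* g_{\Hyp^2}} = \det(E - J_I B)\, dA_I.
\]
Since $\det(E + J_I B) = \det(E - J_I B) = 1 + \det B$, the two pulled-back area forms coincide on $S$. Transporting this equality via $(\pi_l)^{-1}$ gives $\Phi^* dA_{\Hyp^2} = dA_{\Hyp^2}$ on $\Omega_l$, proving that $\Phi$ preserves area. No step is really the main obstacle here: the content is entirely packaged in Proposition \ref{formula KS} and Lemma \ref{proj injective convex}, and this corollary just assembles them.
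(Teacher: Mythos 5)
Your proposal is correct and follows essentially the same route as the paper: the paper also derives $\det(E\pm J_I B)=1+\det B>0$ from $\mathrm{tr}(J_I B)=0$ and convexity to get local diffeomorphisms via Proposition \ref{formula KS}, invokes Lemma \ref{proj injective convex} for injectivity, and concludes area-preservation from $\det(E+J_I B)=\det(E-J_I B)$. Nothing is missing.
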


Moreover, using Lemma \ref{lemma extension}, one gets:
\begin{cor}
If $S$ is a smooth convex embedded spacelike surface in $\AdS^3$ with $\partial S=gr(\phi)\subset\partial\AdS^3$, for $\phi$ an orientation-preserving diffeomorphism of $\partial\Hyp^2$, then $\pi_l$ and $\pi_r$ are diffeomorphisms onto $\Hyp^2$ and $\Phi=\pi_r\circ(\pi_l)^{-1}$ is an area-preserving diffeomorphism of $\Hyp^2$ which extends to $\phi$ on $\partial\Hyp^2$.
\end{cor}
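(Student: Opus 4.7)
The plan is to bootstrap from the immediately preceding corollary, which already establishes that $\pi_l,\pi_r: S\to \Omega_l,\Omega_r$ are diffeomorphisms onto open subsets of $\Hyp^2$ and that $\Phi=\pi_r\circ\pi_l^{-1}:\Omega_l\to\Omega_r$ is area-preserving. The only extra content is that $\Omega_l=\Omega_r=\Hyp^2$ and that $\Phi$ admits a continuous extension to $\phi$ on the boundary circle; both facts will follow from Lemma \ref{lemma extension} together with a straightforward compactness argument, using the fact that $\overline{S}:=S\cup\Gamma$ is a closed subset of the compact space $\overline{\AdS^3}=\AdS^3\cup\partial\AdS^3$.

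First I would show surjectivity of $\pi_l$ onto $\Hyp^2$ by contradiction. Since $\Omega_l$ is open in $\Hyp^2$, if $\Omega_l\neq\Hyp^2$ there exists $p$ in the topological boundary of $\Omega_l$ in $\Hyp^2$. Pick a sequence $x_n\in\Omega_l$ with $x_n\to p$ and set $\gamma_n:=\pi_l^{-1}(x_n)\in S$. By compactness of $\overline{\AdS^3}$, a subsequence converges to some $\gamma_\infty\in\overline{S}=S\cup\Gamma$. If $\gamma_\infty\in S$, continuity of $\pi_l$ gives $p=\pi_l(\gamma_\infty)\in\Omega_l$, contradicting that $p$ lies on the boundary of $\Omega_l$. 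If instead $\gamma_\infty=(q,\phi(q))\in\Gamma$, then Lemma \ref{lemma extension} yields $x_n=\pi_l(\gamma_n)\to q\in\partial\Hyp^2$, contradicting $p\in\Hyp^2$. Hence $\Omega_l=\Hyp^2$, and the identical argument applied to $\pi_r$ gives $\Omega_r=\Hyp^2$, so that $\Phi$ is an area-preserving diffeomorphism of $\Hyp^2$.

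For the boundary behaviour, I would define $\overline{\Phi}:\overline{\Hyp^2}\to\overline{\Hyp^2}$ by $\overline{\Phi}|_{\Hyp^2}=\Phi$ and $\overline{\Phi}|_{\partial\Hyp^2}=\phi$, and verify continuity at an arbitrary point $p\in\partial\Hyp^2$. Given $x_n\in\Hyp^2$ with $x_n\to p$, let $\gamma_n:=\pi_l^{-1}(x_n)\in S$. By compactness, every subsequence has a further subsequence converging to some $\gamma_\infty\in S\cup\Gamma$. The case $\gamma_\infty\in S$ is excluded because it would force $\pi_l(\gamma_n)\to\pi_l(\gamma_\infty)\in\Hyp^2$, contradicting $x_n\to p\in\partial\Hyp^2$. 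Therefore $\gamma_\infty=(q,\phi(q))\in\Gamma$, and Lemma \ref{lemma extension} gives both $x_n=\pi_l(\gamma_n)\to q$ (so $q=p$) and $\Phi(x_n)=\pi_r(\gamma_n)\to\phi(q)=\phi(p)$. Since every subsequential limit of $\Phi(x_n)$ equals $\phi(p)$, the whole sequence converges to $\phi(p)$, establishing the claimed continuous extension.

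There is no serious obstacle: once the preceding corollary is in hand, the entire argument is a compactness-and-continuity chase driven by Lemma \ref{lemma extension}. The only mild subtlety is making sure that subsequential limits of points of $S$ in $\overline{\AdS^3}$ are constrained to $\overline{S}=S\cup\Gamma$, which follows from the hypothesis $\partial S=\mathrm{gr}(\phi)$ that tells us $S\cup\Gamma$ is closed in $\overline{\AdS^3}$.
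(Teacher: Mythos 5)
Your proposal is correct and follows exactly the route the paper intends: the paper derives this corollary in one line from the preceding corollary together with Lemma \ref{lemma extension}, and your compactness-and-subsequence argument is precisely the standard way to fill in that implication (using that $S\cup\mathrm{gr}(\phi)$ is closed in $\overline{\AdS^3}$, which is implicit in the hypothesis $\partial S=\mathrm{gr}(\phi)$). No gaps.
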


Let us now discuss some properties of the associated map $\Phi$ to a smooth convex embedded surface $S$. In fact, we will prove the following proposition:

\begin{prop} \label{prop properties associated map}
Let $S$ be a smooth strictly convex embedded spacelike surface in $\AdS^3$, and let $\Phi=\pi_r\circ(\pi_l)^{-1}:\Omega_l\to\Omega_r$ be the associated area-preserving map. Then there exists a smooth tensor $b\in\Gamma(\mathrm{End}(T\Omega_l))$ such that
\begin{equation} \label{eq pullback b}
\Phi^*g_{\Hyp^2}=g_{\Hyp^2}(b\cdot,b\cdot)\,,
\end{equation}
which satisfies:
\begin{enumerate}
\item $d^\nabla b=0\,$;
\item $\det b=1\,$;
\item $\tr b\in(-2,2)\,$.
\end{enumerate}
Here $\nabla$ is the Levi-Civita connection of $\Hyp^2$.
\end{prop}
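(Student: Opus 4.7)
My plan is to construct the tensor $b$ explicitly by working on $S$ via the diffeomorphism $\pi_l$ and then transporting everything to $\Omega_l$. By Proposition \ref{formula KS}, setting $A := E + J_I B$ and $\bar A := E - J_I B$, the pulled-back metrics are $g_l := \pi_l^* g_{\Hyp^2} = I(A\cdot, A\cdot)$ and $g_r := \pi_r^* g_{\Hyp^2} = I(\bar A\cdot, \bar A\cdot)$. Since $B$ is $I$-self-adjoint while $J_I$ is $I$-skew, the operator $J_I B$ is $I$-traceless, and $\det(J_I B) = \det B$; hence by Cayley--Hamilton in dimension two, $(J_I B)^2 = -\det B \cdot E$. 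Consequently $A$ and $\bar A$ commute and
$$A \bar A = \bar A A = (1+\det B)\, E,$$
so $A$ is invertible with $A^{-1} = \bar A/(1+\det B)$. I would define
$$\tilde b := A^{-1} \bar A = \frac{(1-\det B)\, E - 2\, J_I B}{1+\det B}$$
as a smooth $(1,1)$-tensor on $S$, and let $b$ be its push-forward to $\Omega_l$ via $\pi_l$. The identity $A\tilde b = \bar A$ immediately gives $g_r = g_l(\tilde b\cdot, \tilde b\cdot)$, which via $\pi_l$ translates into Equation \eqref{eq pullback b}.

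Properties $(2)$ and $(3)$ then reduce to short algebraic computations on the explicit formula for $\tilde b$. One finds
$$\det \tilde b = \frac{(1-\det B)^2 + 4\det B}{(1+\det B)^2} = 1 \qquad\text{and}\qquad \tr \tilde b = \frac{2(1-\det B)}{1+\det B},$$
so $\det b = \det \tilde b = 1$. The strict convexity assumption for $S$ means $\det B > 0$, and since $t \mapsto 2(1-t)/(1+t)$ is a decreasing bijection from $(0,+\infty)$ onto $(-2, 2)$, one gets $\tr b \in (-2, 2)$.

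The main obstacle is condition $(1)$, the Codazzi equation $d^\nabla b = 0$. Since $\pi_l: (S, g_l) \to (\Omega_l, g_{\Hyp^2})$ is an isometry by construction, this is equivalent to $d^{\nabla^{g_l}} \tilde b = 0$ on $S$, where $\nabla^{g_l}$ is the Levi-Civita connection of $g_l$. My plan is to compare $\nabla^{g_l}$ with $\nabla^I$ via Koszul's formula applied to the identity $g_l = I(A\cdot, A\cdot)$, which expresses the tensorial difference $\nabla^{g_l} - \nabla^I$ in terms of $\nabla^I A$; because $J_I$ is parallel for $\nabla^I$, this reduces to $\nabla^I B$. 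Then, using the explicit expression $(1+\det B)\, \tilde b = (1-\det B)\, E - 2 J_I B$ and differentiating by the product rule, the exterior derivative $d^{\nabla^{g_l}} \tilde b$ collapses to a tensorial expression in $\nabla^I B$ alone, which vanishes by the Codazzi equation $d^{\nabla^I} B = 0$ for the immersed surface $S \subset \AdS^3$. The hard part is organizing this bookkeeping so that all the terms involving $\nabla^I B$ and $d(\det B)$ cancel cleanly; but it is a purely local, tensorial verification whose only analytic input is the Codazzi equation for $B$.
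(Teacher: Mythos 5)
Your proposal is correct and follows essentially the same route as the paper: the tensor you define, $\tilde b=(E+J_IB)^{-1}(E-J_IB)$, is exactly the paper's choice of $b$, and your computations of $\det\tilde b$ and $\tr\tilde b=2(1-\det B)/(1+\det B)$ are the ones given there. The only point where the paper is more economical is condition (1): instead of product-ruling the explicit formula $(1+\det B)\tilde b=(1-\det B)E-2J_IB$, it invokes the identity $\nabla_vw=(E+J_IB)^{-1}\nabla^I_v\bigl((E+J_IB)w\bigr)$ for the Levi-Civita connection of $I((E+J_IB)\cdot,(E+J_IB)\cdot)$ (valid because $E+J_IB$ is Codazzi for $\nabla^I$), whence $d^{\nabla}\tilde b=(E+J_IB)^{-1}\,d^{\nabla^I}\!(E-J_IB)=0$ in one line, with no $d(\det B)$ bookkeeping at all.
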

\begin{proof}
Let us define 
\begin{equation} \label{eq:emy}
b=(E+J_I B)^{-1}(E-J_I B)\,.
\end{equation}
Then from Equations \eqref{eq pullback left} and \eqref{eq pullback right}, the condition in Equation \eqref{eq pullback b} is satisfied. Let us check the three properties of $b$:
\begin{enumerate}[leftmargin=*]
\item Observe that, since $\nabla^I E$ and $\nabla^I J_I$ vanish and $B$ is Codazzi for $I$, then $E-J_I B$ satisfies 
$$d^{\nabla^I}\!(E-J_I B)=0\,.$$
Using a formula given in \cite{labourieCP} or \cite[Proposition 3.12]{Schlenker-Krasnov}, the Levi-Civita connection of the metric $\pi_l^*g_{\Hyp^2}$ is 
$$\nabla_v w=(E+J_I B)^{-1}\nabla^I_v (E+J_I B)w\,,$$
for every $v,w$. Hence:
\begin{align*}
 d^{\nabla}&\!b(v, w)=\nabla_v b(w)-\nabla_w b(v)-b[v,w] \\
 &=(E+J_I B)^{-1}(\nabla^I_v(E-J_I B)(w)-\nabla^I_w(E-J_I B)(v)-(E-J_I B)[v,w]) \\
 &=(E+J_I B)^{-1}(d^{\nabla^I}\!(E-J_I B)(v,w))=0\,.
\end{align*}
 This concludes the first point. 
\item Since $B$ is self-adjoint for $I$, then $\tr J_I B=0$, and thus $$\det(E+J_I B)=\det(E-J_I B)=1+\det B\,.$$ 
Therefore $\det b=1$.
\item Observe that 
\begin{equation} \label{eq:ely}
(E+J_I B)^{-1}=\frac{1}{1+\det B}(E-J_I B)\,.
\end{equation}
In fact, $(E+J_I B)(E-J_I B)=E-(J_I B)^2$ and, since $\tr (J_I B)=0$, by the Cayley-Hamilton Theorem $(J_I B)^2=-\det(J_I B)E=-(\det B)E$. Moreover by a direct computation, $\mathrm{tr}(E-J_I B)^2=\tr E+\tr(JB)^2=2(1-\det B)$ and thus:

\begin{equation} \label{eq: gambadilegno}
\tr b=\tr(E+J_I B)^{-1}(E-J_I B)=\frac{1}{1+\det B}\tr(E-J_I B)^2=2\left(\frac{1-\det B}{1+\det B}\right)\,.
\end{equation}

Since $\det B>0$ by hypothesis, $\tr b$ is always contained in $(-2,2)$ for a strictly convex surface. \qedhere
\end{enumerate}
\end{proof}

\begin{remark} \label{rmk:picodepaperis}
Using Equation \eqref{eq:emy} and $J=(E+J_I B)^{-1} J_I (E+J_I B)$, we get
$$Jb=J (E+J_I B)^{-1}(E-J_I B)=(E+J_I B)^{-1}J_I(E-J_I B)\,.$$
By applying again Equation \eqref{eq:ely}, one obtains that
$$\mathrm{tr}(Jb)=\frac{\mathrm{tr}(J_I(E-J_I B)^2)}{1+\det B}=\frac{2\mathrm{tr}B}{1+\det B}\,.$$
Since we have chosen the future-directed normal vector field to the surface $S$, this shows that $S$ is future-convex if $\mathrm{tr}(Jb)>0$, and past-convex if $\mathrm{tr}(Jb)<0$.
\end{remark}

\subsection{From $K$-surfaces to $\theta$-landslides}

Recall that a smooth spacelike surface in $\AdS^3$ is a \emph{K}-surface if its Gaussian curvature is constantly equal to $K$. In this paper we are interested in strictly convex surfaces, hence we will consider $K$-surfaces with $K\in(-\infty,-1)$, so that by the Gauss equation
$$K_I=-1-\det B\,,$$
we will have $\det B>0$.

In \cite{bms} it is proved that, given a $K$-surface in $\AdS^3$, for $K\in(-\infty,-1)$, the associated map $\Phi$ is a $\theta$-\emph{landslide}. In \cite{bms} the definition of $\theta$-landslides is the following:

\begin{defi} \label{defi landslide}
Let $\Phi:\Omega\subseteq \Hyp^2\to\Hyp^2$ be an orientation-preserving diffeomorphism onto its image and let $\theta\in[0,\pi]$. We say that $\Phi$ is a $\theta$-landslide if 
there exists a bundle morphism $m\in\Gamma(\mathrm{End}(T\Omega))$ such that
$$\Phi^*g_{\Hyp^2}=g_{\Hyp^2}((\cos\theta E+\sin\theta Jm)\cdot,(\cos\theta E+\sin\theta Jm)\cdot)\,,$$
and
\begin{itemize}
\item $d^{\nabla}m=0\,$;
\item $\det m=1\,;$
\item $m$ is positive self-adjoint for $g_{\Hyp^2}\,$.
\end{itemize}
\end{defi}

Let us observe that a $\theta$-landslide with $\theta=0,\pi$ is an isometry. In \cite{bms}
it was proved that a $\theta$-landslide can be decomposed as $\Phi=f_2\circ (f_1)^{-1}$, where $f_1$ and $f_2$ are harmonic maps from a fixed Riemann surface, with Hopf differentials satisfying the relation
$$\mathrm{Hopf}(f_1)=e^{2i \theta }\mathrm{Hopf}(f_2)\,.$$
We will restrict to the case of $\theta\in (0,\pi)$. 
In this case, we will mostly use an equivalent characterization of landslides:

\begin{lemma} \label{char landslide}
Let $\Phi:\Omega\subseteq \Hyp^2\to\Hyp^2$ be an orientation-preserving diffeomorphism onto its image. Then $\Phi$ is a $\theta$-landslide if and only if
there exists a bundle morphism $b\in\Gamma(\mathrm{End}(T\Omega))$ such that
$$\Phi^*g_{\Hyp^2}=g_{\Hyp^2}(b\cdot,b\cdot)\,,$$
and
\begin{itemize}
\item $d^{\nabla}b=0\,$;
\item $\det b=1\,;$
\item $\tr b=2\cos\theta\,$;
\item $\tr Jb< 0$.
\end{itemize}
\end{lemma}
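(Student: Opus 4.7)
\medskip

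\noindent\textbf{Proof plan.} The plan is to show that the given four conditions on $b$ are equivalent to the existence of a decomposition $b = \cos\theta\, E + \sin\theta\, Jm$ with $m$ positive self-adjoint for $g_{\Hyp^2}$ satisfying $\det m = 1$ and $d^\nabla m = 0$. Since both formulations involve the same pullback identity $\Phi^*g_{\Hyp^2} = g_{\Hyp^2}(b\cdot, b\cdot)$, the argument reduces to a pointwise linear-algebra equivalence (with $d^\nabla$-closedness passing between $b$ and $m$ trivially because $J$ is parallel).

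\emph{Forward direction.} Starting from Definition \ref{defi landslide}, I would set $b = \cos\theta\, E + \sin\theta\, Jm$. In dimension two, a direct computation in any $g_{\Hyp^2}$-orthonormal frame shows that $\tr(Jm) = 0$ whenever $m$ is self-adjoint, so $\tr b = 2\cos\theta$. Since $m$ is self-adjoint with $\det m = 1$, the Cayley–Hamilton identity gives $m^2 = (\tr m)\,m - E$, hence $(Jm)^2 = -m^2 \cdot(\text{something})$; more cleanly, expanding in a frame diagonalizing $m$ yields $\det b = \cos^2\theta + \sin^2\theta\,\det m = 1$. Finally, $Jb = \cos\theta\, J - \sin\theta\, m$, so $\tr(Jb) = -\sin\theta\,\tr m$, which is strictly negative since $m$ is positive definite and $\sin\theta > 0$ for $\theta \in (0,\pi)$.

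\emph{Reverse direction.} Given $b$ satisfying the four listed conditions, I would define
\[
m \;=\; \tfrac{1}{\sin\theta}\bigl(\cos\theta\, J - Jb\bigr),
\]
so that by construction $b = \cos\theta\,E + \sin\theta\,Jm$, and $d^\nabla m = 0$ follows from $d^\nabla b = 0$ and $\nabla J = 0$. The remaining three properties of $m$ I would check by working in a local $g_{\Hyp^2}$-orthonormal frame where $b$ has matrix $\begin{pmatrix}p & q\\ r & s\end{pmatrix}$: the trace condition $p+s = 2\cos\theta$ makes the off-diagonal entries of $m$ coincide, proving $m$ is self-adjoint for $g_{\Hyp^2}$; the determinant condition $ps - qr = 1$ combined with $p+s = 2\cos\theta$ yields $\det m = 1$ after a short simplification; and $\tr m = (r - q)/\sin\theta$, which is positive precisely because $\tr(Jb) = q - r < 0$. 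Together with $\det m = 1 > 0$, this forces both eigenvalues of $m$ to be positive, so $m$ is positive definite.

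No part of this is a serious obstacle; the only point that requires a moment of care is verifying that the algebraic identities $\tr b = 2\cos\theta$ and $\det b = 1$ are together \emph{equivalent} to $\det m = 1$ once self-adjointness is established, which is why the two trace/determinant conditions on $b$ must appear jointly. The sign condition $\tr(Jb) < 0$ is the unique ingredient that selects $m$ positive rather than merely $\det m = 1$, and it is also what distinguishes, via Remark \ref{rmk:picodepaperis}, the past-convex $K$-surface from its future-convex dual.
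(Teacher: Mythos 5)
Your proof is correct and follows essentially the same route as the paper: the forward direction is identical, and in the reverse direction you simply make explicit (via the formula $m=\frac{1}{\sin\theta}(\cos\theta J-Jb)$ and the frame computation) what the paper states more tersely by observing that the traceless tensor $b-\cos\theta E$ must be of the form $aJm$ with $m$ positive self-adjoint of unit determinant. Your coordinate verification that the trace and determinant conditions on $b$ yield self-adjointness and $\det m=1$ is a welcome expansion of a step the paper leaves implicit, but it is not a different argument.
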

\begin{proof}
Let us first suppose that $\Phi$ is a $\theta$-landslide and prove the existence of $b$. 
%If $\theta=0$ (that is, $\Phi$ is an isometry), the statement is obvious for $b=E$. 
Let $b=\cos\theta E+\sin\theta Jm$, for $m$ as in Definition \ref{defi landslide}. Then $\tr b=2\cos\theta$ since $m$ is self-adjoint for $g_{\Hyp^2}$, and thus $Jm$ is traceless. On the other hand, $\mathrm{tr}(Jb)=-\sin\theta\,\mathrm{tr}\, m<0$ since $m$ is positive definite.  Moreover, $\det b=\cos^2\theta+\sin^2\theta=1$ since $\det E=\det J=\det m=1$ and again $\mathrm{tr}(Jm)=0$. Finally, $d^{\nabla}b=0$ since $\nabla E=\nabla J=0$.

Conversely, let $b$ satisfy the conditions in the statement. Then $b-\cos\theta E$ is traceless and thus is of the form $a Jm$, with $m$ positive self-adjoint for $g_{\Hyp^2}$, $\det m=1$, and $a\in\R$. Since $d^{\nabla}b=0$, one has $d^{\nabla}m=0$. Finally, $1=\det b=\cos^2\theta+a^2(\det m)=\cos^2\theta+a^2$ implies that $|a|=|\sin\theta|$. Imposing that $\tr Jb<0$, we conclude that $a=\sin\theta$.
\end{proof}

Hence we are now able to reprove the following fact, which is known from \cite{bms}:

\begin{prop} \label{prop past-convex}
Let $S$ be a past-convex $K$-surface in $\AdS^3$ with $K\in(-\infty,-1)$. Then the associated map $\Phi:\Omega_l\to\Omega_r$ is a $\theta$-landslide, with
$$K=-\frac{1}{\cos^2(\frac{\theta}{2})}\,.$$
\end{prop}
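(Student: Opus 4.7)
The plan is to combine Proposition \ref{prop properties associated map} with the characterization of $\theta$-landslides in Lemma \ref{char landslide}. Since $K \in (-\infty,-1)$, the Gauss equation \eqref{Gauss} gives $\det B = -1 - K > 0$, so $S$ is strictly convex. Thus Proposition \ref{prop properties associated map} yields a smooth $(1,1)$-tensor $b$ on $\Omega_l = \pi_l(S)$ satisfying $\Phi^* g_{\Hyp^2} = g_{\Hyp^2}(b\cdot, b\cdot)$, $d^\nabla b = 0$, and $\det b = 1$. It remains to verify the two trace conditions of Lemma \ref{char landslide}.

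For the computation of $\tr b$, the key ingredient is Equation \eqref{eq: gambadilegno}, which gives
$$\tr b \;=\; 2\,\frac{1 - \det B}{1 + \det B}\,.$$
Substituting $\det B = -1 - K$ from the Gauss equation yields
$$\tr b \;=\; 2\,\frac{2 + K}{-K} \;=\; -2 - \frac{4}{K}\,.$$
Setting $K = -1/\cos^2(\theta/2)$, so that $4/K = -4\cos^2(\theta/2)$, and applying the half-angle identity $2\cos^2(\theta/2) - 1 = \cos\theta$, one obtains
$$\tr b \;=\; -2 + 4\cos^2(\theta/2) \;=\; 2\bigl(2\cos^2(\theta/2) - 1\bigr) \;=\; 2\cos\theta\,.$$
Conversely, the map $K\mapsto \tr b$ is a bijection between $(-\infty,-1)$ and $(-2,2)$, so this relation uniquely determines $\theta\in(0,\pi)$.

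For the sign of $\tr Jb$, I would invoke Remark \ref{rmk:picodepaperis} directly: since we orient $S$ by the future-directed normal, past-convexity is by definition equivalent to $\tr Jb < 0$ (equivalently, to $\tr B < 0$ via the identity $\tr Jb = 2\tr B/(1+\det B)$ recorded there). Hence all four hypotheses of Lemma \ref{char landslide} are satisfied, and $\Phi$ is a $\theta$-landslide with $\theta$ determined by $\cos\theta = -1 - 2/K$, i.e.\ $K = -1/\cos^2(\theta/2)$.

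There is no substantive obstacle: once Proposition \ref{prop properties associated map} and Remark \ref{rmk:picodepaperis} are available, the proof reduces to the one-line trigonometric identity above. The only point requiring a small remark is that the sign of $\tr Jb$ selects past-convex (rather than future-convex) surfaces, which is exactly what distinguishes the $\theta$-landslide from its conjugate by $J$; this is why the statement singles out past-convex $K$-surfaces and why the dual future-convex $K^*$-surface, obtained by changing $b$ to $-b$, corresponds to the complementary angle $\pi - \theta$ and to the dual curvature $K^* = -K/(K+1)$.
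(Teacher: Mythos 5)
Your proof is correct and follows essentially the same route as the paper: Proposition \ref{prop properties associated map} supplies the tensor $b$, Equation \eqref{eq: gambadilegno} gives $\tr b=2(1-\det B)/(1+\det B)$, and Remark \ref{rmk:picodepaperis} handles the sign of $\tr Jb$; the only difference is that the paper defines $\theta$ by $2\cos\theta:=\tr b$ (legitimate since $\tr b\in(-2,2)$) and then solves for $K$, whereas you substitute $\det B=-1-K$ from the Gauss equation and verify the same identity in the reverse direction. One small caveat on your closing aside: by Remark \ref{remark dual surface2} the dual future-convex $K^*$-surface has the \emph{same} associated $\theta$-landslide map (the sign requirement $\tr Jb<0$ in Lemma \ref{char landslide} forces the same admissible $b$, not $-b$), so it is not accurate to say it ``corresponds to the complementary angle $\pi-\theta$''; this does not affect the validity of your argument.
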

\begin{proof}
The proof follows, using the characterization of Lemma \ref{char landslide}, from the choice 
$$b=(E+J_I B)^{-1}(E-J_I B)$$
in Proposition \ref{prop properties associated map}. In fact, the first two points are satisfied, and from Equation \eqref{eq: gambadilegno} one obtains:
\begin{equation} \label{eq:tipetap}
\tr b=2\left(\frac{1-\det B}{1+\det B}\right)=2\cos\theta\,,
\end{equation}
for some $\theta\in(0,\pi)$. As observed in Remark \ref{rmk:picodepaperis}, $\tr(Jb)<0$ since $S$ is past-convex by hypothesis.
Finally, from Equation \eqref{eq:tipetap} one obtains:
$$\det B=\frac{1-\cos\theta}{1+\cos\theta}={\tan^2({\theta}/{2})}$$
and thus
$$K=-1-\det B=-\frac{1}{\cos^2({\theta}/{2})}\,,$$
thus concluding the claim.
\end{proof}

\begin{remark} \label{remark dual surface2}
There clearly is an analogous statement of Proposition \ref{prop past-convex} for future-convex surfaces. In fact, if $S$ is future-convex $K$-surface, consider first the tensor $b'=(E+J_I B)^{-1}(E-J_I B)$ as in the proof of Proposition \ref{prop past-convex}. In this case $\tr(Jb')>0$, as observed in Remark \ref{rmk:picodepaperis}. 

Hence it is the choice $b=-(E+J_I B)^{-1}(E-J_I B)$ which makes the conditions in Lemma \ref{char landslide} satisfied. This shows that the associated map $\Phi$ is a $\theta$-landslide where $\theta\in(0,\pi)$ is chosen so that $2\cos\theta=-\tr((E+J_I B)^{-1}(E-J_I B))$. The same computation as above shows that the curvature of $S$ is:
$$K=-\frac{1}{\sin^2(\theta/2)}\,.$$

Let us remark that the result of this computation is consistent with the fact that the dual surface of the $K$-surface $S$ is a past-convex surface $S^*$, gives the same associated map $\Phi$, and has constant curvature
$$K^*=-\frac{1}{\cos^2(\theta/2)}~.$$
In light of this remark, we will always restrict ourselves to consider past-convex $K$-surfaces.
%We remark that, up to replacing $b$ with $-b$ in Lemma \ref{char landslide}, one can always suppose that $\cos\theta\geq 0$ and thus $\theta\in [0,\pi/2]$. This corresponds to the fact that the dual $S^*$ of a strictly convex $K$-surface $S$ is again a surface of constant curvature, and in Remark \ref{remark dual surface} we observed that the maps associated to $S$ and $S^*$ are the same. In fact it is well known (\cite{schlenkermetriques}) that, by means of the usual identification of $S$ with $S^*$ (as in Remark \ref{remark dual surface}) the shape operator of  $S^*$ is $B^*=B^{-1}$, and thus $\det B^*=1/\det B$. Hence essentially a $\theta$-landslide is the same as a $(\pi-\theta)$-landslide, and one can always assume that $\theta\in [0,\pi/2]$.
\end{remark}

A special case is obtained for $\theta=\pi/2$. Applying Lemma \ref{char landslide}, a $\pi/2$-landslide is a diffeomorphism
 $\Phi:\Omega\subseteq \Hyp^2\to\Phi(\Omega)\subseteq\Hyp^2$ such that $\Phi^*g_{\Hyp^2}=g_{\Hyp^2}(b\cdot,b\cdot)$, where:
\begin{itemize}
\item $d^{\nabla}b=0\,;$
\item $\det b=1\,;$
\item $\tr b=0\,;$
\item $\tr Jb<0\,.$
\end{itemize}
Hence, by taking $b_0=-Jb$, we have $\tr(Jb_0)=0$, and thus $b_0$ has the property that $\Phi^*g_{\Hyp^2}=g_{\Hyp^2}(b_0\cdot,b_0\cdot)$ with:
\begin{itemize}
\item $d^{\nabla}b_0=0\,;$
\item $\det b_0=1\,;$
\item $b_0$ is positive self-adjoint for $g_{\Hyp^2}\,$.
\end{itemize}
This is known to be equivalent to $\Phi$ being a \emph{minimal Lagrangian map}, namely $\Phi$ is area-preserving and its graph is a minimal surface in $\Hyp^2\times\Hyp^2$. 

\begin{remark}
In \cite{bon_schl}, the existence of maximal surface in $\AdS^3$ (namely, such that $\tr B=0$ where $B$ is the shape operator) was used to prove that every quasisymmetric homeomorphism of $\partial \Hyp^2$ admits a unique minimal Lagrangian extension to $\Hyp^2$. We remark that, given a smooth maximal surface in $\AdS^3$ with principal curvatures in $(-1,1)$, the two $\pi/4$-parallel surfaces are $(-2)$-surfaces (see \cite{bon_schl} or \cite{Schlenker-Krasnov}). As already discussed in Remarks \ref{remark normal flow} and \ref{remark dual surface}, the map associated to the two parallel surfaces (which are dual to one another) is the same as the map associated to the original maximal surface.
\end{remark}

\section{A representation formula for convex surfaces} \label{sec representation formula}

The purpose of this section is to provide a \emph{representation formula} for convex surfaces in $\AdS^3$ in terms of the associated diffeomorphism 
$\Phi:\Omega\subseteq\Hyp^2\to\Phi(\Omega)\subseteq\Hyp^2$, for $\Omega$ a domain in $\Hyp^2$. 
A basic observation to start with is the fact that, if $S$ is a smooth strictly convex embedded surface, with associated map
 $\Phi=\pi_r\circ(\pi_l)^{-1}$, and if $\gamma\in S$ is such that $\pi_l(\gamma)=x$, then $\gamma$ belongs to the timelike geodesic
$$L_{x,\Phi(x)}=\{\eta\in\isom(\Hyp^2):\eta(\Phi(x))=x \}\,.$$ 
In fact, from Subsection \ref{subsec: projections}, we know $L_{x,\Phi(x)}$ is precisely the timelike geodesic orthogonal to $S$ at $\gamma$.

Motivated by this observation, we will now  define 
 a 1-parameter family of parallel surfaces $S_{\Phi,b}$, all orthogonal to the lines $L_{x,\Phi(x)}$, which depend on the diffeomorphism $\Phi$ and on the choice of $b\in\Gamma(\mathrm{End}(T\Omega))$ satisfying the necessary conditions provided by Proposition \ref{prop properties associated map}.
  The surface $S_{\Phi,b}$ will be parameterized by the inverse map of the left projection $\pi_l:S\to\Hyp^2$. 
 
% However, for the moment we will not use the third condition of Proposition \ref{prop properties associated map}.

\begin{defi} \label{defi sigma}
Let $\Omega$ be a domain in $\Hyp^2$ and $\Phi:\Omega\subseteq\Hyp^2\to\Phi(\Omega)\subseteq\Hyp^2$ be an orientation-preserving diffeomorphism such that
$$\Phi^*g_{\Hyp^2}=g_{\Hyp^2}(\bb\cdot,\bb\cdot)\,,$$
where $\bb\in\Gamma(\mathrm{End}(T\Omega))$ is a smooth bundle morphism such that $d^\nabla\! \bb=0$ and $\det \bb=1$. Then define the map
$\sigma_{\Phi,\bb}:\Hyp^2\to\isom(\Hyp^2)$, where $\sigma_{\Phi,\bb}(x)$ is the unique isometry $\sigma$ such that
\begin{itemize}
\item $\sigma(\Phi(x))=x\,$;
\item $d\sigma_{\Phi(x)}\circ d\Phi_x=-\bb_x\,$.
\end{itemize}
%Here $R_{2\rho}$ denotes the rotation of angle $2\rho$ around the point $\Phi(x)$.
\end{defi}

\begin{example} 
If $\Phi$ is the identity of $\Hyp^2$ and $\cos\theta=1$, then clearly the identity operator $\bb=E$ satisfies the conditions $d^\nabla\!\bb=0$ and $\det\bb=1$. Hence by applying the definition, $\sigma_{\mathrm{id},E}(x)$ is the unique isometry which fixes $x$ and has differential $-E$. In conclusion, $\sigma_{\mathrm{id},E}:\Hyp^2\to\isom(\Hyp^2)$ is our usual identification which maps $x\in\Hyp^2$ to $\mathcal{I}_x$.
\end{example}

\begin{remark} \label{remark parallel immersion}
Clearly the smooth tensor $b\in\Gamma(\mathrm{End}(T\Omega))$ is not uniquely determined. In fact, if $\bb'$ is another tensor satisfying 
$$\Phi^*g_{\Hyp^2}=g_{\Hyp^2}(\bb'\cdot,\bb'\cdot)\,,$$
with $\det \bb'=1$, then $\bb'_x=R^x_{\rho(x)}\bb_x$, where $\rho$ is a function and $R^x_{\rho}$ is the rotation of $\rho$ around the basepoint $x\in\Hyp^2$. Then we claim that $d^\nabla \bb'=R_\rho (d\rho \wedge Jb+d^\nabla\bb)$. Indeed, using that $\nabla R_\rho=d\rho\otimes JR_\rho$ we get

\begin{equation} \label{eq:etabeta}
\begin{split}
d^\nabla (R_\rho b)(v,w)&=\nabla_v (R_\rho b)(w)-\nabla_w (R_\rho b)(v) \\
&=(\nabla_v R_\rho)\circ b(w)+R_\rho\circ(\nabla_v b)(w)-(\nabla_w R_\rho)\circ b(v)-R_\rho\circ(\nabla_w b)(v) \\
&= d\rho(v)JR_\rho b(w) -d\rho(w)JR_\rho b(v)+R_\rho ((\nabla_v b)(w)-(\nabla_w b)(v)) \\
&=R_\rho (d\rho(v)J b(w) -d\rho(w)J b(v)+d^\nabla b(v,w))\,,
\end{split}
\end{equation}

Therefore, from the condition $d^\nabla\! \bb=0$, if $\bb'$ is another tensor as in the hypothesis of Definition \ref{defi sigma}, then (up to replacing $\rho$ with $2\rho$), $\bb'=R^x_{2\rho}\bb$, where $\rho$ is a constant. 

In this case, one gets that $\sigma:=\sigma_{\Phi,\bb}(x)$ satisfies the condition
$d\sigma_{\Phi(x)}=-\bb_x\circ (d\Phi_{x})^{-1}$, while $\sigma':=\sigma_{\Phi,\bb'}(x)$ satisfies
$$d\sigma'_x=- \bb'\circ(d\Phi_{x})^{-1}=-R^x_{2\rho}\circ\bb\circ (d\Phi_x)^{-1}\,.$$
In other words, the map $\sigma_{\Phi,\bb'}$ is obtained by displacing the map $\sigma_{\Phi,\bb}$ along the timelike lines $L_{x,\Phi(x)}$ of a constant length $\rho$.
\end{remark}

The next lemma discuss the naturality of the construction of the map $\sigma_{\Phi,\bb}$. 

\begin{lemma} \label{lemma composition rule}
Let $b\in\Gamma(\mathrm{End}(T\Omega))$ be a smooth bundle morphism such that 
 $$\Phi^*g_{\Hyp^2}=g_{\Hyp^2}(\bb\cdot,\bb\cdot)\,,$$
with $d^\nabla\! \bb=0$ and $\det \bb=1$. Let $\alpha,\beta$ be isometries of $\Hyp^2$. Then:
\begin{itemize}
\item The smooth tensor $\bb'$  on $\beta(\Omega)$ defined by $\bb'_{\beta x}=d\beta_{x}\circ\bb_{x}\circ (d\beta_x)^{-1}$ is such that $(\alpha\Phi\beta^{-1})^*g_{\Hyp^2}=g_{\Hyp^2}(b'\cdot,b'\cdot)$, $d^\nabla\! \bb'=0$ and $\det b'=1$;
\item
$\sigma_{\alpha\circ\Phi\circ\beta^{-1},\bb'}(\beta(x))=\beta\circ\sigma_{\Phi,\bb}(x)\circ\alpha^{-1}=(\beta,\alpha)\cdot \sigma_{\Phi,\bb}(x)\,.$
\end{itemize}
\end{lemma}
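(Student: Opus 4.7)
\medskip

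\noindent\textbf{Proof plan.} The statement has two parts, both of which I would verify by direct computation using the defining properties of $\sigma_{\Phi,b}$ and the fact that $\alpha,\beta$ are isometries of $\Hyp^2$, hence preserve $g_{\Hyp^2}$, the Levi-Civita connection $\nabla$, the almost-complex structure $J$, and have determinant $1$.

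For the first bullet, I would verify each of the three properties in turn. Setting $v,w\in T_{\beta(x)}\Hyp^2$ and writing $d(\alpha\Phi\beta^{-1})_{\beta(x)}=d\alpha_{\Phi(x)}\circ d\Phi_x\circ d\beta^{-1}_{\beta(x)}$, the fact that $\alpha$ is an isometry lets one strip off $d\alpha$ from the metric; the hypothesis on $b$ replaces $d\Phi$ by $b_x$; and finally, inserting $d\beta^{-1}_{\beta(x)}\circ d\beta_x=\mathrm{Id}$ and using that $\beta$ is an isometry converts the expression into $g_{\Hyp^2}(b'_{\beta(x)}v,b'_{\beta(x)}w)$. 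The equality $\det b'=\det b=1$ is immediate from conjugation. For $d^\nabla b'=0$, the key observation is that $b'$ is nothing but the pushforward of $b$ by $\beta$, that is, $b'=\beta_*b$, and since $\beta$ is an isometry it preserves the Levi-Civita connection; therefore $d^\nabla$ commutes with the action of $\beta$, and $d^\nabla b=0$ transports to $d^\nabla b'=0$.

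For the second bullet, set $\tilde{\sigma}:=\beta\circ\sigma_{\Phi,b}(x)\circ\alpha^{-1}\in\isom(\Hyp^2)$. I need to show that $\tilde{\sigma}$ coincides with $\sigma_{\alpha\Phi\beta^{-1},b'}(\beta(x))$, which by Definition \ref{defi sigma} is characterized by two conditions. The first condition, $\tilde\sigma\bigl((\alpha\Phi\beta^{-1})(\beta(x))\bigr)=\beta(x)$, reduces to $\tilde\sigma(\alpha\Phi(x))=\beta(x)$ and follows by cancelling $\alpha^{-1}\circ\alpha$ in the middle and applying the defining property $\sigma_{\Phi,b}(x)(\Phi(x))=x$. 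For the differential condition, I would differentiate $\tilde\sigma$ and apply the chain rule: using $d\sigma_{\Phi(x)}\circ d\Phi_x=-b_x$ (the defining property of $\sigma_{\Phi,b}(x)$) together with $d\alpha^{-1}_{\alpha\Phi(x)}\circ d\alpha_{\Phi(x)}=\mathrm{Id}$, the computation collapses to $d\beta_x\circ(-b_x)\circ d\beta^{-1}_{\beta(x)}$, which by the first bullet is exactly $-b'_{\beta(x)}$. By uniqueness in Definition \ref{defi sigma}, this identifies $\tilde\sigma$ with $\sigma_{\alpha\Phi\beta^{-1},b'}(\beta(x))$. Finally, the identification $\beta\circ\sigma\circ\alpha^{-1}=(\beta,\alpha)\cdot\sigma$ is just the convention for the $\isom(\Hyp^2)\times\isom(\Hyp^2)$-action on $\AdS^3=\isom(\Hyp^2)$ recalled in Section \ref{prel ads}.

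I do not expect any serious obstacle here: the whole lemma is formal bookkeeping, essentially the statement that the construction $(\Phi,b)\mapsto\sigma_{\Phi,b}$ is equivariant with respect to the natural $\isom(\Hyp^2)\times\isom(\Hyp^2)$-action on both sides. The only points that require attention are keeping track of which isometry acts on the left and which on the right (so that one recovers the action $(\beta,\alpha)\cdot\sigma=\beta\sigma\alpha^{-1}$ as in Section \ref{prel ads}), and correctly handling the base-point change $x\leadsto\beta(x)$ when transporting the tensor $b$ to $b'$.
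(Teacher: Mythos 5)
Your proposal is correct and follows essentially the same route as the paper: a direct verification of the metric identity and of $d^\nabla b'=0$ for the first bullet (the paper carries out explicitly the computation you summarize as ``$d^\nabla$ commutes with the isometry $\beta$''), and for the second bullet a check that $\beta\circ\sigma_{\Phi,b}(x)\circ\alpha^{-1}$ satisfies the two defining properties of $\sigma_{\alpha\Phi\beta^{-1},b'}(\beta(x))$, concluding by uniqueness. No gaps.
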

\begin{proof}
For the first point, since $\beta$ is an isometry, one has:
\begin{align*}
(\alpha\Phi\beta^{-1})^*g_{\Hyp^2}(v,w)&=g_{\Hyp^2}(d(\alpha\Phi\beta^{-1})(v),d(\alpha\Phi\beta^{-1})(w))\\
&=g_{\Hyp^2}(d(\Phi\beta^{-1})(v),d(\Phi\beta^{-1})(w))=g_{\Hyp^2}(\bb'(v),\bb'(w))\,.
\end{align*}
Moreover,
\begin{align*}
d^\nabla\!\bb'&=\nabla_v (d\beta\bb \,d\beta^{-1}(w))-\nabla_w (d\beta\bb\, d\beta^{-1}(v))-d\beta\bb\, d\beta^{-1}[v,w] \\
&=d\beta \left(\nabla_{d\beta^{-1}(v)}(\bb\, d\beta^{-1}(w))-d\beta\nabla_{d\beta^{-1}(w)}(\bb\, d\beta^{-1}(v))-\bb[d\beta^{-1}(v),d\beta^{-1}(w)]\right) \\
&=d\beta d^\nabla\!\bb(d\beta^{-1}(v),d\beta^{-1}(w))=0\,.
\end{align*}
For the second point, we must check that $\hat\sigma=\beta\circ\sigma_{\Phi,\bb}(x)\circ\alpha^{-1}$ satisfies the two properties defining $\sigma_{\alpha\circ\Phi\circ\beta^{-1},b'}(\beta(x))$, namely  $\hat\sigma(\alpha\Phi\beta^{-1}(\beta(x)))=\beta(x)$ and 
$d\hat\sigma_{\alpha\Phi(x)}\circ d(\alpha\Phi\beta^{-1})_{\beta(x)}=-\bb'_{\beta(x)}$.

 For the first defining property, we have that for every $x\in\Omega$:
$$(\beta\circ\sigma_{\Phi,\bb}(x)\circ\alpha^{-1})(\alpha\circ\Phi\circ\beta^{-1})(\beta(x))=\beta(\sigma_{\Phi,\bb}(x) \Phi(x))=\beta(x)\,.$$
For the second property, 
\begin{align*}
d(\beta\sigma_{\Phi}(x)\alpha^{-1})_{\alpha\Phi(x)}\circ d(\alpha\Phi\beta^{-1})_{\beta x}&=d\beta_x \circ d(\sigma_{\Phi}(x))_{\Phi(x)}\circ d\Phi_{x}\circ d\beta^{-1}_{\beta (x)} \\
&=-d\beta_x \circ\bb_{x}\circ (d\beta_x)^{-1}=
-\bb'_{\beta(x)}\,.
\end{align*}
This concludes the proof.
\end{proof}
Let us now compute the pull-back of the induced metric on the surface $S_{\Phi,\bb}=\sigma_{\Phi,\bb}(\Omega)$ by means of $\sigma_{\Phi,\bb}$.

\begin{prop} \label{prop first ff}
Let $\Omega$ be a domain in $\Hyp^2$ and $\Phi:\Omega\subseteq\Hyp^2\to\Phi(\Omega)\subseteq\Hyp^2$ be an orientation-preserving diffeomorphism such that
$$\Phi^*g_{\Hyp^2}=g_{\Hyp^2}(\bb\cdot,\bb\cdot)\,,$$
where $\bb\in\Gamma(\mathrm{End}(T\Omega))$ is a smooth bundle morphism such that $d^\nabla\! \bb=0$ and $\det \bb=1$. Then
\begin{equation} \label{firstff}
(\sigma_{\Phi,\bb})^* g_{\AdS^3}=\frac{1}{4} g_{\Hyp^2}((E+\bb)\cdot,(E+\bb)\cdot)%-d\theta\otimes d\theta
\,.
\end{equation}
\end{prop}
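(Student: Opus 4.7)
The plan is to reduce the computation to a convenient base point via the naturality afforded by Lemma \ref{lemma composition rule}, and there to determine the Maurer--Cartan form of $\sigma_{\Phi,b}$ in the identification $T_{\id}\AdS^3 \cong \mathfrak{isom}(\Hyp^2) \cong \R^{2,1}$, showing via the Codazzi condition $d^\nabla b = 0$ that its rotational (timelike) component vanishes.

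Given $x_0 \in \Omega$, I would first choose $\beta \in \isom(\Hyp^2)$ with $\beta(x_0) = y_0$ for a fixed base point $y_0 \in \Hyp^2$, and set $\alpha := \beta \circ \sigma_{\Phi, b}(x_0)$. By Lemma \ref{lemma composition rule} the transformed data $(\Phi', b') := (\alpha \Phi \beta^{-1},\, d\beta\, b\, d\beta^{-1})$ satisfy $\sigma_{\Phi', b'}(y_0) = \id$, $\Phi'(y_0) = y_0$, and $d\Phi'|_{y_0} = -b'_{y_0}$. Bi-invariance of $g_{\AdS^3}$, together with the identity $(E + b')(d\beta(v)) = d\beta((E + b)v)$, shows that both sides of \eqref{firstff} transform covariantly under this change, so it is enough to prove the formula at $y_0$ under the normalization $\sigma(y_0) = \id$, $\Phi(y_0) = y_0$, $d\Phi|_{y_0} = -b_{y_0}$.

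At the base point, for $v \in T_{y_0}\Hyp^2$ I set $\tilde v := d\sigma_{\Phi, b}|_{y_0}(v) \in T_{\id}\AdS^3 = \mathfrak{isom}(\Hyp^2)$ and write $\tilde v = \Lambda(\xi)$ with Minkowski decomposition $\xi = \xi_\perp + t\, y_0$ ($\xi_\perp \in T_{y_0}\Hyp^2$, $t \in \R$). Viewing $\tilde v$ as a Killing field on $\Hyp^2$, elementary properties of $\Lambda$ and of the cross product give $\tilde v(y_0) = -J_{y_0}(\xi_\perp)$ and $\nabla \tilde v|_{y_0} = t\, J_{y_0}$. By Equation \eqref{prop lambda 2},
\[
|\tilde v|^2_{g_{\AdS^3}} = \tfrac{1}{4}\langle \xi, \xi\rangle_{\R^{2,1}} = \tfrac{1}{4}\bigl(|\tilde v(y_0)|^2_{g_{\Hyp^2}} - t^2\bigr).
\]
Differentiating the identity $\sigma(y)(\Phi(y)) = y$ at $y_0$ and using $d\Phi|_{y_0} = -b_{y_0}$ immediately gives $\tilde v(y_0) = (E+b)(v)$, so the proposition reduces to proving $t = 0$.

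This is the hard part. I would Taylor-expand the second defining condition $d\sigma|_{\Phi(y)} \circ d\Phi|_y = -b_y$ along a curve $y_t$ with $\dot y_0 = v$, working inside the $\R^{2,1}$-model so that each $\sigma_t \in \SO_0(2,1)$ acts linearly. Differentiating both sides in the ambient Minkowski space and extracting the tangential component at $y_0$ (the normal parts cancel automatically using the identity $(E+b)v = v + bv$), the resulting relation takes the form
\[
B(v, w) = t_{(v)}\, J_{y_0}(b w) - (\nabla_v b)(w) \qquad \text{for all } w \in T_{y_0}\Hyp^2,
\]
where $B = \nabla d\Phi|_{y_0}$ is the (symmetric) Hessian of $\Phi$ at $y_0$ and $t_{(v)}$ denotes the rotational coefficient of $\tilde v$ as a function of the direction $v$. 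Exchanging $v$ and $w$, the symmetry $B(v, w) = B(w, v)$ and the Codazzi condition $(\nabla_v b)(w) = (\nabla_w b)(v)$ together force $t_{(v)} J_{y_0}(bw) = t_{(w)} J_{y_0}(bv)$; since $J_{y_0} \circ b$ is invertible, taking $v, w$ linearly independent forces $t_{(v)} \equiv 0$. Plugging $t = 0$ into the norm formula above yields Equation \eqref{firstff} at $y_0$, and the reduction of the second paragraph propagates it to every point of $\Omega$.
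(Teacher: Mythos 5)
Your proof is correct, and its overall skeleton is the same as the paper's: normalize at a point via Lemma \ref{lemma composition rule} so that $\sigma_{\Phi,b}(y_0)=\mathrm{id}$, $\Phi(y_0)=y_0$ and $d\Phi_{y_0}=-b_{y_0}$, then differentiate the two defining conditions of $\sigma_{\Phi,b}$ inside the hyperboloid model; your identity $\tilde v(y_0)=(E+b)v$ is exactly Equation \eqref{relation dsigma1}. Where you genuinely diverge is in how the Codazzi hypothesis controls the second differentiation. The paper computes the full action of $d\sigma_{\Phi,b}(v)\in\so(2,1)$ on tangent vectors (Equation \eqref{relation dsigma2}) by invoking the formula $\widetilde\nabla_XY=b^{-1}\nabla_X(b(Y))$ for the Levi-Civita connection of $\Phi^*g_{\Hyp^2}$ -- that citation is precisely where $d^\nabla b=0$ enters -- and this yields at once that $d\sigma_{\Phi,b}(v)=\Lambda(J(E+b)v)$ has no component along $y_0$. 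You instead isolate only the timelike coefficient $t_{(v)}$ of $\Lambda^{-1}(d\sigma_{\Phi,b}(v))$ and kill it by antisymmetrizing the differentiated relation in $(v,w)$, playing the symmetry of the Hessian $\nabla d\Phi$ against $(\nabla_vb)(w)=(\nabla_wb)(v)$ and the invertibility of $Jb$; I checked that the tangential relation $B(v,w)=t_{(v)}J_{y_0}(bw)-(\nabla_vb)(w)$ and the cancellation of the normal parts both come out as you claim. Your route is somewhat more self-contained, since it avoids citing the Labourie/Krasnov--Schlenker connection formula, at the mild cost of not producing the explicit expression \eqref{eq differential sigma} for $d\sigma_{\Phi,b}(v)$, which the paper reuses later (for instance in Proposition \ref{prop sec ff}). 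Both arguments conclude identically via the $1/4$-isometry property \eqref{prop lambda 2} of $\Lambda$ and polarization.
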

\begin{proof}
In this proof, we will consider $\Hyp^2$ in the hyperboloid model (recall Subsection \ref{subsec models}), so that $\isom(\Hyp^2)$ is identified to the connected component of the identity in $\SO(2,1)$.

Let us fix a point $x\in\Hyp^2$. By post-composing $\Phi$ with an isometry $\alpha$ (where actually $\alpha=\sigma_{\Phi,b}(x)$), we can assume that $\Phi(x)=x$ and that $d\Phi_x=-\bb$. Indeed, by Lemma \ref{lemma composition rule}, $\sigma_{\Phi,\bb}$ is modified by right multiplication with $\alpha^{-1}$ and thus the embedding data are unchanged. Therefore by Definition \ref{defi sigma}, $\sigma_{\Phi,\bb}(x)=\mathrm{id}$ since $\sigma_{\Phi,\bb}(x)$ fixes $x$ and its differential at $x$ is the identity. 

Consider a smooth path $x(t)$ with $x(0)=x$ and $\dot x(0)=v$. Hence, regarding the isometries of $\Hyp^2$ as $\SO(2,1)$-matrices and points of $\Hyp^2$ as vectors in $\R^{2,1}$,
% we will think at 
%$d\sigma_{\Phi,\bb}(v)$ as a matrix in $\so(2,1)$.
by differentiating at $t=0$ the relation
$$\sigma_{\Phi,\bb}(x(t))\cdot \Phi(x(t))=x(t)\,,$$
we get
$$d\sigma_{\Phi,\bb}(v)\cdot x- \bb(v)=v\,,$$
and therefore
\begin{equation} \label{relation dsigma1}
d\sigma_{\Phi,\rho}(v)\cdot x=(E+\bb)v\,.
\end{equation}
On the other hand, choose a vector $w\in T_x\Hyp^2$ and extend $w$ to a parallel vector field $w(t)$ along $x(t)$. We start from the following relation (recall that we consider $\isom(\Hyp^2)$ in the $\SO(2,1)$ model, hence the differential of an isometry $\sigma$ is $\sigma$ itself):
$$\sigma_{\Phi,\bb}(x(t))\cdot d\Phi_{x(t)}(w(t))=- \bb_{x(t)}(w(t))\,.$$
By differentiating this identity in $\R^{2,1}$, we obtain
\begin{equation} \label{equality differentiate}
-d\sigma_{\Phi,\bb}(v)\cdot (\bb_x w)+\ddt( d\Phi_{x(t)}(w(t)))=-\ddt (\bb_{x(t)}w(t))\,.
\end{equation}
Now observe that, if $\nabla$ is the Levi-Civita connection of $\Hyp^2$ and $\langle\cdot,\cdot\rangle_{\R^{2,1}}$ is the Minkowski product of $\R^{2,1}$, then
$$\ddt (\bb_{x(t)}w(t))=\nabla_v(\bb_{x(t)}w(t))+(\langle v,\bb (w)\rangle_{\R^{2,1}}) x\,.$$
For the other term, we obtain in a similar fashion that
$$\ddt( d\Phi_{x(t)}(w(t)))=\nabla_{d\Phi_x(v)}(d\Phi_{x(t)}w(t))%-2(d\theta(v))Jb(w)
+(\langle \bb(v),\bb(w)\rangle_{\R^{2,1}}) x\,,$$
In the second term, we have used that %$\nabla_v R_{2\theta}=2d\theta(v)J R_{2\theta}$ (this can be easily checked by writing $R_{2\theta}=\cos2\theta E+\sin2\theta J$ and using that $\nabla E=\nabla J=0$), and that 
$d\Phi_x=-\bb$. Since $\Phi$ is an isometry between $g_{\Hyp^2}$ and $\Phi^*g_{\Hyp^2}=g_{\Hyp^2}(\bb\cdot,\bb\cdot)$, we have
$$\nabla_{d\Phi_x(v)}(d\Phi_{x(t)}w(t))=d\Phi_x\widetilde \nabla_{v}(w(t))=-\bb_x \widetilde \nabla_{v}(w(t))\,,$$
where $\widetilde \nabla$ is the Levi-Civita connection of $\Phi^*g_{\Hyp^2}$. Using again the  formula given in \cite{labourieCP} or \cite[Proposition 3.12]{Schlenker-Krasnov}, it turns out that %, since $\Phi$ is minimal Lagrangian, 
for any two vector fields $X,Y$
\begin{equation}
\widetilde \nabla_X Y=\bb^{-1}\nabla_X(\bb(Y))\,.
\end{equation}
Therefore, from Equation \eqref{equality differentiate} we obtain 
\begin{equation}
-d\sigma_{\Phi,\bb}(v)\cdot (\bb_x w)-\nabla_v(\bb_{x(t)}w(t))+(\langle \bb_x v,\bb_x w\rangle_{\R^{2,1}}) x=-\nabla_v(\bb_{x(t)}w(t))-(\langle v,\bb_x w\rangle_{\R^{2,1}}) x%+2(d\theta(v))Jb(w)\,.
\end{equation}
Therefore, replacing $b_x w$ with an arbitrary vector $w$, we obtain the relation
\begin{equation} \label{relation dsigma2}
d\sigma_{\Phi,\bb}(v)\cdot w=(\langle w,(E+\bb) v\rangle _{\R^{2,1}})x%+2(d\theta(v))Jb(w)
\,.
\end{equation}
It is straightforward to check, using Equations \eqref{relation dsigma1} and \eqref{relation dsigma2}, that $d\sigma_{\Phi,\rho}(v)$ corresponds to the linear map 
\begin{equation} \label{eq differential sigma}
d\sigma_{\Phi,b}(v)(w)=J(E+\bb)v%+2d\theta(v) x
\boxtimes w =\Lambda(J(E+\bb)(v))(w)\,,
\end{equation}
where $J$ is the almost-complex structure of $\Hyp^2$. Recalling from Equation \eqref{prop lambda 2} that the identification of $\so(2,1)$ with $\R^{2,1}$ sends the AdS metric of $\isom(\Hyp^2)$ 
to $1/4$ the Minkowski product, the first fundamental form is:
\begin{align*}
g_{\AdS^3}(d\sigma_{\Phi,\bb}(v_1),d\sigma_{\Phi,\bb}(v_2))&=\frac{1}{4}\langle J(E+\bb)v_1%+2d\theta(v_1)
,J(E+\bb)v_2%+2d\theta(v_2)
\rangle_{\R^{2,1}} \\
&=\frac{1}{4}\langle (E+\bb)v_1,(E+\bb)v_2\rangle_{\R^{2,1}}%-d\theta(v_1)d\theta(v_2)
\,,
\end{align*}
as in our claim.
\end{proof}

\begin{remark} \label{remark immersion trace condition} 
From the proof of Proposition \ref{prop first ff}, it turns out that, under the usual identification, the differential of $\sigma_{\Phi,\bb}$ is given by $J(E+\bb)$. Hence the map $\sigma_{\Phi,\bb}$ is an immersion provided
$$\det d \sigma_{\Phi,\bb}=2+\tr\bb\neq 0\,,$$
or equivalently $\tr\bb\neq -2$.
\end{remark}

\begin{cor} \label{cor orthogonal lines}
In the hypothesis of Proposition \ref{prop first ff}, the image of $d\sigma_{\Phi,b}$ at a point $x\in\Omega$ is orthogonal to the line $L_{x,\Phi(x)}$ at the point $\sigma_{\Phi,b}(x)$. In particular, if $x$ is not a critical point, then the family of lines $L_{\cdot,\Phi(\cdot)}$ foliates a neighborhood of $\sigma_{\Phi,b}(x)$ and the map $\sigma_{\Phi,b}$ gives a local parameterization of an integral surface of the orthogonal distribution. 

Moreover, if $\sigma_{\Phi,\bb}$ is an immersion, then $\pi_l\circ\sigma_{\Phi,\bb}$ is the identity, and $\pi_r\circ\sigma_{\Phi,\bb}=\Phi$.
\end{cor}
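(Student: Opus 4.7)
All three assertions would follow from the explicit computation of $d\sigma_{\Phi,b}$ already carried out in the proof of Proposition~\ref{prop first ff}, so my plan is to leverage that formula three times.

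First, to show that the image of $d\sigma_{\Phi,b}$ at $x$ is orthogonal to $L_{x,\Phi(x)}$ at $\sigma_{\Phi,b}(x)$, I would apply Lemma~\ref{lemma composition rule} to reduce to the normalized situation $\sigma_{\Phi,b}(x)=\mathrm{id}$ (which forces $\Phi(x)=x$). This reduction is legitimate because both sides of the orthogonality claim transform covariantly under isometries of $\AdS^3$, by Equation~\eqref{transf rule timelike geodesic} and the bi-invariance of $g_{\AdS^3}$. In the normalized case, $L_{x,x}$ is the one-parameter subgroup of elliptic rotations around $x$, whose tangent at $\mathrm{id}$ corresponds under $\Lambda\colon\R^{2,1}\to\so(2,1)$ to the timelike element $\Lambda(x)$; Equation~\eqref{eq differential sigma} simultaneously gives $d\sigma_{\Phi,b}(v)=\Lambda(J(E+b)v)$. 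Since $J(E+b)$ is an endomorphism of $T_x\Hyp^2=x^\perp\subset\R^{2,1}$, one has $\langle J(E+b)v,x\rangle_{\R^{2,1}}=0$, and the compatibility~\eqref{prop lambda 2} between the Minkowski product and the AdS metric at $\mathrm{id}$ translates this into the required orthogonality in $g_{\AdS^3}$.

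For the foliation statement, I would next observe that by Remark~\ref{remark immersion trace condition} a point $x$ is regular exactly when $\tr b\neq -2$, and at such a point the image of $d\sigma_{\Phi,b}$ is a $2$-dimensional spacelike subspace coinciding, by the first step, with the orthogonal complement of the timelike line tangent to $L_{x,\Phi(x)}$. Since $y\mapsto L_{y,\Phi(y)}$ varies smoothly and this line is transverse to $d\sigma_{\Phi,b}(T_x\Omega)$, a standard implicit function theorem argument shows that the family $\{L_{y,\Phi(y)}\}$ foliates a neighborhood of $\sigma_{\Phi,b}(x)$ and that $\sigma_{\Phi,b}$ locally parameterizes an integral leaf of the orthogonal distribution.

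The last assertion is then immediate: when $\sigma_{\Phi,b}$ is a global immersion, its image $S$ is a smooth spacelike surface (the induced metric being Riemannian by Proposition~\ref{prop first ff}), and by the orthogonality just proved the timelike geodesic normal to $S$ at $\sigma_{\Phi,b}(x)$ is exactly $L_{x,\Phi(x)}$; the definition of left and right projection from Subsection~\ref{subsec: projections} then yields $\pi_l\circ\sigma_{\Phi,b}=\mathrm{id}$ and $\pi_r\circ\sigma_{\Phi,b}=\Phi$. The only mildly delicate step is verifying the equivariance in the initial normalization, but this is forced by Lemma~\ref{lemma composition rule} combined with~\eqref{transf rule timelike geodesic}, so no genuine obstruction arises.
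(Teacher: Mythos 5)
Your proposal is correct and follows essentially the same route as the paper: normalize via Lemma \ref{lemma composition rule} so that $\sigma_{\Phi,b}(x)=\mathrm{id}$, read off $d\sigma_{\Phi,b}(v)=\Lambda(J(E+b)v)$ from Equation \eqref{eq differential sigma}, and conclude orthogonality to $\Lambda(x)$, which generates $L_{x,x}$. The paper's own proof stops there and leaves the foliation and projection statements implicit, whereas you spell them out; your added details are sound and consistent with the definitions in Subsection \ref{subsec: projections}.
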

\begin{proof}
As in the proof of Proposition \ref{prop first ff}, we can assume that $\Phi(x)=x$ and that $d\Phi_x=-\bb$, which implies that $\sigma_{\Phi,b}(x)=\mathrm{id}$. From Equation \eqref{eq differential sigma}, it follows that the image of $d\sigma_{\Phi,b}$ at $x$ is orthogonal to 
%From the proof Proposition \ref{prop first ff} we have actually shown that, under the assumption $\Phi(x)=x$ and that $d\Phi_x=- \bb$, the tangent plane to the surface $\sigma_{\Phi,\bb}(\Hyp^2)$ for a fixed point $x\in\Hyp^2$ is orthogonal to 
the direction given by $\Lambda(x)$. 

On the other hand, in this assumption $L_{x,\Phi(x)}=L_{x,x}$ is the 1-parameter subgroup generated by $\Lambda(x)$ itself.
\end{proof}

\begin{remark} \label{rmk unit vector}
From the proof Proposition \ref{prop first ff} we have actually shown that, under the assumption $\Phi(x)=x$ and that $d\Phi_x=- \bb$, if $d\sigma_{\Phi,\bb}$ is non-singular at $x$, then under the identification $T_{\mathrm{id}}\isom(\Hyp^2)\cong\so(2,1)\cong\R^{2,1}$, the future unit normal vector at $\sigma_{\Phi,\bb}(x)=\mathrm{id}$ is $N(x)=2\Lambda(x)$.
In fact, $N(x)=2\Lambda(x)$ is orthogonal to the surface at $\sigma_{\Phi,\bb}(x)$ by the above observation, and is unit, since the $\AdS$ metric 
at $\mathrm{id}$ is identified to $1/4$ the Minkowski product, see Equation \eqref{prop lambda 2}. 
\end{remark}

\begin{prop} \label{prop sec ff}
Let $\Omega$ be a domain in $\Hyp^2$ and $\Phi:\Omega\subseteq\Hyp^2\to\Phi(\Omega)\subseteq\Hyp^2$ be an orientation-preserving diffeomorphism such that
$$\Phi^*g_{\Hyp^2}=g_{\Hyp^2}(\bb\cdot,\bb\cdot)\,,$$
where $\bb\in\Gamma(\mathrm{End}(T\Omega))$ is a smooth bundle morphism such that $d^\nabla\! \bb=0$ and $\det \bb=1$. Then the shape operator of the immersion $\sigma_{\Phi,\bb}$ is:
\begin{equation} \label{formula shape operator sigma}
B=-J_I(E+\bb)^{-1}(E-\bb)\,,
\end{equation}
where $J_I$ denotes the almost-complex structure associated to the induced metric $I$.
\end{prop}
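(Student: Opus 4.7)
The plan is to verify the formula by a direct computation at a single point, exploiting the bi-invariance of $g_{\AdS^3}$. Using the naturality statement in Lemma \ref{lemma composition rule}, after post-composing $\Phi$ with the isometry $\sigma_{\Phi,\bb}(x)^{-1}$ it suffices to compute $B$ at a point $x$ where $\sigma_{\Phi,\bb}(x)=\mathrm{id}$ (so that $\Phi(x)=x$ and $d\Phi_x=-\bb_x$). In the identifications $T_x\Hyp^2\subset\R^{2,1}$ and $T_{\mathrm{id}}\AdS^3\cong\so(2,1)$ via $\Lambda$, we already know $d\sigma_{\Phi,\bb}(v)=\Lambda(J(E+\bb)v)$ from Equation \eqref{eq differential sigma} and, by Remark \ref{rmk unit vector}, that the future unit normal at $\mathrm{id}$ is $N(x)=2\Lambda(x)$.

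The first ingredient is to identify the normal vector field along the whole surface. Since the timelike geodesic orthogonal to $\sigma_{\Phi,\bb}(\Omega)$ at $\sigma(y)$ is $L_{y,\Phi(y)}=\Stab(y)\cdot\sigma(y)$, a right coset of the $1$-parameter subgroup generated by $\Lambda(y)\in\so(2,1)$, the unit future normal right-translated to $\mathrm{id}$ is the Lie algebra element $n(y)=2\Lambda(y)$; equivalently $N(\sigma(y))=(dR_{\sigma(y)})_{\mathrm{id}}\bigl(2\Lambda(y)\bigr)$.

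Next I would apply the formula for the Levi-Civita connection in right-invariant frames. Bi-invariance and torsion-freeness give $\nabla_{X^R}Y^R=-\tfrac{1}{2}[X,Y]_{\so(2,1)}^R$ for right-invariant vector fields, from which one deduces that for any vector field written as $W(g)=(dR_g)_{\mathrm{id}}\,w(g)$,
\[
\nabla^{\AdS^3}_V W\big|_{\mathrm{id}}=V(w)-\tfrac{1}{2}\bigl[V,w(\mathrm{id})\bigr]_{\so(2,1)}.
\]
Applied to $W=N$ with $V=\Lambda(J(E+\bb)v)$, the first term is $V(w)=2\Lambda(v)$ because $w$ restricted to the surface is the linear map $y\mapsto 2\Lambda(y)$. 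For the bracket term, Equation \eqref{prop lambda 3} yields $[\Lambda(J(E+\bb)v),\Lambda(x)]=\Lambda((J(E+\bb)v)\boxtimes x)$, and $J_x(\cdot)=x\boxtimes(\cdot)$ from Equation \eqref{almost-complex cross} together with the antisymmetry of $\boxtimes$ gives $(J(E+\bb)v)\boxtimes x=-J_x^2(E+\bb)v=(E+\bb)v$. Combining,
\[
\nabla^{\AdS^3}_V N\big|_{\mathrm{id}}=2\Lambda(v)-\Lambda((E+\bb)v)=\Lambda((E-\bb)v).
\]

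To conclude I would invert $d\sigma$. The defining condition $d\sigma(Bv)=\nabla^{\AdS^3}_V N$ becomes $\Lambda(J(E+\bb)Bv)=\Lambda((E-\bb)v)$, whence $Bv=-(E+\bb)^{-1}J(E-\bb)v$. To match the statement, one verifies that the pullback of $J_I$ to $T_x\Hyp^2$ through $d\sigma$ equals $(E+\bb)^{-1}J(E+\bb)$, as this is the unique positively oriented almost-complex structure compatible with $\sigma^*I=\tfrac{1}{4}g_{\Hyp^2}((E+\bb)\cdot,(E+\bb)\cdot)$. Substituting gives $-J_I(E+\bb)^{-1}(E-\bb)=-(E+\bb)^{-1}J(E-\bb)=B$, as desired. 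The principal hurdle is bookkeeping: maintaining consistent identifications between $T_x\Hyp^2$, $T_{\mathrm{id}}\AdS^3$ and $\R^{2,1}$ throughout, and carefully distinguishing the pullback of $J_I$ from the ambient $J$ of $\Hyp^2$; once those are fixed the computation is essentially mechanical.
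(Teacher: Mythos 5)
Your proposal is correct and follows essentially the same route as the paper's proof: normalizing so that $\sigma_{\Phi,\bb}(x)=\mathrm{id}$, right-translating the normal field to get $N\sigma^{-1}=2\Lambda(\cdot)$, applying the bi-invariant connection formula $\nabla^{\AdS^3}_{V}N=\nabla^r_{V}N-\tfrac12[V,N]$, and inverting $d\sigma$ with $J_I=(E+\bb)^{-1}J(E+\bb)$. The computations match the paper's step for step.
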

\begin{proof}
To compute the covariant derivative of the normal vector field $N$ along a tangent vector $d\sigma_\Phi(v)$, we use the following formula (see \cite{milnor_curvature}):
$$\nabla^{\AdS^3}_{d\sigma_{\Phi,\bb}(v)} N=\nabla^l_{d\sigma_{\Phi,\bb}(v)} N+\frac{1}{2}[d\sigma_{\Phi,\bb}(v),N]=\nabla^r_{d\sigma_{\Phi,\bb}(v)} N-\frac{1}{2}[d\sigma_{\Phi,\bb}(v),N]\,,$$
where $\nabla^l$ (resp. $\nabla^r$) is the left-invariant (resp. right-invariant) flat connection which makes the left-invariant (resp. right-invariant) vector fields parallel, and $[\cdot,\cdot]$ denotes the bi-invariant extension of Lie bracket on $\mathfrak{isom}(\Hyp^2)$. 
Assuming that $\sigma_{\Phi,\bb}(x)=\mathrm{id}$ as before, we can compute
$\nabla^r_{d\sigma_{\Phi,\bb}(v)} N$ as the differential at the identity of the vector field $N\sigma_{\Phi,\bb}(x)^{-1}$. 
Observe that in the argument of Proposition \ref{prop first ff} and Remark \ref{rmk unit vector}, while doing the assumption that $\Phi(x)=x$ and that $d\Phi_x=-\bb$ (so that $\sigma_{\Phi,\bb}(x)=\mathrm{id}$), one is actually composing with an isometry of $\AdS^3$ of the form $(\mathrm{id},\alpha)$, where $\alpha=\sigma_{\Phi,\bb}(x)$. Thus the formula
$N(x)\sigma_{\Phi,\bb}(x)^{-1}=2\Lambda(x)$ is still true for every point $x$.
%Since we have showed in Remark \ref{rmk inverse projection} that $\sigma_\Phi(x)^{-1}N=\mathcal{I}_x$, by using the identification with the Lie algebra $\so(2,1)$, we must differentiate the infinitesimal rotation of unit norm along $x$, which corresponds to the map $w\mapsto 2x\boxtimes w$. Hence $\nabla^l_{d\sigma_\Phi(v)} N$, under the identification of $\so(2,1)$ with $\R^{2,1}$, corresponds to $2v\in \R^{2,1}$. Analogously, $N\sigma_\Phi(x)^{-1}$ identifies with the infinitesimal rotation around $\Phi(x)$ and therefore $\nabla^r_{d\sigma_\Phi(v)} N$ provides the term $2d\Phi_x(v)=-2b(v)$. In conclusion, we have
Recall that, under the identification of the Lie algebra $\so(2,1)$ with $\R^{2,1}$, the Lie bracket $[v,w]$ is identified to the Minkowski product $v\boxtimes w$ (see Equation \eqref{prop lambda 3}).  Therefore we get, using Equation \eqref{eq differential sigma},
$$\nabla^{\AdS^3}_{d\sigma_{\Phi,\bb}(v)} N=\Lambda\left(2v-\frac{1}{2}J(E+\bb)v\boxtimes 2x\right)=\Lambda\left(2v-v-\bb (v)\right)=\Lambda\left((E-\bb)v\right)\,,$$
where we have used Equation \eqref{almost-complex cross}. Therefore, using Equation \eqref{eq differential sigma}, that is, $d\sigma_{\Phi,\bb}(v)=\Lambda(J(E+\bb)v)$, one concludes that
%recalling from Proposition \ref{prop first ff} that under these assumptions, $d\sigma_\Phi(v)=J_{\Hyp^2}(E+b)v$, we get
$$B=-(E+\bb)^{-1}J(E-\bb)\,.$$
Using that, from Equation \eqref{firstff} for the metric, $J_I=(E+\bb)^{-1}J(E+\bb)$, one concludes the proof.
\end{proof}

%CAMBIARE It is easy to check (see \cite{bon_schl}) that $(I,B)$ are the embedding data of a maximal surface in $\AdS^3$. In conclusion, we have proved the following:
%It is an exercise to check that $B$ satisfies the Codazzi equation with respect to the first fundamental form in Equation \eqref{firstff}. Observe that, when $\rho=\pm\pi/4$, the formula for the shape operator becomes $B=J_I(E+Jb)^{-1}(E-Jb)$ or $B=J_I(E-Jb)^{-1}(E+Jb)$. In the first case, its determinant is 
Observe that, if $\Phi$ is a $\theta$-landslide, then by Lemma \ref{char landslide} we can choose $\bb$ as in the hypothesis, with moreover $\tr \bb=2\cos\theta$. Hence by Equation \eqref{formula shape operator sigma}, the curvature of $\sigma_{\Phi,\bb}(\Omega)$ is
$$\det B=\frac{2-\tr\bb}{2+\tr\bb}=\frac{1-\cos\theta}{1+\cos\theta}=\tan^2\left(\frac{\theta}{2}\right)\,,$$
and thus $\sigma_{\Phi,\bb}$ is a $K$-surface for $K=-1/\cos^2(\theta/2)$.

More generally, we can now prove:

\begin{theorem}
Let $S$ be a smooth strictly convex embedded spacelike surface in $\AdS^3$, and let $\Phi=\pi_r\circ(\pi_l)^{-1}:\Omega_l\to\Omega_r$ be the associated map. By identifying $g_{\Hyp^2}$ with the metric $I((E+J_I B)\cdot,(E+J_I B)\cdot)$ by means of the left projection $\pi_l$, and choosing $$\bb=(E+J_I B)^{-1}(E-J_I B)\,,$$
then the map $\sigma_{\Phi,\bb}:\Omega_l\to\AdS^3$ defined by $\sigma_{\Phi,\bb}(x)=\sigma$, with:
\begin{itemize}
\item $\sigma(\Phi(x))=x\,$;
\item $d\sigma_{\Phi(x)}\circ d\Phi_{x}=-\bb_x\,$.
\end{itemize}
is an embedding whose image is the original surface $S$.
\end{theorem}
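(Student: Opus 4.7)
The plan is to show that $\sigma_{\Phi,b}$ coincides with the map $\pi_l^{-1}\colon \Omega_l\to S\subset\AdS^3$ by checking directly that $\pi_l^{-1}$ satisfies both characterizing conditions of $\sigma_{\Phi,b}(x)$ in Definition \ref{defi sigma}. Since $\pi_l$ is a diffeomorphism onto $\Omega_l$ (by Lemma \ref{proj injective convex} together with strict convexity and Proposition \ref{formula KS}), this will yield that $\sigma_{\Phi,b}$ is an embedding with image $S$.

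First I would verify that the tensor $b=(E+J_I B)^{-1}(E-J_I B)$, transferred to $\Omega_l$ via $\pi_l$, satisfies the hypotheses of Definition \ref{defi sigma}; this is precisely the content of Proposition \ref{prop properties associated map}. Since $S$ is strictly convex, $\det B>0$, and Equation \eqref{eq: gambadilegno} then yields $\tr b\in(-2,2)$; in particular $\tr b\neq -2$, so by Remark \ref{remark immersion trace condition} the map $\sigma_{\Phi,b}$ is an immersion.

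The characterizing condition \eqref{definition1}, namely $\pi_l^{-1}(x)(\Phi(x))=x$, is immediate: setting $\gamma=\pi_l^{-1}(x)$, the timelike geodesic of $\AdS^3$ orthogonal to $S$ at $\gamma$ is $L_{x,\Phi(x)}=\{\eta\in\isom(\Hyp^2):\eta(\Phi(x))=x\}$, and $\gamma$ lies on this geodesic by definition of the projections. For condition \eqref{definition2}, I would exploit the decomposition $\gamma=\mathcal{I}_x\cdot G(\gamma)$ of Subsection \ref{subsec: projections}: from $G(\gamma)(\Phi(x))=x$ (which follows from $\gamma(\Phi(x))=x$ together with the fact that $x$ is the unique fixed point of $\mathcal{I}_x$) and from $d\mathcal{I}_x|_x=-\mathrm{id}$, one obtains $d\gamma|_{\Phi(x)}=-dG(\gamma)|_{\Phi(x)}$. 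Expressing $d\pi_l$ and $d\pi_r$ through the operators $E+J_I B$ and $E-J_I B$ as prescribed by Proposition \ref{formula KS} then reduces the identity $dG(\gamma)|_{\Phi(x)}\circ d\Phi_x=b_x$ to an algebraic manipulation with the chosen $b$.

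The main obstacle is the verification of \eqref{definition2}, which requires careful bookkeeping between $T_\gamma S\subset T_\gamma\AdS^3$ and the tangent spaces of $\Hyp^2$ at $x$ and $\Phi(x)$ under the action of $\isom(\Hyp^2)$ on itself. If the direct calculation proves too delicate, a cleaner alternative is to match embedding data: applying Propositions \ref{prop first ff} and \ref{prop sec ff} together with the algebraic identities $E+b=2(E+J_I B)^{-1}$ and $(E+b)^{-1}(E-b)=J_I B$ shows that, after identification via $\pi_l$, the first fundamental form and shape operator of $\sigma_{\Phi,b}(\Omega_l)$ coincide with $I$ and $B$ respectively. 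Combined with Corollary \ref{cor orthogonal lines} (placing $\sigma_{\Phi,b}(x)$ on $L_{x,\Phi(x)}$ with orthogonal tangent plane), this identifies $\sigma_{\Phi,b}(\Omega_l)$ as the unique parallel of $S$ with shape operator $B$, namely $S$ itself.
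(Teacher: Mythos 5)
Your fallback argument (matching embedding data) is essentially the paper's proof, and it is the route you should take; your primary route has a gap. Concretely: Proposition \ref{formula KS} is a statement about the \emph{pull-back metrics} $\pi_l^*g_{\Hyp^2}$ and $\pi_r^*g_{\Hyp^2}$ only. It determines $d\pi_l$ and $d\pi_r$ up to post-composition with an isometry of the target tangent space, so it does not let you "express $d\pi_l$ and $d\pi_r$ through $E+J_IB$ and $E-J_IB$" as specific linear maps. Consequently the identity $dG(\gamma)|_{\Phi(x)}\circ d\Phi_x=b_x$ that you want to verify for condition \eqref{definition2} cannot be reduced to an algebraic manipulation using only the cited results; you would have to recompute the differentials of the projections from scratch (a Lie-theoretic computation of the same flavour as the proofs of Propositions \ref{prop first ff} and \ref{prop sec ff}). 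The reduction $d\gamma|_{\Phi(x)}=-dG(\gamma)|_{\Phi(x)}$ via $\gamma=\mathcal I_x\cdot G(\gamma)$ is fine, but the remaining step is exactly the hard part and is not supplied.

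The alternative you sketch is the paper's argument: one checks $(E+J_IB)(E+b)=2E$, so Proposition \ref{prop first ff} gives $\pi_l^*\sigma_{\Phi,b}^*(g_{\AdS^3})=I$, and Proposition \ref{prop sec ff} gives shape operator $-J_I(E+b)^{-1}(E-b)=-J_I^2B=B$. Two caveats on how you close the argument. First, equality of embedding data only yields that $\sigma_{\Phi,b}\circ\pi_l$ and the inclusion of $S$ differ by a global isometry $(\gamma_l,\gamma_r)$ of $\AdS^3$; it does not by itself place the image among the parallels of $S$. Second, "the unique parallel of $S$ with shape operator $B$" is not quite the right invocation (for an umbilic surface a proper parallel can share the shape operator). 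The clean finish, as in the paper, is: by Corollary \ref{cor orthogonal lines} both surfaces pass through and are orthogonal to $L_{x,\Phi(x)}$ at the corresponding points, so $(\gamma_l,\gamma_r)$ maps each normal line $L_{x,\Phi(x)}$ to itself; by the transformation rule \eqref{transf rule timelike geodesic} this forces $\gamma_l(x)=x$ and $\gamma_r(\Phi(x))=\Phi(x)$ for all $x$ in the open set $\Omega_l$, hence $\gamma_l=\gamma_r=\mathrm{id}$.
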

\begin{proof}
Recall from Proposition \ref{prop properties associated map} that $\bb$ satisfies the hypothesis $d^{\nabla}\bb=0$ and $\det\bb=1$. Moreover, since $S$ is strictly convex, then $\tr\bb\in(-2,2)$ and thus $\sigma_{\Phi,\bb}$ is an immersion by Remark \ref{remark immersion trace condition}. We will check that the embedding data of $\sigma_{\Phi,\bb}\circ \pi_l$ coincide with those of $S$, and thus in principle $\sigma_{\Phi,\bb}\circ \pi_l$ extends to a global isometry $(\gamma_l,\gamma_r)$ of $\AdS^3$. By Corollary \ref{cor orthogonal lines}, if $\pi_l(\gamma)=x$, then the point $\sigma_{\Phi,\bb}(x)$ belongs to the geodesic $L_{x,\Phi(x)}$ which is orthogonal to $S$ at $\gamma$, and is also orthogonal to the image of $\sigma_{\Phi,\bb}$. So we deduce that $(\gamma_l,\gamma_r)$ preserves every geodesic of the form $L_{x,\Phi(x)}$. Hence $\gamma_l$ fixes every point $x\in\Omega_l$, and analogously $\gamma_r$ fixes every point $\Phi(x)$. Therefore $\gamma_l=\gamma_r=\mathrm{id}$ and this will conclude the proof.

To check the claim of the embedding data, using Equation \eqref{firstff},
$$
\pi_l^* \sigma_{\Phi,\bb}^*(g_{\AdS^3})=\frac{1}{4}\pi_l^*g_{\Hyp^2}((E+\bb)\cdot,(E+\bb)\cdot)=\frac{1}{4}I((E+J_I B)(E+\bb)\cdot,(E+J_I B)(E+\bb)\cdot)\,.
$$
On the other hand,
\begin{align*}
(E+J_I B)(E+\bb)&=(E+J_I B)(E+(E+J_I B)^{-1}(E-J_I B)) \\
&=(E+J_I B)(E+J_I B)^{-1}(E+J_I B+E-J_I B)=2E\,.
\end{align*}
In a similar way, according to Equation \eqref{formula shape operator sigma}, the shape operator of $\sigma_{\Phi,\bb}\circ\pi_l$ is:
\begin{align*}
-J_I(E+\bb)^{-1}(E-\bb)&=J_I(E+(E+J_I B)^{-1}(E-J_I B))^{-1}(E-(E+J_I B)^{-1}(E-J_I B)) \\
&=J_I(2(E+J_I B)^{-1})^{-1}(-2(E+J_I B)^{-1}J_I B)=-J_I^{2}B\,,
\end{align*}
and thus it coincides with $B$, the shape operator of the original surface $S$. This concludes the proof.
\end{proof}

As already observed, the condition that $\tr\bb$ is a constant in $(-2,2)$ coincides with the condition that $S$ has constant curvature in $(-\infty,-1)$. Hence we obtain the following corollary:

\begin{cor} \label{cor reconstruct ksurf}
Let $\Omega$ be a  domain in $\Hyp^2$ and $\Phi:\Omega\subseteq\Hyp^2\to\Hyp^2$ be a $\theta$-landslide with $\theta\in(0,\pi)$. If $b\in\Gamma(\mathrm{End}(T\Omega))$ satisfies
$$\Phi^*g_{\Hyp^2}=g_{\Hyp^2}(b\cdot,b\cdot)\,,$$
with $d^\nabla b=0$, $\det b=1$, $\tr b=2\cos\theta$ and $\tr Jb<0$, then the map $\sigma_{\Phi,b}$ defined by:
\begin{itemize}
\item $\sigma(\Phi(x))=x\,$;
\item $d\sigma_{\Phi(x)}\circ d\Phi_{x}=-\bb_x\,$.
\end{itemize}
is the embedding of the past-convex $K$-surface in $\AdS^3$ (for $K=-1/\cos^2(\theta/2)$) such that $\pi_l\circ \sigma_{\Phi,b}=\mathrm{id}$ and $\pi_r\circ \sigma_{\Phi,b}=\Phi$.
\end{cor}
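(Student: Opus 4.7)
The plan is to assemble ingredients already established in Sections 4 and 5; essentially no new machinery is required. First, since $\theta\in(0,\pi)$ and $\tr b=2\cos\theta$, we have $|\tr b|<2$, so Remark \ref{remark immersion trace condition} shows that $\sigma_{\Phi,b}$ is an immersion. With this in hand, Corollary \ref{cor orthogonal lines} immediately yields $\pi_l\circ\sigma_{\Phi,b}=\mathrm{id}_{\Omega}$ and $\pi_r\circ\sigma_{\Phi,b}=\Phi$. The first identity forces $\sigma_{\Phi,b}$ to be injective, and together with the immersion property this promotes $\sigma_{\Phi,b}$ to an embedding.

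Second, to identify the image as a $K$-surface of the correct curvature, I apply Proposition \ref{prop sec ff} to read off the shape operator
\[
B = -J_I(E+b)^{-1}(E-b).
\]
Since $\det b = 1$ and $\tr b = 2\cos\theta$, for a $2\times 2$ tensor one has $\det(E+b) = 1 + \tr b + \det b = 4\cos^2(\theta/2)$ and similarly $\det(E-b) = 4\sin^2(\theta/2)$. Hence
\[
\det B \;=\; \frac{\det(E-b)}{\det(E+b)} \;=\; \tan^2(\theta/2),
\]
and the Gauss equation \eqref{Gauss} gives $K = -1 - \det B = -1/\cos^2(\theta/2)$. In particular $\det B > 0$, so the surface is strictly convex.

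Third, for the past-convexity assertion I appeal to Remark \ref{rmk:picodepaperis}: the surface is past-convex precisely when $\tr(Jb')<0$, where $b'=(E+J_I B)^{-1}(E-J_I B)$ is the canonical tensor attached by Proposition \ref{prop properties associated map}. A short algebraic substitution using the formula for $B$ above gives $E+J_I B=2(E+b)^{-1}$ and $E-J_I B=2(E+b)^{-1}b$, whence $b'=b$. Consequently the hypothesis $\tr Jb<0$ translates exactly into past-convexity of $\sigma_{\Phi,b}(\Omega)$.

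I expect no real obstacle in this argument: the only mildly nontrivial step is the verification that $b' = b$, which is purely algebraic and relies on the cancellation $(E+b)\cdot 2(E+b)^{-1} = 2E$. Everything else is a direct invocation of the representation formula packaged in Propositions \ref{prop first ff} and \ref{prop sec ff}, so the corollary is essentially a bookkeeping exercise built on top of them.
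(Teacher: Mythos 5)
Your argument is correct and follows essentially the same route as the paper, which derives the corollary from Remark \ref{remark immersion trace condition} (immersion since $\tr b=2\cos\theta\neq -2$), Corollary \ref{cor orthogonal lines} (the projection identities), and the shape operator formula of Proposition \ref{prop sec ff} giving $\det B=\frac{2-\tr b}{2+\tr b}=\tan^2(\theta/2)$ and hence $K=-1/\cos^2(\theta/2)$. Your explicit verification that $b'=b$, so that $\tr Jb<0$ yields past-convexity via Remark \ref{rmk:picodepaperis}, is a correct elaboration of a step the paper leaves implicit.
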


\begin{remark}
Applying Remark \ref{remark parallel immersion}, the map $\sigma_{\Phi,-b}$, which is defined by:
\begin{itemize}
\item $\sigma(\Phi(x))=x\,$;
\item $d\sigma_{\Phi(x)}\circ d\Phi_{x}=\bb_x\,$.
\end{itemize}
corresponds to the dual embedding, which is a $K^*$-surface, with $K^*=-K/(K+1)$, whose associated landslide map $\Phi$ is the same.
\end{remark}

Although this is not the main point of this paper, we observe that this representation formula holds also for maximal surfaces. In fact, if $\Phi$ is a minimal Lagrangian map (which is the same as a $(\pi/2)$-landslide, as already observed), then there exists a smooth $b_0\in\Gamma(\mathrm{End}(T\Omega))$ such that $\Phi^*g_{\Hyp^2}=g_{\Hyp^2}(b_0\cdot,b_0\cdot)$ with
\begin{itemize}
\item $d^{\nabla}b_0=0\,;$
\item $\det b_0=1\,;$
\item $b_0$ is self-adjoint for $g_{\Hyp^2}\,$.
\end{itemize}
Moreover, one can assume $b_0$ is positive definite (up to replacing $b_0$ with $-b_0$). The condition that $b_0$ is positive definite ensures that $\tr b_0\neq -2$, and thus $\sigma_{\Phi,b_0}$ is an immersion.  Moreover, by a direct computation (see \cite[§3]{bon_schl}), $B$ is traceless, that is, $\sigma_{\Phi,b_0}(\Omega)$ is a maximal surface. In fact, in \cite{bon_schl} the existence of maximal surfaces was proved in order to obtain results on the existence of minimal Lagrangian extensions. Our next corollary goes in the opposite direction.

\begin{cor}
Let $\Omega$ be a open domain in $\Hyp^2$ and $\Phi:\Omega\subseteq\Hyp^2\to\Hyp^2$ be a minimal Lagrangian map such that
$$\Phi^*g_{\Hyp^2}=g_{\Hyp^2}(b_0\cdot,b_0\cdot)\,,$$
so that $b_0$ is positive definite, $g_{\Hyp^2}$-self-adjoint, $d^\nabla b_0=0$ and $\det b_0=1$. Then the map $\sigma_{\Phi,b_0}$ defined by:
\begin{itemize}
\item $\sigma(\Phi(x))=x\,$;
\item $d\sigma_{\Phi(x)}\circ d\Phi_{x}=-(\bb_0)_x\,$.\end{itemize}
is the embedding of the maximal surface in $\AdS^3$  such that $\pi_l\circ \sigma_{\Phi,b_0}=\mathrm{id}$ and $\pi_r\circ \sigma_{\Phi,b_0}=\Phi$. 
\end{cor}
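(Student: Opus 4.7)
The plan is to mirror the argument of Corollary \ref{cor reconstruct ksurf}, replacing the trace condition ``$\tr b=2\cos\theta$'' with the self-adjointness and positive-definiteness of $b_0$, and deducing that the resulting surface is maximal rather than a $K$-surface. Since the hypotheses $d^\nabla b_0=0$ and $\det b_0=1$ are already those required in Definition \ref{defi sigma}, Propositions \ref{prop first ff} and \ref{prop sec ff} apply verbatim to $\sigma_{\Phi,b_0}$.

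First I would check that $\sigma_{\Phi,b_0}$ is an immersion using Remark \ref{remark immersion trace condition}. Because $b_0$ is $g_{\Hyp^2}$-self-adjoint, positive definite, and has determinant $1$, its eigenvalues are of the form $\lambda,1/\lambda$ with $\lambda>0$, so $\tr b_0=\lambda+1/\lambda\geq 2$; in particular $\tr b_0\neq -2$ and $\sigma_{\Phi,b_0}$ is an immersion.

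Next I would verify that the shape operator provided by Proposition \ref{prop sec ff},
\[
B=-J_I(E+b_0)^{-1}(E-b_0),
\]
is traceless. Using that the induced metric satisfies $I=\tfrac{1}{4}g_{\Hyp^2}((E+b_0)\cdot,(E+b_0)\cdot)$, the almost-complex structure $J_I$ is conjugate to the hyperbolic one via $J_I=(E+b_0)^{-1}J(E+b_0)$, so
\[
B=-(E+b_0)^{-1}J(E-b_0).
\]
Now I would diagonalize $b_0$ in a $g_{\Hyp^2}$-orthonormal basis $e_1,e_2$ with $b_0 e_i=\lambda_i e_i$ and $\lambda_1\lambda_2=1$, and use $Je_1=e_2$, $Je_2=-e_1$; a direct computation gives
\[
B(e_1)=-\frac{1-\lambda_1}{1+\lambda_2}e_2,\qquad B(e_2)=\frac{1-\lambda_2}{1+\lambda_1}e_1,
\]
so $B$ has zero diagonal entries and $\tr B=0$. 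Hence $\sigma_{\Phi,b_0}(\Omega)$ is a maximal surface in $\AdS^3$.

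Finally, since $\sigma_{\Phi,b_0}$ is an immersion and the timelike geodesic $L_{x,\Phi(x)}$ is, by Corollary \ref{cor orthogonal lines}, orthogonal to its image at $\sigma_{\Phi,b_0}(x)$, the identities $\pi_l\circ\sigma_{\Phi,b_0}=\mathrm{id}$ and $\pi_r\circ\sigma_{\Phi,b_0}=\Phi$ follow directly from the same corollary. The bulk of the work is the short linear-algebra computation showing $\tr B=0$; there is no genuine obstacle, as every structural ingredient (immersion criterion, shape operator formula, orthogonality to the lines $L_{x,\Phi(x)}$) has been established in the preceding sections.
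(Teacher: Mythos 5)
Your argument is correct and follows the paper's own route: the paper likewise deduces that $\sigma_{\Phi,b_0}$ is an immersion from $\tr b_0\geq 2\neq -2$ (positive definiteness plus $\det b_0=1$), obtains the projection identities from Corollary \ref{cor orthogonal lines}, and asserts $\tr B=0$ by ``a direct computation'' referred to \cite{bon_schl}. The only difference is that you actually carry out that computation by diagonalizing $b_0$ in the formula $B=-(E+b_0)^{-1}J(E-b_0)$, which is a welcome addition rather than a deviation.
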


%\begin{remark}
%It is showed in \cite{Schlenker-Krasnov} that, given a maximal surface $\Sigma_0$ in $\AdS^3$, the surfaces $\Sigma_{\pi/4}$ and $\Sigma_{-\pi/4}$ obtained as parallel surfaces at timelike distance $\pi/4$ from $\Sigma_0$ are surfaces of constant curvature $-2$ (one convex and one concave). Using the description of Remark \ref{rmk orthogonal lines} of the orthogonal lines to $\Sigma_0$, one sees directly that $\Sigma_{\pi/4}$ and $\Sigma_{-\pi/4}$ are obtained by post-composing $\sigma_\Phi(x)$ with rotations of angle $\pm\pi/2$ around $\Phi_x$ (namely, with the almost-complex structure $J_{\Hyp^2}$ of $\Hyp^2$ at $\Phi(x)$). Therefore we obtain the embedding $\varsigma_\Phi^+$ of $\Sigma_{\pi/4}$, which is defined by
%\begin{itemize}
%\item $\varsigma^+_\Phi(x)(x)=\Phi(x)\,$;
%\item $(d\varsigma^+_\Phi)_x=-J_{\Hyp^2}\circ d\Phi_{x}\circ b^{-1}\,$.
%\end{itemize}
%Analogously, the embedding $\varsigma_\Phi^-$ of $\Sigma_{-\pi/4}$ is defined by:
%\begin{itemize}
%\item $\varsigma^+_\Phi(x)(x)=\Phi(x)\,$;
%\item $(d\varsigma^+_\Phi)_x=J_{\Hyp^2}\circ d\Phi_{x}\circ b^{-1}\,$.
%\end{itemize}
%\end{remark}
In fact, as it is to be expected, considering $b=\pm Jb_0$ one recovers the $K$-surfaces which are $(\pi/4)$-parallel surfaces to the maximal surface.

%\begin{remark}
%However, in this special case $\Phi=\mathrm{id}$ is \emph{also} a $\theta$-landslide for every $\theta\in(0,\pi)$, by setting $b=R_{-\pi/2+\theta}$ in the definition. Thus the surface $\sigma_{\mathrm{id},\rho}(\Hyp^2)$ is the equidistant surface from the totally geodesic plane $\Hyp^2$ composed of involutions, at distance $-\pi/4+\rho+\theta/2$.
%\end{remark}

\section{The explicit construction of a barrier} \label{sec barrier}
In this section we will construct an explicit example of $\theta$-landslides between hyperbolic surfaces, which commute with a 1-parameter hyperbolic group of isometries.

We will use the upper half-plane model of $\Hyp^2$, and will denote by $z=x+iy$ the standard coordinates of the upper half-plane. Let us introduce a new coordinate $w=s+it$, with $w\in \R\times (-\pi/2,\pi/2)$, defined by $$z=i\exp(w)\,.$$ (See also \cite[Chapter 2]{hubbardbook}.) %Consider the upper-half plane model of the hyperbolic plane and pull-back the hyperbolic metric by means of the map $\pi:\R\times (-\pi/2,\pi/2)\to\Hyp^2$, $\pi(z)=i\exp(z)$. In this way, if $z=x+iy$, we get
Clearly $w$ is a conformal coordinate, and the hyperbolic metric takes the form
\begin{equation} \label{eq:edera}
h_0=\frac{|dz|^2}{y^2}=\frac{|dw|^2}{\cos^2(t)}\,.
\end{equation}
In this coordinates, the line $l=\{t=0\}$ is a geodesic with endpoints $0,\infty\in \RP^1$. The isometries preserving the geodesic $l$ have the form $\gamma_a(s,t)=(s+a,t)$. Moreover, the lines $\{s=s_0\}$ are geodesics orthogonal to $l$. See Figure \ref{fig:coordinateshp}.

\begin{figure}[htbp]
\centering
\includegraphics[height=4cm]{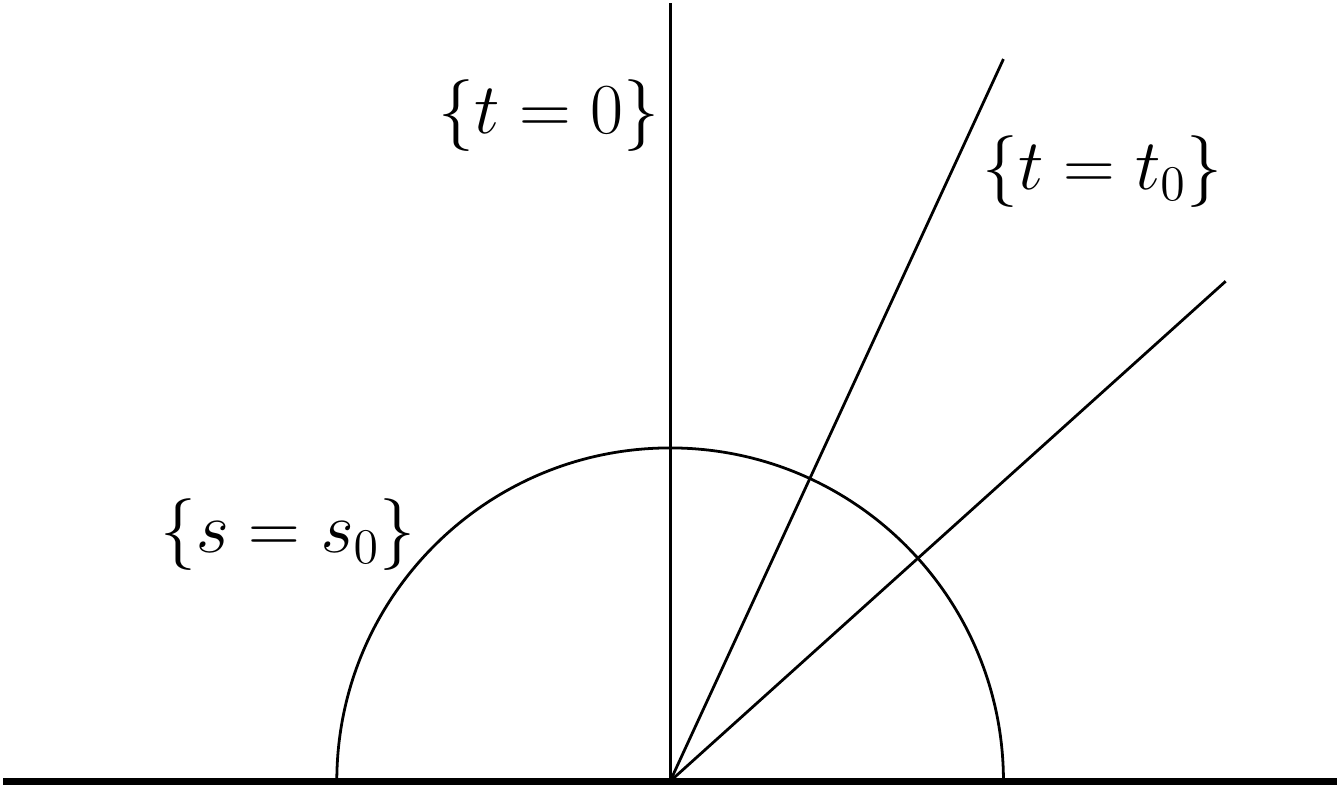}
\caption{In the $w=s+it$ on the upper half-plane, the curves $\{t=t_0\}$ are equidistant curves from the geodesic $\{t=0\}$. The curves $\{s=s_0\}$ are geodesics orthogonal to $\{t=0\}$. \label{fig:coordinateshp}}
\end{figure}

We will look for a $\theta$-landslide $\Phi_\theta$ invariant for the 1-parameter group $\gamma_a$. More precisely, we require that $\Phi_\theta\circ\gamma_a=\gamma_a\circ\Phi_\theta$. Hence $\Phi_\theta$ will necessarily have the form
$$\Phi_\theta(s,t)=(s-\ph(t),\psi(t))\,.$$
We want to impose that  $\Phi_\theta$ is an orientation-preserving diffeomorphism, hence we will assume $\psi$ increasing. Thus we will always consider $\psi'>0$. 

We will make use of the following remark:
\begin{remark} \label{rmk varpi}
Given an orientation-preserving local diffeomorphism $\Phi:\Omega\subseteq \Hyp^2\to\Hyp^2$, there exists a unique bundle morphism $b_0\in\Gamma(\mathrm{End}(T\Hyp^2))$ with
$$\Phi^*g_{\Hyp^2}=g_{\Hyp^2}(b_0\cdot,b_0\cdot)\,,$$
such that 
 $b_0$ is positive definite and self-adjoint for $g_{\Hyp^2}$ (which is equivalent to $\mathrm{tr}Jb_0=0$).
Indeed, $b_0$ is the square root of $(g_{\Hyp^2})^{-1}\Phi^*g_{\Hyp^2}$.  In general, $\Phi^*g_{\Hyp^2}$ is equal to $g_{\Hyp^2}(b\cdot,b\cdot)$, for a (smooth) bundle morphism $b$, if and only if $b$ is of the form $R_{\rho}b_0$ for some smooth function $\rho$.

 \end{remark}

Hence, let us begin by computing
\begin{align*}
\Phi_\theta^*h_0=&\frac{1}{\cos^2(\psi(t))}\left((ds-\ph'(t)dt)^2+\psi'(t)^2dt^2\right) \\
=&\frac{ds^2-2\ph'(t)dsdt+(\ph'(t)^2+\psi'(t)^2)dt^2}{\cos^2(\psi(t))}\,.
\end{align*}
Therefore we have $\Phi_\theta^*h_0=h_0(b_0\cdot,b_0\cdot)=h_0(\cdot,b_0^2)$, where $b_0$ is self-adjoint for $h_0$, positive definite, and in the $(s,t)$-coordinates
$$b_0^2=h_0^{-1}\Phi_\theta^*h_0=\frac{\cos^2(t)}{\cos^2(\psi(t))}\begin{pmatrix} 1 & -\ph'(t) \\ -\ph'(t) & \ph'(t)^2+\psi'(t)^2 \end{pmatrix}\,.$$
Let us observe that $$\det b_0^2=\frac{\cos^4(t)}{\cos^4(\psi(t))}\psi'(t)^2\,.$$
Since we are imposing that $\Phi_\theta$ is area-preserving, we must require $\det b=1$, which is equivalent to $\det b_0=\det b_0^2=1$. We obtain (assuming $\psi'(t)>0$):
$$\frac{\psi'(t)}{\cos^2(\psi(t))}=\frac{1}{\cos^2(t)}\,,$$
which leads to the condition $\tan(\psi(t))=\tan t+C$ (after a choice of the sign).

In the following, we choose $C=0$, thus $\psi(t)=t$ and we look for a map of the form $\Phi(s,t)=(s-\ph(t),t)$.

Observe that, for the Cayley-Hamilton theorem, $b_0^2-(\tr b_0)b_0+(\det b_0)E=0$. Hence we have $b_0^2+E=(\tr b_0)b_0$ and, taking the trace, $(\tr b_0)^2=2+\tr(b_0^2)$. In our case, $\tr(b_0^2)=2+\ph'(t)^2$ and therefore we obtain the following positive definite root of $b_0^2$:
$$b_0=\frac{1}{\sqrt{4+\ph'(t)^2}}(b_0^2+E)=\frac{1}{\sqrt{4+\ph'(t)^2}}\begin{pmatrix} 2 & -\ph'(t) \\ -\ph'(t) & 2+\ph'(t)^2 \end{pmatrix}\,.$$

From Remark \ref{rmk varpi}, it remains to impose that $b=R_{\rho}b_0$ satisfies $\tr b=2\cos\theta$, $\tr Jb<0$ and the Codazzi condition for $g_{\Hyp^2}$. 

\begin{remark}
The condition $\tr b=2\cos\theta$ is equivalent to $\tr(R_{\rho}b_0)=2\cos\theta$. Observe that
$$R_{\rho}b_0=(\cos\rho E+\sin\rho J)b_0=(\cos\rho)b_0+(\sin\rho)Jb_0~.$$
Since $b_0$ is self-adjoint, we have $\tr(Jb_0)=0$, hence the following formula holds:
$$\mathrm{tr}(R_{\rho}b_0)=\cos\rho\,\tr b_0~.$$ 
Using this formula, the condition $\tr(R_{\rho}b_0)=2\cos\theta$ can be rewritten as
\begin{equation}\label{equation varpi trace}
\cos\rho=\frac{2\cos\theta}{\tr b_0}\,.
\end{equation}
Note that $\tr b_0\geq 2$, since $\det b_0=1$ and $b_0$ is positive definite. Thus there are precisely two possible continuous choices of the angle $\rho$, namely
\begin{equation} \label{choice varpi}
\rho=\pm\arccos\left(\frac{2\cos\theta}{\mathrm{tr}(b_0)}\right)\,.
\end{equation}
 Since $\cos\theta\neq 1$, neither of these two functions can vanish at any point. 
 \end{remark}

In the case being considered, $\tr(b_0)=\sqrt{4+\varphi'(t)^2}$. Hence, by imposing that $\tr(Jb)<0$, we get $\sin\rho>0$ and thus
$$\rho=\arccos\left(\frac{2\cos\theta}{\sqrt{4+\varphi'(t)^2}}\right)\,.$$
It thus remains to impose the Codazzi condition to $b=R_\rho b_0$ to determine the function $\ph(t)$.
Using Equation \eqref{eq:etabeta}, it suffices to impose
$$d\rho(\partial_s)J b_0(\partial_t) -d\rho(\partial_t)J b_0(\partial_s)+d^\nabla b_0(\partial_s,\partial_t)=0\,,$$
and since $d\rho(\partial_s)=0$ and $[\partial_s,\partial_t]=0$,
we must impose
\begin{equation} \label{eq:pino}
 \nabla_{\partial_s}b_0(\partial_t)-\nabla_{\partial_t}b_0(\partial_s)-d\rho(\partial_t)J b_0(\partial_s)=0\,.
\end{equation} 
 For this purpose, let us compute
\begin{align*}
\nabla_{\partial_s}b_0(\partial_t)&=\nabla_{\partial_s}\left(\frac{1}{\sqrt{4+\ph'(t)^2}}(-\ph'(t)\partial_s+(2+\ph'(t)^2)\partial_t)\right) \\
&=\frac{1}{\sqrt{4+\ph'(t)^2}}\left(-\ph'(t)\nabla_{\partial_s}\partial_s+(2+\ph'(t)^2)\nabla_{\partial_s}\partial_t\right)
\end{align*}
Observing that $\nabla_{\partial_s}\partial_s=-\tan(t)\partial_t$ and $\nabla_{\partial_s}\partial_t=\tan(t)\partial_s$, we obtain
$$\nabla_{\partial_s}b_0(\partial_t)=\frac{\tan(t)}{\sqrt{4+\ph'(t)^2}}\left((2+\ph'(t)^2){\partial_s}+\ph'(t)\partial_t\right)\,.$$
On the other hand, 
\begin{align*}
\nabla&_{\partial_t}b_0(\partial_s)=\nabla_{\partial_t}\left(\frac{1}{\sqrt{4+\ph'(t)^2}}(2\partial_s-\ph'(t)\partial_t)\right) \\
&=-\frac{\ph'(t)\ph''(t)}{(4+\ph'(t)^2)^{3/2}}(2\partial_s-\ph'(t)\partial_t) 
+ \frac{1}{\sqrt{4+\ph'(t)^2}}\left(2\nabla_{\partial_t}\partial_s-\ph''(t)\partial_t-\ph'(t)\nabla_{\partial_t}\partial_t\right) \,,
\end{align*}
Using moreover $\nabla_{\partial_t}\partial_t=\tan(t)\partial_t$,
\begin{align*}
\nabla_{\partial_t}b_0(\partial_s)=&\left(-\frac{2\ph'(t)\ph''(t)}{(4+\ph'(t)^2)^{3/2}}+\frac{2\tan(t)}{\sqrt{4+\ph'(t)^2}} \right)\partial_s \\
&+ \left(\frac{\ph'(t)^2\ph''(t)}{(4+\ph'(t)^2)^{3/2}}-\frac{\ph''(t)+\ph'(t)\tan(t)}{\sqrt{4+\ph'(t)^2}} \right)\partial_t\,.
\end{align*}
We also compute:
\begin{align*}
\partial_t\rho&= \frac{1}{\sqrt{1-\frac{4\cos^2\theta}{4+\varphi'(t)^2}}} \frac{2\cos\theta}{(4+\varphi'(t)^2)^{3/2}}\varphi'(t)\varphi''(t) \\
&=\frac{2\cos\theta\,\varphi'(t)\varphi''(t)}{(4+\varphi'(t)^2)\sqrt{4\sin^2\theta+\varphi'(t)^2}}\,.
\end{align*}

Finally $Jb_0(\partial_s)=(\varphi'(t)\partial_s+2\partial_t)/\sqrt{4+\varphi'(t)^2}$. Therefore the Codazzi condition in Equation \eqref{eq:pino} gives, for the $\partial_s$ components,
$$\frac{\tan(t)\ph'(t)^2}{\sqrt{4+\ph'(t)^2}}=-\frac{2\ph'(t)\ph''(t)}{(4+\ph'(t)^2)^{3/2}}+\frac{2\cos\theta\,\varphi'(t)^2\varphi''(t)}{(4+\varphi'(t)^2)^{3/2}\sqrt{4\sin^2\theta+\varphi'(t)^2}}\,,$$
and thus
\begin{equation} \label{eq min lag}
-\tan(t)=\frac{2}{4+\ph'(t)^2}\left(\frac{1}{\ph'(t)}-\frac{\cos\theta}{\sqrt{4\sin^2\theta+\varphi'(t)^2}}\right)\ph''(t)\,.
\end{equation}
On the other hand, equating the $\partial_t$-components in \eqref{eq:pino} gives
$$\frac{\ph'(t)^2\ph''(t)}{(4+\ph'(t)^2)^{3/2}}=\frac{\ph''(t)+2\ph'(t)\tan(t)}{\sqrt{4+\ph'(t)^2}}
-\frac{4\cos\theta\,\varphi'(t)\varphi''(t)}{(4+\varphi'(t)^2)^{3/2}\sqrt{4\sin^2\theta+\varphi'(t)^2}}$$
and thus
$$2\ph'(t)\tan(t)=\frac{\ph''(t)(\ph'(t)^2-4-\ph'(t)^2)}{(4+\ph'(t)^2)}+
\frac{2\cos\theta\,\varphi'(t)\varphi''(t)}{(4+\varphi'(t)^2)\sqrt{4\sin^2\theta+\varphi'(t)^2}}\,,$$
hence leading again to Equation \eqref{eq min lag}.

\begin{remark}
Observe that, for the first term in the RHS of Equation \eqref{eq min lag},
$$\int \frac{2\ph''(t)}{\ph'(t)(4+\ph'(t)^2)}dt=\frac{1}{2}\log(\ph'(t))-\frac{1}{4}\log(\ph'(t)^2+4)+\text{cost.}$$
Hence if $\cos\theta=0$, by straightforward algebra one obtains
$$\frac{\ph'(t)^2}{\ph'(t)^2+4}=e^{-4C}\cos^4(t)\,,$$
for some constant $C$. In conclusion,
$$\ph'(t)^2=4\left(\frac{1}{1-e^{-4C}\cos^4(t)}-1\right)\,,$$
or equivalently
\begin{equation} \label{derivative phi}
\ph'(t)=2\frac{e^{-2C}\cos^2(t)}{\sqrt{1-e^{-4C}\cos^4(t)}}\,.
\end{equation}

Observe that, if $C>0$, then $\ph'(t)$ is defined for all $t\in[-\pi/2,\pi/2]$, and therefore we obtain minimal Lagrangian maps from $\Hyp^2$ to $\Hyp^2$. Up to composing with a hyperbolic translation still of the form $\gamma_a(s,t)=(s+a,t)$, we can assume $\ph$ is a primitive of the RHS of Equation \eqref{derivative phi} such that $\ph(-\pi/2)=0$, so that the  points in $\partial\Hyp^2$ with coordinates $w=(s,-\pi/2)$ (namely, those points in the boundary on one side of the geodesic connecting $0$ and $\infty$) are fixed. It is also easy to check that the map obtained by choosing the opposite sign in Equation \eqref{derivative phi} provides the inverse map. See Figure \ref{fig:dynamics1}. However, in this paper we are interested in the solutions of Equation \eqref{eq min lag} which are \emph{not} defined on the whole interval $[-\pi/2,\pi/2]$.
\end{remark}

\begin{figure}[htbp]
\centering
\includegraphics[width=0.8\textwidth]{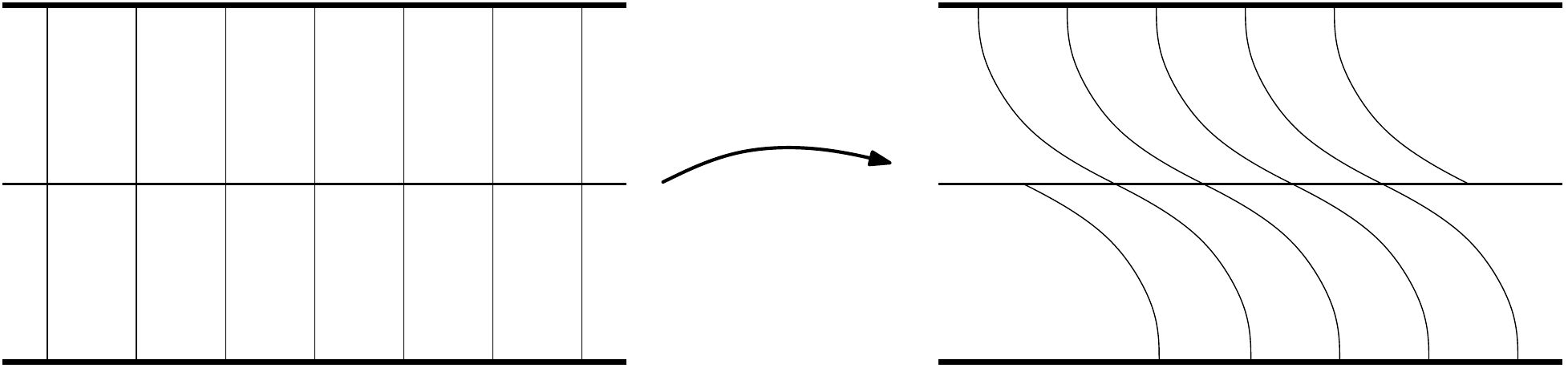}
\caption{The dynamics of the map $\Phi$ when the constant $C$ is positive, in the $w=s+it$ coordinate. This is a conformal model of $\Hyp^2$. \label{fig:dynamics1}}
\end{figure}

\subsection{Landslides of the hyperbolic half-plane} \label{subsec half-plane}

Let us now study the ODE in Equation \eqref{eq min lag}. Let us define 
$$F:[0,+\infty)\to\R\qquad F(r)=-2\cos\theta\int_{0}^r \frac{du}{(4+u^2)\sqrt{4\sin^2\theta+u^2}}\,.$$ 
Clearly $F(0)=0$ and 
$$|F(+\infty)|:=\lim_{r\to+\infty} |F(r)|<+\infty\,.$$
 By integrating both sides of Equation \eqref{eq min lag}, one obtains

\begin{equation} \label{integration 1}
\frac{1}{2}\log(\ph'(t))-\frac{1}{4}\log(\ph'(t)^2+4)+F(\varphi'(t))=\log(\cos(t))+C\,.
\end{equation}
Observe that the the function
$$r\mapsto \frac{1}{2}\log r-\frac{1}{4}\log(r^2+4)+F(r)\,,$$ 
which corresponds to the LHS of Equation \eqref{integration 1} with $\ph'(t)=r$, is an increasing function of $r$, since by construction its derivative is the expression 
$$r\mapsto \frac{2}{4+r^2}\left(\frac{1}{r}-\frac{\cos\theta}{\sqrt{4\sin^2\theta+r^2}}\right)$$
 which is positive (even if $\cos\theta>0$). (Compare Equation \eqref{eq min lag}.)
An equivalent form of Equation \eqref{integration 1} is:
\begin{equation} \label{eq ph F cos}
\frac{\ph'(t)^{1/2}}{(\ph'(t)^2+4)^{1/4}}e^{F(\varphi'(t))}=e^C \cos(t)\,.
\end{equation}
%Thus the LHS of Equation \eqref{eq ph F cos} is an increasing function of $\varphi'(y)$, and moreover it tends to $0$ as $\ph'(y)\to 0$ and to $e^{F(+\infty)}$ as $\ph'(y)\to+\infty$.

Let us put $C=F(+\infty)$. 
With this choice, the real function 
$$G(r)=-\arccos\left(\frac{r^{1/2}}{(r^2+4)^{1/4}}e^{F(r)-C}\right)$$
is strictly increasing and sends $[0,+\infty)$ to $[-\pi/2,0)$. 
Hence 
Equation \eqref{eq ph F cos} defines $\ph'(t)=G^{-1}(t)$, and in particular
 \begin{equation}\label{eq:ciccio}
 \lim_{t\to 0^-}\ph'(t)=+\infty \qquad \mathrm{and}\qquad   \lim_{t\to -\pi/2}\ph'(t)=0\,.
 \end{equation}

We will need a more precise analysis of the behavior of $\ph(t)$ close to $0$.
\begin{lemma} \label{lemma:rockerduck}
Any solution $\ph$ of Equation \eqref{eq ph F cos}, with $C=F(+\infty)$, satisfies
\begin{equation}\label{eq:zorro}
\lim_{t\to 0^-} t\ph'(t)=-\sqrt{2(1-\cos\theta)}\,. 
\end{equation}
Therefore, $\lim_{t\to 0}\ph(t)=+\infty$ and
\begin{equation}\label{eq:zorro2}
   \lim_{t\to 0^-} t e^{\ph(t)/2}=0\,.
\end{equation}
\end{lemma}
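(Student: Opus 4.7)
The plan is to perform an asymptotic analysis of Equation \eqref{eq ph F cos} as $t\to 0^-$, i.e., as $r:=\ph'(t)\to+\infty$ by \eqref{eq:ciccio}. The idea is to expand both sides to order $1/r^2$ (respectively $t^2$) and match, which should directly give the leading behaviour $\ph'(t)^2\sim 2(1-\cos\theta)/t^2$ needed for \eqref{eq:zorro}.

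First, a direct Taylor expansion yields
$$\frac{r^{1/2}}{(r^2+4)^{1/4}}=(1+4r^{-2})^{-1/4}=1-r^{-2}+O(r^{-4}).$$
For the exponential factor, noting that the integrand defining $F$ behaves as $u^{-3}+O(u^{-5})$ for large $u$, a routine tail estimate gives
$$F(+\infty)-F(r)=-2\cos\theta\int_r^{+\infty}\frac{du}{(4+u^2)\sqrt{4\sin^2\theta+u^2}}=-\frac{\cos\theta}{r^2}+O(r^{-4}),$$
and consequently $e^{F(r)-C}=1+\cos\theta\,r^{-2}+O(r^{-4})$. Multiplying the two expansions, the leading corrections combine into
$$\frac{r^{1/2}}{(r^2+4)^{1/4}}\,e^{F(r)-C}=1-\frac{1-\cos\theta}{r^2}+O(r^{-4}).$$
Setting this equal to $\cos t=1-t^2/2+O(t^4)$ gives $(1-\cos\theta)/\ph'(t)^2=t^2/2+o(t^2)$, and since $\ph'(t)>0$ while $t<0$, extracting square roots delivers \eqref{eq:zorro}.

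For the second assertion, using the identity $\sqrt{2(1-\cos\theta)}=2\sin(\theta/2)$ and integrating $\ph'(t)\sim -2\sin(\theta/2)/t$ from a fixed $t_0<0$ down to $t$, one obtains $\ph(t)=-2\sin(\theta/2)\log(-t)+O(1)$, so in particular $\ph(t)\to+\infty$ and $e^{\ph(t)/2}$ is comparable to $(-t)^{-\sin(\theta/2)}$ as $t\to 0^-$. Since $\theta\in(0,\pi)$ forces $\sin(\theta/2)\in(0,1)$, we conclude $|t\,e^{\ph(t)/2}|=O(|t|^{1-\sin(\theta/2)})\to 0$, giving \eqref{eq:zorro2}. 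The only mildly delicate point in the argument is controlling the $O(r^{-4})$ remainder in the tail of $F$ uniformly, which however follows immediately by dominating the integrand by $Cu^{-3}$ on $[r,+\infty)$ and then refining by one order; everything else is routine Taylor expansion and integration.
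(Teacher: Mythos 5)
Your proof is correct and follows essentially the same route as the paper: both expand the left-hand side of Equation \eqref{eq ph F cos} to order $\ph'(t)^{-2}$ (computing the tail of $F$ as $\cos\theta\, r^{-2}+o(r^{-2})$ and combining it with $(1+4r^{-2})^{-1/4}=1-r^{-2}+o(r^{-2})$) and match against $\cos t=1-t^2/2+o(t^2)$, then integrate the resulting asymptotic for $\ph'$ to control $e^{\ph(t)/2}$. One small imprecision: from $\ph'(t)\sim -2\sin(\theta/2)/t$ alone you only get $\ph(t)=\bigl(-2\sin(\theta/2)+o(1)\bigr)\log(-t)$ rather than an $O(1)$ error term, but this still yields $|t\,e^{\ph(t)/2}|=(-t)^{1-\sin(\theta/2)+o(1)}\to 0$ since $1-\sin(\theta/2)>0$, so the conclusion stands.
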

\begin{proof}
Recalling that $\ph'$ is defined by $\ph'(t)=G^{-1}(t)$, to prove \eqref{eq:zorro} we need to show that $$\lim_{t\to 0^-}tG^{-1}(t)=-\sqrt{2(1-\cos\theta)}\,.$$
Being $G^{-1}$ a diffeomorphism between $[-\pi/2,0)$ and $[0,+\infty)$, observe that
$$\lim_{t\to 0^-}tG^{-1}(t)=\lim_{v\to+\infty} vG(v)=\lim_{u\to 0}\frac{G(1/u)}{u}\,.$$
A simple computation shows that
\[
   F(1/u)-C=2\cos\theta\int_{1/u}^{+\infty}\frac{dr}{(4+r^2)\sqrt{4\sin^2\theta+r^2}}=2\cos\theta\int_0^u\frac{sds}{(4s^2+1)\sqrt{4s^2\sin^2\theta+1}}
\]
so $F(1/u)-C=\cos\theta u^2+o(u^2)$.
Thus
\[
\cos G(1/u)= \frac{e^{F(1/u)-C}}{(1+4u^2)^{1/4}} =(1-u^2+o(u^2))(1+\cos\theta u^2+o(u^2))=1-(1-\cos\theta)u^2+o(u^2)
\]
so
\[
   G^2(1/u)/2=(1-\cos\theta)u^2+o(u^2)
\]
and we conclude that $G(1/u)=-\sqrt{2(1-\cos\theta)}u +o(u)$, which implies \eqref{eq:zorro}.

For \eqref{eq:zorro2}, notice that $\sqrt{2(1-\cos\theta)}<1$, and choose a number $\alpha\in(\sqrt{2(1-\cos\theta)},1)$.
Now fixing $t_0<0$ so that $\ph'(t)<\alpha/|t|$ for $t\in[t_0, 0)$ (this exists by \eqref{eq:zorro}).
So for $t>t_0$ we have
\[
  \ph(t)-\ph(t_0)=\int_{t_0}^t\ph'(r)dr<\alpha\ln(t_0/t)
\]
so 
\[
  e^{\ph(t)}\leq e^{\ph(t_0)}\left(\frac{t_0}{t}\right)^\alpha=c|t|^{-\alpha}~,
\]
and this proves \eqref{eq:zorro2}.
\end{proof}

The associated map 
$$\Phi_\theta:\Hyp^2_+\to\Hyp^2_+\,,$$
where $\Hyp^2_+=\{z\in\Hyp^2\,|\,\Re(z)>0\}=\{t\leq 0\}$ is a half-plane in $\Hyp^2$, of the form
$$\Phi_\theta(s,t)=(s-\ph(t),t)\,,$$ 
is a $\theta$-landslide of $\Hyp^2_+$. 
Here $\ph$ is a primitive of $\ph'$ chosen in such a way that
 $\ph(-\pi/2)=0$, so that $\Phi(s,-\pi/2)=(s,-\pi/2)$ for every $s\in\R$. Since from Lemma \ref{lemma:rockerduck} we know $\ph(t)$ diverges as $t\to 0$, the map does not extend to the geodesic boundary of $\Hyp^2_+$, which is the geodesic invariant for the 1-parameter hyperbolic group.

We additionally observe that $\Phi_\theta$ maps the curve $q_{s_0}:(0,\pi/2)\to \Hyp^2_+$, parameterized by
$$q_{s_0}(t)=\left(s_0+\frac{1}{2}\ph(t),t\right)\,,$$
to the curve $p_{s_0}$ with parameterization
$$p_{s_0}(t)=\left(s_0-\frac{1}{2}\ph(t),t\right)\,,$$
which is the image of $q_{s_0}$ by means of the reflection $(s,t)\mapsto(2s_0-s,t)$. See Figure \ref{fig:dynamics2}.

\begin{figure}[htbp]
\centering
\includegraphics[width=0.8\textwidth]{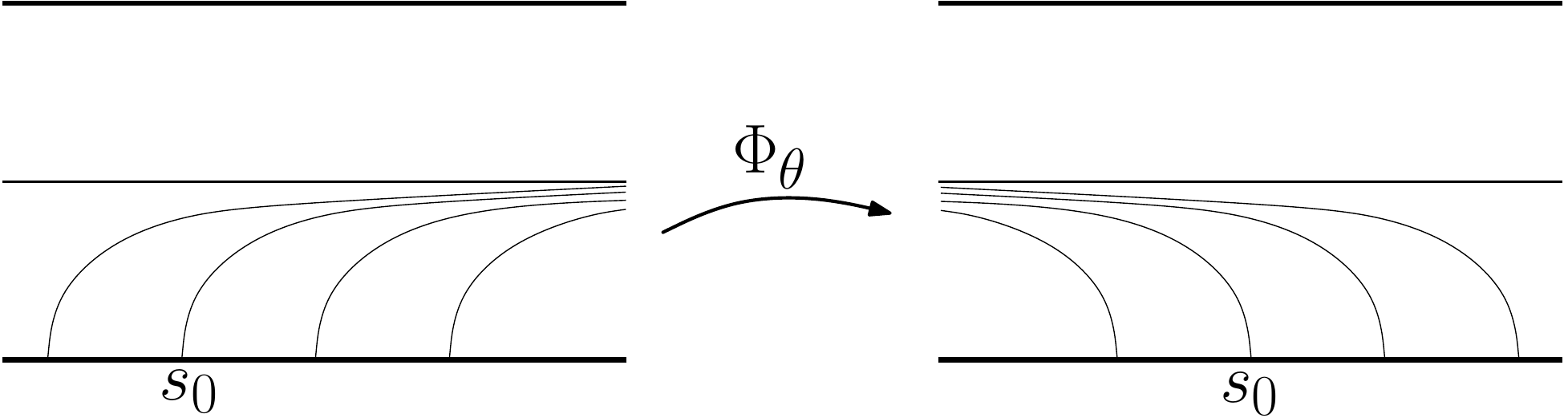}
\caption{A sketch of the dynamics of the map $\Phi_\theta$, in the $w=s+it$ coordinate. \label{fig:dynamics2}}
\end{figure}

Let us remark that, since $\Phi_\theta$ commutes with the 1-parameter hyperbolic group $\gamma_a$, then the images of the constant curvature embeddings $\sigma_{\Phi_\theta,R_\rho b_0}$ are invariant for the group of isometries of the form $(\gamma_a,\gamma_a)$, which preserve the plane $\mathcal R_\pi$. 
We want to prove that the image of $\sigma_{\Phi_\theta,R_\rho b_0}$ is the spacelike part of a past-convex $K$-surface $S_K$ (with as usual $K=-1/\cos^2(\theta/2)$) whose boundary in $\partial\AdS^3$ is the union of the boundary at infinity of an invariant half-plane in $\mathcal R_\pi$ and a sawtooth, i.e. two null segments in $\partial\AdS^3$. Moreover, the lightlike part of $S_\theta$ is precisely the lightlike triangle bounded by the two null segments. This last condition will be the key property we will need to use $S_K$ as a \emph{barrier} in the next sections. 

Defining $\sigma(t)=\sigma_{\Phi_\theta,R_\rho b_0}(q_{s_0}(t))$, by applying the definition, the isometry $\sigma(t)$ maps the point $p_{s_0}(t)$ to $q_{s_0}(t)$. From Equation \eqref{eq:edera}, the distance of a point from the geodesic $\{t=0\}$ only depends on the $t$-coordinate, and thus $p_{s_0}$ and $q_{s_0}$ have the same distance from the geodesic $\{t=0\}$. Hence $\sigma(t)$ can be an elliptic isometry with fixed point on the geodesic $\{s=s_0\}$,
a parabolic isometry fixing an endpoint of $\{s=s_0\}$, or
a hyperbolic isometry whose axis is orthogonal to the geodesic $\{s=s_0\}$. 

The goal of this part is to prove that for $t\to 0$ the family of isometries
$\sigma_{\Phi_\theta,R_\rho b_0}(q_{s_0}(t))$ actually converges to $\mathcal{I}_{(s_0,0)}$, that is, to the elliptic isometry of order two fixing the point of intersection of $\{s=s_0\}$ and $\{t=0\}$.
In this way we will get that the surface associated to $\Phi_\theta$ contains the spacelike line in the totally geodesic plane $\mathcal R_\pi$ invariant by $(\gamma_a, \gamma_a)$.

\begin{lemma} \label{lemma:dinamitebla}
If in the $w=s+it$ coordinate we denote $p(t)=p_{s_0}(t)=(s_0-({1}/{2})\ph(t),t)$, $q(t)=q_{s_0}(t)=(s_0+({1}/{2})\ph(t),t)$ and $\sigma(t)=\sigma_{\Phi_\theta,R_\rho b_0}(q_{s_0}(t))$, then
\begin{equation}\label{eq:archimede}
\begin{split}
d(\sigma(t))\left(\left.\frac{\partial\,}{\partial s}\right |_{p(t)}\right)&=\cos\xi(t) \left(\left.\frac{\partial\,}{\partial s}\right |_{q(t)}\right)+\sin\xi(t)\left(\left.\frac{\partial\,}{\partial t}\right |_{q(t)}\right)\\
 d(\sigma(t))\left(\left.\frac{\partial\,}{\partial t}\right |_{p(t)}\right)&=-\sin\xi(t) \left(\left.\frac{\partial\,}{\partial s}\right |_{q(t)}\right)+\cos\xi(t)\left(\left.\frac{\partial\,}{\partial t}\right |_{q(t)}\right)\,.
\end{split}
\end{equation}
where $\xi$ is a function of $t\in(-\pi/2,0)$ with
\begin{align}
&\lim_{t\to 0}\ph'(t)\sin\xi(t)=2(1-\cos\theta)\,, \label{eq:orazio} \\
&\lim_{t\to 0} \cos\xi(t)=-1\,.
\end{align}
In particular, $\xi(t)\to \pi \mod 2\pi$ as $t\to 0$
\end{lemma}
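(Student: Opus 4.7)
The plan is to compute $d\sigma(t)|_{p(t)}$ directly from Definition~\ref{defi sigma}, recognize it as a rotation of the tangent plane, and then pass to the limit $t \to 0^-$.

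By a direct substitution $\Phi_\theta(q_{s_0}(t)) = p_{s_0}(t)$, so $\sigma(t)$ is the isometry sending $p(t)$ to $q(t)$, and the defining Equation~\eqref{definition2} becomes
\[
d\sigma(t)|_{p(t)} = -b|_{q(t)} \circ \bigl(d\Phi_\theta|_{q(t)}\bigr)^{-1}\,, \qquad b = R_\rho b_0\,.
\]
In the coordinate basis $(\partial_s,\partial_t)$, write $u = \varphi'(t)$ and $r = \sqrt{4+u^2}$; then $d\Phi_\theta|_{q(t)}$ has matrix $\bigl(\begin{smallmatrix} 1 & -u \\ 0 & 1\end{smallmatrix}\bigr)$, and $b_0|_{q(t)} = r^{-1}\bigl(\begin{smallmatrix} 2 & -u \\ -u & 2+u^2\end{smallmatrix}\bigr)$. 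A short computation reveals a clean cancellation,
\[
b_0\,(d\Phi_\theta)^{-1} = \frac{1}{r}\begin{pmatrix} 2 & u \\ -u & 2 \end{pmatrix}\,,
\]
which is the matrix of the rotation $R_{-\alpha}$, where $\cos\alpha = 2/r$ and $\sin\alpha = u/r$ (so $\alpha \in (0,\pi/2)$). Multiplying on the left by $R_\rho$ and then by $-1$ yields
\[
d\sigma(t)|_{p(t)} = -R_{\rho-\alpha} = R_{\pi+\rho-\alpha}\,.
\]

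The key point that makes this rotation description legitimate in the coordinate bases is that $p(t)$ and $q(t)$ share the same $t$-coordinate; in view of \eqref{eq:edera} the conversion factor $1/\cos t$ between $(\partial_s,\partial_t)$ and a hyperbolic orthonormal frame is the same at both points, so the coordinate-basis matrix is genuinely orthogonal. This gives \eqref{eq:archimede} with $\xi(t) = \pi + \rho(t) - \alpha(t)$.

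For the limits, Lemma~\ref{lemma:rockerduck} gives $\varphi'(t) \to +\infty$ as $t \to 0^-$, whence $r \to \infty$. Both $\alpha$ and $\rho$ tend to $\pi/2$ (for $\rho$ use $\cos\rho = 2\cos\theta/r \to 0$ with $\sin\rho > 0$), so $\xi \to \pi \pmod{2\pi}$ and $\cos\xi \to -1$. For the remaining limit, substituting $\cos\alpha = 2/r$, $\sin\alpha = u/r$, $\cos\rho = 2\cos\theta/r$, $\sin\rho = \sqrt{u^2+4\sin^2\theta}/r$ into the $(1,2)$-entry of the rotation matrix gives a closed formula for $\sin\xi(t)$. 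Multiplying by $u$ and using the expansion $\sqrt{u^2+4\sin^2\theta} = u + O(1/u)$ as $u\to\infty$, together with $u^2/r^2 \to 1$, isolates the leading coefficient $2(1-\cos\theta)$ (up to sign) and hence the claim about $\varphi'(t)\sin\xi(t)$.

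The main obstacle is spotting the cancellation in the product $b_0\,(d\Phi_\theta)^{-1}$ that exposes the rotation structure; once the formula $\xi = \pi + \rho - \alpha$ is on the table, both limits reduce to routine asymptotic analysis of elementary trigonometric quantities.
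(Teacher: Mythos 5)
Your proof is correct and follows essentially the same route as the paper: compute $d\sigma=-R_\rho\, b_0\,(d\Phi_\theta)^{-1}$ in the conformal coordinate, note that equality of the $t$-coordinates of $p(t)$ and $q(t)$ makes the coordinate matrix a genuine rotation $R_{\pi+\rho-\alpha}$, and pass to the limit using $\ph'(t)\to+\infty$ and $\cos\rho=2\cos\theta/\mathrm{tr}(b_0)$. The only point to settle is your ``up to sign'': with the orientation convention fixed by \eqref{eq:archimede}, the computation (yours and the paper's own) gives $\ph'(t)\sin\xi(t)\to-2(1-\cos\theta)$, so the sign in \eqref{eq:orazio} is a typo in the statement --- harmless, since only the boundedness of $\ph'(t)\sin\xi(t)$ is used afterwards.
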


\begin{proof}
First observe that  $\{\cos t\frac{\partial\,}{\partial s}|_{p(t)}, \cos t\frac{\partial\,}{\partial t}|_{p(t)}\}$ and $\{\cos t\frac{\partial\,}{\partial s}|_{q(t)}, \cos t\frac{\partial\,}{\partial t}|_{q(t)}\}$
are  orthonormal basis  respectively at $p(t)$ and $q(t)$, for the form \eqref{eq:edera} of the metric. Thus $d(\sigma(t))$ at $p(t)$ must have the form of Equation \eqref{eq:archimede} for some angle $\xi(t)$.

Let us now compute $\xi(t)$. Observe that, with respect to the $(\partial_s,\partial_t)$-frame,
$$(d\Phi_0)_{(s,t)}=\begin{pmatrix} 1 & -\ph'(t) \\ 0 & 1 \end{pmatrix}\,.$$
Recall that, 
$$b_{(s,t)}=R_\rho\circ (b_0)_{(s,t)}=\frac{1}{\sqrt{4+\ph'(t)^2}}\begin{pmatrix} \cos\rho(t) & -\sin\rho(t) \\ \sin\rho(t) & \cos\rho(t) \end{pmatrix}\begin{pmatrix} 2 & -\ph'(t) \\ -\ph'(t) & 2+\ph'(t)^2 \end{pmatrix}\,,$$
and thus
$$
d\sigma=-b\circ (d\Phi_0)^{-1}=-\frac{1}{\sqrt{4+\ph'(t)^2}}\begin{pmatrix} \cos\rho(t) & -\sin\rho(t) \\ \sin\rho(t) & \cos\rho(t) \end{pmatrix}\begin{pmatrix} 2 & \ph'(t) \\ -\ph'(t)  & 2 \end{pmatrix} \,.
%&=-\frac{1}{\sqrt{4+\ph'(y)^2}}\begin{pmatrix} \ph'(y) & -2 \\ 2 & \ph'(y) \end{pmatrix}\begin{pmatrix} \cos\rho & \sin\rho \\ -\sin\rho & \cos\rho \end{pmatrix} \,.
$$
It follows that
\[
  \sin\xi(t)=-\frac{1}{\sqrt{4+\ph'(t)^2}}\left(2\sin\rho(t)-\ph'(t)\cos\rho(t)\right)
\]

By Equation \eqref{eq:ciccio}, $\ph'(t)\to+\infty$ as $t\to 0$, so  $\mathrm{tr}(b_0)=\sqrt{4+\ph'(t)^2}$ tends to infinity as well. 
By Equation \eqref{equation varpi trace},
$$\ph'(t)\cos\rho(t) =\ph'(t)\frac{2\cos\theta}{\mathrm{tr}(b_0)}\to 2\cos\theta\,.$$
On the other hand,   as $\rho=\arccos(2\cos\theta/\mathrm{tr}(b_0))$, 
$\sin\rho(t)\to 1$ as $t\to 0$. 
Therefore, as $t\to 0$:
\[
\ph'(t)\sin\xi(t)=-\frac{\ph'(t)}{\sqrt{4+\ph'(t)^2}}\left(2\sin\rho(t)-\ph'(t)\cos\rho(t)\right)\to -2(1-\cos\theta)~.
\]
Finally an explicit computation shows that $$\cos\xi(t)=-\frac{1}{\sqrt{4+\ph'(t)^2}}\left(2\cos\rho(t)+\ph'(t)\sin\rho(t)\right)\,.$$
Hence $\lim \cos\xi(t)=-\lim\sin\rho(t)=-1$.
\end{proof}

\begin{prop}\label{prop:nonnapapera}
Denoting  $p(t)=(s_0-({1}/{2})\ph(t),t)$, and $\sigma(t)=\sigma_{\Phi_\theta,R_\rho b_0}(q(t))$ (in the $w=s+it$ coordinate, as in Lemma \ref{lemma:dinamitebla}), then as $t\to 0$ the family of isometries $\{\sigma(t)\}$ converges to $\mathcal{I}_{m}$,
the elliptic rotation of angle $\pi$ around the point $m=(s_0,0)$.
\end{prop}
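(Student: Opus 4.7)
The plan is to parameterize $\sigma(t)$ explicitly as a Möbius transformation of the upper half-plane model of $\Hyp^2$ and then take its limit coefficient by coefficient. In the coordinate $z = ie^w$, the points $p(t)$ and $q(t)$ correspond to $ie^{s_0 \mp \ph(t)/2 + it}$ respectively, and $m$ to $ie^{s_0}$; the target isometry $\mathcal I_m$ is $z \mapsto -e^{2s_0}/z$. Writing $\sigma(t)(z) = (a(t)z+b(t))/(c(t)z+d(t))$ with $a(t)d(t)-b(t)c(t)=1$, the two conditions from Lemma \ref{lemma:dinamitebla} translate, after identifying the orthonormal frame $\{\cos t\,\partial_s, \cos t\,\partial_t\}$ with the complex tangent vectors $\{\cos t\cdot p, \cos t\cdot ip\}$ and using $q(t)/p(t)=e^{\ph(t)}$, into the complex equation $\sigma'(t)(p(t)) = e^{\ph(t)+i\xi(t)}$. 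This is equivalent to $c(t)p(t)+d(t) = \pm e^{-\ph(t)/2-i\xi(t)/2}$; together with $a(t)p(t)+b(t) = q(t)e^{-\ph(t)/2-i\xi(t)/2}$ and splitting into real and imaginary parts, a short manipulation yields the closed-form expressions
\begin{align*}
a(t) &= \frac{e^{\ph(t)/2}\cos(\xi(t)/2 - t)}{\cos t}, & b(t) &= \frac{e^{s_0}\sin(\xi(t)/2)}{\cos t},\\
c(t) &= -\frac{e^{-s_0}\sin(\xi(t)/2)}{\cos t}, & d(t) &= \frac{e^{-\ph(t)/2}\cos(\xi(t)/2 + t)}{\cos t}\,.
\end{align*}
The relation $a(t)d(t) - b(t)c(t) = 1$ follows from $\cos(\xi/2-t)\cos(\xi/2+t) = \cos^2(\xi/2) - \sin^2 t$.

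It then remains to take the limits as $t \to 0^-$, using the asymptotics from Lemmas \ref{lemma:rockerduck} and \ref{lemma:dinamitebla}, namely $\ph(t) \to +\infty$, $\xi(t) \to \pi$, $\ph'(t)\sin\xi(t) \to 2(1-\cos\theta)$, $t\ph'(t) \to -\sqrt{2(1-\cos\theta)}$, and $te^{\ph(t)/2} \to 0$. Since $\sin(\xi(t)/2) \to 1$, the limits $b(t) \to e^{s_0}$ and $c(t) \to -e^{-s_0}$ are immediate, and $d(t) \to 0$ follows at once from the prefactor $e^{-\ph(t)/2} \to 0$. The delicate coefficient is $a(t)$, where $e^{\ph(t)/2}$ blows up while $\cos(\xi(t)/2-t)$ vanishes. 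Decomposing
\[
a(t) = e^{\ph(t)/2}\cos(\xi(t)/2) + e^{\ph(t)/2}\sin(\xi(t)/2)\tan t\,,
\]
the second summand is equivalent to $t\,e^{\ph(t)/2}$ and tends to $0$; for the first, the Taylor expansion $\cos(\xi/2) \sim \sin(\xi)/2$ near $\xi=\pi$ combined with Lemma \ref{lemma:dinamitebla} gives $\cos(\xi(t)/2) \sim (1-\cos\theta)/\ph'(t)$, and then $e^{\ph(t)/2}/\ph'(t) \sim -t\,e^{\ph(t)/2}/\sqrt{2(1-\cos\theta)} \to 0$ by Lemma \ref{lemma:rockerduck}. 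Thus $a(t) \to 0$, and $(a(t),b(t),c(t),d(t))\to(0,e^{s_0},-e^{-s_0},0)$, which represents the Möbius transformation $z\mapsto -e^{2s_0}/z = \mathcal I_m(z)$.

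The main obstacle is precisely this cancellation in $a(t)$: it is not \emph{a priori} clear that the exponential blow-up of $e^{\ph/2}$ is tamed by the vanishing of $\cos(\xi/2-t)$, and the estimate essentially uses both asymptotic lemmas in a coordinated way.
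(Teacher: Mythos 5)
Your proposal is correct and follows essentially the same route as the paper: both compute the $\SL(2,\R)$ entries of $\sigma(t)$ explicitly (the paper by factoring $\sigma(t)$ as a rotation of angle $\xi(t)$ about $p(t)$ followed by the dilation $z\mapsto e^{\ph(t)}z$, you by solving the two defining conditions $cp+d=\pm e^{-\ph/2-i\xi/2}$ and $ap+b=\pm qe^{-\ph/2-i\xi/2}$ directly), and the resulting matrices agree up to the sign convention on $\xi$, which does not affect the limit. The treatment of the delicate diagonal entry is also the same: the paper likewise expands $\cos(t+\xi(t)/2)$ via $\cos(\xi/2)=\sin\xi/\sqrt{2(1-\cos\xi)}$ and concludes with Equations \eqref{eq:orazio}, \eqref{eq:zorro} and \eqref{eq:zorro2}, exactly as you do.
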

\begin{proof}
The isometry $\sigma(t)$ is defined by the  properties 
 that it sends $p(t)$ to $q(t)$ and its differential 
 sends the vector $\frac{\partial\,}{\partial s}|_{p(t)}$ to
 a vector forming the positive angle $\xi(t)$ with $\frac{\partial\,}{\partial s}|_{q(t)}$, by Equation~\eqref{eq:archimede}.
 
  Recall that in the $z=x+iy$ coordinate of the upper half-plane model, $z=i\exp w=\exp(s+i(\pi/2+t))$,
 so $$p(t)=\exp(s_0-\ph(t)/2+i(\pi/2+t))\qquad q(t)=\exp(s_0+\ph(t)/2+i(\pi/2+t))\,.$$
 
 The transformation $\delta(t): z\mapsto e^{\ph(t)}z$ sends $p(t)$ to $q(t)$ and  $\frac{\partial\,}{\partial s}|_{p(t)}$ to
 $\frac{\partial\,}{\partial s}|_{q(t)}$. So $\sigma(t)$ is obtained as the rotation of angle $\xi(t)$ around $p(t)$ post-composed with $\delta(t)$.
 
 An explicit computation shows that the rotation around a point $p=|p|e^{i\eta}$ of angle $\xi$ is represented by the $\SL(2,\R)$ matrix
 $$
 \begin{pmatrix}
  \frac{\sin(\eta+\xi/2)}{\sin\eta} & -\frac{\sin(\xi/2)|p|}{\sin\eta}\\
  \frac{\sin(\xi/2)}{|p|\sin\eta} & \frac{\sin(\eta-\xi/2)}{\sin\eta}
  \end{pmatrix}~.
 $$
 (Notice that changing $\xi$ by $\xi+2\pi$ the matrix changes by sign.)
 
 Applying this formula to $p(t)$, $\eta=\eta(t)=\pi/2+t$ and $|p|=e^{s_0-\ph(t)/2}$ and multiplying the rotation matrix by 
 $\delta(t)=\mathrm{diag}(e^{\ph(t)/2}, e^{-\ph(t)/2})$ we get  that
 
 \begin{align*}
( \sigma(t))_{21}&=-e^{s_0}\frac{\sin(\xi(t)/2)}{\cos t}\\
(\sigma(t))_{12}&= e^{-s_0}\frac{\sin(\xi(t)/2)}{\cos t}\\
(\sigma(t))_{22}&=e^{-\ph(t)/2}\frac{\sin(\pi/2+t-\xi(t)/2)}{\cos t} \,. \\
 \end{align*}

As $\xi(t)\to\pi\ (\mathrm{mod} 2\pi)$, we obtain $\lim(\sigma(t))_{21}=\pm e^{-s_0}$ and $\lim(\sigma(t))_{12}=\mp e^{s_0}$, whereas
 $(\sigma(t))_{22}\to 0$.
 
On the other hand,  the first entry of $\sigma(t)$ is 
 \[
 (\sigma(t))_{11}=e^{\ph(t)/2}\frac{\sin(\pi/2+t+\xi(t)/2)}{\cos t}= e^{\ph(t)/2}\frac{\cos(t+\xi(t)/2)}{\cos t}\,.
 \]
 Now,
 \begin{align*}
 \cos(t+\xi(t)/2)&=\cos t\cos(\xi(t)/2)-\sin t\sin(\xi(t)/2)\\
 &=\cos t\sqrt{\frac{\cos\xi(t)+1}{2}}-\sin t\sin(\xi(t)/2)\\
&= \frac{\cos t\sin\xi(t)}{\sqrt{2(1-\cos\xi(t))}} -\sin t\sin(\xi(t)/2)~.
\end{align*}

 By Equations \eqref{eq:orazio} and \eqref{eq:zorro} it follows that the ratio
 \[
 \frac{\cos(t+\xi(t)/2)}{t}
 \]
 is bounded.
 Thus using Equation \eqref{eq:zorro2} we deduce that $\lim_{t\to 0}\sigma_{11}(t)=0$.

In conclusion
\[
\sigma(t)\to \begin{bmatrix} 0 & e^{-s_0}\\ -e^{s_0} &0\end{bmatrix}=\mathcal{I}_{m}~,
\]
which proves the claim.
\end{proof}

%We are now ready to prove the following:

\begin{cor} \label{lemma barrier}
The constant curvature embedding $\sigma_{\Phi_\theta,R_\rho b_0}:\Hyp^2_+\to\isom(\Hyp^2)$ defined by
\begin{itemize}
\item ($\sigma_{\Phi_\theta,R_\rho b_0}(z))(\Phi_\theta(z))=z\,$;
\item $(d\sigma_{\Phi_\theta,R_\rho b_0}(z))_z\circ (d\Phi_\theta)_{z}=-R_\rho b_0\,$;
\end{itemize}
with respect to the map $\Phi_\theta$ and the bundle morphism $b=R_\rho b_0$ constructed above, has image a constant curvature surface in $\AdS^3$ whose boundary coincides with the boundary of the half-plane $\Hyp^2_+$.
\end{cor}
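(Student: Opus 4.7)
The plan is to apply Corollary~\ref{cor reconstruct ksurf} to recognize the image as a past-convex $K$-surface, and then identify its boundary using $\gamma_a$-equivariance together with the asymptotic analyses of this section. Concretely, I would first verify that $b=R_\rho b_0$ meets all the hypotheses of Lemma~\ref{char landslide}: $\det b=1$ from the normalization $\psi(t)=t$; $\tr b=2\cos\theta$ by the choice of $\rho$ in~\eqref{equation varpi trace}; $\tr Jb<0$ from the sign convention in~\eqref{choice varpi}; and the Codazzi identity $d^\nabla b=0$, which is exactly the content of the ODE~\eqref{eq min lag}. Hence $\Phi_\theta$ is genuinely a $\theta$-landslide on $\Hyp^2_+$, and Corollary~\ref{cor reconstruct ksurf} gives that $\sigma_{\Phi_\theta, R_\rho b_0}$ embeds $\Hyp^2_+$ onto a past-convex $K$-surface $S_\theta$ with $K=-1/\cos^2(\theta/2)$. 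Moreover, since $\Phi_\theta$ and $b$ are both invariant under $\gamma_a$, Lemma~\ref{lemma composition rule} forces $S_\theta$ to be invariant under $(\gamma_a,\gamma_a)\in\isom(\AdS^3)$; this subgroup preserves $\mathcal R_\pi$ and acts on it as the hyperbolic 1-parameter group whose axis is the spacelike geodesic $\ell^*=\{\mathcal I_{(s,0)}:s\in\R\}\subset\mathcal R_\pi$, namely the embedding via $x\mapsto\mathcal I_x$ of the bounding geodesic $\{t=0\}$ of $\Hyp^2_+$.

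Next I would locate the two pieces of $\partial S_\theta$ in $\overline{\AdS^3}$ separately. For the piece inside $\AdS^3$, Proposition~\ref{prop:nonnapapera} gives $\sigma_{\Phi_\theta, R_\rho b_0}(q_{s_0}(t))\to \mathcal I_{(s_0,0)}\in\ell^*$ as $t\to 0^-$, and the $(\gamma_a,\gamma_a)$-equivariance propagates this convergence to cover all of $\ell^*$. For the piece at infinity, the relevant limit is $t\to -\pi/2$: here~\eqref{eq:ciccio} forces $\ph'(t)\to 0$, hence $b_0\to E$, $\rho(t)\to \theta$, and $\Phi_\theta$ extends continuously to the identity on the boundary arc $\{t=-\pi/2\}\subset\partial \Hyp^2_+$. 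A matrix computation in the spirit of the one in Proposition~\ref{prop:nonnapapera}, but simpler since both $d\Phi_\theta$ and $R_\rho b_0$ approach elementary limits (the identity and $R_\theta$, respectively), should then yield $\sigma_{\Phi_\theta, R_\rho b_0}(s,t)\to (e^s,e^s)\in\partial\AdS^3$ in the convergence of~\eqref{defi convergence boundary}, tracing out the diagonal arc that, under $x\mapsto\mathcal I_x$, corresponds to the asymptotic boundary of $\Hyp^2_+$.

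The spacelike geodesic $\ell^*$ and this diagonal arc share the endpoints $(0,0)$ and $(\infty,\infty)$ in $\partial\AdS^3$, so their union is precisely the boundary of the half-disk $\{\mathcal I_x:x\in\Hyp^2_+\}\subset\mathcal R_\pi$ taken in $\overline{\AdS^3}$, as claimed. The hardest step in this plan will be the asymptotic analysis at $t\to -\pi/2$: in contrast with Proposition~\ref{prop:nonnapapera}, whose limit lies in the interior of $\AdS^3$, one must show here that the family of isometries $\sigma_{\Phi_\theta, R_\rho b_0}(s,t)$ escapes to $\partial\AdS^3$ in a controlled way, which requires explicitly tracking the $\PSL(2,\R)$-entries; however, the simple limiting forms of $d\Phi_\theta$ and $R_\rho b_0$ should reduce this to a routine variant of the matrix computation already carried out in Proposition~\ref{prop:nonnapapera}.
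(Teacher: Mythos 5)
Your proposal is correct and follows essentially the same route as the paper: the constant-curvature claim comes from the representation formula of Section 5 applied to $b=R_\rho b_0$ (whose Codazzi condition is precisely the ODE \eqref{eq min lag}), the finite part of the boundary is obtained from Proposition \ref{prop:nonnapapera} together with $(\gamma_a,\gamma_a)$-invariance, and the ideal part from the $t\to-\pi/2$ asymptotics where $\ph'(t)\to 0$, $\rho\to\theta$ and $\ph(-\pi/2)=0$ give $\sigma(t)(ie^{s_0})\to e^{s_0}$ and $\sigma(t)^{-1}(ie^{s_0})\to e^{s_0}$, i.e.\ convergence to the diagonal point $(e^{s_0},e^{s_0})$ in the sense of \eqref{defi convergence boundary}. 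The only difference is presentational: you make explicit the appeal to Corollary \ref{cor reconstruct ksurf}, which the paper leaves implicit in the discussion preceding the corollary.
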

\begin{proof}
%Consider the points $\gamma_{x_0}^-(y)=(x_0-({1}/{2})\ph(y),y)$ as $y$ approaches $0$. By definition, $\sigma:=\sigma_{\Phi_\theta,R_\rho b_0}(\gamma_{x_0}^-(y))$ is an isometry of $\Hyp^2$ which maps $\gamma_{x_0}^-(y)$ to $\gamma_{x_0}^+(y)$, which is its image under the reflection in the line $l_{x_0}=\{(x,y):x=x_0\}$. Therefore $\sigma$ might be:
%\begin{itemize}
%\item An elliptic isometry which fixes a point of the geodesic $l_{x_0}$;
%\item A parabolic isometry which fixes one endpoint of the geodesic $l_{x_0}$;
%\item A hyperbolic isometry whose axis is orthogonal to the geodesic $l_{x_0}$.
%\end{itemize}
From Proposition \ref{prop:nonnapapera}, for every $s_0$ we found a family of points $p_{s_0}(t)$ whose images for the embedding  $\sigma_{\Phi_\theta,R_\rho b_0}$ converges to $(s_0,0)$. That is, all points on the geodesic boundary of  $\Hyp^2_+$ (identified to a half-plane of the totally geodesic plane $\mathcal R_\pi$) are in the frontier of $\sigma_{\Phi_\theta,R_\rho b_0}(\Hyp^2_+)$.

To conclude the proof, it remains to show that, when $t\to-\pi/2$, $\sigma_{\Phi_\theta,R_\rho b_0}$ tends to the boundary at infinity of the half-plane $\Hyp^2_+$ of $\mathcal R_\pi$. 
%For this purpose, another computation (using Equation \eqref{firstff}) shows that the curve $p_{s_0}(t)$ is mapped by $\sigma_{\Phi_\theta,R_\rho b_0}$ to a curve of infinite length for the first fundamental form. Therefore such curve on the surface $\sigma_{\Phi_\theta,R_\rho b_0}(\Hyp^2_+)$ tends to a point in the boundary at infinity of $\AdS^3$. Hence by invariance, the boundary at infinity of the surface is the boundary of a totally geodesic half-plane. Since the $\theta$-landslide associated to this constant curvature surface is $\Phi_\theta$ by construction, and it extends the identity on $\partial\Hyp^2_+$ by Lemma \ref{lemma extension}, $\partial \sigma_{\Phi_\theta,R_\rho b_0}(\Hyp^2_+)$ must necessarily coincide with $\partial\Hyp^2_+$.
Denoting again by $\sigma(t)$ the image of the point $p_{s_0}(t)$ by the embedding $\sigma_{\Phi_\theta,R_\rho b_0}$, we claim that both sequences $\sigma(t)(m)$ and $\sigma(t)^{-1}(m)$, where $m$ is the point $ie^{s_0}$, converge to $e^{s_0}$, which is in the boundary of the upper half-plane model. Since $s_0$ is chosen arbitrarily, and recalling the definition of Equation \eqref{defi convergence boundary}, this will conclude the proof.  

Here the computation is similar, though much simpler. Indeed, recalling that $\ph'(t)\to 0$ as $t\to -\pi/2$, we have that $\mathrm{tr}(b_0)=\sqrt{4+\ph'(t)^2}\to 2$ as $t\to-\pi/2$. Therefore, from the choice of Equation \eqref{choice varpi}, we get $\rho\to\theta$. Moreover, recall that we have chosen $\ph$ as the primitive of $\ph'$ such that $\ph(-\pi/2)=0$. Using the above, a straigthforward computation shows that $\sigma(t)(ie^{s_0})\to e^{s_0}$. Analogously, $\sigma(t)^{-1}(ie^{s_0})\to e^{s_0}$.
\end{proof}

\section{Existence of constant-curvature surfaces} \label{sec existence}

The idea of the proof of the existence of $K$-surfaces with prescribed boundary is to obtain a $K$-surface  by approximation from surfaces which are invariant by a cocompact action. In that case, the existence of constant curvature surfaces is guaranteed by the following theorem:

\begin{theorem}[\cite{barbotzeghib}] \label{bbz}
Let %$\phi:\RP^1\to \RP^1$ 
$\phi:\partial\Hyp^2\to \partial\Hyp^2$ be an orientation-preserving homeomorphism which is equivariant for a pair of Fuchsian surface group representation $$\rho_l,\rho_r:\pi_1(\Sigma_g)\to\isom(\Hyp^2)\,,$$ where $\Sigma_g$ is a closed surface. Let $\Gamma$ the curve in $\partial\AdS^3$ which is the graph of $\phi$. Then there exists a foliation of $\mathcal{D}_+(\Gamma)$ and $\mathcal{D}_-(\Gamma)$ by surfaces of constant curvature $K\in(-\infty,-1)$, which are invariant for the representations 
$$(\rho_l,\rho_r):\pi_1(\Sigma_g)\to\isom(\AdS^3)\cong \isom(\Hyp^2)\times\isom(\Hyp^2)\,.$$
The foliations are such that, if $|K_1|<|K_2|$, then the $K_1$-surface is in the convex side of the $K_2$-surface.
\end{theorem}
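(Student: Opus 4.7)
The strategy is to reduce to the compact quotient. By Mess's classification of MGHC AdS spacetimes, the product representation $(\rho_l,\rho_r)$ acts freely, properly discontinuously and cocompactly on $\mathcal{D}(\Gamma)$, producing a maximal globally hyperbolic AdS manifold $M := \mathcal{D}(\Gamma)/(\rho_l,\rho_r)$ with compact Cauchy surface diffeomorphic to $\Sigma_g$. Equivariant $K$-surfaces in $\mathcal{D}_\pm(\Gamma)$ correspond to closed embedded spacelike $K$-surfaces in $M_\pm := \mathcal{D}_\pm(\Gamma)/(\rho_l,\rho_r)$. The duality $S \leftrightarrow S^*$ discussed in Remark \ref{remark dual surface2} exchanges past-convex $K$-surfaces in $M_+$ with future-convex $K^*$-surfaces in $M_-$, where $K^* = -K/(K+1)$, and the interval $(-\infty,-1)$ is preserved under this involution. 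Hence it suffices to treat $M_+$.

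For each fixed $K \in (-\infty,-1)$, I would obtain a closed $K$-surface in $M_+$ by the continuity method applied to the prescribed Gaussian curvature equation. Writing spacelike Cauchy surfaces as normal-flow graphs over a fixed reference surface (say a constant mean curvature surface, whose existence in $M_+$ is classical), the condition $K_I = K$ becomes a fully nonlinear Monge-Amp\`ere type PDE for the graphing function. Openness comes from invertibility of the linearized Jacobi operator on a strictly convex compact Cauchy surface, where $\det B = -K-1 > 0$. Closedness requires uniform $C^{2,\alpha}$ bounds, which I would produce from geometric barriers: the spacelike part of $\partial_+\mathcal{C}(\Gamma)/(\rho_l,\rho_r)$ serves as an inner pleated barrier, while the future lightlike boundary of $M_+$ provides an outer barrier, since parallel surfaces approaching it have principal curvatures tending to zero and hence curvature tending to $-1$. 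These barriers trap any solution in a compact subdomain of $M_+$, reducing the a priori estimates to those for a uniformly elliptic problem on a compact region. Non-emptiness at some starting value of $K$ (say $K=-2$) can be harvested from the existence of a maximal surface in $M$ and its $\pi/4$-parallel surfaces.

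Uniqueness of the $K$-surface and the foliation property both rest on the maximum principle in its cocompact form. Given two past-convex $K$-surfaces $S$ and $S'$ in $M_+$, translate $S'$ along the future normal flow of $S$ until first tangency occurs at some point $p$; a comparison of shape operators at $p$ forces the Gaussian curvatures to differ unless $S = S'$, which gives the required contradiction. The same argument yields monotonicity: if $K_1 < K_2$, then the $K_2$-surface lies in the convex side of the $K_1$-surface. Combining this monotonicity with the limiting behavior visible from the barriers (as $K \to -\infty$ the surfaces converge to $\partial_+\mathcal{C}(\Gamma)/(\rho_l,\rho_r)$, and as $K \to -1^-$ they exit toward the future lightlike boundary of $M_+$) produces the foliation of $M_+ \setminus \mathcal{C}(\Gamma)/(\rho_l,\rho_r)$.

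The main obstacle I anticipate is closedness in the continuity method, namely producing global $C^2$ estimates for a Monge-Amp\`ere equation that degenerates as $K \to -1^-$ and whose natural outer boundary is lightlike. Cocompactness is crucial here: it confines the problem to a finite fundamental domain and guarantees that suitable barriers need only be constructed transversally to the time direction. This is precisely the comfort absent in the non-equivariant setting of Theorems \ref{thm:qui} and \ref{thm Ksurfaces ads intro}, where keeping limiting surfaces away from $\partial\mathcal{D}(\Gamma)$ is the technical heart of Section \ref{sec barrier}.
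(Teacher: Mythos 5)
A preliminary but important observation: the paper does not prove Theorem \ref{bbz} at all. It is imported from \cite{barbotzeghib} and used as a black box — it is precisely the input to the approximation arguments of Sections \ref{sec existence} and \ref{sec foliations}. So there is no in-paper proof to compare yours against; what follows assesses your sketch on its own terms. Your overall architecture — pass to the quotient $M=\mathcal{D}(\Gamma)/(\rho_l,\rho_r)$ via Mess (note: the quotient is $\Sigma_g\times\R$, so the action is not literally cocompact, but every spacelike surface in $M$ is a compact Cauchy surface, which is what you actually use), reduce to $M_+$ by the duality $K\mapsto K^*=-K/(K+1)$, run a continuity method for the resulting Monge--Amp\`ere equation with geometric barriers, seed it with the $\pi/4$-parallel surfaces of a maximal surface at $K=-2$, and extract uniqueness and monotonicity from the maximum principle on a compact leaf — is a reasonable reconstruction and close in spirit to the strategy of \cite{barbotzeghib}.

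There is, however, a concrete error: your asymptotics are backwards, and they infect the barrier argument, which is the one genuinely hard step you lean on. By the Gauss equation $K_I=-1-\det B$, the leaves with $K\to-1^-$ have $\det B\to 0$ and degenerate onto the pleated surface $\partial_+\mathcal{C}(\Gamma)$ (whose shape operator vanishes on each stratum), while the leaves with $K\to-\infty$ have $\det B\to\infty$ and escape to the lightlike boundary $\partial_+\mathcal{D}(\Gamma)$: the dual of $\partial_+\mathcal{D}(\Gamma)$ is $\partial_-\mathcal{C}(\Gamma)$ and duality inverts the shape operator, so principal curvatures blow up — they do not tend to zero — as a convex surface approaches $\partial\mathcal{D}(\Gamma)$. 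This is exactly Steps 4 and 5 in the proof of Theorem \ref{thm foliation part ads}, and it is also forced by your own monotonicity claim: if $S_{K_2}$ lies on the convex side of $S_{K_1}$ whenever $K_1<K_2$, then increasing $K$ toward $-1$ moves the leaf toward $\mathcal{C}(\Gamma)$, not toward the lightlike boundary, so your two concluding assertions contradict each other. As written, your ``outer barrier'' (``parallel surfaces approaching the lightlike boundary have principal curvatures tending to zero and hence curvature tending to $-1$'') rests on a false premise and traps nothing; what is needed is a quantitative statement that a $K$-surface stays a definite distance, depending on $K$, from $\partial_+\mathcal{D}(\Gamma)$ — for instance by comparison with the parallel surfaces of $\partial_+\mathcal{C}(\Gamma)$, whose curvatures are explicitly computable. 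With the asymptotics corrected the scheme is salvageable, but the closedness step — uniform $C^2$ estimates for a Monge--Amp\`ere problem that degenerates at both ends of the parameter interval — is still only gestured at, and that is where the real content of \cite{barbotzeghib} lies.
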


For this purpose, we will use three main technical tools. The first is the following lemma, which we prove in Subsection \ref{subsec approximation} below.

\begin{lemma} \label{lemma approx acausal}
Given any weakly acausal curve $\Gamma$ in $\partial\AdS^3$, there exists a sequence of curves $\Gamma_n$ invariant for pairs of Fuchsian representations of the fundamental group of closed surfaces
$$((\rho_l)_n,(\rho_r)_n):\pi_1(\Sigma_{g_n})\to\isom(\AdS^3)\cong\isom(\Hyp^2)\times\isom(\Hyp^2)$$
such that $\Gamma_n$ converges to $\Gamma$ in the Hausdorff convergence.
\end{lemma}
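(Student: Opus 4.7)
The approach is to encode $\Gamma$ through the bending data of an associated pleated surface, approximate this data by data invariant under a cocompact Fuchsian group, and then apply Mess's construction to recover the required curves $\Gamma_n$ in $\partial\AdS^3$.

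First I would use the fact recalled in Section \ref{sec causal} that the upper boundary $\partial_+\mathcal{C}(\Gamma)$ of the convex hull is a pleated nowhere-timelike surface, parameterized by a pair $(H,\lambda)$, where $H$ is a straight convex subset of $\Hyp^2$ endowed with its intrinsic hyperbolic metric, and $\lambda$ is a measured geodesic lamination on $H$ whose transverse measure is infinite on every arc meeting $\partial H$. From this data one reconstructs $\Gamma$ by integrating the bending cocycle and taking the trace at infinity of the resulting pleated map into $\AdS^3$. This reduces the approximation problem for $\Gamma$ to an approximation problem for pairs $(H,\lambda)$.

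Next I would approximate $(H,\lambda)$ by a sequence of pairs $(H_n,\lambda_n)$ in which $\lambda_n$ is a weighted multicurve invariant under a cocompact Fuchsian representation $\rho_n\colon \pi_1(\Sigma_{g_n})\to \isom(\Hyp^2)$, with $(H_n,\lambda_n)\to (H,\lambda)$ in the pointed geometric sense on exhausting compact subsets of $H$. By Mess's parametrization, the two representations $(\rho_l)_n$ and $(\rho_r)_n$ obtained by bending $\rho_n$ to the left and to the right by $\lambda_n/2$ are both Fuchsian, and admit a unique equivariant orientation-preserving homeomorphism $\phi_n\colon\partial\Hyp^2\to\partial\Hyp^2$ conjugating them. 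The graphs $\Gamma_n=\mathrm{graph}(\phi_n)$ are the candidate approximating curves. Continuity of the Mess correspondence under geometric convergence of bending data would then yield $\Gamma_n\to\Gamma$ in the Hausdorff topology of $\partial\AdS^3$.

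The main obstacle is precisely this last continuity step when $\Gamma$ is not the graph of a homeomorphism, namely when $\Gamma$ contains null segments or sawteeth. In that regime, $H$ is incomplete and $\lambda$ carries infinite transverse measure on leaves accumulating at $\partial H$, so one cannot naively invoke density of weighted multicurves in the space of measured laminations on a closed surface: one must also recover in the limit the boundary-at-infinity features of $(H,\lambda)$ that correspond to the null segments and sawteeth in $\Gamma$. Producing approximations $(H_n,\lambda_n)$ that are cocompact \emph{and} track the infinite-mass leaves of $\lambda$ compatibly with Hausdorff convergence of the trace at the ideal boundary is the technical heart of the argument; here I would adapt the lemma of \cite{Bonsante:2015vi}, which provides exactly such a notion of approximation of measured laminations on incomplete hyperbolic surfaces by $\pi_1(\Sigma_g)$-invariant ones.
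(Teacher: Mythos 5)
Your overall strategy (encode $\Gamma$ by pleated-surface data, approximate that data by cocompact-invariant data, and read off the approximating curves at infinity) is the same circle of ideas as the paper's, but you run it in one step through the bending data $(H,\lambda)$ of $\partial_+\mathcal{C}(\Gamma)$, and the step you yourself flag as ``the technical heart'' is left genuinely open. When $\Gamma$ contains null segments, $H$ is an incomplete straight convex set and $\lambda$ has infinite transverse measure on arcs meeting $\partial H$; the approximation lemma of \cite{Bonsante:2015vi}, as stated and used here (Lemma \ref{fuchsian weak approximation}), produces cocompact-invariant approximants only for measured laminations on the \emph{complete} hyperbolic plane, and the accompanying convergence statement (Lemma \ref{convergence earthquakes}) likewise assumes the laminations induce earthquakes of all of $\Hyp^2$. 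Asserting that you ``would adapt'' these to the incomplete, infinite-mass setting is not a proof: one would have to define weak* convergence for laminations on varying straight convex sets, show that weighted multicurves invariant under cocompact groups are dense there, and prove that the trace at infinity of the bent surface depends continuously on this data in a way that recovers the sawteeth in the Hausdorff limit. The paper explicitly remarks that such generalizations are possible but chooses to \emph{circumvent} them.

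The device that closes the gap is a two-step decomposition. First, approximate the weakly acausal curve $\Gamma$ in the Hausdorff sense by graphs $\Gamma_n=gr(\phi_n)$ of orientation-preserving homeomorphisms of $\partial\Hyp^2$ (smoothing out the null segments); this is elementary and eliminates sawteeth entirely. Second, for each such $\phi_n$ the associated earthquake lamination lives on all of $\Hyp^2$, so Lemma \ref{fuchsian weak approximation} applies verbatim to produce cocompact-invariant laminations $\mu_{n,k}$, and Lemma \ref{convergence earthquakes} (weak* convergence of laminations implies uniform convergence of the induced boundary homeomorphisms, suitably normalized) gives $\Gamma_{n,k}\to\Gamma_n$; a diagonal argument finishes. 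If you want to salvage your one-step route, you must actually supply the approximation and continuity statements for laminations with infinite mass on incomplete straight convex sets; otherwise the reduction to acausal graphs is the missing ingredient.
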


The second is the following theorem of Schlenker on the smooth convergence of surfaces, specialized to the case of the ambient manifold $(M,g)=\AdS^3$:

\begin{theorem}[\cite{schlenkersurfconv}] \label{schl_degeneration}
Let $\sigma_n:\D\to M$ be a sequence of uniformly elliptic spacelike immersions (i.e. with uniformly positive determinant of the shape operator) in a Lorentzian manifold $(M,g)$. Assume $\sigma_n^*g$ converges $C^\infty$ to a metric $g_\infty$. If $x_n\in\D$ is a sequence converging to a point $x_\infty$, such that the 1-jet $j^1\sigma_n(x_n)$ is converging, but $\sigma_n$ does not converge $C^\infty$ in a neighborhood of $x_\infty$, then there exists a maximal geodesic $\gamma$ of $(\D,g_\infty)$ containing the point $x_\infty$ such that $\sigma_n$, restricted to $\gamma$, converges to an isometry onto a geodesic $\gamma'$ of $M$. 
\end{theorem}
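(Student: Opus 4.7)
The plan is to analyze the shape operators $B_n$ of the immersions $\sigma_n$ and to show that failure of $C^\infty$-convergence is equivalent to the blow-up of one principal curvature $\lambda_n\to\infty$, with the other $\mu_n\to 0$; the corresponding small-eigenvalue direction will integrate to curves which, in the limit, become geodesics both intrinsically in $(\D,g_\infty)$ and extrinsically in $(M,g)$.

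Because $j^1\sigma_n(x_n)$ converges, the base points $\sigma_n(x_n)$ accumulate at some $y_\infty\in M$ and the $\sigma_n(\D)$ stay in a compact neighborhood of $y_\infty$ for $n$ large. The Gauss equation expresses $\det B_n$ in terms of the intrinsic curvature $K_{I_n}$ and of the ambient sectional curvature of $M$ along the tangent plane $d\sigma_n(T_x\D)$. Both converge in $C^\infty$ near $x_\infty$ (the first because $I_n=\sigma_n^*g\to g_\infty$, the second by smoothness of $g$ on a neighborhood of $y_\infty$ together with convergence of the 1-jet), so $\det B_n$ is $C^\infty$-bounded and, by uniform ellipticity, bounded away from zero. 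By the fundamental theorem on isometric immersions of spacelike surfaces in Lorentzian $3$-manifolds, on a simply connected chart $\sigma_n$ is determined, once $j^1\sigma_n(x_n)$ is fixed, by $(I_n,B_n)$ in a $C^\infty$-continuous way. Hence if $B_n$ converged in $C^\infty$ near $x_\infty$, so would $\sigma_n$, contrary to assumption; and since $\lambda_n\mu_n=\det B_n$ is $C^\infty$-controlled, after a subsequence one must have $\lambda_n\to\infty$ on a neighborhood of $x_\infty$, while $\mu_n=\det B_n/\lambda_n\to 0$.

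Let $e_n$ denote the $I_n$-unit eigenvector of $B_n$ for $\mu_n$. Applying the Codazzi identity $(\nabla^{I_n}_X B_n)Y-(\nabla^{I_n}_Y B_n)X=(R^M(d\sigma_n(X),d\sigma_n(Y))N_n)^T$ in the orthonormal eigenframe yields first-order equations for the geodesic curvature of integral curves of $e_n$ whose coefficients involve $1/(\lambda_n-\mu_n)$; as $\lambda_n\to\infty$ these coefficients vanish uniformly, so the integral curves of $e_n$ become asymptotically $I_n$-geodesic. On such a curve, the ambient acceleration splits as
\[
\nabla^{M}_{d\sigma_n(e_n)}d\sigma_n(e_n)=d\sigma_n(\nabla^{I_n}_{e_n}e_n)+\mu_n\,N_n,
\]
and both summands tend to zero. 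Passing to a subsequence so that $e_n(x_n)\to e_\infty\in T_{x_\infty}\D$, the $I_n$-geodesic through $x_n$ tangent to $e_n(x_n)$ converges to the $g_\infty$-geodesic $\gamma$ through $x_\infty$ tangent to $e_\infty$, and $\sigma_n$ restricted to this curve converges to an isometric parametrization of an ambient geodesic $\gamma'$.

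The main obstacle is the maximality of $\gamma$. The previous step yields convergence only on a short arc around $x_\infty$. I would extend it by a continuation argument: let $U\subset\gamma$ be the set of $y$ at which $\sigma_n|_{\gamma}$ converges, on a neighborhood of $y$ in $\gamma$, to an isometric parametrization of a geodesic of $M$. Then $U$ is open and non-empty; closedness requires showing that the blow-up of $\lambda_n$ propagates along $\gamma$. At $y\in\overline U\cap\gamma$ the 1-jets $j^1\sigma_n(y)$ converge (their values from the convergence of $\sigma_n|_{\gamma}$ up to $y$, their tangent part from the limiting ambient geodesic direction together with $I_n\to g_\infty$), and the Codazzi-driven ODE along $\gamma$ forces $\lambda_n(y)\to\infty$ as well, so the construction of the previous step applies at $y$ and produces a neighborhood of $y$ inside $U$. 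Thus $U$ equals the whole maximal extension of $\gamma$ in $\D$, giving the conclusion. Making the propagation of the blow-up rigorous, and checking that the limiting direction $e_\infty$ is uniquely selected as one moves along $\gamma$, are the most delicate points.
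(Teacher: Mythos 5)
This statement is not proved in the paper at all: it is quoted from Schlenker's work \cite{schlenkersurfconv} (which builds on Labourie's compactness theory for uniformly elliptic isometric immersions), so there is no internal proof to compare yours against, and I can only assess your argument on its own terms. It has genuine gaps, and the approach itself differs from the known proofs.

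The decisive gap is the Codazzi step. In the $I_n$-orthonormal eigenframe, the geodesic curvature of the integral curves of the small-eigenvalue direction is $-e_n^{\perp}(\mu_n)/(\lambda_n-\mu_n)$, where $e_n^{\perp}$ is the large-eigenvalue direction. The denominator does tend to infinity, but the numerator is a first derivative of $\mu_n=\det B_n/\lambda_n$ and hence involves $e_n^{\perp}(\lambda_n)$, which nothing in the hypotheses controls: convergence of $I_n$ and $C^\infty$-boundedness of $\det B_n$ bound no derivative of the individual eigenvalues, and if $\lambda_n$ oscillates on a scale finer than $1/\lambda_n$ the quotient blows up rather than vanishing. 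So the assertion that ``these coefficients vanish uniformly'' is unjustified, and with it the claim that the integral curves of $e_n$ become intrinsically and extrinsically geodesic. A second gap is localization: the hypothesis that $\sigma_n$ fails to converge in every neighborhood of $x_\infty$ only yields, after a diagonal extraction, points $y_n\to x_\infty$ with $\lambda_n(y_n)\to\infty$; it gives no blow-up at $x_n$ itself, nor on a neighborhood, so the direction $e_n(x_n)$ from which you launch the geodesic need not be a degenerate direction at all. Showing that blow-up at a sequence of points forces blow-up along an entire intrinsic geodesic through $x_\infty$ is precisely the content of the theorem, and it is the part you defer both here and in the maximality step. (A smaller issue: spacelike curves of bounded Lorentzian length can have unbounded displacement with respect to an auxiliary Riemannian metric, so ``$\sigma_n(\D)$ stays in a compact neighborhood'' also requires an argument.) The proofs of Labourie and Schlenker bypass the eigenvalue ODEs entirely: uniform ellipticity makes the Gauss lifts of the $\sigma_n$ to the bundle of tangent planes pseudo-holomorphic curves for a suitable almost-complex structure, a Gromov-type compactness theorem applies to these lifts, and the geodesic $\gamma$ appears as the locus over which the limit lift degenerates; this is what simultaneously produces the geodesic and its maximality.
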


Moreover, in the conditions of Theorem \ref{schl_degeneration}, the degeneration of the immersions $\sigma_n$ is well-understood in \cite{schlenkersurfconv}: if $\sigma_n$ does not converge $C^\infty$ in a neighborhood of $x_\infty$, then the surfaces $\sigma_n(\D)$ converge to a surface which contains all the (future-directed or past-directed) light rays starting from points on the geodesic $\gamma'$ of $M$.

The third tool is the use of the barrier we constructed in Section \ref{sec barrier}, in particular Corollary \ref{lemma barrier}.

\subsection{Approximation of measured geodesic laminations} \label{subsec approximation}

We recall here the definition of measured geodesic lamination on $\Hyp^2$. Let $\mathcal{G}$ be the set of (unoriented) geodesics of $\Hyp^2$. The space $\mathcal{G}$ is identified to $((\partial\Hyp^2\times \partial\Hyp^2)\setminus \Delta)/\sim$ where $\Delta$ is the diagonal and the equivalence relation is defined by $(p,q)\sim(q,p)$. Note that $\mathcal{G}$ has the topology of an open M\"{o}bius strip. 

\begin{defi} \label{defi mgl}
A geodesic lamination on $\Hyp^2$ is a closed subset of $\mathcal{G}$ such that its elements are pairwise disjoint geodesics of $\Hyp^2$. A measured geodesic lamination is a locally finite Borel measure on $\mathcal{G}$ such that its support is a geodesic lamination.
\end{defi}

For the approximation procedure, we will use the following Lemma \ref{fuchsian weak approximation} on the approximation of measured geodesic laminations, which is proved in \cite[Lemma 3.4]{Bonsante:2015vi}. We first need to recall some definitions.

\begin{defi}
A sequence $\{\mu_n\}_n$ of measured geodesics laminations converges in the weak* topology to a measured geodesic lamination, $\mu_n\war\mu$, if
$$\lim_{n\rar\infty}\int_{\mathcal{G}}f d\mu_n=\int_{\mathcal{G}}f d\mu$$
for every $f\in C_0^0(\mathcal{G})$.
\end{defi}

\begin{lemma} \label{fuchsian weak approximation}
Given a measured geodesic lamination $\mu$ on $\Hyp^2$, there exists a sequence of measured geodesic laminations $\mu_n$ such that $\mu_n$ is invariant under a torsion-free cocompact Fuchsian group $G_n<\isom(\Hyp^2)$ and $\mu_n\war \mu$.
\end{lemma}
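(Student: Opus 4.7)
The plan is to reduce to the case of finite atomic measures and then to embed each such finite lamination into a Fuchsian-invariant one. First I would observe that any measured geodesic lamination $\mu$ can be approximated in the weak* topology by a sequence $\mu^{(n)}$ of measured laminations, each supported on a finite set of pairwise disjoint complete geodesics $g^{(n)}_1,\ldots,g^{(n)}_{m_n}\subset \Hyp^2$ with positive weights $w^{(n)}_i$. This is a soft step: by outer regularity of Borel measures on $\mathcal{G}$ and local finiteness, the restriction of $\mu$ to an exhausting sequence of compact subsets of $\mathcal{G}$ can be approximated by atomic measures supported on leaves of $\mathrm{supp}(\mu)$ (pairwise disjoint because they lie in a lamination), chosen so that integrals of continuous functions compactly supported on $\mathcal{G}$ converge.

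The main step is to show that each finitely-supported $\mu^{(n)}$ agrees on a large compact set with a measured lamination $\mu_n$ that is invariant under some cocompact, torsion-free Fuchsian group $G_n$. I would construct $G_n$ so that:
\begin{enumerate}
\item each $g^{(n)}_i$ is the axis of a primitive hyperbolic element of $G_n$, hence descends to a simple closed geodesic of $\Hyp^2/G_n$;
\item the collection $\{g^{(n)}_i\}$ descends to a simple multicurve, i.e.\ no $G_n$-translate of $g^{(n)}_i$ crosses any $g^{(n)}_j$;
\item for a chosen basepoint $p_n\in \Hyp^2$, the ball $B_{R_n}(p_n)$, with $R_n\to\infty$, contains no non-stabilizer translate $\gamma(g^{(n)}_i)$.
\end{enumerate}
Given such $G_n$, define $\mu_n$ as the $G_n$-invariant locally finite atomic measure assigning weight $w^{(n)}_i$ to each geodesic in the $G_n$-orbit of $g^{(n)}_i$.

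The hard part will be the construction of $G_n$ realizing the finite collection $\{g^{(n)}_i\}$ as (a subcollection of) a simple multicurve on a compact hyperbolic surface, with arbitrarily large injectivity radius at a chosen basepoint. One concrete strategy is to cut $\Hyp^2$ along the finitely many disjoint geodesics $g^{(n)}_i$, and to embed the resulting pieces inside a compact hyperbolic surface of sufficiently large genus, in which the $g^{(n)}_i$'s are realized as simple closed geodesics of prescribed lengths. Taking the genus large ensures condition (iii). This is precisely the type of construction carried out in \cite{Bonsante:2015vi}, which I would adapt directly.

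Finally, the weak* convergence $\mu_n \war \mu$ is assembled from the two approximations. For any $f\in C_0^0(\mathcal{G})$, its support $K$ is a fixed compact subset of $\mathcal{G}$; since geodesics in $K$ stay within a bounded subset of $\Hyp^2$ near $p_n$ after suitable choice, condition (iii) together with $R_n\to\infty$ yields $\mu_n|_K = \mu^{(n)}|_K$ for all $n$ large enough, so
\[
\int_{\mathcal{G}} f\,d\mu_n \;=\; \int_{\mathcal{G}} f\,d\mu^{(n)} \;\longrightarrow\; \int_{\mathcal{G}} f\,d\mu,
\]
by construction of the atomic approximation. This completes the argument.
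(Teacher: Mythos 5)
A preliminary remark on the comparison: the paper does not prove this lemma at all --- it is quoted from \cite[Lemma 3.4]{Bonsante:2015vi} --- so there is no in-paper argument to measure your proposal against. Your architecture (discretize $\mu$ to a finite weighted family of disjoint geodesics, realize that family as part of a $G_n$-invariant multicurve with no other translates meeting a large ball, pass to the limit) is indeed the strategy of the cited source, and your final step is correct as stated: granting (i)--(iii) with $R_n\to\infty$ and a fixed basepoint, any compact $K\subset\mathcal{G}$ consists of geodesics meeting a fixed compact subset of $\Hyp^2$, so eventually $\mu_n|_K=\mu^{(n)}|_K$ and $\mu_n\war\mu$ follows from the atomic approximation.

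As a standalone proof, however, there is a genuine gap at the step you yourself flag as hard, and it is not only a matter of deferring to \cite{Bonsante:2015vi} (which is circular here, since that is precisely the reference the paper cites for the whole lemma). Two concrete problems. First, your condition (i) --- that each chosen leaf $g^{(n)}_i$ be the axis of an element of $G_n$ --- cannot be met whenever two of the chosen leaves share an ideal endpoint: two hyperbolic isometries whose axes have exactly one common fixed point on $\partial\Hyp^2$ generate a non-discrete group, so no single Fuchsian group can contain both as axes. Laminations with asymptotic leaves (a fan of geodesics through a common ideal point, or leaves spiralling onto a closed one) are perfectly admissible, so selecting actual leaves of $\mathrm{supp}(\mu)$ does not suffice; one must additionally perturb the finite family to be pairwise non-asymptotic while preserving pairwise disjointness (an elementary but necessary adjustment of the endpoint configuration on the circle that your write-up does not perform). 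Second, the one concrete construction you offer --- ``cut $\Hyp^2$ along the $g^{(n)}_i$ and embed the resulting pieces inside a compact hyperbolic surface'' --- does not parse: the complementary pieces are non-compact convex regions and cannot be embedded in a compact surface. What is actually required is a closed hyperbolic surface carrying a weighted multicurve whose lifts meeting $B_{R_n}(p_n)$ are exactly $g^{(n)}_1,\dots,g^{(n)}_{m_n}$, all other lifts avoiding that ball; producing it (by explicit pants/hexagon gluings and by enlarging the surface to push the remaining translates far away) is the real content of the lemma and is not supplied.
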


The second technical lemma ensures that the weak*-convergence of the bending laminations implies the convergence of the curves in the boundary of $\AdS^3$, which is basically the same as the uniform convergence of the corresponding left earthquake maps.

\begin{lemma} \label{convergence earthquakes}
Let $\mu_n,\mu$ be measured geodesic laminations on $\Hyp^2$ such that $\mu_n\war \mu$ in the weak* topology. Assume $\mu_n$ and $\mu$ induce earthquakes of $\Hyp^2$ and $p_0$ is a point of $\Hyp^2$ which is not on any weighted leaf of $\mu$. Then the homeomorphism $\phi_n$ of $\partial\Hyp^2$ obtained by earthquake along $\mu_n$, normalized in such a way that the stratum containing $p_0$ is fixed, converges uniformly to the homeomorphism $\phi$ obtained by earthquake along $\mu$, normalized in the analogous way.
%Let $p_n:\Hyp^2\to\AdS^3$ be a sequence of isometric embeddings of pleated spacelike surfaces, and let $P_n=p_n(\Hyp^2)$. Suppose the bending lamination $\mu_n$ of $P_n$ (pulled back to $\Hyp^2$ by $p_n$) converges in the weak* topology to a measured geodesic lamination $\mu$. Then $p_n$ converges uniformly on compact sets to the isometric embedding of a pleated surface $p:\Hyp^2\to\AdS^3$ with bending lamination $\mu$. Moreover, the boundary at infinity $\partial_\infty P_n$ converges in the Hausdorff topology to $\partial_\infty P$, where $P=p(\Hyp^2)$.
\end{lemma}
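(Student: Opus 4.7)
The plan is to work with the shear cocycle of the earthquake. For a measured geodesic lamination $\lambda$ that induces an earthquake of $\Hyp^2$, the shear cocycle $q \mapsto \alpha^\lambda(q) \in \isom(\Hyp^2)$ is locally constant on strata, satisfies $E^\lambda(q) = \alpha^\lambda(q)(q)$ for $q$ not on a weighted leaf, is normalized by $\alpha^\lambda \equiv \mathrm{id}$ on the stratum of $p_0$, and its continuous extension to $\partial\Hyp^2$ is the earthquake homeomorphism $\phi^\lambda$. The plan has three steps: (i) pointwise convergence $\alpha^{\mu_n}(q) \to \alpha^\mu(q)$ in $\isom(\Hyp^2)$ for each $q$ in a stratum of $\mu$; (ii) pointwise convergence $\phi_n(\xi) \to \phi(\xi)$ at every $\xi \in \partial\Hyp^2$ outside the countable set of endpoints of weighted leaves of $\mu$; (iii) the standard monotonicity argument that promotes dense pointwise convergence of orientation-preserving circle homeomorphisms to uniform convergence.

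For step (i), fix $q$ in a stratum of $\mu$ and take a geodesic arc $\sigma$ from $p_0$ to $q$ transverse to $\mu$ and avoiding endpoints of weighted leaves of $\mu$. Then $\alpha^\mu(q)$ is the ordered composition of elementary left earthquakes $\exp(w_i T_{\ell_i})$, one per weighted leaf $\ell_i$ of $\mu$ crossed by $\sigma$ with weight $w_i$, where $T_\ell$ denotes the unit hyperbolic translation along $\ell$; the analogous formula holds for $\alpha^{\mu_n}(q)$. Partition $\sigma$ into finitely many short subarcs on which $\mu$ is either concentrated near a single heavy leaf or has uniformly small total mass. Testing the weak$^*$ convergence $\mu_n \war \mu$ against continuous compactly supported functions on the space of geodesics meeting each subarc shows that on a heavy-leaf subarc the $\mu_n$-mass collects onto the $\mu$-leaf with limiting weight (its support collapsing), while on a small-mass subarc the $\mu_n$-mass remains small uniformly in $n$; composing the resulting convergent elementary earthquakes over all subarcs yields $\alpha^{\mu_n}(q) \to \alpha^\mu(q)$.

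For step (ii), fix $\xi \in \partial\Hyp^2$ that is not an endpoint of a weighted leaf of $\mu$, choose a geodesic ray $\gamma$ from $p_0$ transverse to $\mu$ with $\gamma(+\infty)=\xi$, and pick $q_k = \gamma(t_k) \to \xi$ in strata of $\mu$. By continuity of $E^\mu$ at $\xi$, $\alpha^\mu(q_k)(q_k) \to \phi(\xi)$, and similarly $\alpha^{\mu_n}(q_k)(q_k) \to \phi_n(\xi)$ as $k \to \infty$ for each fixed $n$; by step (i), $\alpha^{\mu_n}(q_k) \to \alpha^\mu(q_k)$ as $n \to \infty$ for each fixed $k$. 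A diagonal extraction, combined with the fact that the isometries $\alpha^{\mu_n}(q_k)$ compress $\partial\Hyp^2\setminus\{\xi\}$ toward $\phi_n(\xi)$ as $k$ grows, then yields $\phi_n(\xi) \to \phi(\xi)$, and step (iii) concludes. The main obstacle is the limit interchange in step (ii): it requires controlling the rate of compression of $\alpha^{\mu_n}(q_k)$ uniformly in $n$. A conceptually cleaner route, natural in the AdS framework of this paper, would be to interpret $\mathrm{graph}(\phi_n) \subset \partial\AdS^3$ as the asymptotic boundary of the pleated surface in $\AdS^3$ with bending lamination $\mu_n$, and to show that weak$^*$ convergence of bending laminations yields Hausdorff convergence of the pleated surfaces inside $\AdS^3 \cup \partial\AdS^3$, hence of their boundary curves — which is precisely uniform convergence of $\phi_n$ to $\phi$.
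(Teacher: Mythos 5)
There is a genuine gap, and it sits exactly where you flag it: the limit interchange in step (ii). Your step (i) is essentially the cocycle convergence $E_{\mu_n}(p_0,p)\to E_\mu(p_0,p)$ that the paper simply quotes from Epstein--Marden (Theorem 3.11.5), and your step (iii) is the standard monotonicity upgrade, which is fine. But in step (ii) you need $\lim_n\lim_k E^{\mu_n}(q_k)(q_k)=\lim_k\lim_n E^{\mu_n}(q_k)(q_k)$, and the required uniformity in $n$ of the inner limit is not available: weak* convergence controls the $\mu_n$-mass only on compact families of geodesics, whereas the geodesics crossed by the ray between $q_k$ and $\xi$ form a non-compact family on which the masses of the $\mu_n$ need not be uniformly small (the laminations here are not assumed Thurston-bounded). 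The ``compression toward $\phi_n(\xi)$'' you invoke is a statement about each fixed $n$ and gives no uniform rate, so the diagonal extraction does not close. Since this is the crux of the lemma, the proof as written is incomplete.

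The paper resolves the difficulty by a different global argument, close in spirit to the alternative you sketch in your last sentence but actually carried out. It regards $C_n=gr(\phi_n)$ as acausal curves in the torus $\partial\Hyp^2\times\partial\Hyp^2$, extracts a Hausdorff limit $C$ (which exists by compactness and is automatically weakly acausal), and reduces the lemma to showing $gr(\phi)\subseteq C$. For this it suffices to produce, for each $x\in\partial\Hyp^2$, points $x_k\to x$ with $(x_k,\phi(x_k))\in C$: one takes $p_k\to x$ off the weighted leaves of $\mu$, lets $y_n(p_k)$ be the ideal boundary point of the stratum $F_n(p_k)$ of $\mu_n$ nearest to $x$, and uses the purely geometric estimate that $d(y_n(p_k),x)$ is bounded in terms of $d(p_k,x)$ \emph{uniformly in $n$}; combined with the cocycle convergence (your step (i)) applied at the basepoints $p_k$, this yields $x_k=\lim_n y_n(p_k)$ with $(x_k,\phi(x_k))\in C$ and $x_k\to x$. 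This replaces the missing uniform control at a fixed ideal point $\xi$ by a uniform control on nearby stratum endpoints, which is where the argument actually closes. If you want to keep your outline, you would need to either prove such a uniform estimate or switch to the Hausdorff-limit formulation; the one-sentence appeal to convergence of pleated surfaces at the end of your proposal is the statement to be proved, not a proof.
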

\begin{proof}
Let $C_n=gr(\phi_n)$ be the 1-dimensional submanifold of $\partial\Hyp^2\times \partial\Hyp^2$ which is the graph of $\phi_n:\partial\Hyp^2\to \partial\Hyp^2$. Up to a subsequence, $C_n$ converges to a subset $C$ of $\partial\Hyp^2\times \partial\Hyp^2$ in the Hausdorff convergence. As $C_n$ are acausal curves, it turns out that $C$ is a weakly acausal curve. Hence, in order to prove that $C=gr(\phi)$, it suffices to show that $gr(\phi)\subseteq C$. In fact we will prove that for every point $x\in \partial\Hyp^2$ there exists a sequence $x_k$ converging to $x$ such that $(x_k,\phi(x_k))\in C$. 

Let $p\in\Hyp^2$ which is not on any weighted leaf of $\mu$. Denote by $F_n(p)$ the stratum of $\mu_n$ which contains $p$, and analogously $F(p)$ is the stratum of $\mu$ contaning $p$. 
%Assume that $p$ is not on any weighted leaf of $\mu$. 
We claim that for any sequence $y_n\in \partial\Hyp^2$ in the boundary of $F_n(p)$ and any $y_0$ in the boundary of $F(p)$, if $y_n\to y_0$, then $\phi_n(y_n)\to \phi(y_0)$. This follows by \cite[Theorem 3.11.5]{epsteinmarden}, where in the language of cocycles, $E_{\mu_n}(p_0,p)\to E_{\mu}(p_0,p)$ in $\isom(\Hyp^2)$, and therefore $\phi_n(y_n)=E_{\mu_n}(p_0,p)(y_n)$ converges to $\phi(y_0)=E_\mu (p_0,p)(y_0)$. In this case, one then has $(y_0,\phi(y_0))\in C$.

Next, we claim that for every $x\in \partial\Hyp^2$ there exists a sequence of points $x_k$ such that $x_k\to x$ and $(x_k,\phi(x_k))\in C$. This will conclude the proof, by the above observations. Let $p_k$ be a sequence of points in $\Hyp^2$ converging to $x$, such that every $p_k$ is not on any weighted leaf of $\mu$. For every fixed $k$, consider the stratum $F_n(p_k)$ and let $y_n(p_k)$ be the point of $\partial\Hyp^2$, in the boundary of $F_n(p_k)$, closest to $x$ (in the round metric of $\partial\Hyp^2\sim S^1$, in the Poincar\'e disc model, for instance). If $x$ itself is in the boundary of $F_n(p_k)$, then obviously $y_n(p_k)=x$. 

Observe that the distance of $y_n(p_k)$ from $x$ is bounded in terms of the distance of $p_k$ from $x$ (in the Euclidean metric on the disc model, for instance). Let $x_k=\lim_{n} y_n(p_k)$, up to a subsequence. By our first claim, $(x_k,\phi(x_k))\in C$. Moreover, $d(y_n(p_k),x)\to 0$ as $k\to+\infty$ uniformly in $n$ (as it is estimated by $d(p_k,x)\to 0$). Therefore also $d(x_k,x)$ tends to zero as $k\to +\infty$. This implies that every $x\in \partial\Hyp^2$ is approximated by points $x_k$ such that $(x_k,\phi(x_k))\in C$.
\end{proof}

\begin{figure}[htb]
\centering
\begin{minipage}[c]{.45\textwidth}
\centering
\includegraphics[height=4.5cm]{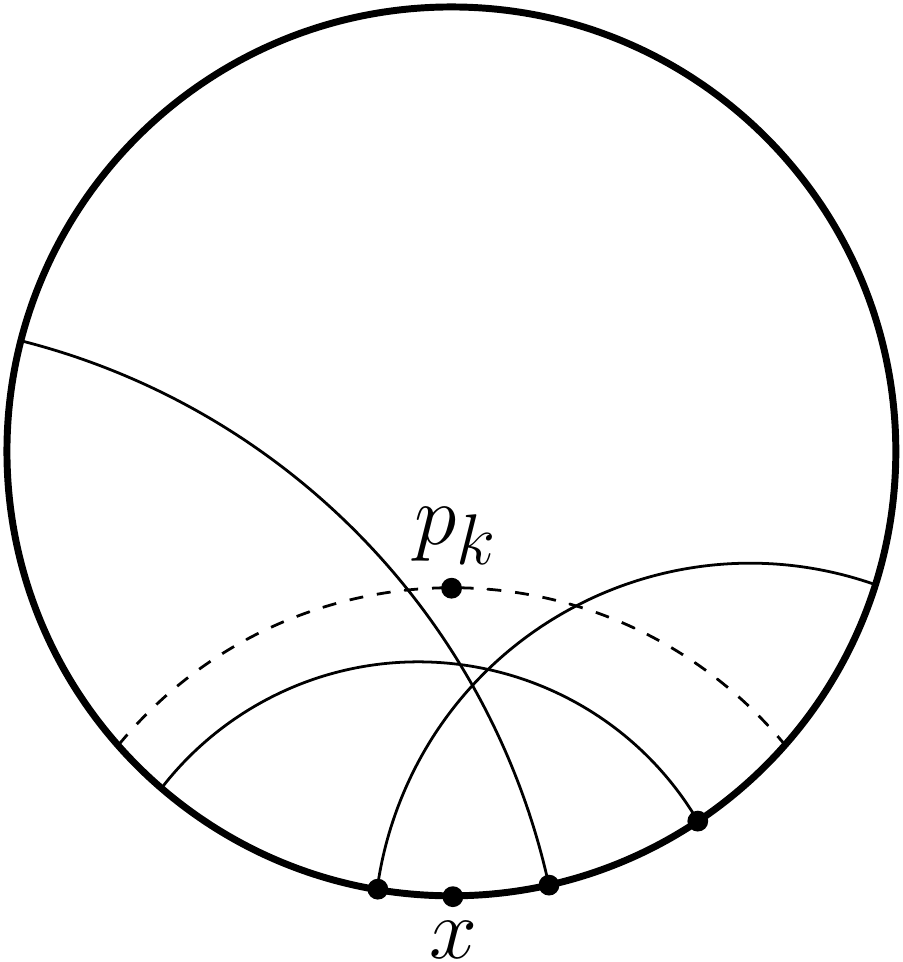}
%\caption{A future-directed sawtooth bounding a lightlike triangle. \label{fig:sawtooth}}
\end{minipage}%
\hspace{3mm}
\begin{minipage}[c]{.45\textwidth}
\centering
\includegraphics[height=4.5cm]{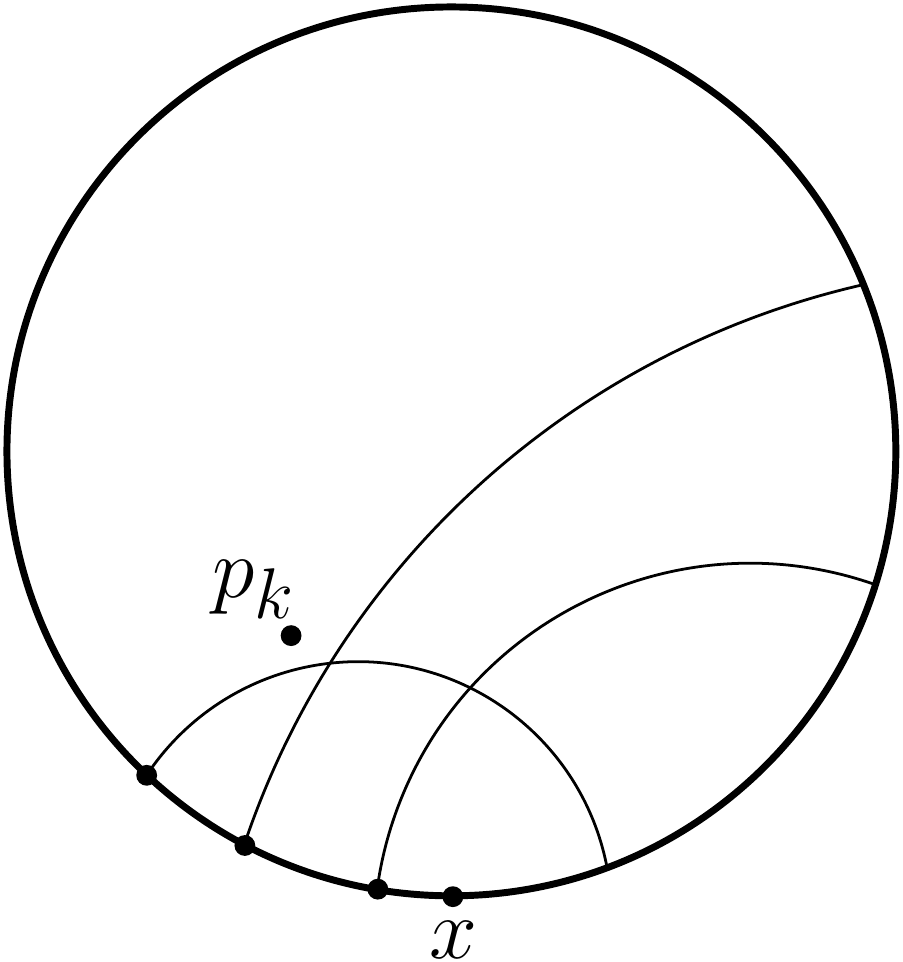}
\end{minipage}
\caption{Possible positions of the points $y_n(p_k)$, as the position of $p_k$ changes. The Euclidean distance  $d(y_n(p_k),x)$ is uniformly bounded in terms of $d(p_k,x)$, and it goes to zero as $d(p_k,x)\to 0$. \label{fig:positions}}

\end{figure}

One might consider the definition of measured geodesic lamination on a straight convex set of $\Hyp^2$, in the sense of \cite{bebo}, and prove more general statements of Lemma \ref{fuchsian weak approximation} and Lemma \ref{convergence earthquakes}. However, to avoid technicalities, we will circumvent this issue by approximating in two steps.

\begin{proof}[Proof of Lemma \ref{lemma approx acausal}]
Let $\Gamma$ be a weakly acausal curve in $\partial\AdS^3$. %In other words, it is the closure of the graph of a non-decreasing map of $\partial\Hyp^2$ of degree one. 
It is possible to find a sequence $\phi_n$ of orientation-preserving homeomorphisms of $\partial\Hyp^2$ such that the graphs $\Gamma_n=gr(\phi_n)$ converge to $\Gamma$ in the Hausdorff convergence. Moreover, by Lemma \ref{fuchsian weak approximation} for every $n$ there exists a sequence $\Gamma_{n,k}$ of graphs invariant for some pairs $((\rho_l)_{n,k},(\rho_r)_{n,k})$ of torsion-free cocompact Fuchsian representations. By Lemma \ref{convergence earthquakes}, the $\Gamma_{n,k}$ converge to $\Gamma_n$ in the Hausdorff convergence. By a standard diagonal argument, one finds a sequence $\Gamma_{n,{k(n)}}$ which converges to $\Gamma$.
\end{proof}

\subsection{Proof of the existence part}

%We will first show the existence of surfaces of constant curvature $-2$. As explained in Subsection \ref{subsec surfaces}, such surfaces can be obtained as surfaces parallel to a maximal surface, at timelike distance $\pi/4$. Thus the main issue will be to prove that the $(\pi/4)$-parallel surfaces have the boundary one would expect, namely, 
The purpose of this section is to prove the existence part of $K$-surfaces of Theorem \ref{thm foliation part ads}.
%Recall that the curves  
 %$\Gamma_+$ and $\Gamma_-$ are defined as the boundary curves of the spacelike part of $\partial_-\mathcal C$ and 
%$\partial_+\mathcal C$.

A 1-step curve is the boundary of a totally geodesic lightlike plane, while a 2-step curve is the union of four null segments in $\partial\AdS^3$. Those are the two cases in which the convex hull of the curve $\Gamma$ coincides with the domain of dependence $\mathcal{D}(\Gamma)$. See Figure \ref{fig:step}.

\begin{figure}[htb]
\centering
\begin{minipage}[c]{.45\textwidth}
\centering
\includegraphics[height=6cm]{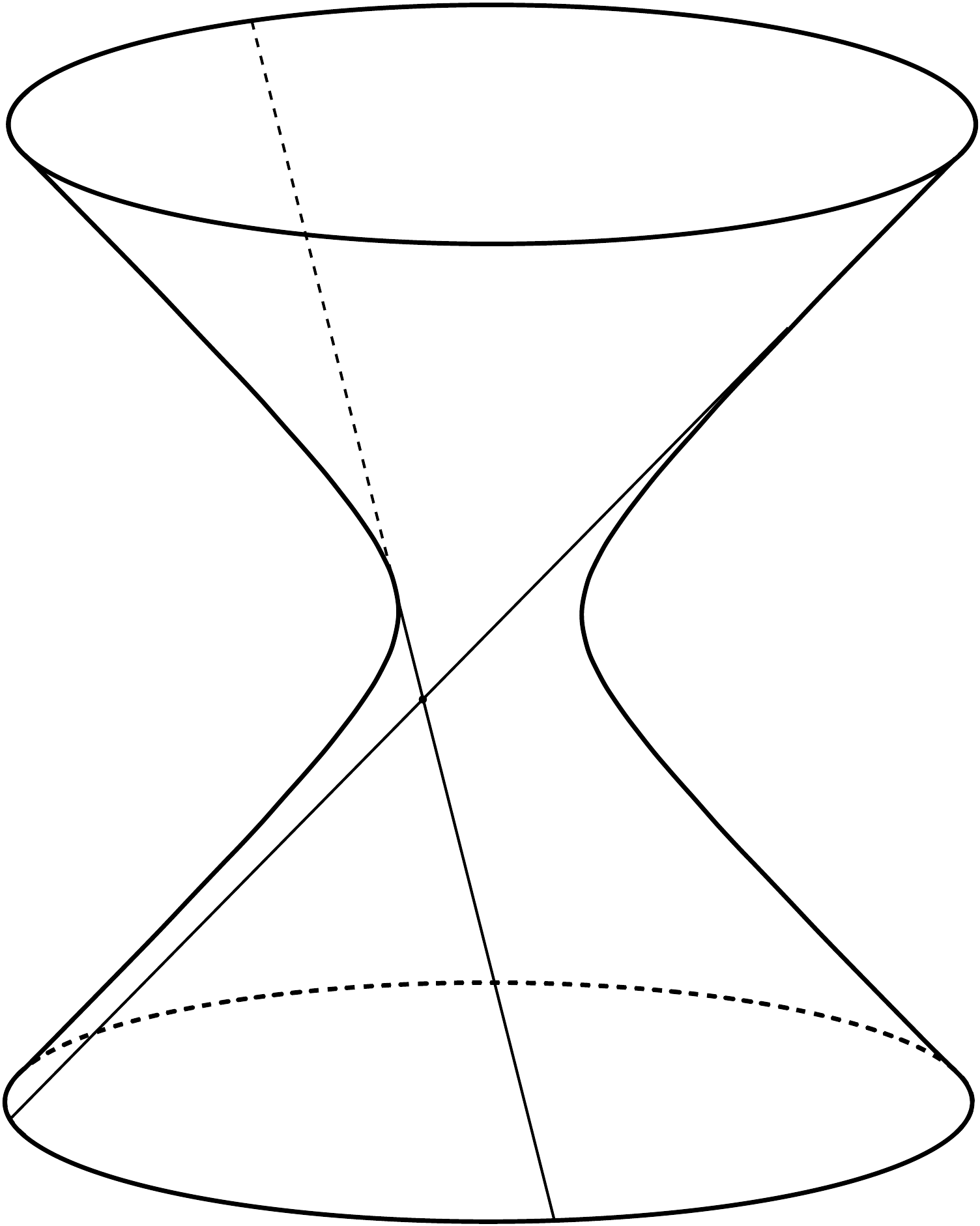}
%\caption{A future-directed sawtooth bounding a lightlike triangle. \label{fig:sawtooth}}
\end{minipage}%
\hspace{1mm}
\begin{minipage}[c]{.45\textwidth}
\centering
\includegraphics[height=6cm]{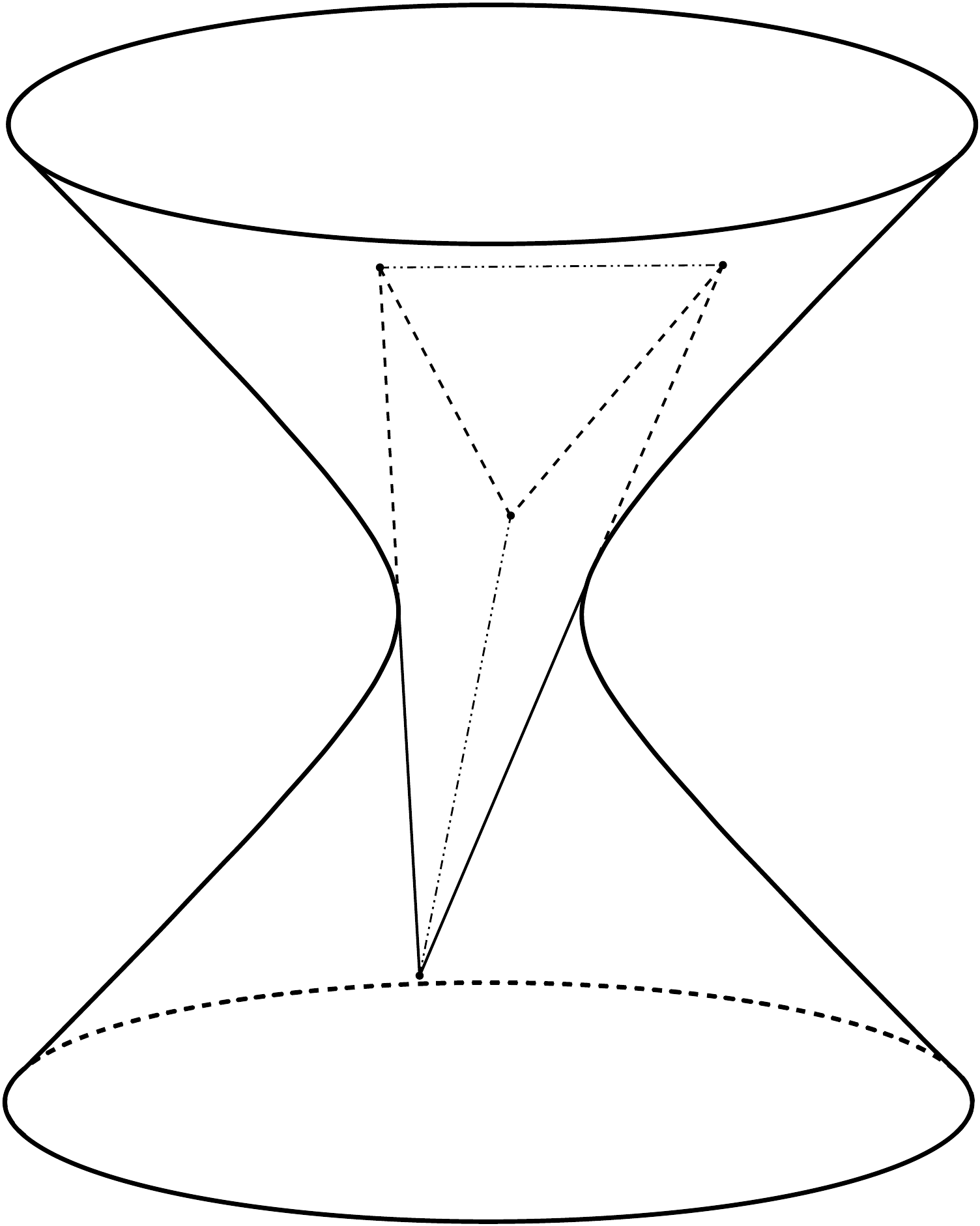}
\end{minipage}
\caption{A 1-step curve (left) bounds a lightlike plane. On the right, a 2-step curve, which is the boundary of the surface obtained as the union of two lightlike half-planes which intersect along a spacelike geodesic. \label{fig:step}}
\end{figure}

%orientation-preserving homemorphism $\phi:\RP^1\to \RP^1$

%\begin{prop} 
%Given any orientation-preserving homemorphism $\phi:S^1\to S^1$, there exist two surfaces $S_{\pm}$ in $\AdS^3$ of constant curvature $K=-2$ with $\partial_\infty S_\pm=gr(\phi)$, one concave and one convex.
%\end{prop}
%\begin{proof}
%Proposition 5.7 of \cite{bon_schl} shows the existence of a (unique) maximal surface $S_0$ in $\AdS^3$ with boundary at infinity $gr(\phi)$ and with principal curvatures in $(-1,1)$. Consider the surfaces $S_+=S_{\pi/4}$ and $S_-=S_{-\pi/4}$ obtained by the normal flow of $S_0$ in the future and past direction respectively, at time $\pi/4$. The following formula holds for the shape operator of $S_\rho$, which is the surface obtained by following the normal flow for time $\rho$ (see \cite[Lemma 3.22]{Schlenker-Krasnov}), pulled-back to $S_0$:
%\begin{equation} \label{shape operator distance}
%B_\rho=(\cos(\rho) E-\sin(\rho) B)^{-1}(-\sin(\rho) E+\cos(\rho) B)\,,
%\end{equation}
%where $B$ denotes the shape operator of $S_0$ and $E$ the identity operator. Since the principal curvatures of $B$ are in $(-1,1)$, it turns out that the normal flow is well-defined for time at least $\pi/4$. By a straightforward computation, using $tr(B)=0$, one checks that $\det B_{\pm\pi/4}=1$. By the Gauss equation, this shows that the curvature of $S_{\pi/4}$ and $S_{-\pi/4}$ is constantly $-2$. \textbf{dimostrare che il bordo all'infinito coincide con $gr(\phi)$}
%\end{proof}

\begin{theorem} \label{prop existence}
Given any weakly acausal curve $\Gamma$ in $\partial\AdS^3$ which is not a 1-step or a 2-step curve, for every $K\in(-\infty,-1)$ there exist two convex surfaces $S^+_{K},S^-_{K}$ in $\AdS^3$ with $\partial S^\pm_{K}=\Gamma$, such that:
\begin{itemize}
\item The lightlike part of $S_K^\pm$ is $\overline{\mathcal D_\pm(\Gamma)}\cap \mathcal C(\Gamma)$, that is, the union of all lightlike triangles bounded by past-directed (resp. future-directed) sawteeth of $\Gamma$;
\item The spacelike part of $S_K^\pm$ is a smooth $K$-surface.
\end{itemize}

\end{theorem}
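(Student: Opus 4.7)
The plan is to prove Theorem~\ref{prop existence} by a limit argument: starting from the cocompact case of Barbot--Zeghib (Theorem~\ref{bbz}), I will use Schlenker's convergence theorem (Theorem~\ref{schl_degeneration}) to control the regularity of the limit, and use the explicit barriers of Corollary~\ref{lemma barrier} to exclude the pathological lightlike degenerations. I will only write the argument for $S_K^+$; the construction of $S_K^-$ is entirely analogous, by reversing time-orientation.

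First, by Lemma~\ref{lemma approx acausal}, I choose a sequence of graphs $\Gamma_n = gr(\phi_n)$ converging to $\Gamma$ in the Hausdorff topology of $\partial\AdS^3$, with each $\phi_n$ equivariant under a pair of cocompact Fuchsian representations. For each $n$ and each $K \in (-\infty, -1)$, Theorem~\ref{bbz} supplies a past-convex smooth $K$-surface $\Sigma_n \subset \mathcal{D}_+(\Gamma_n)$ with $\partial \Sigma_n = \Gamma_n$. Since $\mathcal{D}(\Gamma_n)$ is eventually contained in a fixed affine chart where it stays bounded, I extract a Hausdorff subsequential limit $\Sigma_\infty \subset \overline{\mathcal{D}_+(\Gamma)}$. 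Each $\Sigma_n$ has constant shape-operator determinant $-(K+1)>0$, so the family is uniformly elliptic, and Theorem~\ref{schl_degeneration} applies pointwise: at every point $x_\infty$ of the spacelike part of $\Sigma_\infty$, either $\Sigma_n$ converges in $C^\infty$ to a smooth strictly convex $K$-surface near $x_\infty$, or $\Sigma_n$ degenerates along a maximal spacelike geodesic through $x_\infty$ and $\Sigma_\infty$ contains the corresponding lightlike ruling.

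The main obstacle, and the genuinely hard part of the argument, is to ensure that the spacelike part of $\Sigma_\infty$ does not escape to $\partial \mathcal{D}_+(\Gamma)$ at a point outside the lightlike triangles of $\overline{\mathcal{D}_+(\Gamma)} \cap \mathcal{C}(\Gamma)$, and conversely that it reaches every point of $\Gamma$ which is not inside a sawtooth. Both are overcome by the explicit barrier of Corollary~\ref{lemma barrier}: for any spacelike totally geodesic half-plane $H \subset \AdS^3$ and any past-directed sawtooth attached to $\partial H$ in $\partial\AdS^3$, that corollary produces a smooth past-convex $K$-surface $\mathcal{B}$ whose asymptotic boundary is $\partial H$ together with the given sawtooth, and whose lightlike part is exactly the lightlike triangle of the sawtooth. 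Suppose by contradiction that the spacelike part of $\Sigma_\infty$ contains a point $x_\infty \in \partial\mathcal{D}_+(\Gamma)$ not lying in a lightlike triangle of $\Gamma$. Because $\Gamma$ is neither a $1$-step nor a $2$-step curve, one can position an isometric copy of $\mathcal{B}$ so that $\partial H$ and its sawtooth lie in $\partial\AdS^3 \setminus \Gamma$ on the outer side of the relevant lightlike support plane of $\mathcal{D}_+(\Gamma)$, while $\mathcal{B}$ passes through a neighbourhood of $x_\infty$; for large $n$, $\mathcal{B}$ and $\Sigma_n$ have disjoint asymptotic boundaries yet are forced to cross in the interior of $\mathcal{D}_+(\Gamma_n) \cap \mathcal{D}_+(\Gamma)$, and the maximum principle applied at a first contact point of two past-convex surfaces of the same constant curvature $K$ yields the contradiction. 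The same barrier family, placed at the endpoints of geodesics of $\Gamma$ outside sawteeth, shows $\Gamma \subset \partial \Sigma_\infty$. Combining these conclusions with Schlenker's dichotomy, the spacelike part of $\Sigma_\infty$ is a smooth $K$-surface with the correct asymptotic boundary, while its lightlike part is forced to be exactly $\overline{\mathcal{D}_+(\Gamma)} \cap \mathcal{C}(\Gamma)$, completing the proof.
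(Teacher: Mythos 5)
Your outline follows the paper's proof essentially step for step: approximation by cocompact-invariant graphs via Lemma~\ref{lemma approx acausal}, existence of the approximating $K$-surfaces from Theorem~\ref{bbz}, a $C^0$ (Lipschitz-graph) limit, Schlenker's Theorem~\ref{schl_degeneration} for smoothness of the spacelike part, and the barriers of Corollary~\ref{lemma barrier} to control the lightlike part. The issue is that you leave the single most delicate step as a one-line appeal to ``the maximum principle applied at a first contact point of two past-convex surfaces of the same constant curvature $K$,'' and as stated this does not go through. Two distinct $K$-surfaces with the \emph{same} curvature can perfectly well intersect transversely; the maximum principle only says something at a point of tangency where one surface lies on the convex side of the other, and even there, equal curvatures give no immediate contradiction from the ordinary comparison of shape-operator determinants --- you would need a strong maximum principle plus unique continuation to force the surfaces to coincide. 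You never explain how a tangency point is produced (no sliding family is set up, and ``forced to cross'' only gives transverse intersection).

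The paper closes exactly this gap by exploiting the Barbot--Zeghib \emph{foliation} of $\mathcal D_+(\chi_n(\Gamma_n))$ rather than the single leaf $\Sigma_n$: writing $\kappa$ for the curvature function of the foliation, one takes $K_0$ to be the maximum of $\kappa$ on the barrier $S_K'$ intersected with $\mathcal D_+(\chi_n(\Gamma_n))$ (this maximum is attained in the compact spacelike part of the barrier because the asymptotic boundaries are disjoint and the barrier's lightlike triangle is excluded). The leaf $S_{K_0}$ is then \emph{tangent} to $S_K'$ from its convex side, the ordinary maximum principle comparing the two \emph{different} curvatures gives the strict inequality $K_0<K$, and the monotonicity of the foliation then forces the $K$-leaf $\Sigma_n$ to lie on the convex side of $S_{K_0}$, hence disjoint from $S_K'$. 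You should either reproduce this foliation argument or supply an explicit sliding-to-first-contact argument together with a justification that first contact occurs at an interior tangency (and then invoke a strong maximum principle for the equal-curvature case). A secondary, smaller omission: to apply Theorem~\ref{schl_degeneration} you must verify that the $1$-jets of the intrinsic parametrizations are bounded, which the paper does by noting that unbounded $1$-jets would make the tangent planes degenerate to a lightlike support plane, contradicting the fact (already established by the barrier step) that the point lies off $\partial\mathcal D(\Gamma)$.
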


\begin{remark}
The first property of the surface $S_K^\pm$ in Theorem \ref{prop existence} means that its lightlike part is the smallest possible, under the condition that the boundary is the curve $\Gamma$. 
  In particular, if it is possible to construct a past-convex spacelike surface with boundary $\Gamma$, then the lightlike part of $S_K^+$ is empty and therefore $S_K^+$ is a smooth spacelike $K$-surface with boundary $\Gamma$. The same holds for future-convex surfaces. In general, the boundary of the spacelike part of $S_K^\pm$ coincides with the boundary of the spacelike part of $\mathcal C_\pm(\Gamma)$.
\end{remark}

\begin{proof}[Proof of Theorem \ref{prop existence}]
%Let $\mu$ be the earthquake measure of $\phi$, namely assume that $\phi$ coincides with the restriction to $S^1$ of the earthquake map along the measured geodesic lamination $\mu$, suitably normalized. Let $\mu_n$ be a sequence of measured geodesic laminations invariant under a cocompact Fuchsian group $G_n$, as in Lemma \ref{fuchsian weak approximation}, such that $\mu_n\war \mu$.
%Consider the pleated surface $P$ which is the upper boundary of the convex hull of $\phi$, and let $P_n$ be the pleated surface which is the upper boundary of the convex hull of $\phi_n$. By Lemma \ref{convergence earthquakes}, we have that $gr(\phi_n)$ converges to $gr(\phi)$ in the Hausdorff topology in $\partial_\infty\AdS^3=S^1\times S^1$. (spiegare meglio la costruzione di Mess?)
%Let $g_n$ be the genus of the quotient surface $\Sigma_{g_n}=\Hyp^2/G_n$, where we recall that $G_n$ is the Fuchsian cocompact group preserving $\mu_n$. The surfaces $P_n$ are embedded surfaces in $\AdS^3$ which are invariant by a group representation of $\rho_n:\pi_1(\Sigma_{g_n})\to \PSL(2,\R)\times\PSL(2,\R)$, therefore obtaining a compact quotient $P_n/\rho_n(\pi_1(\Sigma_{g_n}))$. By the results of \cite{Mess}, $\rho_n(\pi_1(\Sigma_{g_n}))$ acts properly discontinuously on the domain of dependence of $gr(\phi_n)$, and therefore the representations obtained by projecting $\rho_n$ on the left and right component of $\PSL(2,\R)\times\PSL(2,\R)$ are Fuchsian representations.
For definiteness, we will provide the argument for the past-convex surface $S_K^+$ contained in $\mathcal{D}_+(\Gamma)$, the other argument being completely analogous.
Let $\Gamma_n$ be a sequence of curves in $\partial\AdS^3$ invariant for a pair of torsion-free cocompact Fuchsian representations $((\rho_l)_n,(\rho_r)_n)$, converging to $\Gamma$, as in Lemma \ref{lemma approx acausal}. Fix $K<-1$. By Theorem \ref{bbz}, there exists a past-convex $K$-surface $(S_K)_n$ in $\mathcal{D}_+(\Gamma_n)$, with $\partial (S_K)_n=\Gamma_n$. %Observe that the maximal surface $\Sigma_n$ having $\partial \Sigma_n=\Gamma_n$ (which exist by Theorem \ref{thm max surface}) is also invariant for the representations $((\rho_l)_n,(\rho_r)_n)$, by the uniqueness part. Hence it is not difficult to show that, when $K=-2$, the $K$-surfaces $(S_{-2})_n$ coincide with the $-(\pi/4)$-parallel surface to $\Sigma_n$.
%Let $(S_0)_n$ be the maximal surface in $\AdS^3$ with $\partial_\infty (S_0)_n=gr(\phi_n)$. Let $K\in(-2,-1)$. By Theorem \ref{bbz}, there exists a surface $S_n$ of constant curvature $K$ invariant under the representation $\rho_n$. Observe that, by the uniqueness part of Theorem \ref{bbz}, the surface at distance $\pi/4$ from $(S_0)_n$ (as in the proof of Proposition \ref{thm k=-2}) is the unique surface of constant curvature $-2$ invariant under the representation $\rho_n$. We denote such surface by $(S_{\pi/4})_n$. Hence, using the second part of the statement of Theorem \ref{bbz}, $S_n$ is contained in the region between the maximal surface $(S_0)_n$ and $(S_{\pi/4})_n$. Up to extracting a subsequence, $(S_0)_n$ converges $C^\infty$ on compact sets to the maximal surface $S_0$ with $\partial_\infty S_0=gr(\phi)$. It follows that also $(S_{\pi/4})_n$ converges to $S_{\pi/4}$, namely the surface obtained as the normal evolution of $S_0$ at time $\pi/4$. By Proposition \ref{thm k=-2}, $\partial_\infty S_n=gr(\phi_n)$. Recall that and this curve at infinity converges in the Hausdorff topology to $gr(\phi)$.

%Since the surfaces $(S_K)_n$ are contained in $\mathcal{D}_+(\Gamma_n)$ and $\Gamma_n$ converges to $\Gamma$, for $n$ large the surfaces $(S_K)_n$ are contained in the union $\cup_{n\geq n_0}\mathcal{D}_+(\Gamma_n)$. Recalling that $\mathcal{D}(\Gamma)$ is contained in affine chart for $\AdS^3$, one can choose an affine chart so that the $(S_K)_n$ are uniformly bounded, in the sense that they are contained in a bounded region of the affine chart, and convex.
 
\emph{Step 1: There exists a $C^0$-limit of the surfaces $(S_K)_n$.}
Introducing the model of $\AdS^3$ given by (see \cite{bon_schl}): 
\begin{equation} \label{eq:battista}
\left(\D^2 \times S^1,\frac{4|dz|^2-(1+|z|^2)dt^2}{(1-|z|^2)^2}\right)\,,
\end{equation}
we see that, since $(S_K)_n$ is spacelike, $\overline{(S_K)_n}$ is the graph of a $4$-Lipschitz function from $\overline{\D^2}$ to $S^1$, with respect to the Euclidean metric of the disc. 
 By an application of the Ascoli-Arzel\`a Theorem, there exists a subsequence (which we still denote by $(S_K)_{n}$) whose closure converges uniformly to the closure of a locally convex nowhere timelike surface $(S_K)_\infty$. 
As a consequence $\partial(S_K)_\infty=\Gamma$. 
%Indeed, it is clear that $\partial(S_K)_\infty\subseteq\Gamma$. On the other hand, $x$ were a point in $\Gamma$ and not in $\partial(S_K)_\infty$, there would exist a neighborhood $U$ of $x$ (in an affine chart for $\AdS^3$) which is disjoint from $(S_K)_\infty$. Since there is a sequence $x_n\in\Gamma_n$ converging to $x$, the surfaces $(S_K)_n$ all intersect $U$, and this contradicts the fact that $(S_K)_n$ converges uniformly on compact sets to $(S_K)_\infty$. 

It remains to show the two properties in the statement. %that the spacelike part of $(S_K)_\infty$ is a smooth surface of constant curvature $K$ and its boundary coincides with $\Gamma_+$. 

\emph{Step 2: The lightlike part of the limit $(S_K)_\infty$ is the union of all maximal lightlike triangles bounded by past-directed sawteeth.}
%Since the surfaces $(S_K)_n$ are contained in the region 
%$\mathcal{D}_+(\Gamma_n)$, which is bounded by the boundary of the domain of dependence $\mathcal{D}(\Gamma_n)$ and by the boundary $\partial_+\mathcal{C}(\Gamma_n)$ of the convex hull, it follows that the same holds for $(S_K)_\infty$. 
We already know that, for any past-directed sawtooth contained in $\Gamma$, the surface $(S_K)_\infty$ contains entirely the totally geodesic lightlike triangle corresponding to the sawtooth. We will need to show the other inclusion. Namely, we will show that
any point in the complement of the past-directed lightlike triangles is not in the lightlike part of $(S_K)_\infty$. (Observe that the existence of a point  $x$ in the lightlike part of $(S_K)_\infty$  lying in the complement
of the lightlike triangles implies that $\Gamma$ is not the boundary of a lightlike plane or the boundary of two lightlike half-planes intersecting in a spacelike geodesic. See Figure \ref{fig:step}.)

For this purpose, suppose by contradiction that $x\in (S_K)_\infty$ is a point in the complement of such lightlike triangles, but still in the lightlike part of $(S_K)_\infty$. This implies that $(S_K)_\infty$ has a lightlike support plane $P$ which contains $x$. Let us show that the dual point $y$ of $P$ is necessarily on the curve $\Gamma$. Indeed, let $\ell$ be the past-directed light-ray joining $x$ to $y$ (which is contained in $P$). On the one hand, by convexity of $(S_K)_\infty$, $\ell$ does not intersect the open convex region bounded by $(S_K)_\infty$ (in an affine chart containing $\Gamma$). On the other hand, as $\ell$ is the limit of timelike rays though $x$, which are contained in this open convex region, it turns out that $\ell$ is in $(S_K)_\infty$. Hence the endpoint $y$ of $\ell$ is in the curve at infinity $\Gamma$.

%, hence from the definition of domain of dependence, $P$ is also a support plane for $\mathcal{D}(\Gamma)$. Since $(S_K)_\infty$ is contained in $\mathcal{D}(\Gamma)$, this implies that  $x\in \partial \mathcal{D}(\Gamma)$.  

Now, we can find a curve $\Gamma'$, as in Example \ref{ex crucial} (compare also Figure \ref{fig:comparison}), with the following properties:
\begin{itemize}
\item $\Gamma'$ is composed of a past-directed sawtooth contained inside the plane $P$ and of the boundary of a totally geodesic spacelike half-plane;
\item $\Gamma'$ does not intersect $\Gamma$ transversely;
\item In an affine chart which entirely contains $\mathcal D(\Gamma)$, the totally geodesic spacelike plane which contains a portion of $\Gamma'$ disconnects $x$ from $\Gamma$.
\end{itemize}
Indeed, if $P\cap \Gamma$ is composed of a single point $y$, one can insert a small enough past-directed sawtooth with vertex in $y$, such that the corresponding lightlike triangle is contained inside $P$ but does not contain $x$. Moreover $\Gamma'$ can be arranged to intersect $\Gamma$ only at $y$. The situation is similar if $P\cap\Gamma$ contains a light-like segment, but not a past-directed sawtooth.  Similarly, if $P\cap \Gamma$ contains a past-directed sawtooth, it suffices to choose a slightly larger sawtooth, in such a way that $\Gamma\cap\Gamma'$ coincides with the sawtooth $P\cap\Gamma$. See Figure \ref{fig:comparison}. Of course this is possible provided $\Gamma$ is not the boundary of a totally geodesic lightlike plane. 

\begin{figure}[htb]
\centering
\begin{minipage}[c]{.45\textwidth}
\centering
\includegraphics[height=6.5cm]{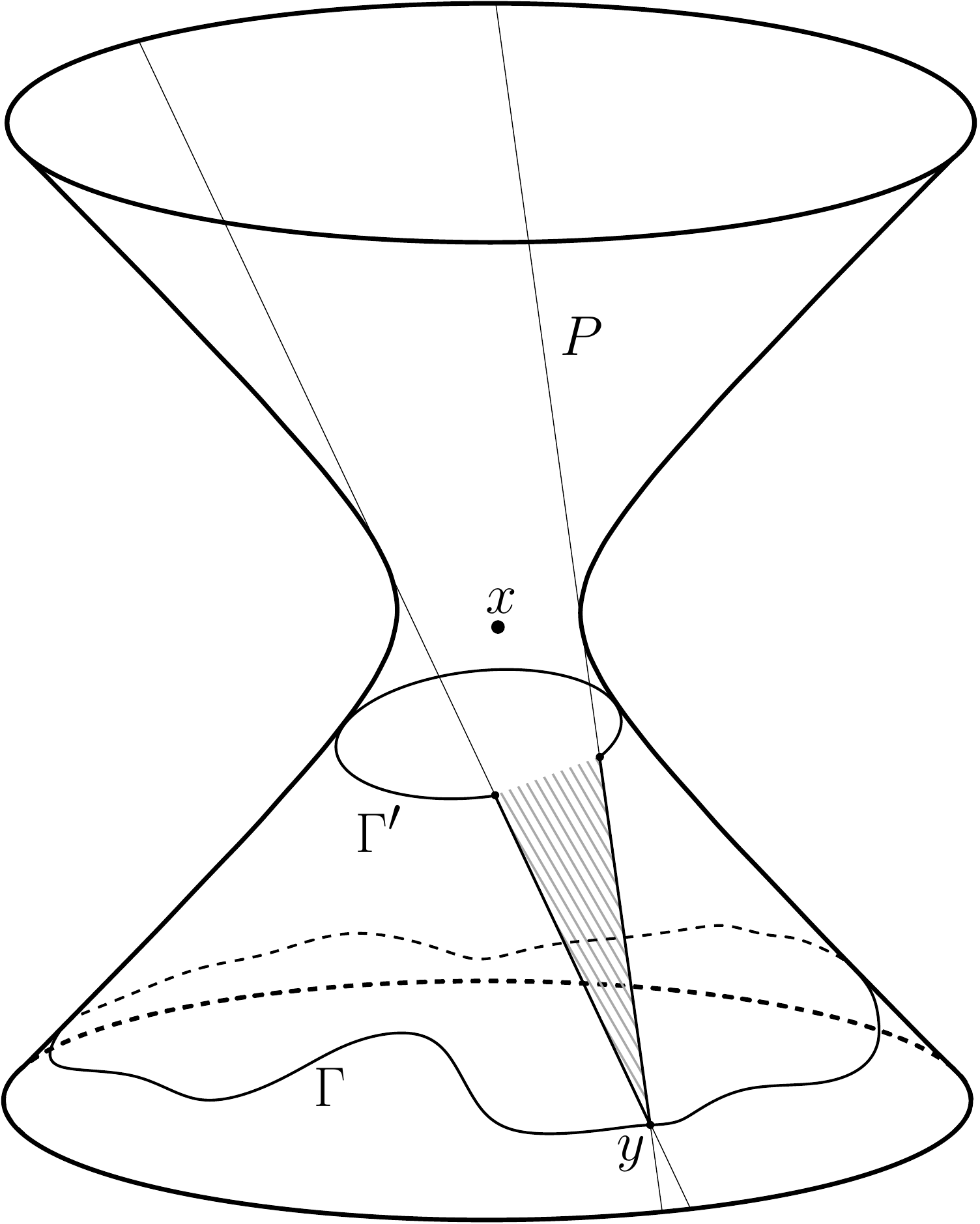}
%\caption{A future-directed sawtooth bounding a lightlike triangle. \label{fig:sawtooth}}
\end{minipage}%
\hspace{1mm}
\begin{minipage}[c]{.45\textwidth}
\centering
\includegraphics[height=6.5cm]{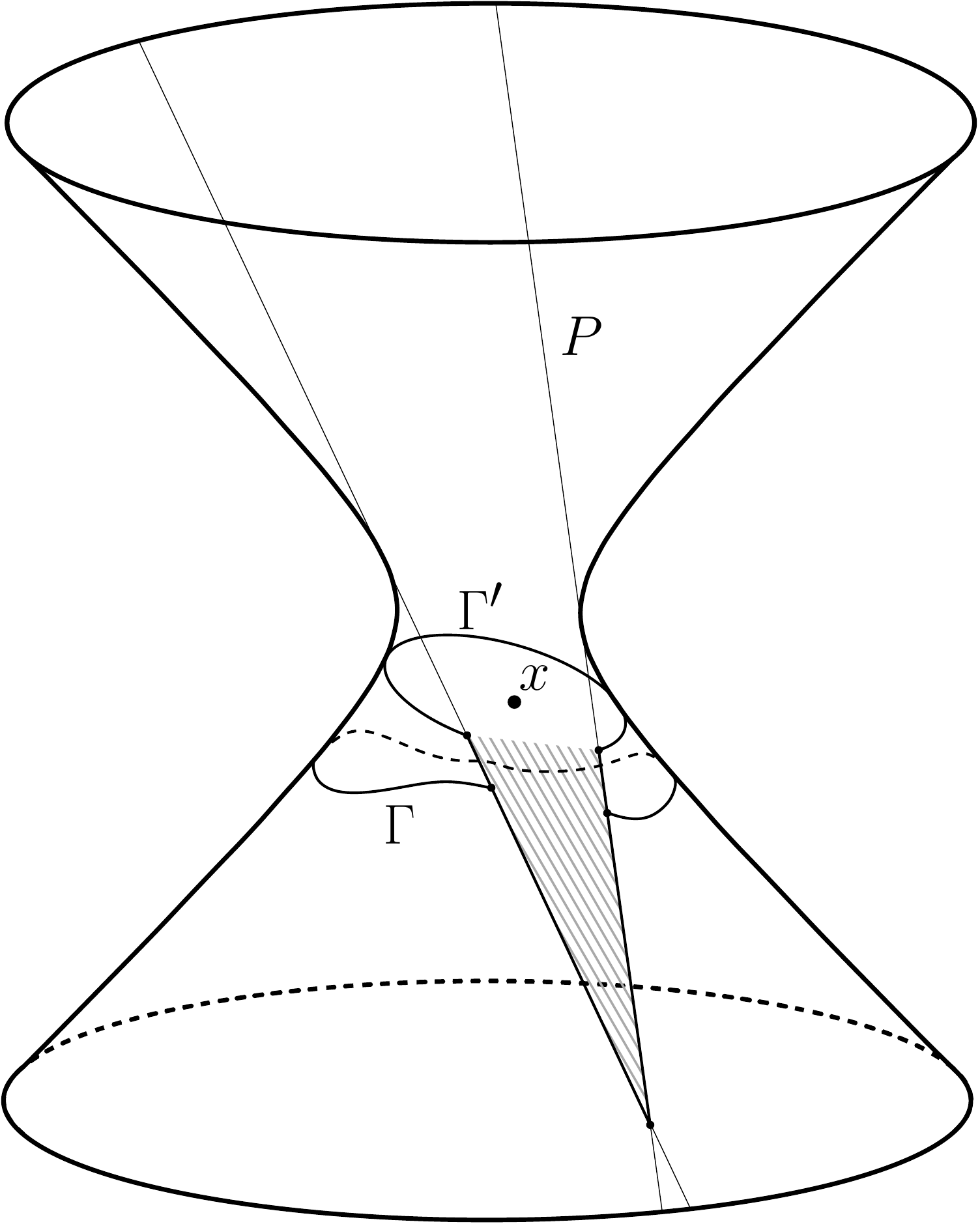}
\end{minipage}
\caption{The construction of the curve $\Gamma'$, when $\Gamma$ is the graph of an orientation-preserving homeomorphism (left) and when $\Gamma$ contains a past-directed sawtooth (right). \label{fig:comparison}}
\end{figure}

Let us consider the surface $S'_K$, which was constructed in Corollary \ref{lemma barrier}, with the following properties:
\begin{itemize}
\item The lightlike part of $S'_K$ is the maximal lightlike triangle with boundary in $\Gamma'$,
\item The spacelike part of $S'_K$ is a smooth $K$-surface.
\end{itemize}

To conclude Step 2, it suffices to show that $S'_K$ and $(S_K)_\infty$ do not intersect transversely. In fact, from Corollary \ref{lemma barrier} the surface $S'_K$ intersects $P$ exactly in the maximal lightlike triangle with boundary in $\Gamma'$. But by construction $x$ is not in such maximal lightlike triangle. Hence if we show that $S'_K$ and $(S_K)_\infty$ do not intersect transversely, it follows that $(S_K)_\infty$ cannot intersect $P$ at $x$, thus giving a contradiction.

To show the claim, 
 observe that since $\Gamma_n$ converges to $\Gamma$ as $n\to\infty$, one can choose a sequence of isometries $\chi_n\in\isom(\AdS^3)$ such that:
\begin{itemize}
\item $\chi_n(\Gamma_n)$ is disjoint from $\Gamma'$;
\item $\chi_n\to\mathrm{id}$ as $n\to\infty$.
\end{itemize}

%By Lemma \ref{lemma max surface}, the maximal surface $\Sigma'$ such that $\partial\Sigma'=\Gamma'$ (whose existence is proved in Subsection \ref{subsec half-plane}) is disjoint from the maximal surfaces $\chi_n(\Sigma_n)$ with boundary $\chi_n(\Gamma_n)$ (whose existence is guaranteed by Theorem \ref{thm max surface}). Therefore also the $-(\pi/4)$-parallel surfaces $S'_{-2}$ and $\chi_n(S_{-2})_n$ are disjoint. Since by Theorem \ref{bbz} the $K$-surfaces $(S_K)_n$ are in the convex side of $(S_{-2})_n$ for $K\in(-1,-2)$, the surface $S'_{-2}$ is also disjoint from $\chi_n(S_{K})_n$ for $K\in(-1,-2]$. Hence in the limit as $n\to\infty$ we obtain that $S'_{-2}$ and $(S_K)_\infty$ do not intersect transversely. But in Lemma \ref{lemma barrier} we proved that $S'_{-2}$ intersects the domain of dependence precisely in the curve $(\Gamma')_-$, and this contradicts the assumption that $x\in (S_K)_\infty$ is such that $x\in \partial \mathcal{D}(\Gamma)$. 

We claim that the surfaces $S'_K$ and $\chi_n (S_K)_n$ do not intersect transversely. Since their boundaries $\Gamma'$ and $\chi_n(\Gamma_n)$ are disjoint, and $S'_K$ contains a lightlike triangle, the intersection must be contained in a compact region of the spacelike part of $S'_K$. Observe that $S'_K$ lies outside of the convex hull of $\chi_n(\Gamma_n)$. By Theorem \ref{bbz}, there is a function $$\kappa:\mathcal D_+(\chi_n(\Gamma_n))\to(-\infty,-1)$$ such that the level sets $\{\kappa=K\}$ are $K$-surfaces. Observe that $\kappa\to -\infty$ as $x\to\partial\mathcal D(\Gamma)$. Let $K_0$ be the maximum of $\kappa$ on $S'_K\cap \mathcal D_+(\chi_n(\Gamma_n)$. Then the $K_0$-surface $\chi_n (S_{K_0})_n$ is tangent to the spacelike part of $S'_K$ at an interior point and $\chi_n (S_{K_0})_n$ is contained in the convex side of  $S_K'$. 
By an application of the maximum principle at the intersection point, $K_0<K$. 
But again by Theorem \ref{bbz},  $\chi_n (S_{K})_n$ is in the convex side of $\chi_n (S_{K_0})_n$ and so $S_K'$ cannot intersect $\chi_n (S_{K})_n$.  

Since $\chi_n (S_K)_n$ converges uniformly on compact sets to $(S_K)_\infty$, this shows that $S'_K$ and $(S_K)_\infty$ do not intersect transversely. This concludes Step 2. In fact, we have proved that for any point $x\in (S_K)_\infty$ which is in the complement of the lightlike triangles corresponding to sawteeth of $\Gamma$, $x$ does not lie in the boundary of the domain of dependence $\mathcal D(\Gamma)$. Therefore, the lightlike part of $(S_K)_\infty$ coincides precisely with the union of all maximal past-directed lightlike triangles.

In particular, the spacelike part of $(S_K)_\infty$ in nonempty unless $\Gamma$ is a 1-step or a 2-step curve.

%We can now proceed to conclude the proof of Theorem \ref{prop existence}. 
\emph{Step 3: The spacelike part of $(S_K)_\infty$ is a smooth $K$-surface.}
Let $x$ be any point of $(S_K)_\infty$, in the complement of the union of all maximal lightlike triangles, and let $x_n\in (S_K)_n$ such that $x_n\to x$. 
Regarding $(S_K)_n$ as the graph of a Lipschitz function in the conformal model of \eqref{eq:battista}, we argue that there is a cylindrical neighborhood $U$ of $x$ such that the restriction of the projection $\pi:\AdS^3\to \D^2$ is uniformly bi-Lipschitz on $U\cap (S_K)_n$. We deduce that there exists 
$\epsilon$ such that the metric ball $B_{(S_K)_n}(x,\epsilon)$ is compact in $(S_K)_n$, contained in $U$, and thus its projection is uniformly bounded in $\D^2$.
 Recall that the induced metric on $(S_K)_n$ has constant curvature $K$, and thus, up to rescaling the metric by a factor $|K|$, it is isometric to a subset of $\Hyp^2$. Let $o\in\Hyp^2$ and let
$$\sigma_n=:B_{\Hyp^2}(o,\epsilon)\to S_n$$
be an isometric embedding (after rescaling the metric by $|K|$) with $\sigma_n(o)=x_n$. Composing with the projection $\pi$, it then turns out that $\pi\circ\sigma_n$ are uniformly bounded and uniformly bi-Lipschitz. 
Hence the maps $\sigma_n$ converge $C^0$ up to a subsequence. The limit map $\sigma_\infty$ has image $B_{(S_K)_\infty}(x,\epsilon)$. 
 
 %the exponential map of $S_n$ based at the point $x_n$, where we have chosen some identification of $\R^2$ with $T_{x_n}S_n$ (by cocompactness, the surfaces $(S_K)_n$ are complete and therefore geodesically complete by the Hopf-Rinow Theorem). Therefore $\sigma_n(0)=x_n$. Up to a subsequence, the maps $\sigma_n$ converge $C^0$ on compact sets. 
 Observe that the 1-jet of $\sigma_n$ at $0$ is bounded. Indeed, if the 1-jet were not bounded, there would be a subsequence $n_k$ such that the tangent planes $T_{x_{n_k}}S_{n_k}$ converge to a lightlike plane, which is a support plane for $(S_K)_\infty$ at $x$. Hence $x$ is a point in the boundary of the domain of dependence $\mathcal{D}(\Gamma)$, contradicting the previous claim.

We can thus extract a subsequence in such a way that $j^1\sigma_n(0)$ converges. Since $\sigma_n$ are isometric embeddings of $B_{\Hyp^2}(o,\epsilon)$, (up to rescaling by the constant factor $|K|$), the pull-back $\sigma_n^*(g_{\AdS^3})$ of the Anti-de Sitter metric is constant, and we are in the right assumptions to apply Theorem \ref{schl_degeneration}. It follows that $\sigma_n$ converges $C^\infty$ in a neighborhood of $0$. Indeed, if this were not the case, by Theorem \ref{schl_degeneration} the surfaces $(S_K)_n$ would converge to the union of two half-planes of lightlike type in $\AdS^3$, intersecting along a spacelike geodesic, and this possibility is ruled out again by the fact that $x$ is in the spacelike part of $(S_K)_\infty$.
This concludes the proof that $\sigma_n$ converges $C^\infty$, and therefore the spacelike part of $(S_K)_\infty$ is smooth and has curvature $K$ at every point $x$.
%Fix a point $x_0\in P$ which is not on any bending line of $P$ (spiegare meglio). 
%By composing with isometries of $\AdS^3$ - which amounts to pre-composing and post-composing $\phi$ with elements of $\PSL(2,\R)$ - it is harmless to assume that the face of $P_+$ containing $x_0$ lies in the to 
\end{proof} 

\section{Foliations of the complement of the convex hull} \label{sec foliations}

The purpose of this section is to prove that the $K$-surfaces obtained in Theorem \ref{prop existence} provide foliations of $\mathcal D_+(\Gamma)$ and $\mathcal D_-(\Gamma)$, as $K\in(-\infty,-1)$.
We first compute the curvature of a spacelike surface in $\AdS^3$, expressed as a graph over the horizontal plane in an affine chart for the projective model of Subsection \ref{subsec models}.

\begin{prop} \label{prop formula affine curvature}
Let $V$ be an open subset of a smooth spacelike surface in $\AdS^3$. Suppose that, in an affine chart of the projective model for which $\AdS^3$ is the interior of the standard one-sheeted hyperboloid, $V$ is expressed as the graph of a function $u:\R^2\to\R$, namely:
$$V=\{(x,y,t):(x,y)\in\Omega,\,t=u(x,y)\}\,,$$
where $\Omega\subset\R^2$ is some domain. Then $u$ satisfies the equation:
\begin{equation} \label{monge ampere graph}
\det D^2 u=\frac{(1-|Du|^2+((x,y)\cdot Du-u)^2)^2}{(1+u^2-x^2-y^2)^2}\det B\,,
\end{equation}
where $B$ is the shape operator of $S$.
\end{prop}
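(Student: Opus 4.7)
The plan is to compute $\det I$ and $\det\II$ separately by realizing $\AdS^3$ as the quadric $\{\langle X,X\rangle=-1\}$ in $\R^{2,2}$ endowed with a pseudo-Euclidean form of signature $(2,2)$, and then to conclude via $\det B=\det\II/\det I$. In the affine chart $X_4=1$ this quadric is exactly the interior of the one-sheeted hyperboloid $\{1+t^2-x^2-y^2>0\}$, and the graph $V$ is parameterized by the lift
\[
\widetilde\Psi(x,y)=F^{-1/2}\bigl(x,y,u(x,y),1\bigr),\qquad F=1+u^2-x^2-y^2.
\]
The first fundamental form of $V$ is then the pull-back by $\widetilde\Psi$ of the ambient $(2,2)$-form.

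First I would compute $\det I$. Setting $P_0=(x,y,u,1)$, $e^1=(1,0,u_x,0)$, $e^2=(0,1,u_y,0)$, and $(v_1,v_2)=(x-uu_x,\,y-uu_y)$, one expands $\partial_i\widetilde\Psi=v_iF^{-3/2}P_0+F^{-1/2}e^i$. Using $\langle P_0,P_0\rangle=-F$, $\langle P_0,e^i\rangle=v_i$, $\langle e^i,e^j\rangle=\delta_{ij}-u_iu_j$, together with the Lagrange identity $|Du|^2(x^2+y^2)-((x,y)\cdot Du)^2=(u_xy-u_yx)^2$ to collect cross terms, a direct calculation gives
\[
\det I=\frac{G}{F^3},\qquad G:=1-|Du|^2+\bigl((x,y)\cdot Du-u\bigr)^2.
\]

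For $\det\II$ I would use the standard formula $\II_{ij}=\langle\partial_i\partial_j\widetilde\Psi,\widetilde N\rangle$, valid for a submanifold of the quadric $\AdS^3\subset\R^{2,2}$; the correction arising from writing the $\AdS^3$-Levi-Civita connection in terms of the flat $\R^{2,2}$-connection is proportional to $\widetilde\Psi$, hence killed upon taking $\langle\cdot,\widetilde N\rangle$. The unit normal $\widetilde N\in\R^{2,2}$ to $V$ inside $\AdS^3$ is characterized by $\widetilde N\perp\{\widetilde\Psi,\partial_x\widetilde\Psi,\partial_y\widetilde\Psi\}$, and solving this linear system yields $\widetilde N=\gamma\bigl(u_x,u_y,1,(x,y)\cdot Du-u\bigr)$, with $\gamma^2=1/G$ coming from $\langle\widetilde N,\widetilde N\rangle=\pm1$. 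The crucial observation is that $\partial_i\partial_j\widetilde\Psi$ lies in the span of $\{P_0,e^1,e^2,F^{-1/2}u_{ij}k\}$ with $k=(0,0,1,0)$, and $\widetilde N$ is orthogonal to each of $P_0$, $e^1$, $e^2$ (orthogonality to $e^i$ follows from $\widetilde N\perp\partial_i\widetilde\Psi$ combined with $\widetilde N\perp P_0$). Hence $\II_{ij}=\gamma F^{-1/2}u_{ij}$, giving $\det\II=\det(D^2u)/(FG)$.

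The conclusion is immediate: $\det B=\det\II/\det I=\det(D^2u)\cdot F^2/G^2$, which rearranges to the claimed identity. The main obstacle is just the bookkeeping of signs and powers of $F$ in the Lorentzian computation; the conceptual crux is the observation that $\partial_i\partial_j\widetilde\Psi$ contributes to $\II$ only through its $k$-component, whose coefficient is precisely $F^{-1/2}u_{ij}$.
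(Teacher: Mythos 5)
Your proposal is correct and follows essentially the same route as the paper: lift the graph to the quadric $\{\langle X,X\rangle=-1\}\subset\R^{2,2}$ by the radial map $F^{-1/2}(x,y,u,1)$, compute $\det I=G/F^{3}$ and $\det\II=\det(D^{2}u)/(FG)$ for the lifted immersion (the paper organizes the same computation via the Hodge-star cross product $\sigma_x\boxtimes\sigma_y=*(\sigma\wedge\sigma_x\wedge\sigma_y)$ rather than solving directly for the normal, and finds the same normal $G^{-1/2}(u_x,u_y,1,(x,y)\cdot Du-u)$), and conclude from $\det B=\det\II/\det I$. The overall sign of $\II$ you obtain differs from the paper's, but this depends only on the choice of normal orientation and is irrelevant for the determinant identity.
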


\begin{proof}
Recall that in the projective description of $\AdS^3$ of Subsection \ref{subsec models}, the double cover $\SL(2,\R)$ of $\PSL(2,\R)$ is identified to the subset of $\R^4$ where the quadratic form $x_1^2+x_2^2-x_3^2-x_4^2$ takes the value $-1$. Hence we will denote by $\R^{2,2}$ the ambient $\R^4$ with the standard bilinear form of signature $(2,2)$, and 
we will suppose  that the affine chart is given by $x_4\neq 0$. We use the affine coordinates:
$$[x_1:x_2:x_3:x_4]\longrightarrow (x,y,t)=\left(\frac{x_1}{x_4},\frac{x_2}{x_4},\frac{x_3}{x_4}\right)\,,$$
so that the horizontal hyperplane $H_0$ corresponds to $x_3=0$. %Let us consider the double cover $\wAdS$ of $\AdS^3$ which is obtained as the subset of $\R^{2,2}$ given by $\langle x,x\rangle=-1$, and consider . 

Denote by $H$ the hyperplane defined by $x_4=1$, and by  $H^{-}$ the region of $H$ where the quadratic form is negative, that is the image
of $\AdS^3$ in the affine chart.
We denote by $\xi: H^{-}\to \SL(2,\R)$ the radial map $\xi(x_1,x_2,x_3,1)=f(x_1,x_2,x_3)\cdot (x_1,x_2,x_3,1)$ where $f(x_1, x_2,x_3)=(1+x_3^2-x_1^2-x_2^2)^{-1/2}$.
The local parameterization of $S$ in the affine chart is of the form
\[
     \bar\sigma(x,y)=(x,y,u(x,y),1)\,,
\]
while the parameterization of the lifting of the surface in $\SL(2,\R)$ is $\sigma=\xi\circ\bar\sigma=f(\bar\sigma)\cdot\bar\sigma$.
We want to compute the determinant of the shape operator $B$ of $\sigma$. If $I$ e $\II$ denote the first and second fundamental form matrices of
$\sigma$ in the $\{\partial_x,\partial_y\}$ frame, it turns out that
\[
     \det B=(\det\II)/(\det I)~.
\] 
First let us compute $\det I$. 
If $\boxtimes$ denotes the vector product on $\SL(2,\R)$, then $\sigma_x\boxtimes\sigma_y$ is a tangent vector at $\sigma$  normal to immersion.
In particulat it is timelike and by standard facts
\[
   \det I=-\langle\sigma_x\boxtimes\sigma_y, \sigma_x\boxtimes\sigma_y\rangle~.
\]
Considering $\SL(2,\R)$ as a submanifold of $\R^{2,2}$ we can orient $\R^{2,2}$ so that the positive normal vector at $\eta\in\SL(2,\R)$ is $N(\eta)=\eta$.
Now it is immediate to check that the following formula holds for any $v,w\in T_\eta\SL(2,\R)$
\[
    v\boxtimes w=*(\eta\wedge v\wedge w)\,,
\]
where $*:\Lambda^3(\R^4)\to\R^4$ is the Hodge-operator on $\R^{2,2}$.
So a direct computation shows that
\[
   \sigma_x\boxtimes\sigma_y=*(f\bar\sigma\wedge d\xi(\bar\sigma_x)\wedge d\xi(\bar\sigma_y))=f^3\,*(\bar\sigma\wedge\bar\sigma_x\wedge\bar\sigma_y)
\]
 Now $\bar\sigma=\bar\sigma_0+e_4$, where $e_4$ is the positive normal of the hyperplane $H$, and 
 $$\bar\sigma_0(x,y)=(x,y,u(x,y),0)\,.$$
Denoting with $\boxtimes$ also the vector product
 on $H$ (that is intrinsically a copy of Minkowski space),
 \[
    *(e_4\wedge\bar\sigma_x\wedge\bar\sigma_y)=(\bar\sigma_x\boxtimes\bar\sigma_y)=\nu\,,
 \]
 where it turns out that $\nu=(u_x,u_y,1,0)$.
Writing $\bar\sigma_0=\bar\sigma-e_4$, one obtains:
\[
    *(\bar\sigma_0\wedge\bar\sigma_x\wedge\bar\sigma_y)=\langle \sigma_0, \nu\rangle e_4=\langle \sigma, \nu\rangle e_4\,.
\]
We thus conclude that
\begin{equation} \label{eq:hinwil}
\sigma_x\boxtimes\sigma_y=f^3(\nu+\langle\bar\sigma,\nu\rangle e_4)
\end{equation}
and thus $\det I=f^6g$ where $g(x,y)=\langle\bar\sigma,\nu\rangle^2-\langle\nu,\nu\rangle=1-|Du|^2+((x,y)\cdot Du-u)^2$.

To compute $\II$, notice that the normal vector of the immersion $\sigma$ is
\[
  N=\frac{\sigma_x\boxtimes\sigma_y}{\sqrt{\det I}}\in T_\sigma\SL(2,\R)~.
\]
Now $\II_{11}=\langle N, \sigma_{xx}\rangle$.
Observe that $\sigma_{xx}=f_{xx}\bar\sigma+2f_x\bar\sigma_x+f\bar\sigma_{xx}$. 
Using that $\sigma_x=f_x\bar\sigma+f\bar\sigma_x$ and that $N$ is orthogonal to $\sigma_x$ and to $\bar\sigma$, we get
\[
\II_{11}=f \langle N, \bar\sigma_{xx}\rangle=\frac{f}{\sqrt{\det I}}\langle \sigma_x\boxtimes\sigma_y, \bar\sigma_{xx}\rangle=\frac{f^4}{\sqrt{\det I}}\langle \bar\sigma_x\boxtimes\bar\sigma_y, \bar\sigma_{xx}\rangle\,,
\]
where in the last equality we have used Equation \eqref{eq:hinwil} and the fact that $\langle e_4,\bar\sigma_{xx}\rangle=0$, since $\bar\sigma_{xx}=(0,0,u_{xx},0)$. Hence, observing that $\langle\nu,\bar\sigma_{xx}\rangle=-u_{xx}$, one finally has:
$$\II_{11}
=-\frac{f^4}{\sqrt{\det I}}u_{xx}$$
With analogous computations we get
\[
\II=-\frac{f^4}{\sqrt{\det I}}D^2u\,,
\]
and therefore $\det\II=\frac{f^8}{\det I}\det D^2 u$. This concludes the proof that
\[
\det B=\frac{f^8}{(\det I)^2}\det D^2 u=f^{-4}g^{-2}\det D^2 u~.
\]
which corresponds to Equation \eqref{monge ampere graph}.
\end{proof}

%\begin{remark}
%Let $S$ be a smooth surface in $\Hyp^3$, which in an affine chart is expressed as the graph of a function $u:\R^2\to\R$, namely:
%$$S=\{(z,t):z\in\Omega,t=u(z)\}\,,$$
%where $\Omega\subset\R^2$ is some domain. Then $u$ satisfies the equation:
%$$\det D^2 u=\frac{(1+|Du|^2-(z\cdot Du-u)^2)^2}{(1-u^2-|z|^2)^2}k\,,$$
%where $k=\det B$ is the intrinsic curvature of $S$.
%\end{remark}

\begin{remark} \label{remark degeneration}
From the proof of Proposition \ref{prop formula affine curvature}, the factor $(1-|Du|^2+((x,y)\cdot Du-u)^2)$ does not vanish as long as $S$ is a spacelike surface, whereas the factor $(1+u^2-x^2-y^2)$ vanishes precisely when $(x,y,u(x,y))$ is in $\partial\AdS^3$.
\end{remark}

We will now introduce a notion of \emph{locally pleated} surface. Recall that the definition of locally convex nowhere timelike surface was given in Definition \ref{defi locally convex}.

\begin{defi}
A locally convex nowhere timelike surface $S$ in $\AdS^3$ is \emph{locally pleated} if for every $\gamma\in S$ there exists a neighborhood $U$ of $\gamma$ in $\AdS^3$ such that $S\cap U$ is contained in the boundary of the convex hull of $S\cap \partial U$.
\end{defi}

The condition that $S$ is a locally pleated surface can be expressed by saying that no point of $S$ is a vertex (or extremal point) in the sense of convex geometry (\cite{rockafellar}). Using this characterization, we prove the following lemma.

\begin{lemma} \label{lemma boundary convex hull}
Let $S$ be a locally pleated surface in $\AdS^3$, with $\partial S=\Gamma$ a weakly acausal curve in $\partial\AdS^3$. %Suppose that in an affine chart where $\AdS^3$ is the interior of the standard hyperboloid, $S$ is graph over the horizontal plane of a convex function $u:\Omega\to\R$. If $\det D^2 u\equiv 0$, then $S$ coincides with the boundary of the convex hull of $\Gamma$.
Then $S$ is a pleated surface, that is, $S$ coincides with a boundary component of the convex hull of $\Gamma$.
\end{lemma}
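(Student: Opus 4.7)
The plan is to establish the conclusion in two stages: first show $S\subset\mathcal{C}(\Gamma)$, and then identify $S$ with a boundary component of $\mathcal{C}(\Gamma)$. By Lemma~\ref{lemma:bandabassotti}, I may assume without loss of generality that $S$ is future-convex, the past-convex case being entirely analogous; in particular, by Definition~\ref{defi locally convex}, $S$ lies in an affine chart of $\AdS^3$.

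For the first stage, I would argue by contradiction. If there existed $\gamma_0\in S\setminus\mathcal{C}(\Gamma)$, then in an affine chart containing both $\mathcal{C}(\Gamma)$ and $S$ the Hahn--Banach theorem would produce an affine function $\ell$ with $\ell|_{\mathcal{C}(\Gamma)}\leq 0$ and $\ell(\gamma_0)>0$. Since $\partial S=\Gamma\subset\overline{\mathcal{C}(\Gamma)}\subset\{\ell\leq 0\}$, the restriction $\ell|_S$ attains a strictly positive maximum at some point $\gamma^*\in S$, and $P:=\{\ell=\ell(\gamma^*)\}$ is a support plane of $S$ at $\gamma^*$ (automatically non-timelike by the nowhere-timelike hypothesis). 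The aim is to prove that the level set $\{\gamma\in S:\ell(\gamma)=\ell(\gamma^*)\}$ is both open and closed in $S$. Closedness is automatic by continuity, and connectedness of $S$ would then force $\ell$ to be constant on $S$, contradicting $\ell|_\Gamma\leq 0<\ell(\gamma^*)$. Openness is precisely where the local pleatedness hypothesis is used: by definition there exists a neighborhood $U$ of $\gamma^*$ with $S\cap U\subset\partial K$ for $K=\mathrm{co}(S\cap\partial U)\subset\{\ell\leq\ell(\gamma^*)\}$, so that $F:=K\cap P$ is a non-trivial face of $K$ through $\gamma^*$; I would then argue that a whole neighborhood of $\gamma^*$ in $S$ lies in $F$, exploiting both the 2-dimensionality of $S\subset\partial K$ and the observation that any point of $S\cap U$ with $\ell$-value strictly less than $\ell(\gamma^*)$ would force $\gamma^*$ to sit in the relative interior of a face entirely contained in $P$.

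For the second stage, once $S\subset\mathcal{C}(\Gamma)$ has been established, I would proceed as follows. For each $\gamma\in S$, any support plane $P_\gamma$ of $S$ at $\gamma$ has $S$ contained in its closed future half-space, by future-convexity; passing to the asymptotic closure, $\Gamma\subset\overline{S}$ also lies in this half-space, and hence so does $\mathcal{C}(\Gamma)$ by convexity. Thus $P_\gamma$ is a past support plane of $\mathcal{C}(\Gamma)$ at $\gamma$, forcing $\gamma\in\partial_-\mathcal{C}(\Gamma)$; this yields $S\subset\partial_-\mathcal{C}(\Gamma)$. To upgrade this inclusion to equality, I would note that $S$ is closed in $\partial_-\mathcal{C}(\Gamma)$ and, by the same local-pleatedness openness argument applied relative to the topological disc $\partial_-\mathcal{C}(\Gamma)$, also relatively open; since both $S$ and $\partial_-\mathcal{C}(\Gamma)$ are connected surfaces sharing the common asymptotic boundary $\Gamma$, invariance of domain and connectedness yield $S=\partial_-\mathcal{C}(\Gamma)$.

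I expect the main obstacle to be the openness step in the first stage: one must combine the local convex-hull structure provided by local pleatedness with the fact that $S$ is a $2$-dimensional surface sitting inside the $2$-dimensional boundary $\partial K$, to conclude that the face $F=K\cap P$ has dimension matching that of $S\cap U$ near $\gamma^*$ and hence contains a whole neighborhood of $\gamma^*$ in $S$. The remaining steps should follow formally from elementary convex geometry and the structural description of $\partial_\pm\mathcal{C}(\Gamma)$ recalled earlier in the paper.
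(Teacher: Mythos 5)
Your overall strategy (maximum principle for affine functions plus an open--closed argument) is different from the paper's, and it has a genuine gap exactly at the point you flag as ``the main obstacle'': the openness of the level set $\{\gamma\in S:\ell(\gamma)=\ell(\gamma^*)\}$ is false in general, and the claim that the face $F=K\cap P$ ``has dimension matching that of $S\cap U$'' does not hold. Local pleatedness only tells you that $\gamma^*$ lies in $\mathrm{co}(S\cap\partial U)$ and hence is not an extreme point of $K$, so $F$ has dimension at least $1$; it can perfectly well be a one-dimensional face (a ridge). A pleated surface bent along a single geodesic already exhibits this: if $\ell$ attains its maximum along the bending line, the maximizing set in $S$ is that geodesic, which is closed but not open in the two-dimensional surface $S\subset\partial K$, and your connectedness argument collapses. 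The correct consequence of local pleatedness is weaker than openness: the set of maximizers contains, through each of its points, a nontrivial segment reaching $\partial U$ (since $F$ is the convex hull of points of $S\cap\partial U$), and one must then propagate such segments to a complete line with endpoints on $\Gamma$ to contradict $\ell|_{\Gamma}\le 0$. That propagation is an extra argument you have not supplied.

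The paper's proof avoids this entirely and is much shorter: let $K$ be the convex hull of $\overline{S}=S\cup\Gamma$; local pleatedness says precisely that no point of $S$ is an extreme point of $K$ (each $\gamma\in S$ is a nontrivial convex combination of points of $S\cap\partial U$), so by the Krein--Milman/Minkowski theorem the extreme points of $K$ all lie in $\Gamma$ and $K=\mathcal{C}(\Gamma)$; since $S\subset\partial K$, this gives $S\subset\partial\mathcal{C}(\Gamma)$ in one step. Your Stage 2 (support planes of $S$ are support planes of $\mathcal{C}(\Gamma)$, then openness of $S$ in $\partial_-\mathcal{C}(\Gamma)$ by invariance of domain, closedness from $\partial S=\Gamma$, and connectedness of $\partial_-\mathcal{C}(\Gamma)$) is sound and supplies the identification with a full boundary component, which the paper leaves implicit. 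If you replace Stage 1 by the extreme-point argument, your write-up becomes a complete and somewhat more detailed version of the paper's proof.
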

\begin{proof}
Let $K$ be the convex hull of $S$. By convexity, $S$ lies in $\partial K$. By Krein-Milman Theorem (\cite{kreinmilman}), $K$ is the convex hull of extreme points of $S\cup \partial S$. As $S$ contains no extreme points from the above definition, we conclude that $K$ is the convex hull of $\partial S$. 
\end{proof}

Recall that, given a convex function $u:\Omega\subset\R^2\to\R$, the \emph{Monge-Ampère measure} $M\!A_u$ of $u$ is  defined in such a way that, if $\omega\subseteq \Omega$ is a Borel subset, then $M\!A_u(\omega)$ is the Lebesgue measure of the union of the subdifferentials of $u$ over points in $\omega$. In particular, if $u$ is $C^2$, then
$$M\!A_u(\omega)=\int_\omega (\det D^2 u)d\mathcal L\,.$$

We will use the following property of Monge-Ampère measures:

\begin{lemma}[{\cite[Lemma 2.2]{trudwang}}] \label{convergence of solutions}
Given a sequence of convex functions $u_n$ which converges uniformly on compact sets to $u$, the Monge-Amp\`ere measure $M\!A_{u_n}$ converges weakly to $M\!A_{u}$. 
\end{lemma}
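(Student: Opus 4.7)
The plan is to reduce the weak convergence of the Monge--Amp\`ere measures to a semicontinuity property of the subdifferential map $\partial u : \Omega \rightrightarrows \R^2$. By definition $M\!A_u(E) = |\partial u(E)|$, where $\partial u(E) := \bigcup_{x\in E}\partial u(x)$ and $|\cdot|$ denotes Lebesgue measure on $\R^2$. Weak convergence of $M\!A_{u_n}$ to $M\!A_u$ will then follow from the Portmanteau theorem once one establishes $\limsup_n M\!A_{u_n}(K) \leq M\!A_u(K)$ on every compact $K\subset\Omega$ and $\liminf_n M\!A_{u_n}(V) \geq M\!A_u(V)$ on every open $V \Subset \Omega$.

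First I would check local uniform boundedness. Locally uniform convergence of a sequence of convex functions yields uniform local Lipschitz bounds on slightly smaller compacts (convex functions on $\Omega$ are Lipschitz on $K \Subset \Omega$ with constant controlled by oscillation over a neighborhood), so $\partial u_n(K) \subset B_R(0)$ for some $R$ independent of $n$. Hence $M\!A_{u_n}(K)$ is uniformly bounded and $\{M\!A_{u_n}\}$ is precompact in the weak-$*$ topology on Radon measures. It then suffices to identify every weak-$*$ limit with $M\!A_u$.

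The central analytic input is upper semicontinuity of the subgradient: if $x_n \to x_0$ in $\Omega$ and $p_n \in \partial u_n(x_n)$ with $p_n \to p$, then passing to the limit in the supporting-hyperplane inequality $u_n(y) \geq u_n(x_n) + p_n\cdot(y - x_n)$ and using local uniform convergence gives $p \in \partial u(x_0)$. Consequently, for every compact $K \subset \Omega$ and every $\epsilon > 0$, one has $\partial u_n(K) \subset \partial u(K) + B_\epsilon$ for all large $n$; taking Lebesgue measure and letting $\epsilon \to 0$ yields $\limsup_n M\!A_{u_n}(K) \leq M\!A_u(K)$. The matching open-set lower bound is symmetric: for any $V' \Subset V$, the same upper semicontinuity gives $\partial u(V') \subset \partial u_n(V) + B_\epsilon$ for large $n$, hence $\liminf_n M\!A_{u_n}(V) \geq M\!A_u(V')$, and exhausting $V$ by such $V'$ completes the estimate.

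The principal difficulty I expect is the measure-theoretic control of overlaps $\partial u(x) \cap \partial u(y)$ for distinct points $x \neq y$: only after verifying, via Aleksandrov's differentiability theorem, that these overlaps carry no aggregate Lebesgue mass does the identification $M\!A_u(E) = |\partial u(E)|$ behave additively enough to underwrite the above set-inclusion estimates. This is the point at which one invokes the standard convex-analytic machinery; modulo that input, the scheme above yields the claimed weak convergence.
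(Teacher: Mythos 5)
The paper offers no proof of this lemma to compare against: it is imported verbatim as \cite[Lemma 2.2]{trudwang}, so your attempt has to be judged on its own. Your architecture (reduce to $\limsup_n M\!A_{u_n}(K)\leq M\!A_u(K)$ on compacts and $\liminf_n M\!A_{u_n}(V)\geq M\!A_u(V)$ on open sets, after securing uniform local Lipschitz bounds and hence weak-$*$ precompactness) is the standard one, and your upper bound is sound: upper semicontinuity of the subgradient gives $\partial u_n(K)\subset\partial u(K)+B_\epsilon$ eventually, and since $\partial u(K)$ is a \emph{fixed} compact set one may send $\epsilon\to 0$ afterwards.

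The lower bound is where there is a genuine gap, in two respects. First, the inclusion $\partial u(V')\subset\partial u_n(V)+B_\epsilon$ is not given by ``the same upper semicontinuity'': that statement only says limits of subgradients of $u_n$ are subgradients of $u$, i.e.\ it bounds $\partial u_n$ by $\partial u$, not the reverse. Producing a subgradient of $u_n$ near a given $p\in\partial u(x_0)$ is an existence statement and requires a separate argument (minimize $u_n-p\cdot x$ over a neighbourhood of $x_0$ and show the minimizer is interior, or an Ekeland-type variational principle). Second, and more seriously, even granting that inclusion, the deduction ``hence $\liminf_n M\!A_{u_n}(V)\geq M\!A_u(V')$'' is invalid: from $A\subset B_n+B_\epsilon$ for all $n\geq N(\epsilon)$ one cannot conclude $|A|\leq\liminf_n|B_n|$, because $|B_n+B_\epsilon|-|B_n|$ need not be small uniformly in $n$ --- take $B_n$ a finer and finer finite net in $A$; this is not a hypothetical pathology, since for piecewise-linear $u_n$ the set $\partial u_n(V)$ genuinely concentrates its Lebesgue mass on the images of the vertices. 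The standard proof avoids the fattening altogether: one establishes the set-theoretic inclusion $\partial u(V')\setminus S\subset\liminf_n\partial u_n(V)$, where $S$ is the Lebesgue-null set of slopes that are subgradients of $u$ at two distinct points (for $p\notin S$ the function $u-p\cdot x$ has a unique minimizer over $\overline{V}$, which forces the minimizers of $u_n-p\cdot x$ into the interior, whence $p\in\partial u_n(V)$), and then applies Fatou's lemma for sets. So the null set $S$ is not, as your final paragraph suggests, merely a bookkeeping device ensuring additivity of $M\!A_u$; it is the pivot of the lower bound, and your proof as written does not close without it.
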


Moreover, the following is a characterization of functions with vanishing Monge-Ampère measure:
\begin{theorem}[{\cite[Theorem 1.5.2]{gutierrez}}] \label{thm gutierrez}
Given a convex function $u:\Omega\subset\R^2\to\R$, $u\in C^0(\overline\Omega)$, where $\Omega$ is a convex bounded domain, if $M\!A_u\equiv 0$, then $u$ is the convex envelope of $u|_{\partial\Omega}$. 
\end{theorem}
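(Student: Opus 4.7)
The plan is to work with the convex envelope $v$ of $u|_{\partial\Omega}$ on $\overline{\Omega}$ defined as
\[
v(x) = \sup\{w(x) : w \in C(\overline{\Omega}) \text{ convex}, w \leq u \text{ on } \partial\Omega\},
\]
and prove $u = v$ by two opposite inequalities. The inequality $u \leq v$ is trivial, since $u$ itself is convex, continuous, and satisfies $u|_{\partial\Omega} \leq u|_{\partial\Omega}$, so it is admissible in the defining supremum.

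For the reverse inequality $v \leq u$, I would fix an arbitrary admissible convex competitor $w$ and prove $w \leq u$ throughout $\Omega$; here is exactly where the hypothesis $M\!A_u \equiv 0$ enters. I would invoke the comparison principle for the Monge-Amp\`ere operator: if $u, w \in C(\overline{\Omega})$ are convex, $w \leq u$ on $\partial\Omega$, and $M\!A_w \geq M\!A_u$ as Borel measures on $\Omega$, then $w \leq u$ in $\Omega$. Since every Monge-Amp\`ere measure is nonnegative, $M\!A_w \geq 0 = M\!A_u$ holds trivially, so the comparison applies and yields $w \leq u$. Taking the supremum over admissible $w$ produces $v \leq u$, completing the identification $u = v$.

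The main obstacle is the comparison principle itself, which is where the assumption $M\!A_u = 0$ is genuinely exploited. The standard approach is by contradiction: after replacing $w$ by $w - \epsilon$ (and eventually sending $\epsilon\to 0$) one may assume $w < u$ on $\partial\Omega$, so that the open set $K = \{w > u\}$ is relatively compact in $\Omega$ with $w = u$ on $\partial K$. A convexity argument then shows that the subdifferential image $\partial u(K)$ is contained, up to null sets, in $\partial w(K)$, giving $M\!A_u(K) \leq M\!A_w(K)$. Strict inequality, needed to derive the contradiction, can be forced by replacing $w$ by the quadratic perturbation $w_\eta = w + \eta(|x-x_0|^2 - R^2)$ for small $\eta > 0$ and $R$ equal to the diameter of $\Omega$, which keeps $w_\eta$ admissible but strictly raises the Monge-Amp\`ere mass on any open set, and then letting $\eta \to 0$.

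A more geometric alternative, special to the two-dimensional setting, would bypass the general comparison principle by using the fact that $M\!A_u = 0$ forces the graph of $u$ to be ruled: through every interior point $x_0 \in \Omega$ passes a segment on $\mathrm{graph}(u)$ with both endpoints on $\partial\Omega$. Writing $x_0 = \lambda y_1 + (1-\lambda)y_2$ with $y_i \in \partial\Omega$ and $u(x_0) = \lambda u(y_1) + (1-\lambda)u(y_2)$, convexity of any admissible $w$ directly yields $w(x_0) \leq \lambda w(y_1) + (1-\lambda) w(y_2) \leq \lambda u(y_1) + (1-\lambda) u(y_2) = u(x_0)$, giving $v \leq u$ without any need for Alexandrov-type estimates. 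Proving the ruled structure, however, is itself a nontrivial consequence of the fact that the image of the subdifferential map has Lebesgue measure zero.
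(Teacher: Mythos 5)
The paper does not prove this statement at all: it is imported verbatim from Guti\'errez's book, so there is no internal proof to compare against. Your main reduction is the standard (and, as far as I can tell, the cited source's) argument, and it is sound: $u$ is itself an admissible competitor in the supremum defining the envelope $v$, so $u\le v$; and for the reverse inequality the Aleksandrov comparison principle, applied with $M\!A_u\equiv 0\le M\!A_w$, gives $w\le u$ in $\Omega$ for every admissible $w$, hence $v\le u$. The comparison principle itself is a legitimately citable standard result, and you state it correctly.

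The one genuine problem is that your sketch of the comparison principle reverses the key inclusion, and with the reversed inclusion the contradiction does not materialize. On $K=\{w>u\}$ one has $u<w$ in $K$ and $u=w$ on $\partial K$; sliding a supporting plane of the \emph{upper} function $w$ downward until it touches the \emph{lower} function $u$ produces an interior contact point, so the correct inclusion is $\partial w(K)\subset\partial u(K)$, hence $M\!A_w(K)\le M\!A_u(K)$: the function that dips lower collects \emph{more} supporting slopes, not fewer. With your quadratic perturbation this yields $0<4\eta^2\,|K_\eta|\le M\!A_{w_\eta}(K_\eta)\le M\!A_u(K_\eta)=0$, which is the desired contradiction. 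The inclusion as you wrote it, $\partial u(K)\subset\partial w(K)$ and hence $M\!A_u(K)\le M\!A_w(K)$, is perfectly consistent with $M\!A_{w_\eta}$ being large while $M\!A_u$ vanishes, so no contradiction follows from it; that step, as written, fails. Your geometric alternative is viable in spirit, but the claim that $M\!A_u=0$ forces a full chord of $\Omega$ through every interior point on which $u$ is affine is essentially equivalent to the theorem itself (compare the paper's Lemma \ref{lemma boundary convex hull}), so, as you acknowledge, it cannot serve as a shortcut without its own proof.
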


We are now ready to prove that the $K$-surfaces with prescribed boundary a weakly spacelike curve $\Gamma\subset\partial\AdS^3$, whose existence was proved in Theorem \ref{prop existence}, provide a foliation of  $\mathcal{D}_+(\Gamma)$ and $\mathcal{D}_-(\Gamma)$.

\begin{theorem} \label{thm foliation part ads}
Given any weakly acausal curve $\Gamma$ in $\partial\AdS^3$ which is not a 1-step or a 2-step curve, $\mathcal{D}_+(\Gamma)$ and $\mathcal{D}_-(\Gamma)$ are foliated by $K$-surfaces $S^\pm_K$, as $K\in(-\infty,-1)$, in such a way that if $K_1<K_2$, then $S^\pm_{K_2}$ is in the convex side of $S^\pm_{K_1}$.
\end{theorem}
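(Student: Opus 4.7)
The plan is to obtain the foliation by approximating $\Gamma$ with Fuchsian-equivariant curves $\Gamma_n$, where the statement is already known by Theorem \ref{bbz}, and then to identify the two extremal one-sided limits of the resulting family of $K$-surfaces using the Monge-Ampère equation of Proposition \ref{prop formula affine curvature}. First I would choose such $\Gamma_n\to\Gamma$ by Lemma \ref{lemma approx acausal}. Theorem \ref{bbz} gives, for each $n$ and each $K\in(-\infty,-1)$, a past-convex $K$-surface $(S^+_K)_n$ foliating $\mathcal{D}_+(\Gamma_n)$ with the desired monotonicity: $K_1<K_2$ implies $(S^+_{K_2})_n$ lies in the past (convex) side of $(S^+_{K_1})_n$. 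The compactness argument used in the proof of Theorem \ref{prop existence} produces a $C^0$-subsequential limit $S^+_K$ for each $K$, which coincides with the $K$-surface constructed in that theorem. Monotonicity transfers to the limit, and the strict maximum principle applied at a hypothetical tangential contact between the spacelike parts rules out intersections for $K_1\neq K_2$.

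Next I would pass to a graph description: fix an affine chart of the projective model (Subsection \ref{subsec models}) containing $\mathcal{D}(\Gamma)$, with vertical time direction, and let $\Omega\subset\R^2$ be the horizontal projection of $\mathcal{D}_+(\Gamma)$. The spacelike part of $S^+_K$ is then the graph of a concave function $u_K\colon\Omega_K\subseteq\Omega\to\R$, while $\partial_+\mathcal{C}(\Gamma)$ and the spacelike part of $\partial\mathcal{D}_+(\Gamma)\cap\partial\mathcal{D}(\Gamma)$ are graphs of concave functions $u_+$ and $u_\partial$ with $u_+\leq u_K\leq u_\partial$. By monotonicity in $K$ and Dini's theorem, the one-sided limits $u^{-1}:=\lim_{K\to -1^-}u_K$ and $u^{-\infty}:=\lim_{K\to -\infty}u_K$ exist and are uniform on compact subsets of $\Omega$. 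The foliation statement reduces to proving $u^{-1}=u_+$ and $u^{-\infty}=u_\partial$.

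For the limit $K\to -1^-$, Equation \eqref{monge ampere graph} reads $|\det D^2 u_K|=C_K(x,y)\cdot|K+1|$, where the coefficient $C_K$ is locally bounded on the open set where $u^{-1}$ has spacelike tangent plane strictly inside $\AdS^3$. Hence $\det D^2 u_K\to 0$ pointwise there, and Lemma \ref{convergence of solutions} forces the Monge-Ampère measure of $u^{-1}$ to vanish on such sets. Theorem \ref{thm gutierrez} applied on small balls in $\Omega$ then shows that $u^{-1}$ is locally equal to its own concave envelope; consequently the graph of $u^{-1}$ is locally ruled by line segments, and $S^+_{-1}$ has no extreme points in the sense of convex geometry. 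Being past-convex with boundary $\Gamma$, it is then forced by Lemma \ref{lemma boundary convex hull} to coincide with $\partial_+\mathcal{C}(\Gamma)$.

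The hardest step will be the limit $K\to -\infty$, where $|K+1|\to\infty$ so $\det D^2 u_K$ can blow up unless the coefficient $C_K$ degenerates. Suppose for contradiction that $u^{-\infty}(p_0)<u_\partial(p_0)$ at some $p_0\in\Omega$. On a small neighborhood $V\Subset\Omega$ the graph of $u^{-\infty}$ stays a positive distance from the lightlike upper boundary $\partial\mathcal{D}(\Gamma)\cap\partial\mathcal{D}_+(\Gamma)$, and a convexity argument shows that on a positive-measure subset of $V$ the tangent planes of $u^{-\infty}$ are strictly spacelike and bounded away from $\partial\AdS^3$, so that $C_K\geq c>0$ uniformly in $K$ for $|K|$ large. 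Then $|\det D^2 u_K|\geq c|K+1|\to\infty$ pointwise on this subset, forcing $M\!A_{u_K}(V)\to\infty$ by integration and contradicting the weak convergence $M\!A_{u_K}\to M\!A_{u^{-\infty}}$ of Lemma \ref{convergence of solutions}. The delicate part of this step is the convexity argument producing a positive-measure set of strictly spacelike, interior tangent planes for $u^{-\infty}$; it uses that $u^{-\infty}$ is the concave limit of spacelike graphs whose closure stays bounded away from the lightlike upper boundary on $V$. Together with the continuity of $K\mapsto u_K$ (which follows from the same Monge-Ampère control), the identifications $u^{-1}=u_+$ and $u^{-\infty}=u_\partial$ show that $\{S^+_K\}_K$ is a foliation of $\mathcal{D}_+(\Gamma)\setminus\mathcal{C}(\Gamma)$; the identical argument applied in $\mathcal{D}_-(\Gamma)$ yields the foliation by future-convex $K$-surfaces.
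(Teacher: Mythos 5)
Your overall strategy coincides with the paper's: approximate $\Gamma$ by Fuchsian-equivariant curves via Lemma \ref{lemma approx acausal}, import the foliations of Theorem \ref{bbz}, pass to monotone limits, and identify the limit as $K\to-1$ with $\partial_+\mathcal{C}(\Gamma)$ via the degenerating Monge--Amp\`ere equation together with Lemma \ref{convergence of solutions}, Theorem \ref{thm gutierrez} and Lemma \ref{lemma boundary convex hull}. The genuine gap is the sentence ``together with the continuity of $K\mapsto u_K$ (which follows from the same Monge--Amp\`ere control)''. Identifying the two extremal limits does not yield a foliation unless you also rule out a jump at an intermediate $K_0$: if $v:=\lim_{K\to K_0^-}u_K\neq u_{K_0}$, an open set between the two graphs is missed by every leaf. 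The Monge--Amp\`ere equation only tells you that $v$ is again the graph function of a $K_0$-surface with ideal boundary $\Gamma$ (the coefficient in Equation \eqref{monge ampere graph} converges to the one evaluated at $v$, not at $u_{K_0}$); to conclude $v=u_{K_0}$ you would need a uniqueness or comparison principle for $K$-surfaces with prescribed ideal boundary, which is precisely what is \emph{not} available for a general weakly acausal $\Gamma$ --- the paper proves uniqueness only for quasisymmetric graphs, in Section \ref{sec uniqueness boundedness}, and the maximum principle ``at infinity'' is the whole difficulty there. The paper closes this gap differently (Step 2 of its proof): after a diagonal extraction over $\Q\cap(-\infty,-1)$, a point of the putative gap $U$ would be hit, for large $n$, by a leaf $(S_{q_n})_n$ of the approximating foliation of $\mathcal{D}_+(\Gamma_n)$ with $q_n$ rational; uniform convergence squeezes $q_n$ between every rational $q''<K_0$ and every rational $q'>K_0$, forcing $q_n=K_0$, absurd when $K_0$ is irrational. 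You need this, or some equivalent use of the fact that the approximants genuinely foliate $\mathcal{D}_+(\Gamma_n)$, to complete the proof.

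For the limit $K\to-\infty$ you depart from the paper, which simply observes that the dual surfaces $(S_K)^*$ have curvature $K^*=-K/(K+1)\to-1$ and that $\partial_-\mathcal{C}(\Gamma)$ is dual to $\partial_+\mathcal{D}(\Gamma)$, so that the $K\to-1$ step applied to the duals finishes. Your direct blow-up argument ($M\!A_{u_K}(V)\geq c\,|K+1|\,|E|\to\infty$ against $M\!A_{u_K}\rightharpoonup M\!A_{u^{-\infty}}$, which is locally finite) is viable, and the ``delicate'' positive-measure set of uniformly spacelike tangent planes is in fact supplied by an argument already in the paper: a point of a convex surface with ideal boundary $\Gamma$ admitting a lightlike support plane must lie on $\partial\mathcal{D}(\Gamma)$ (Step 2 of the proof of Theorem \ref{prop existence}), so if $u^{-\infty}(p_0)<u_\partial(p_0)$ then all support planes near $p_0$ are spacelike, uniformly so on a compact neighborhood. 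Still, the duality argument is shorter and avoids the measure-theoretic bookkeeping, so I would recommend it.
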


\begin{proof}
For definiteness, we give the proof for the region $\mathcal{D}_+(\Gamma)$, as the other case is completely analogous.

As in the proof of Theorem \ref{prop existence}, let $\Gamma_n$ be a sequence of curves in $\partial\AdS^3$ invariant for a pair $((\rho_l)_n,(\rho_r)_n)$ of torsion-free cocompact Fuchsian representations of $\pi_1(\Sigma_{g_n})\to\isom(\Hyp^2)\times \isom(\Hyp^2)$.
%let $\mu_n$ be a sequence of measured geodesic laminations invariant under a cocompact Fuchsian group $G_n$, (using in Lemma \ref{fuchsian weak approximation}) which converges in the weak* topology to the  earthquake measure $\mu$ of $\phi$. Let $\Sigma_{g_n}=\Hyp^2/G_n$ and (recalling Theorem \ref{bbz} and the constructions of Mess) let $(S_K)_n$ be $K$-surfaces invariant for a group representation $\rho_n:\pi_1(\Sigma_{g_n})\to\PSL(2,\R)\times\PSL(2,\R)$, such that the bending lamination of the upper boundary of the convex hull of $gr(\phi_n)$ is $\mu_n$. Here we denoted $gr(\phi_n)=\partial_\infty((S_K)_n)$. 
Recall that in Theorem \ref{prop existence} we showed that for every $K\in(-\infty,-1)$ there exists a subsequence on which the $K$-surfaces $(S_K)_{n}$ invariant for $((\rho_l)_n,(\rho_r)_n)$ converge to a convex  nowhere timelike surface $(S_K)_\infty$ with $\partial (S_K)_\infty=\Gamma$, whose lightlike part is the union of all maximal past-directed lightlike triangles bounded by $\Gamma$, and whose spacelike part is a smooth $K$-surface. %By a slight abuse of notation, let us denote by $S_K$ the spacelike part of $(S_K)_\infty$.

By choosing a numeration of $\Q\cap(-\infty,-1)$, by a classical diagonal argument we can extract a subsequence $n_k$ such that for every $q\in\Q$, as $k\to\infty$, $(S_q)_{n_k}$ converges uniformly on compact sets to a convex surface, whose spacelike part is a smooth $q$-surface, which we denote (with a slight abuse of notation) by $S_q$. Let us denote again by $(S_q)_n$ the converging subsequence, omitting the subindex $k$. We will prove that the family of surfaces $\{S_q\}$ extend to a foliation of $\mathcal D_+(\Gamma)$ by $K$-surfaces. %\textbf{Notazione migliore: $\mathcal{C}(\phi), \mathcal{D}(\phi)$}

\emph{Step 1: The spacelike parts of the surfaces $S_q$ are pairwise disjoint.} Indeed, for every $q_1<q_2$, the surface $(S_{q_2})_n$ lies in the convex side of $(S_{q_1})_n$. Therefore the same holds for $S_{q_2}$ and $S_{q_1}$, and in particular they do not intersect transversely. Moreover, it is not possible that $S_{q_2}$ and $S_{q_1}$ are tangent at one point in the spacelike part, since the determinant of the shape operator of $S_{q_2}$ is strictly smaller than the determinant of the shape operator of $S_{q_1}$. Therefore if they were tangent, $S_{q_2}$ could not be in the convex side of $S_{q_1}$, since this would contradict the maximum principle.

\emph{Step 2: For every irrational $K<-1$ the sequences $\{S_{q'}:q'>K\}$ and  $\{S_{q''}:q''<K\}$ both converge to the same surface.} Let us denote
$$(S_K)'=\lim_{q'>K} S_{q'}\qquad\text{and}\qquad (S_K)''=\lim_{q''<K} S_{q''}\,,$$
where the limit clearly exists as the sequences are monotone. Moreover, since every $S_{q'}$ is in the convex side of every $S_{q''}$, the limit $(S_K)'$ is in the convex side of $(S_K)''$, and in particular they do not intersect transversely. Suppose by contradiction that there exists an open set $U$ which is contained in the region between $(S_K)'$ and $(S_K)''$. Now, since the surfaces $(S_K)_n$ foliate $\mathcal D_+(\Gamma_n)$ by Theorem \ref{bbz}, for large $n$ there is a surface $(S_{q_n})_n$ in the approximating sequence, for $q_n\in\Q\cap(-\infty,-1)$, which intersects $U$. By the uniform convergence on compact sets, for $n$ larger than some $n_0$, $q_n<q'$ for every $q'>K$. But for the same argument, for some possibly larger $n_0$, if $n\geq n_0$ then $q_n>q''$ for every $q''<K$. This gives a contradiction. 

\emph{Step 3: The spacelike part of the limit $S_K:=(S_K)'=(S_K)''$ is a smooth $K$-surface.} By the usual application of Theorem \ref{schl_degeneration}, for every $\gamma\in S_K$, pick a sequence $\gamma_{q'}\in S_{q'}$ converging to $\gamma$ and pick the homothetic embeddings
$$\sigma_{q'}:B_{\Hyp^2}(o,\epsilon)\to S_{q'}\,.$$ As in the proof of Theorem \ref{prop existence}, the $\sigma_{q'}$ converge $C^\infty$ and therefore $S_K$ is a smooth $K$-surface.

%Next, we claim that the union of the $S_q$, as $q\in\Q\cap(-\infty,-1)$, is dense in the region $\mathcal{D}_+(\Gamma)$. First, let $x$ be a point in the region bounded by $S_{r_1}$ and $S_{r_2}$, for some $r_1,r_2\in\Q\cap(-\infty,-1)$. Let $U$ be any neighborhood of $x$ entirely contained in the region between $S_{r_1}$ and $S_{r_2}$, and let $U'$ be a smaller neighborhood whose closure is contained in $U$. Since $(S_{r_1})_n$ converges to $S_{r_1}$ and $(S_{r_2})_n$ converges to $S_{r_2}$, for large $n$ the neighborhood $U'$ is contained in the region of $\mathcal{D}_+(\Gamma_n)$ contained between $S_{r_1}$ and $S_{r_2}$. Since the surfaces $(S_K)_n$ foliate $\mathcal{D}_+(\Gamma_n)$ for $K\in(-\infty,-1)$ by Theorem \ref{bbz}, for every $n$ we find $q_n$ such that $(S_{q_n})_n$ intersects $U'$. Moreover (for large $n$) we have that $q_n\in(r_1,r_2)$. Hence up to a subsequence, we can assume $q_n\to q_\infty\in(r_1,r_2)$ and thus $(S_{q_n})_n$ converges to a weakly spacelike surface $S_{q_\infty}$. Moreover $S_{q_\infty}$ intersects the closure of $U'$, hence this shows that $S_{q_\infty}$ intersects $U$. By the same argument as Theorem \ref{prop existence}, the spacelike part of $S_{q_\infty}$ is a smooth surface of constant curvature $q_\infty$ with boundary $\Gamma_+$.

So far we have proved that $\{S_K:K\in(-\infty,-1)\}$ is a foliation by smooth $K$-surfaces of a subset of $\mathcal{D}_+(\Gamma)$. We finally need to prove that $\bigcup\{S_K:K\in(-\infty,-1)\}$ coincides with $\mathcal{D}_+(\Gamma)$. 

\emph{Step 4: The surfaces $S_K$ approach the spacelike part of $\partial_+\mathcal{C}(\Gamma)$ as $K\to-1$.} As $S_K$ is a monotone sequence in $K$, consider the $C^0$-limit
$$S_{-1}:=\lim_{K\to -1}S_K\,.$$

%Since $\partial_+\mathcal{D}(\Gamma)$ and  $\partial_-\mathcal{C}(\Gamma)$ are dual surfaces, one can find two totally geodesic spacelike planes $P_1$ and $P_2$, having a common orthogonal segment of length $\pi/2$, such that $\mathcal D_+(\Gamma)\cup \mathcal C(\Gamma)$ is contained in the region bounded by $P_1$ and $P_2$. Hence one can find an affine chart for the projective model of $\AdS^3$ such that all the $S_K$ are convex and lie in the upper half-space of the affine chart. Since $S_K$ are spacelike surfaces, in this affine chart
We claim that $S_{-1}$ is locally pleated. As the lightlike part of $S_{-1}$ 
is the union of lightlike triangles corresponding to sawteeth of $\Gamma$,
we only have to prove the claim for the spacelike part of $S_{-1}$.
Let $\gamma\in S_{-1}$ and let us choose an affine chart so that a support plane of $S_{-1}$ at $\gamma$ is the horizontal plane as in Proposition \ref{prop formula affine curvature}.

Hence, in a neighborhood of $\gamma$, the surfaces $S_K$ are the graph of a convex function $u_K:\Omega_K\to[0,+\infty)$. 
Using Remark \ref{remark degeneration}, as $K\to -1$ then $(1+u_K(x,y)^2-x^2-y^2)$ remains bounded for every $(x,y)\in\Omega_{-1}$ (since the point $(x,y,u_{-1}(x,y))$ is not in the boundary of $\AdS^3$). Also 
$(1-|Du_K(x,y)|^2+((x,y)\cdot Du_K(x,y)-u_K(x,y))^2)$
remains bounded for $(x,y)\in\Omega_{-1}$, as otherwise the limiting surface would have a lightlike support plane over $(x,y)$, and this is not possible for the usual argument, as $\gamma$ is in the spacelike part of $S_{-1}$.
Hence, applying Lemma \ref{convergence of solutions} to Equation \eqref{monge ampere graph}, we get that $u_{-1}$ satisfies $\det D^2 u_{-1}\equiv 0$ on $\Omega_{-1}$. Since $\gamma$ was arbitrary, by Theorem \ref{thm gutierrez}, $S_{-1}$ is locally pleated.  By Lemma \ref{lemma boundary convex hull}, $S_{-1}$ coincides with $\partial_+ \mathcal C(\Gamma)$ as claimed.

\emph{Step 5: The surfaces $S_K$ approach the boundary $\partial_+\mathcal{D}(\Gamma)$ as $K\to-\infty$.} As before, we can take the limit
$$S_{-\infty}=\lim_{K\to-\infty}S_K\,.$$
Suppose that $S_{-\infty}$ does not coincide with the boundary of $\mathcal{D}(\Gamma)$. Since $\partial_-\mathcal C(\Gamma)$ is the dual surface to $\partial_+\mathcal{D}(\Gamma)$, the dual surfaces $(S_K)^*$ would converge to $(S_{-\infty})^*$. But the shape operator of $(S_K)^*$ is identified to the inverse of the shape operator of $S_K$, and thus the $(S_K)^*$ are again surfaces of constant Gaussian curvature $K^*=-K/(K+1)$. In particular $K^*$ tends to $-1$ as $K\to-\infty$. Thus applying the same argument as in Step 4, one gets a contradiction.
\end{proof}

%\subsection{Proof of uniqueness}

%In this section we conclude the proof of Theorem \ref{thm existence and uniqueness ads} by showing that the $K$-surfaces constructed in Theorem \ref{thm existence ads} are unique.

%\begin{theorem} \label{thm uniqueness ads}
%Given any orientation-preserving homemorphism $\phi:S^1\to S^1$, for every $K\in(-\infty,-1)$ the concave surface $S_+$ and a convex surface $S_-$ of constant curvature $K$ with $\partial_\infty S_+=\partial_\infty S_-=gr(\phi)$ are unique.
%\end{theorem}

\section{Uniqueness and boundedness of principal curvatures} \label{sec uniqueness boundedness}

In this section we consider the problem of the uniqueness of the $K$-surfaces with fixed boundary curve $\Gamma$ (which we will be able to prove under the assumption that $\Gamma$ is the graph of a quasisymmetric homeomorphism) and the boundedness of principal curvatures. The two problems are tackled with similar techniques, and the key tool is the following compactness theorem for quasisymmetric homeomorphisms:

\begin{lemma}[\cite{basmajian}] \label{Compactness property of quasisymm homeo}
Let $\phi_n:\partial\Hyp^2\to \partial\Hyp^2$ be a family of orientation-preserving quasisymmetric homeomorphisms of $\partial \Hyp^2$. Suppose there exists a constant $M$ such that, for every symmetric quadruple $Q$ (i.e. such that the cross-ratio of $Q$ is $cr(Q)=-1$) and for every $n$,
\begin{equation} \label{cross-ratio boundedness}
|\log|cr\phi_n(Q)||\leq M\,.
\end{equation}
 Then there exists a subsequence $\phi_{n_k}$ for which one of the following holds:
\begin{itemize}
\item The homeomorphisms $\phi_{n_k}$ converge uniformly to a quasisymmetric homeomorphism $\phi:\partial\Hyp^2\to \partial\Hyp^2$;
\item The homeomorphisms $\phi_{n_k}$ converge uniformly on the complement of any open neighborhood of a point of $\partial\Hyp^2$ to a constant map $c:\partial\Hyp^2\to \partial\Hyp^2$.
\end{itemize}
\end{lemma}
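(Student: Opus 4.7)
The plan is the classical three-point normalization argument. First, I would post-compose each $\phi_n$ with the unique isometry $\alpha_n \in \isom(\Hyp^2) = \PSL(2,\R)$ that sends the triple $(\phi_n(a_0), \phi_n(b_0), \phi_n(c_0))$ back to three fixed pairwise distinct base points $a_0, b_0, c_0 \in \partial\Hyp^2$, and set $\tilde\phi_n := \alpha_n \circ \phi_n$. Since M\"obius transformations preserve the cross-ratio, the hypothesis $|\log|cr(\tilde\phi_n(Q))|| \le M$ continues to hold on every symmetric quadruple $Q$, while the normalized maps $\tilde\phi_n$ now fix the three points $a_0, b_0, c_0$.

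Next, I would show the family $\{\tilde\phi_n\}$ is equicontinuous on $\partial\Hyp^2$. The key step is to upgrade the bound on symmetric cross-ratios ($cr(Q) = -1$) to a bound on cross-ratios taking values in any fixed compact subset $K \subset \R \setminus \{0,1\}$: any cross-ratio in $K$ can be written as an iterated composition of a controlled number of symmetric ones, so that a uniform distortion bound on symmetric quadruples yields a uniform distortion bound on quadruples with cross-ratio in $K$. Combined with the fact that $a_0, b_0, c_0$ are fixed, this controls $\tilde\phi_n(x)$ in terms of $x$ by comparing cross-ratios with $a_0$ and $b_0$, producing a uniform modulus of continuity. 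By Arzel\`a--Ascoli, up to a subsequence $\tilde\phi_n$ converges uniformly to a continuous map $\tilde\phi : \partial\Hyp^2 \to \partial\Hyp^2$ fixing $a_0, b_0, c_0$; passing the cross-ratio bound to the limit shows $\tilde\phi$ is a quasisymmetric homeomorphism, since it cannot collapse any arc without forcing some cross-ratio to diverge.

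Finally, I would carry out the dichotomy on the normalizing sequence $\{\alpha_n^{-1}\} \subset \PSL(2,\R)$. Either it admits a subsequence converging to some $\beta \in \PSL(2,\R)$, in which case $\phi_n = \alpha_n^{-1} \circ \tilde\phi_n$ converges uniformly to the quasisymmetric homeomorphism $\beta \circ \tilde\phi$, yielding the first alternative; or it eventually leaves every compact subset of $\PSL(2,\R)$. In the latter case, the standard compactness description of M\"obius degenerations provides, after passing to a further subsequence, points $p, q \in \partial\Hyp^2$ such that $\alpha_n^{-1}$ tends to the constant $p$ uniformly on compact subsets of $\partial\Hyp^2 \setminus \{q\}$. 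Setting $x_0 := \tilde\phi^{-1}(q)$, for any open neighborhood $U$ of $x_0$ the images $\tilde\phi_n(\partial\Hyp^2 \setminus U)$ are eventually contained in a fixed compact set avoiding a neighborhood of $q$, so $\phi_n = \alpha_n^{-1} \circ \tilde\phi_n \to p$ uniformly on $\partial\Hyp^2 \setminus U$, which is the second alternative.

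The main obstacle is the interpolation step in the second paragraph: promoting the hypothesis on symmetric quadruples to a genuine equicontinuity estimate. Once that is in place, both the Arzel\`a--Ascoli extraction and the case analysis on $\alpha_n^{-1}$ proceed along well-known lines from the theory of quasisymmetric maps of the circle.
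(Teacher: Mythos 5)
The paper does not prove this lemma at all: it is imported verbatim from the cited reference \cite{basmajian}, so there is no in-paper argument to compare against. Your proposal is the standard proof of this kind of compactness statement and its structure is sound: post-composition by M\"obius maps preserves cross-ratios, so the normalized family $\tilde\phi_n$ still satisfies the hypothesis while fixing three points; a normalized, uniformly quasisymmetric family is equicontinuous, and a uniform limit of such maps fixing three points is injective (identifying two points would force some symmetric quadruple's image cross-ratio to $0$ or $\infty$), hence a homeomorphism, and inherits the cross-ratio bound; and the north--south dynamics of a degenerating sequence in $\PSL(2,\R)$ produces exactly the second alternative, with the exceptional point $\tilde\phi^{-1}(q)$ handled correctly via the uniform convergence of $\tilde\phi_n$. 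The one substantive step, which you rightly flag, is upgrading the bound on symmetric quadruples ($cr(Q)=-1$) to a distortion function valid for all cross-ratios, which is what actually yields equicontinuity; this is the classical subdivision argument showing that a bounded cross-ratio norm implies a quasi-M\"obius modulus $|cr(\phi(Q))|\leq\eta(|cr(Q)|)$ with $\eta$ depending only on $M$. As written that step is only sketched, but it is a genuinely standard fact rather than a gap, and with it supplied your argument is complete.
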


We start with a remark, which shows us that the condition that $\Gamma=gr(\phi)$, with $\phi$ quasisymmetric, is not restrictive for the study of surfaces with bounded principal curvatures. That is, if $S$ is a surface with bounded principal curvatures and $\partial S=\Gamma$, then $\Gamma$ is necessarily the graph of a quasisymmetric homeomorphism. 

\begin{remark}
Suppose $S$ is a convex spacelike surface in $\AdS^3$ with $\partial S=\Gamma$, such that  the principal curvatures of $S$ are bounded. We claim that the associated map $\Phi$ is a bi-Lipschitz diffeomorphism of $\Hyp^2$. In particular $\Phi$ is quasiconformal and  $\Gamma$ is the graph of the quasisymmetric extension of $\Phi$.
%it is not difficult to see that in this case $\Gamma$ is the graph of an orientation-preserving homeomorphism. 
%Observe that tangent planes of $S$ cannot become lightlike at interior points of $\AdS^3$, and thus $S$ cannot contain any sawtooth. 
%Hence the associated map $\Phi= \pi_r\circ \pi_l^{-1}$ is a diffeomorphism of $\Hyp^2$ which extends to $\phi$ on $\partial\Hyp^2$, by Lemma \ref{lemma extension}. 

By the same argument as in the proof of \cite[Proposition 6.21]{bebo}, the bound on the second fundamental form implies that $S$ is geodesically complete. Let us show that the projections $\pi_l,\pi_r$ are bi-Lipschitz. Using the completeness of $S$, this implies that they are coverings, hence diffeomorphisms. Therefore $\Phi$ is a bi-Lipschitz diffeomorphism of $\Hyp^2$.

%Moreover, the map $\Phi$ is quasiconformal. In fact, we will prove that the left and right projections $\pi_l$ and $\pi_r$ are quasiconformal with respect to the complex structure on $S$ induced by the first fundamental form, and that induced by the hyperbolic metric. As observed in \cite[Lemma 3.16]{Schlenker-Krasnov}, if the point $\gamma\in S$ is fixed, up to composing with the isometry $\Pi_l^\gamma$ (resp. $\Pi_r^\gamma$), we can suppose that $\gamma\in\mathcal R_\pi$ and that $T_\gamma S$ coincides with $\mathcal R_\pi$. Under this identification, the differential of $\pi_l$ is $d\pi_l(x)=E-JB$ (resp. $d\pi_r(x)=E+JB$). Now observe that $B$ is symmetric for the first fundamental form $I$, so we can assume it is of the diagonal form 
The minimal and maximal stretch of $\pi_l$ (resp. $\pi_r$) are the eigenvalues of $E+JB$ (resp. $E-JB$). Since $B$ is bounded, the maximal stretch is bounded from above by some constant $M$. To obtain the bound from below, it suffices to notice that $\det(E\pm JB)=1+\det B>1$. Hence the minimal stretch is bounded from below by $1/M$. This proves that $\pi_l$ and $\pi_r$ are bi-Lipschitz.

%$$
%E-JB=\begin{pmatrix} 1 & \lambda_2 \\  -\lambda_1 & 1 \end{pmatrix}=A(E+A^{-1}A')\,, $$
%where
%$$A=\begin{pmatrix} 1 & \frac{1}{2}(\lambda_1+\lambda_2) \\  \frac{1}{2}(-\lambda_1-\lambda_2) & 1 \end{pmatrix}\,,$$
%and
%$$A'=\begin{pmatrix} 0 & \frac{1}{2}(\lambda_2-\lambda_1) \\  \frac{1}{2}(\lambda_2-\lambda_1) & 0 \end{pmatrix}\,.
%$$
%Hence in complex notation,
%$$(d\pi_l)_x(z)=\alpha(z+\mu \bar z)\,,$$
%where $\alpha$ is the complex number $1-(\lambda_1+\lambda_2)i$ and 
%$$\mu=\left(1-\frac{1}{2}(\lambda_1+\lambda_2)i\right)^{-1}\left(\frac{1}{2}(\lambda_2-\lambda_1)i\right)\,.$$
%In conclusion,
%$$|\mu|^2=\frac{(\lambda_2-\lambda_1)^2}{4+(\lambda_1+\lambda_2)^2}<1-\frac{4}{4+(\lambda_1+\lambda_2)^2}\,,$$
%as long as $\lambda_1,\lambda_2>0$. Hence if the principal curvatures $\lambda_1,\lambda_2$ are bounded, then $|\mu|$ is bounded away from $1$ (where the bound only depends on the bound on the principal curvatures, and not on the point $\gamma\in S$) and thus $\pi_l$ is quasiconformal.

%An analogous argument shows that $\pi_r$ is quasiconformal.
%This shows that, if the principal curvatures of $S$ are bounded, then $\Phi=\pi_r\circ \pi_l^{-1}$ is quasiconformal, and thus $\phi$ is a quasisymmetric homeomorphism. 
\end{remark}

Another basic observation is the fact that the condition $\Gamma=gr(\phi)$, $\phi$ being quasisymmetric, is equivalent to the condition that the measured geodesic lamination on the upper (or on the lower) boundary of the convex hull of $\Gamma$ is bounded. Indeed, the latter condition is equivalent to saying that the Thurston norm of the left (or right) earthquake lamination is bounded, which is known to be equivalent to quasisymmetry of $\phi$, as proved independently in \cite{garhulakic} and \cite{Saric:2006ds} (see also \cite{thurstonearth,Saric:2008ek}). Boundedness of the measured geodesic lamination can be seen as a condition of boundedness of curvatures for the non-smooth surface $\partial_\pm\mathcal C(\Gamma)$. 

Theorem \ref{thm uniqueness and boundedness} gives a similar statement for $K$-surfaces, that is, a $K$-surface $S$ with $\partial S=\Gamma$ has bounded principal curvatures if and only if $\phi$ is quasisymmetric - and moreover it is unique. The proof of Theorem \ref{thm uniqueness and boundedness} is split in two propositions, which are proved with similar techniques. Proposition \ref{prop uniqueness quasisym} first proves the uniqueness part of Theorem \ref{thm uniqueness and boundedness}, while Proposition \ref{prop boundedness quasisym} proves boundedness of principal curvatures.

\begin{prop} \label{prop uniqueness quasisym}
Given any quasisymmetric homeomorphism $\phi:\partial\Hyp^2\to \partial\Hyp^2$, for every $K\in(-\infty,-1)$ there exists a unique (spacelike, smooth) future-convex $K$-surface $S^-_K$ and a unique past-convex $K$-surface $S_K^+$ with $\partial S^\pm_K=gr(\phi)$.
\end{prop}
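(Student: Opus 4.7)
The plan is to combine the foliation $\{S^+_K\}_{K\in(-\infty,-1)}$ of $\mathcal{D}_+(\Gamma)$ provided by Theorem \ref{thm foliation part ads} with a maximum-principle argument, handling the non-compactness of the surfaces via an isometric rescaling that exploits the quasisymmetry of $\phi$. I treat the past-convex case; the future-convex case is completely analogous using the foliation of $\mathcal{D}_-(\Gamma)$. Let $\kappa:\mathcal{D}_+(\Gamma)\to(-\infty,-1)$ denote the function whose level sets are the leaves $S^+_K$, and suppose $S'$ is another past-convex $K_0$-surface with $\partial S'=\Gamma$. Set $\bar\kappa=\inf_{S'}\kappa$ and $\overline\kappa=\sup_{S'}\kappa$. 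The goal is to prove the two inequalities $\bar\kappa\geq K_0$ and $\overline\kappa\leq K_0$; together with $\bar\kappa\leq\overline\kappa$ this forces $\kappa\equiv K_0$ on $S'$, hence $S'=S^+_{K_0}$.

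If $\bar\kappa$ is achieved at an interior point $x_0\in S'$, then $S'$ is tangent to the leaf $S^+_{\bar\kappa}$ at $x_0$, and since $\kappa\geq\bar\kappa$ locally on $S'$, the surface $S'$ lies in the past (convex) side of $S^+_{\bar\kappa}$ near $x_0$. Writing both past-convex surfaces as graphs $t=u(x,y)$ of concave functions over their common spacelike tangent plane at $x_0$, one gets $u_{S'}\leq u_{S^+_{\bar\kappa}}$ near the origin with vanishing 1-jet difference, hence $D^2 u_{S'}(0)\leq D^2 u_{S^+_{\bar\kappa}}(0)\leq 0$. Passing to the absolute values and using the monotonicity of the determinant on $2\times 2$ positive matrices yields $\det B_{S'}(x_0)\geq\det B_{S^+_{\bar\kappa}}(x_0)$, and the Gauss equation $K=-1-\det B$ gives $K_0\leq\bar\kappa$, i.e.\ $\bar\kappa\geq K_0$. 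The symmetric comparison at an interior maximum of $\kappa$ produces $\overline\kappa\leq K_0$.

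The main issue is that the infimum and supremum need not be attained, since $S'$ is non-compact. In that case I pick a minimizing sequence $x_n\in S'$ with $\kappa(x_n)\to\bar\kappa$ and apply isometries $\chi_n=(\alpha_n,\beta_n)\in\isom(\Hyp^2)\times\isom(\Hyp^2)\cong\isom(\AdS^3)$ chosen so that $\chi_n(x_n)$ sits at a fixed basepoint with a prescribed spacelike tangent plane. Since the cross-ratio on $\partial\Hyp^2$ is M\"obius-invariant, the conjugates $\phi_n=\alpha_n\circ\phi\circ\beta_n^{-1}$ inherit the cross-ratio distortion of $\phi$, so Lemma \ref{Compactness property of quasisymm homeo} applied both to $\phi_n$ and to $\phi_n^{-1}$ produces, up to a subsequence, a uniform limit $\phi_n\to\phi_\infty$ with $\phi_\infty$ quasisymmetric; the degenerate alternative is excluded because a collapse of $\chi_n(\Gamma)$ would force $\mathcal{D}_+(\chi_n(\Gamma))$ to degenerate, contradicting the existence of an interior point $\chi_n(x_n)$ with a fixed non-degenerate spacelike support plane. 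Setting $\Gamma_\infty=gr(\phi_\infty)$, Schlenker's theorem (Theorem \ref{schl_degeneration}) combined with the arguments of Theorem \ref{prop existence} and Theorem \ref{thm foliation part ads} gives smooth convergence on compact sets of $\chi_n(S')$ to a past-convex $K_0$-surface $S'_\infty$ with $\partial S'_\infty=\Gamma_\infty$, and of the rescaled foliations $\chi_n(\{S^+_K\})$ to the corresponding foliation $\{S^+_K(\Gamma_\infty)\}$ with associated function $\kappa_\infty$. Passing the inequality $\kappa\geq\bar\kappa$ on $S'$ to the limit gives $\kappa_\infty\geq\bar\kappa$ on $S'_\infty$, while $\kappa_\infty(x_\infty)=\lim_n\kappa(x_n)=\bar\kappa$ at the limit point $x_\infty=\lim_n\chi_n(x_n)$. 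Thus $x_\infty$ is an \emph{interior} minimum of $\kappa_\infty$ on $S'_\infty$, and the classical maximum principle of the previous paragraph now applies to yield $\bar\kappa\geq K_0$. The analogous argument with a maximizing sequence gives $\overline\kappa\leq K_0$.

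The principal obstacle I anticipate is the joint convergence argument in the rescaled picture: one must ensure both that $\chi_n(S')$ converges to a bona fide smooth $K_0$-surface with asymptotic boundary exactly $\Gamma_\infty$, and that the foliation functions $\kappa\circ\chi_n^{-1}$ on $\mathcal{D}_+(\chi_n(\Gamma))$ pass to $\kappa_\infty$ in a sense strong enough to guarantee $\kappa_\infty\geq\bar\kappa$ on $S'_\infty$. This is where Schlenker's theorem is used twice, together with the absence of lightlike degeneration (the spacelike tangent plane at the rescaling point remains non-degenerate in the limit). A secondary delicate point is the exclusion of the constant case in Lemma \ref{Compactness property of quasisymm homeo}, which is where the interplay between quasisymmetry of $\phi$ and the fixed interior point $\chi_n(x_n)$ with prescribed spacelike tangent plane is essential.
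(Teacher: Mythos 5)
Your proposal is correct and follows essentially the same route as the paper's proof: extremize the foliation function $\kappa$ over the competitor surface, handle an attained extremum by tangency with a leaf and the maximum principle via the Gauss equation, and handle a non-attained extremum by renormalizing with isometries, invoking the cross-ratio compactness lemma (with the constant-limit alternative excluded by the fixed spacelike support plane at the basepoint), and passing both the surface and the foliation to a limit configuration where the extremum becomes interior. The only differences are presentational (the paper runs the argument for the supremum first and gives a slightly more explicit dual-plane argument to rule out collapse to a lightlike plane), not mathematical.
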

\begin{proof}
Let us give the proof for past-convex surfaces, for definiteness. Let $\Gamma=gr(\phi)$. We have already proved the existence of a foliation by $K$-surfaces $S_K^+$ of $\mathcal D_+(\Gamma)$. Hence we have a function $\kappa:\mathcal D_+(\Gamma)\to(-\infty,-1)$, such that $\kappa(\gamma)=\kappa_0$ if $\gamma\in S_{\kappa_0}^+$. Let $S$ be another $K$-surface, for fixed $K$. We must show that $S$ coincides with $S_K^+=\kappa^{-1}(K)$.

Let $$\kappa_{\mathrm{max}}=\sup_{\gamma\in S} \kappa(\gamma)\qquad\text{and}\qquad \kappa_{\mathrm{min}}=\inf_{\gamma\in S} \kappa(\gamma)\,.$$
Let us show that $\kappa_{\mathrm{max}}\leq K$. Then by an analogous argument, we will have $\kappa_{\mathrm{min}}\geq K$, and thus $\kappa_{\mathrm{max}}=\kappa_{\mathrm{min}}=K$. 

Suppose the supremum $\kappa_{\mathrm{max}}$ is achieved at some point $\gamma_{\mathrm{max}}$, such that $\kappa_{\mathrm{max}}=\kappa(\gamma_{\mathrm{max}})$. First, it follows that $\kappa_{\mathrm{max}}>-\infty$, for otherwise $S$ would touch the boundary of the domain of dependence. Then in this case, $S$ and $S_{\kappa_{\mathrm{max}}}$ are tangent at $\gamma$, and $S_{\kappa_{\mathrm{max}}}$ is on the convex side of $S$. Hence by a standard application of the maximum principle, the determinant of the shape operator of $S_{\kappa_{\mathrm{max}}}$ at $\gamma$ is larger than the determinant of the shape operator of  $S$ (which equals $-1-K$). Therefore $K\geq \kappa_{\mathrm{max}}$.

On the other hand, suppose that $\kappa_{\mathrm{max}}=\lim_n \kappa(\gamma_n)$, where $\gamma_n\in S$ is a diverging sequence. Let $\chi_n\in \isom(\Hyp^2)$ such that $\chi_n(\gamma_n)=\mathcal I_x$ (for $x\in\Hyp^2$ a fixed point)  and $\chi_n(P_{\gamma_n}S)=\mathcal R_\pi$, where $P_{\gamma_n}S$ is the totally geodesic plane tangent to $S$ at $\gamma_n$. Observe that, if $\Gamma=gr(\phi)$ is the curve in $\partial\AdS^3$ of $S$, then $\chi_n(\Gamma)=gr(\phi_n)$, where $\phi_n$ is obtained by pre-composing and post-composing $\phi$ with isometries of $\Hyp^2$. Hence the condition of Equation \eqref{cross-ratio boundedness} in Lemma \ref{Compactness property of quasisymm homeo} is satisfied.

We claim that there can be no subsequence $\phi_{n_k}$ which converges to a constant map. In terms of Anti-de Sitter geometry, this means that the curve $\chi_{n_k}(\Gamma)$ would converge (in the Hausdorff convergence) to the boundary of a totally geodesic lightlike plane $P$. Let $\xi\in\partial\AdS^3$ be the point which determines the lightlike plane, namely the self-intersection point of $\partial P$ (see Figure \ref{fig:step}, left). Since $P_{\gamma_n}S$ is a support plane for $S$, $\partial P_{\gamma_n}S$ does not intersect $\Gamma$ for every $n$. By applying $\chi_n$, this means that $\partial \mathcal R_\pi$ does not intersect $\chi_n(\Gamma)$, and thus $\xi$ must necessarily be in $\partial \mathcal R_\pi$. See Figure \ref{fig:lightlikeplane}. But in this case, $(\mathcal I_x)^*$ does not contain $\xi$, so $(\mathcal I_x)^*$ meets $\chi_n(\Gamma)$ for $n$ large, which contradicts the fact that $\mathcal I_x$ is in the domain of dependence of $\chi_n(\Gamma)$ for every $n$.

\begin{figure}[htb]
\centering
\includegraphics[height=6.5cm]{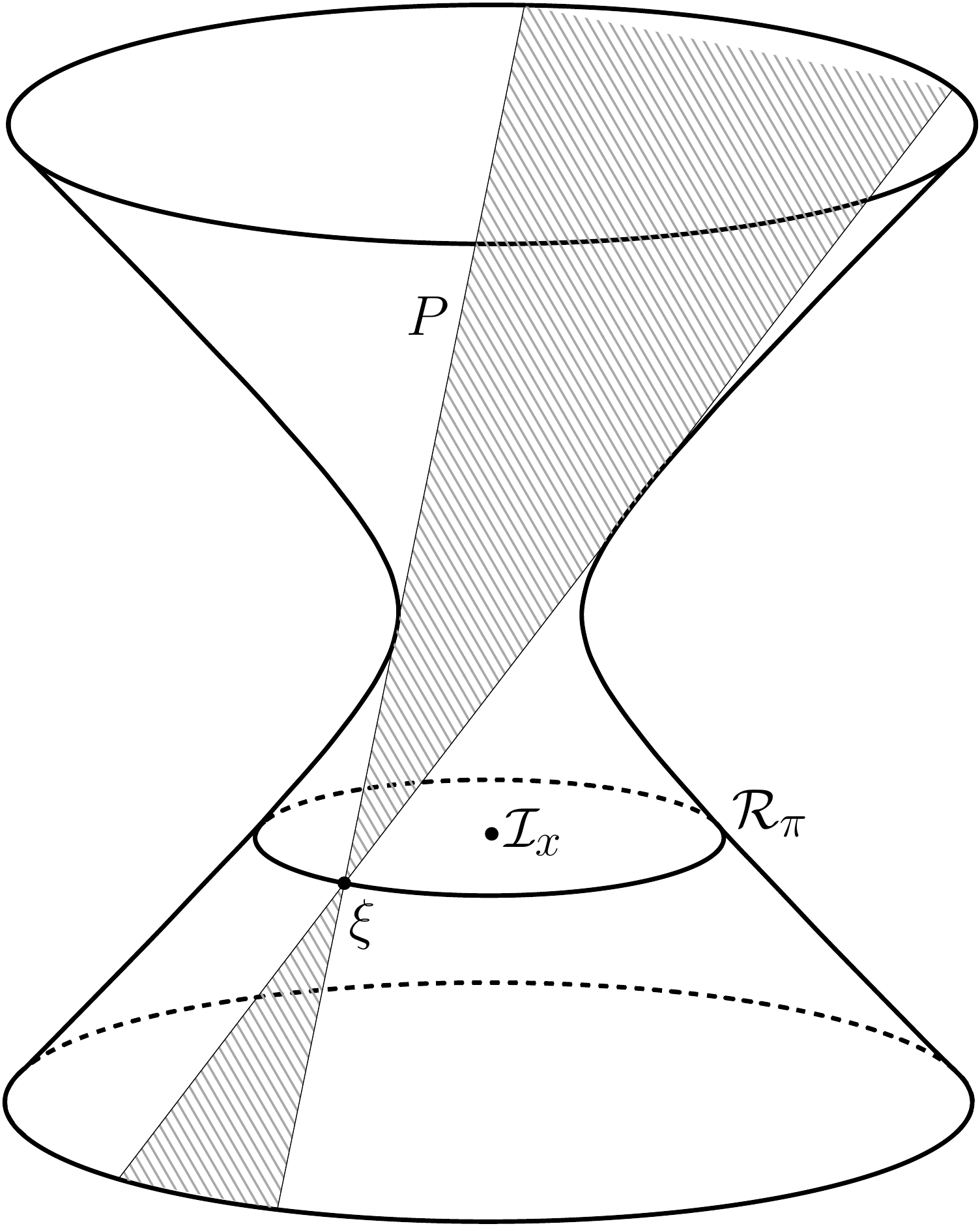}
\caption{The position of the lightlike plane $P$  of the claim in the proof of Proposition \ref{prop uniqueness quasisym}.  \label{fig:lightlikeplane}}
\end{figure}

Therefore, by Lemma \ref{Compactness property of quasisymm homeo}, there exists a subsequence $\phi_{n_k}$ which converges to a quasisymmetric homeomorphism $\phi_\infty$ of $\partial\Hyp^2$. Let $\Gamma_\infty=gr(\phi_\infty)$. By the same argument as the proofs of Theorem \ref{prop existence} and Theorem \ref{thm foliation part ads}, there exists a foliation of $\mathcal D_+(\Gamma_\infty)$ by $K$-surfaces which are obtained as limits (up to taking subsequences) of the surfaces $\chi_n(S^+_K)$. Moreover, taking a further subsequence, we can suppose $\chi_n(S)$ converges to a smooth $K$-surface $S_\infty$ contained in $\mathcal D_+(\Gamma_\infty)$ (as in the proof of Theorem \ref{prop existence}). Now we are again in the situation of the beginning of the proof, where $\kappa_{\mathrm{max}}=\sup_{\gamma\in S_\infty} \kappa_\infty(\gamma)$, and this supremum is achieved at $\mathcal I_x$. Thus applying again the above argument, the proof is concluded.
\end{proof}

We will now apply a similar argument to show that, under the assumption that $\Gamma$ is the graph of a quasisymmetric homeomorphism, the principal curvatures of the $K$-surfaces $S_K$ with $\partial S_K=\Gamma$ are uniformly bounded. 

\begin{prop} \label{prop boundedness quasisym}
Let $S$ be a $K$-surface, for $K\in(-\infty,-1)$, with $\partial S=gr(\phi)$, where $\phi:\partial\Hyp^2\to \partial\Hyp^2$ is a quasisymmetric homeomorphism. Then the principal curvatures of $S$ are bounded.
\end{prop}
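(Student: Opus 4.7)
The strategy mirrors that of Proposition \ref{prop uniqueness quasisym}: I will argue by contradiction, use isometries of $\AdS^3$ to bring a hypothetical blow-up point to a compact region, and then combine the compactness Lemma \ref{Compactness property of quasisymm homeo} for quasisymmetric homeomorphisms with Schlenker's convergence Theorem \ref{schl_degeneration} to reach a contradiction.

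Suppose for contradiction that the principal curvatures of $S$ are unbounded. Since $S$ has constant Gaussian curvature $K$, the Gauss equation \eqref{Gauss} gives $\det B = -K-1 > 0$, so the two principal curvatures have the same sign and their product is a fixed positive constant. In particular, one principal curvature goes to $0$ and the other goes to $\pm\infty$ along some sequence $\gamma_n \in S$. I will assume without loss of generality that $S$ is past-convex (the other case is symmetric). Choose isometries $\chi_n = (\alpha_n,\beta_n) \in \isom(\AdS^3)$ such that $\chi_n(\gamma_n) = \mathrm{id}$ and $\chi_n(P_{\gamma_n} S) = \mathcal R_\pi$; this is possible since $P_{\gamma_n} S$ is a spacelike plane. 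Write $\chi_n(\Gamma) = gr(\phi_n)$ with $\phi_n = \alpha_n \circ \phi \circ \beta_n^{-1}$; since $\phi$ is quasisymmetric and the cross-ratio norm is invariant under isometries of $\Hyp^2$ on both sides, the sequence $\phi_n$ satisfies the uniform cross-ratio bound \eqref{cross-ratio boundedness} of Lemma \ref{Compactness property of quasisymm homeo}.

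Next I apply Lemma \ref{Compactness property of quasisymm homeo}. The possibility that a subsequence $\phi_{n_k}$ converges to a constant map is ruled out exactly as in the proof of Proposition \ref{prop uniqueness quasisym}: in that case $\chi_{n_k}(\Gamma)$ would Hausdorff-converge to the boundary of a lightlike plane $P$; since $\partial\mathcal R_\pi$ is disjoint from each $\chi_n(\Gamma)$, the singular vertex $\xi$ of $\partial P$ would lie in $\partial\mathcal R_\pi$, whence $\xi\notin (\mathcal I_x)^*=\mathcal R_\pi^*$ would fail to belong to the dual plane of $\mathrm{id}$, contradicting the fact that $\mathrm{id}\in\mathcal D(\chi_n(\Gamma))$. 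Therefore, up to subsequence, $\phi_n$ converges uniformly to a quasisymmetric homeomorphism $\phi_\infty$, and $\chi_n(\Gamma) \to \Gamma_\infty = gr(\phi_\infty)$ in the Hausdorff topology.

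Now I apply Theorem \ref{schl_degeneration} to the sequence of immersions $\chi_n(S)$. The induced metrics on $\chi_n(S)$ are constant of curvature $K$ (hence trivially $C^\infty$-convergent), the determinant of the shape operator equals the constant $-K-1>0$ so the immersions are uniformly elliptic, and by construction the $1$-jet at $\mathrm{id}$ is constant (passing through $\mathrm{id}$ with tangent plane $\mathcal R_\pi$). Hence, Theorem \ref{schl_degeneration} provides a dichotomy at the point $\mathrm{id}$: either $\chi_n(S)$ converges $C^\infty$ in a neighborhood of $\mathrm{id}$, or it degenerates to a union of two lightlike totally geodesic half-planes meeting along a spacelike geodesic through $\mathrm{id}$. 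In the latter case, $\Gamma_\infty$ would contain a $2$-step configuration, contradicting the fact that $\Gamma_\infty$ is the graph of the homeomorphism $\phi_\infty$.

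Therefore $\chi_n(S)$ converges $C^\infty$ near $\mathrm{id}$ to a smooth spacelike $K$-surface. Since $\chi_n$ is an isometry, the principal curvatures of $\chi_n(S)$ at $\mathrm{id}$ equal those of $S$ at $\gamma_n$; the $C^\infty$-convergence implies they converge to finite limits, contradicting the assumption that they blow up. The hardest step is the application of Theorem \ref{schl_degeneration}: one must ensure that the $1$-jets of $\chi_n(S)$ at $\mathrm{id}$ are controlled, which follows from the construction of $\chi_n$, and must exclude lightlike degeneration of the ambient surface using the quasisymmetric limit $\phi_\infty$; once these are in place the contradiction is immediate.
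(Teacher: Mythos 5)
Your proposal is correct and follows essentially the same route as the paper: contradiction via a blow-up sequence $\gamma_n$, renormalization by isometries $\chi_n$, the compactness Lemma \ref{Compactness property of quasisymm homeo} (with the constant-map case excluded exactly as in Proposition \ref{prop uniqueness quasisym}), and Schlenker's Theorem \ref{schl_degeneration} to get $C^\infty$ convergence and hence bounded principal curvatures in the limit. One small slip to fix: you normalize $\chi_n(\gamma_n)=\mathrm{id}$ with tangent plane $\mathcal R_\pi$, but $\mathrm{id}\notin\mathcal R_\pi$ (it is the dual point of that plane); the correct normalization, as in the paper, sends $\gamma_n$ to a point $\mathcal I_x\in\mathcal R_\pi$ and $P_{\gamma_n}S$ to $\mathcal R_\pi$, after which your dual-plane argument against the constant-map degeneration reads as intended.
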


%\begin{remark} \label{rmk princ curv max surf}
%Theorem \ref{thm bounded II} can be compared with the following statement, which follows from \cite{bon_schl}: a maximal surface $S_0$ in $\AdS^3$ with nonpositive curvature and with $\partial_\infty S_0=gr(\phi)$ has principal curvatures in $[-1+\epsilon,1-\epsilon]$ if and only if $\phi$ is quasisymmetric. This can be proved in the following way. Suppose there exists a sequence of points $x_n$ where the principal curvatures of $S_0$ tend to $1$ and $-1$. Then one can compose with isometries $\psi_n$ in such a way that $x_n$ is mapped to a fixed point $p$ of $\AdS^3$ and $T_{x_n}S_0$ is mapped to a fixed plane $P$. By Lemma \ref{lemma conv max surf fixed position}, the maximal surfaces $\psi_n(S_0)$ tangent to $P$ obtained in this way converges $C^\infty$ on compact sets to a maximal surface $S_\infty$, which has principal curvatures equal to $1$ and $-1$ at $p$. It was proved in \cite{bon_schl} that the width of the convex hull of such a surface is necessarily $\pi/2$. On the other hand, $\phi$ is quasisymmetric if and only if the width of the convex hull of $gr(\phi)$ is less than $\pi/2$, therefore the width of the convex hull of all surfaces $\psi_n(S_n)$ is equal to some number $w<\pi/2$, and this leads to a contradiction.
%\end{remark}

\begin{proof}
By contradiction, assume that there exists a sequence of points $\gamma_n\in S$ where one principal curvature $\lambda_1(\gamma_n)$ tends to infinity (and thus the other tends to zero). 
As in the proof of Proposition \ref{prop uniqueness quasisym}, let $\chi_n\in \isom(\Hyp^2)$ such that $\chi_n(\gamma_n)=\mathcal I_x$ and $\chi_n(T_{\gamma_n}S)=\mathcal R_\pi$. By the same argument, the curves $\chi_n(\Gamma)$ converge to the graph of a quasisymmetric homeomorphism $\Gamma_\infty=gr(\phi_\infty)$, and as in Theorem \ref{prop existence}, the surfaces $\chi_n(S)$ converge to a smooth $K$-surface $S_\infty$ with $\partial S_\infty=\Gamma_\infty$. But then for the $C^\infty$ convergence, the principal curvatures $\lambda_1(\gamma_n)$, which are equal to the largest principal curvature of $\chi_n(S)$ at $\chi_n(\gamma_n)=\mathcal I_x$, converge to the largest principal curvature of $S_\infty$ at $\mathcal I_x$. Thus they cannot go to infinity and this gives a contradiction.
\end{proof}

We remark that the key points in the proofs of Proposition \ref{prop uniqueness quasisym} and Proposition \ref{prop boundedness quasisym} are the use of the compactness result for quasisymmetric homeomorphisms (Lemma \ref{Compactness property of quasisymm homeo}), and on the other hand, the convergence result of
Theorem \ref{schl_degeneration}, which essentially can be fruitfully applied (as for Theorem \ref{prop existence}) provided the curve $\Gamma$ is not a 1-step or a 2-step curve.

Proposition \ref{prop uniqueness quasisym} and Proposition \ref{prop boundedness quasisym} together give the statement of Theorem \ref{thm uniqueness and boundedness}.

Recall that the map $\Phi:\pi_r\circ(\pi_l)^{-1}$ associated to a past-convex $K$-surface $S$ is a $\theta$-landslide, for $K=-{1}/{\cos^2({\theta}/{2})}$, and that if $\partial S=gr(\phi)$ then $\Phi$ extends to $\phi$ on $\partial\Hyp^2$. Then we conclude by a corollary about the extensions by $\theta$-landslides of quasisymmetric homeomorphisms. 

\begin{cor}
Given any quasisymmetric homeomorphism $\phi:\partial\Hyp^2\to\partial\Hyp^2$ and any $\theta\in(0,\pi)$, there exist a unique $\theta$-landslides $\Phi_\theta:\Hyp^2\to\Hyp^2$ which extend $\phi$. Moreover, $\Phi_\theta$ is quasiconformal.
\end{cor}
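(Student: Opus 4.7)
The plan is to combine the uniqueness of $K$-surfaces with prescribed quasisymmetric boundary (Theorem~\ref{thm uniqueness and boundedness}) with the representation formula (Corollary~\ref{cor reconstruct ksurf}), which identifies past-convex $K$-surfaces in $\AdS^3$ with $\theta$-landslides of $\Hyp^2$ via $K=-1/\cos^2(\theta/2)$. Existence and uniqueness of the $\theta$-landslide extension $\Phi_\theta$ will then follow from existence and uniqueness of the $K$-surface bounding $gr(\phi)$; quasiconformality will be a consequence of the uniform bound on the principal curvatures provided by Theorem~\ref{thm uniqueness and boundedness}.

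\emph{Existence, extension and quasiconformality.} Let $K=-1/\cos^2(\theta/2)\in(-\infty,-1)$ and $\Gamma=gr(\phi)\subset\partial\AdS^3$. Theorem~\ref{thm uniqueness and boundedness} produces a past-convex $K$-surface $S_K^+$ with $\partial S_K^+=\Gamma$ and uniformly bounded principal curvatures. Set $\Phi_\theta=\pi_r\circ\pi_l^{-1}$: by Proposition~\ref{prop past-convex}, $\Phi_\theta$ is a $\theta$-landslide, and by Lemma~\ref{lemma extension} it extends continuously to $\phi$ on $\partial\Hyp^2$. The boundedness of $B$ forces two-sided bounds on the eigenvalues of $E\pm JB$, so $\pi_l$ and $\pi_r$ are bi-Lipschitz diffeomorphisms of $\Hyp^2$, hence so is $\Phi_\theta$, yielding quasiconformality as observed in the discussion preceding Proposition~\ref{prop boundedness quasisym}.

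\emph{Uniqueness.} Suppose $\Phi'$ is another $\theta$-landslide extending $\phi$. Using Lemma~\ref{char landslide}, pick $b\in\Gamma(\mathrm{End}(T\Hyp^2))$ with $d^\nabla b=0$, $\det b=1$, $\tr b=2\cos\theta$ and $\tr Jb<0$ realising $(\Phi')^*g_{\Hyp^2}=g_{\Hyp^2}(b\cdot,b\cdot)$. Corollary~\ref{cor reconstruct ksurf} yields a past-convex $K$-surface $S'=\sigma_{\Phi',b}(\Hyp^2)$ on which $\pi_l\circ\sigma_{\Phi',b}=\mathrm{id}$ and $\pi_r\circ\sigma_{\Phi',b}=\Phi'$. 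Once we know $\partial S'=\Gamma$, the uniqueness part of Theorem~\ref{thm uniqueness and boundedness} forces $S'=S_K^+$, and consequently $\Phi'=\Phi_\theta$. The principal obstacle is verifying this boundary identification: since $\pi_l$ is a global diffeomorphism from $S'$ onto $\Hyp^2$, for any sequence $x_n\to p\in\partial\Hyp^2$ the lifts $\gamma_n=\sigma_{\Phi',b}(x_n)$ must accumulate on $\partial S'\subset\partial\AdS^3$; the defining identity $\gamma_n(\Phi'(x_n))=x_n$ together with the continuous extension $\Phi'(x_n)\to\phi(p)$ and the convergence criterion of Equation~(\ref{defi convergence boundary})---essentially a converse to Lemma~\ref{lemma extension}---should force $\gamma_n\to(p,\phi(p))$, whence $\partial S'\supseteq gr(\phi)$, and the spacelikeness of $S'$ (guaranteed by $\tr b=2\cos\theta\in(-2,2)$) upgrades this inclusion to equality.
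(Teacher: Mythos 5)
Your route coincides with the paper's: existence of the extension comes from the associated map $\pi_r\circ\pi_l^{-1}$ of the past-convex $K$-surface with boundary $gr(\phi)$ (Theorem \ref{prop existence}, Proposition \ref{prop past-convex}, Lemma \ref{lemma extension}), quasiconformality from the bound on the principal curvatures exactly as in the remark preceding Proposition \ref{prop boundedness quasisym}, and uniqueness by running Corollary \ref{cor reconstruct ksurf} backwards on a competing landslide $\Phi'$ and invoking Proposition \ref{prop uniqueness quasisym}. This is precisely the paper's (very terse) argument, so in structure you are aligned.

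The one step where you go beyond the paper --- verifying $\partial S'=gr(\phi)$ for $S'=\sigma_{\Phi',b}(\Hyp^2)$ --- is indeed the point that needs care, but the mechanism you propose does not work as written. Knowing that $\gamma_n=\sigma_{\Phi',b}(x_n)$ lies on the timelike geodesic $L_{x_n,\Phi'(x_n)}$ with $x_n\to p$ and $\Phi'(x_n)\to\phi(p)$ does not by itself force $\gamma_n\to(p,\phi(p))$: the criterion of Equation \eqref{defi convergence boundary} requires $\gamma_n(x)\to p$ and $\gamma_n^{-1}(x)\to q$ for a \emph{fixed} interior point $x$, and the closed geodesics $L_{x_n,y_n}$ contain points that do not leave a compact subset of $\AdS^3$ at all --- for instance the order-two rotation $\mathcal I_{m_n}$ about the midpoint $m_n$ of the segment $[y_n,x_n]$ belongs to $L_{x_n,y_n}$ and converges to the interior point $\mathcal I_{m_\infty}$ whenever $m_n$ converges in $\Hyp^2$. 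So membership in $L_{x_n,\Phi'(x_n)}$ plus convergence of the footpoints proves nothing about the limit of $\gamma_n$. The repair is standard and goes in the opposite logical direction: $S'$ is a convex spacelike surface, hence in an affine chart (or in the cylinder model \eqref{eq:battista}) it is the graph of a Lipschitz function and is properly embedded, so the escaping sequence $\gamma_n$ accumulates only on $\partial\AdS^3$; and once $\gamma_n\to(p',q')\in\partial\AdS^3$, the continuity of the projections up to the boundary (the content of Lemma \ref{lemma extension}, i.e.\ Lemma 3.18 of \cite{bon_schl}, whose proof via support planes does not presuppose that $\partial S'$ is a graph) gives $p'=\lim\pi_l(\gamma_n)=p$ and $q'=\lim\pi_r(\gamma_n)=\lim\Phi'(x_n)=\phi(p)$. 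With that substitution the boundary identification, and hence your uniqueness argument, is complete.
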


In fact, Theorem \ref{prop existence} and Proposition \ref{prop uniqueness quasisym} prove existence and uniqueness of  a past-convex $K$-surface for every $K\in(-\infty,-1)$. Therefore the associated map is a $\theta$-landslide.
%Recall that dual surfaces give the same associated map $\Phi$ (see Remark \ref{remark dual surface} and Remark \ref{remark dual surface2}), hence we can restrict the attention to $K\in[-2,-1)$. For $K=-2$, the future-convex and past-convex $(-2)$-surfaces with the same boundary curve are dual to one another, and thus they give the same extension $\Phi_{\pi/2}$, which is a minimal Lagrangian map. It is easy to see that, for $-1<K<-2$, the two $K$-surfaces $S_K^\pm$ with $\partial S_K^\pm=gr(\phi)$ cannot be obtained from one another by a normal evolution, and thus the two associated maps are different. 
On the other hand, given any $\theta$-landslide $\Phi_\theta$, by the procedure described in Section \ref{sec representation formula}, in particular Corollary \ref{cor reconstruct ksurf}, $\Phi_\theta$ must necessarily come from one of the two $K$-surfaces with boundary $gr(\phi)$.

%In fact the $K$-surfaces which foliate $\mathcal D_+(gr(\phi)$, with $K\in(-\infty,-1)$ are in $1-1$ correspondence with $\theta$-landslides

\bibliographystyle{alpha}
\bibliography{../bs-bibliography}

\end{document}